\newif\iffinal
\newcommand{\esssup}{\operatorname{esssup}}
\newcommand{\prob}{{\bf P}}
\newcommand{\hetzi}{{\frac 12}}
\newcommand{\var}{{\texttt{var}}}
\newcommand{\cov}{{\texttt{cov}}}
\newcommand{\Epsilon}{{\mathcal E}}
\newcommand{\BB}{\mathcal B}
\newcommand{\DD}{\mathcal D}
\newcommand{\EE}{\mathcal E}
\newcommand{\FF}{\mathcal F}
\newcommand{\GG}{\mathcal G}
\newcommand{\LL}{\mathcal L}
\newcommand{\MM}{\mathcal M}
\newcommand{\PP}{\mathcal P}
\newcommand{\hPP}{\tilde{\mathcal P}}
\newcommand{\D}{\mathbb D}
\newcommand{\BbbL}{\mathbb L}
\newcommand{\N}{\mathbb N}
\newcommand{\BbbP}{\mathbb P}
\newcommand{\BbbE}{\mathbb E}
\newcommand{\R}{\mathbb R}
\newcommand{\T}{\mathbb T}
\newcommand{\U}{\mathbb U}
\newcommand{\Z}{\mathbb Z}
\newcommand{\veloc}{\includegraphics[width=0.08in]{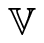}}
\newcommand{\critdim}{5}
\newcommand{\ignore}[1]{}
\newtheoremstyle{thm}{1.5ex}{1.5ex}{\itshape\rmfamily}{} {\bfseries\rmfamily}{}{2ex}{}
\newtheoremstyle{def}{1.5ex}{1.5ex}{\slshape\rmfamily}{} {\bfseries\rmfamily}{}{2ex}{}
\newtheoremstyle{rem}{1.3ex}{1.3ex}{\rmfamily}{} {\itshape}
{} {1.5ex}{}
\theoremstyle{thm}
\newtheorem{theorem}{Theorem}[section]
\newtheorem{lemma}[theorem]{Lemma}
\newtheorem{claim}[theorem]{Claim}
\newtheorem{proposition}[theorem]{Proposition}
\newtheorem*{Main Theorem}{Main Theorem.}
\newtheorem{corollary}[theorem]{Corollary}
\newtheorem*{special theorem}{Lindeberg-Feller Theorem for Martingales}
\newtheorem{conjecture}[theorem]{Conjecture}
\theoremstyle{def}
\newtheorem{definition}{Definition}
\theoremstyle{rem}
\newtheorem{remark}{{\itshape Remark}}[]
\numberwithin{equation}{section}
\renewcommand{\section}{\secdef\sct\sect}
\newcommand{\sct}[2][default]{%
\refstepcounter{section} \addcontentsline{toc}{section}{{\tocsection
{}{\thesection}{\!\!\!\!#1\dotfill}}{}} \vspace{0.7cm}
\centerline{\scshape\thesection.\ #1} \nopagebreak \vspace{0.2cm}}
\newcommand{\sect}[1]{%
\vspace{0.4cm} \centerline{\large\scshape\rmfamily #1}
\vspace{0.2cm}}
\renewcommand{\critdim} {4}
\renewcommand{\ignore}[1]{{}}
\newcommand{\paral}{block}
\newcommand{\parals}{blocks}
\def\myffrac#1#2 in #3{\raise 2.6pt\hbox{$#3 #1$}\mkern-1.5mu\raise 0.8pt\hbox{$#3/$}\mkern-1.1mu\lower 1.5pt\hbox{$#3 #2$}}
\title[Slowdown for RWRE]
{\fontsize{14}{20}\selectfont Slowdown estimates for ballistic random walk in 
random environment}
\author[N.~Berger]{Noam Berger \\ Hebrew university of Jerusalem\,}
\begin{document}
\thanks{\hglue-4.5mm\fontsize{9.6}{9.6}\selectfont The research was partially supported by grant 2006477 of the Isreal-U.S. binational science foundation, by grant 152/2007 of the German Israeli foundation and by ERC StG grant 239990.}
\maketitle

\begin{abstract}
We consider models of random walk in uniformly elliptic i.i.d. random environment in dimension greater
than or equal to \critdim, 
satisfying a condition slightly weaker than the ballisticity condition $(T^\prime)$. We show that for every $\epsilon >0$ and
$n$ large enough, the annealed probability of linear slowdown is bounded from above by
$
\exp\big(-(\log n)^{d-\epsilon}\big).
$
This bound almost matches the known lower bound of 
$
\exp\big(-C(\log n)^d\big),
$
and significantly improves previously known upper bounds. 
As a corollary we provide almost sharp estimates for the quenched probability of slowdown.
As a tool for obtaining the main result, we show an almost local version of the quenched central 
limit theorem under the assumption of the same condition.
\end{abstract}

\newcommand{\joint}{\Large \texttt{P}}
\newcommand{\quenchedP}{P}
\newcommand{\annealedP}{\BbbP}
\newcommand{\quenchedE}{E}
\newcommand{\annealedE}{\BbbE}
\newcommand{\st}{\mbox{ s.t. }}

\ignore{
\section{Notations}

In this section I list the notation used by me throughout the paper, for the purpose of avoiding multiple meanings.

$P$ distribution of environments

$\annealedP$ annealed measure

$\quenchedP$ quenched measure 

$\PP(z,N)$ basic \paral

$\hPP(z,N)$ middle third of basic \paral

$\partial^+$ The right boundary of a set.

$R(N)$ equals $e^{(\log\log N)^2}$

$A_N(\{X_n\})$ The even that the first $N$ regenerations have radii less than $R(N)$.

$H_l$ the plain at distance $l$ from the origin.
}
\section{Introduction}
%$\veloc \xi$
\label{sec:intro}\noindent
\subsection{Background}
Let $d\geq 1$. A Random Walk in Random Environment (RWRE) on $\Z^d$ is defined as follows: Let
$\MM^d$ denote the space of all probability measures on $\{\pm e_i\}_{i=1}^d$ and let $\Omega=\left(\MM^d\right)^{\Z^d}$. An {\em environment} is a  point $\omega\in\Omega$. Let $P$ be a probability  measure on $\Omega$. For the purposes of this paper, we assume that $P$ is an i.i.d. measure, i.e.
\[
P=Q^{\Z^d}
\]
for some distribution $Q$ on $\MM^d$
and that $P$ is {\em uniformly elliptic}, i.e. there exist $\eta>0$ s.t. for every
%$x\in\Z^d$ and
%$v\in\{\pm e_i\}_{i=1}^d$,
neighbor $v$ of the origin,
%\[
%P\left(\omega(x,e)<\epsilon\right)=0.
%\]
\begin{equation}\label{eq:unifelliptic}
Q(\{\omega:\omega(v)<\eta\})=0.
\end{equation}
For an environment $\omega\in\Omega$, the {\em Random Walk} on $\omega$ is a time-homogenous
Markov chain with transition kernel
\[
\quenchedP_\omega\left(\left.X_{n+1}=z+e\right|X_n=z\right)=\omega(z,e).
\]
The {\bf quenched law} $\quenchedP_\omega^z$ is defined to be the law on $\left(\Z^d\right)^\N$ induced by
the kernel
$\quenchedP_\omega$ and $\quenchedP_\omega^z(X_0=z)=1$.  We let $\joint^z=P\otimes \quenchedP_\omega^z$ be the joint law of the environment and the walk, and the {\bf annealed} law is defined to be its marginal
\[
\annealedP^z=\int_{\Omega}P_\omega^zdP(\omega).
\]

For simplicity, we omit the superscript when the walk starts from zero.

We use the notations $\quenchedE^z_\omega$ and $\annealedE^z$ for the expectations with respect to the measures  $\quenchedP^z_\omega$ and $\annealedP^z$. 

In \cite{sznit_zer} and \cite{zerner}, Sznitman and Zerner proved that the limiting velocity
\begin{equation*}
\veloc=\lim_{n\to\infty}\frac{X_n}{n}
\end{equation*}
exists almost surely. A remaining open problem, which is one of the most important problems in this field, is whether this limiting velocity is always an almost sure constant.

We now introduce three important definitions:
\begin{definition}\label{def:ballistic}
The RWRE is said to be {\em ballistic} if the limiting velocity is a non-zero almost sure constant.
\end{definition}

\begin{definition}\label{def:localdrift}
The local drift at a point $z$ is defined to be the (quenched) quantity
\[
\Delta_\omega(z):=\sum_{e\in\{\pm e_i\}_{i=1}^d} e\omega(z,e)
=\quenchedE_\omega^z(X_1-z).
\]
\end{definition}

\begin{definition}\label{def:nestling}
The RWRE is said to be {\em plain nestling} if zero is contained in the interior of the convex hull of the 
support of the random variable $\Delta_\omega(0)$. It is said to be {\em marginally nestling} if zero is
on the boundary of the convex hull of the support, and {\em non-nestling} if zero is outside the
%closure of the
convex hull of the support.
\end{definition}

\subsection{Large deviations for RWRE}
In \cite{varadhan}, Varadhan considered large deviations for the sequence of random variables 
$\frac{X_n}{n}$ under the annealed measure $\annealedP$. he showed that a large deviation principle holds with a 
%convex
rate function $F$, and identified the zero set of the function $F$. For the ballistic case with limiting velocity $\veloc$, Varadhan showed that if the RWRE is non-nestling, then $F^{-1}(0)=\{\veloc\}$, while if the RWRE is plain nestling or marginally nestling, then $F^{-1}(0)=A$, with $A$ being the convex hull of $0$ and $\veloc$. We note here that recently Yilmaz \cite{yilmaz} and Peterson \cite{peterson} obtained more information about the structure of the rate function $F$.

In other words, for every $a\notin A$ and $\epsilon>0$ small enough, 
\begin{equation}\label{eq:ball}
\annealedP\left(\left\|\frac{X_n}{n}-a\right\|_\infty<\epsilon\right)
\end{equation}
 decays exponentially with $n$, and for every $a\in A$, \eqref{eq:ball} decays more slowly than exponentially. (Note that the choice of the $\ell^\infty$ norm is completely arbitrary, since in our finite-dimensional space,  all norms are equivalent)

It is therefore natural to ask what the decay rate of \eqref{eq:ball} is for $a\in A$. 

In the marginally nestling case, Sznitman \cite{sznitmanquenched} showed that there exist $C_1$ and $C_2$ such that
\begin{equation}\label{eq:marginnestle}
e^{-C_1n^{\frac{d}{d+2}}}<
\annealedP\left(\left\|\frac{X_n}{n}-a\right\|_\infty<\epsilon\right)
<e^{-C_2n^{\frac{d}{d+2}}}
\end{equation}
for large enough $n$. In \cite{sznitmanquenched} Sznitman phrased \eqref{eq:marginnestle} in the language of bounds on the distribution of the first regeneration time. Nevertheless, the way it is presented here follows immediately from Sznitman's result using the appropriate large deviation estimates.

%In \cite{sznitmanquenched}, among other results, Sznitman found tight bounds for the slowdown %probability in the marginally nestling case.

\subsection{Main goal}
The purpose of this paper is to provide an estimate for the probability in \eqref{eq:ball} in the plain nestling case under some additional assumptions which we specify below.

\subsection{Ballisticity conditions}

In \cite{SznitmanT,SznitmanTprime} Sznitman introduced two criteria for ballisticity of the RWRE, 
which he called conditions $(T)$ and $(T^\prime)$. In order to define these conditions,
we need some preliminary definitions. 

\begin{definition}\label{def:hittimelayer}
Let $\ell\in S^{d-1}$ be a direction in $\R^d$. Let $L>0$. For a sequence $\{X_n\}$, we define
\begin{equation*}
T^{(\ell)}_L(\{X_n\})=\inf\{n\geq 0\,:\,\langle X_n,\ell\rangle\geq L\}.
\end{equation*}
If no confusion may arise, we may omit $\ell$ and $\{X_n\}$ from this notation.
\end{definition}

Equivalently to Definition \ref{def:hittimelayer}, we also define the first hitting time of a set.
\begin{definition}\label{def:hittimeset}
Let $A\subseteq\Z^d$.For a sequence $\{X_n\}$, we define
\begin{equation*}
T_A(\{X_n\})=\inf\{n\geq 0\,:\, X_n\in A\}.
\end{equation*}
Again, we may omit $\{X_n\}$ when no confusion may arise.
\end{definition}

We now return to Sznitman's ballisticity conditions. We start by defining 
the condition $(T_\gamma),\ 0<\gamma\leq 1$ as follows:

\begin{definition}
We say that $P$ satisfies condition $(T_\gamma)$ in direction $\ell_0$ if for every $\ell$ in a neighborhood of $\ell_0$ there exists a constant $C$  such that for every large enough $L$,
\begin{equation}\label{eq:tgamma}
\annealedP\big(T^{(-\ell)}_L<T^{(\ell)}_L\big)<C\exp\left(-L^\gamma\right)
\end{equation}
\end{definition}

\begin{definition}
We say that $P$ satisfies condition $(T)$ if it satisfies condition $(T_1)$. We say that it satisfies condition $(T^\prime)$ if it satisfies condition $(T_\gamma)$ for some $\gamma>1/2$.
\end{definition}

In \cite{SznitmanTprime}, it is  shown that the conditions $(T_\gamma)_{1/2<\gamma<1}$ are all equvalent.

The connection between the conditions mentioned above and ballisticity lies in the following theorem and conjecture:

\begin{theorem}[Sznitman, \cite{SznitmanTprime}]\label{thm:sznitmanspeed}
If  condition $(T^\prime)$ holds for some $\ell_0$, then the RWRE is ballistic, and the limiting velocity $\veloc$ satisfies $\langle \veloc, \ell_0\rangle>0$. Furthermore, in this case $(T^\prime)$ holds for all $\ell$ satisfying
$\langle \veloc, \ell \rangle>0$
\end{theorem}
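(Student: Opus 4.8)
\smallskip\noindent\textbf{Proof sketch of Theorem \ref{thm:sznitmanspeed}.}
The plan is to run the regeneration argument of Sznitman and Zerner in direction $\ell_0$, using condition $(T_\gamma)$, $\gamma>1/2$, to control the regeneration times. First, \eqref{eq:tgamma} forces transience in direction $\ell_0$: since $\sum_{L\in\N}\exp(-L^\gamma)<\infty$, a Borel--Cantelli argument (cf.\ \cite{sznit_zer}) shows that $\annealedP_0$-almost surely the walk eventually reaches the level $\langle\cdot,\ell_0\rangle=L$ before the level $\langle\cdot,\ell_0\rangle=-L$ for every large $L$, so that $\langle X_n,\ell_0\rangle\to\infty$. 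One then introduces the regeneration times $\tau_1<\tau_2<\cdots$ relative to $\ell_0$ ($\tau$ being a regeneration time if $\langle X_m,\ell_0\rangle<\langle X_\tau,\ell_0\rangle$ for all $m<\tau$ and $\langle X_m,\ell_0\rangle\ge\langle X_\tau,\ell_0\rangle$ for all $m\ge\tau$); by transience these are $\annealedP_0$-a.s.\ finite and infinite in number, and, with $D:=\inf\{n\ge1:\langle X_n,\ell_0\rangle<\langle X_0,\ell_0\rangle\}$, the increments $(X_{\tau_{k+1}}-X_{\tau_k},\tau_{k+1}-\tau_k)$, $k\ge1$, are i.i.d.\ with the law of $(X_{\tau_1},\tau_1)$ under $\annealedP_0(\,\cdot\mid D=\infty)$. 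By \cite{sznit_zer,zerner} the limiting velocity exists, and, provided $\annealedE_0[\tau_1\mid D=\infty]<\infty$, equals
\[
\veloc=\frac{\annealedE_0\big[X_{\tau_1}\mid D=\infty\big]}{\annealedE_0\big[\tau_1\mid D=\infty\big]};
\]
since $\langle X_{\tau_1},\ell_0\rangle>0$ under $\annealedP_0(\,\cdot\mid D=\infty)$, in that case $\langle\veloc,\ell_0\rangle>0$ and the walk is ballistic. Thus the statement reduces to proving $\annealedE_0[\tau_1\mid D=\infty]<\infty$ (the companion bound $\annealedE_0[|X_{\tau_1}|\mid D=\infty]<\infty$ being easier).

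The spatial estimate $\annealedP_0\big(\sup_{n\le\tau_1}|X_n|>L\mid D=\infty\big)\le C\exp(-cL^\gamma)$ follows from \eqref{eq:tgamma}: a displacement of order $L$ before the first regeneration forces a backtrack of order $L$ in direction $\ell_0$, which by \eqref{eq:tgamma} and the strong Markov property costs probability at most $C\exp(-cL^\gamma)$ at each of a stochastically geometric number of attempts; summing, then dividing by the positive constant $\annealedP_0(D=\infty)$, yields the bound, so $X_{\tau_1}$ has all moments. The temporal estimate is the crux. To bound $\annealedP_0(\tau_1>n)$ I would split on whether $\sup_{m\le n}|X_m|$ exceeds $n^{1/2+\delta}$: the first case is controlled by the spatial estimate and is then smaller than any power of $1/n$, while in the second case the walk has spent $n$ steps inside a slab of $\ell_0$-width $\lesssim n^{1/2+\delta}$ without regenerating, and one must show that this too is super-polynomially unlikely. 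The natural route is a renormalization over sub-slabs: by \eqref{eq:tgamma} the walk, from its current position, exits the next sub-slab on the $\ell_0$-positive side before dropping below its current level with probability close to $1$, and one iterates the strong Markov property. I expect this to be the main obstacle, since the successive crossings are not independent and, more seriously, one must control not merely the \emph{displacement} but the \emph{elapsed time} of each crossing; choosing the sub-slab width to balance it against the time budget and the number of crossings is precisely the renormalization behind the effective criterion of \cite{SznitmanTprime}, and this is where the hypothesis $\gamma>1/2$ is used. Granting it, $\annealedP_0(\tau_1>n\mid D=\infty)$ decays faster than every polynomial, so all moments of $\tau_1$ under $\annealedP_0(\,\cdot\mid D=\infty)$, and in particular the first, are finite.

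For the last assertion, fix $\ell$ with $\langle\veloc,\ell\rangle>0$. Then $\langle X_{\tau_k},\ell\rangle$ is a random walk with i.i.d.\ increments $\langle X_{\tau_{j+1}}-X_{\tau_j},\ell\rangle$ of strictly positive mean $\langle\veloc,\ell\rangle\,\annealedE_0[\tau_2-\tau_1]$ and of stretched-exponential tails (by the spatial estimate), so its running minimum has an exponential lower tail; combined with the stretched-exponential tail of the excursion of $\langle X_n,\ell\rangle$ below the surrounding regeneration levels, this gives $\annealedP_0\big(\inf_n\langle X_n,\ell\rangle\le-L\big)\le C\exp(-cL^\gamma)$. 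Since $\{T^{(-\ell)}_L<T^{(\ell)}_L\}\subseteq\{\inf_n\langle X_n,\ell\rangle\le-L\}$ and the same estimate holds uniformly for directions near $\ell$ (where $\langle\veloc,\cdot\rangle>0$ as well), we obtain condition $(T_{\gamma'})$ for every $\gamma'<\gamma$, hence $(T')$, in direction $\ell$.
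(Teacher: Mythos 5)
The paper states Theorem~\ref{thm:sznitmanspeed} as a citation to \cite{SznitmanTprime} and gives no proof of it, so there is no internal argument to compare against; the paper's own Theorem~\ref{thm:ballis}, a strengthening valid for all $\gamma>0$ when $d\geq 4$, is obtained by an entirely different route, namely through the quantitative tail bound of Proposition~\ref{prop:main}. Your sketch reproduces the outline of Sznitman's regeneration-time strategy faithfully: transience in direction $\ell_0$ from $(T_\gamma)$ by Borel--Cantelli, the Sznitman--Zerner regeneration structure and law of large numbers, a stretched-exponential tail for the regeneration radius deduced from \eqref{eq:tgamma} and the strong Markov property (this is Lemma~\ref{lem:regrad} here), and, for the final assertion, a one-dimensional ruin estimate for $\langle X_{\tau_k},\ell\rangle$ combined with control of the intra-regeneration excursions. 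All of that is correct in outline.

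The crucial step, however, is $\annealedE_0[\tau_1\mid D=\infty]<\infty$, which you leave as ``granting it,'' and the route you gesture at does not by itself close it. In your second case (the walk confined to an $\ell_0$-slab of width $n^{1/2+\delta}$ for time $n$), mere confinement costs essentially nothing under uniform ellipticity, and ``iterating the strong Markov property over sub-slabs'' is precisely the delicate point: one must control the \emph{time} spent crossing each sub-slab, not just the displacement, and the successive crossings are neither independent nor uniformly bounded. Resolving this is the whole content of Sznitman's effective criterion in \cite{SznitmanTprime} --- a multi-scale renormalization of quenched box-exit estimates which, under $(T_\gamma)$ with $\gamma>1/2$, yields the super-polynomial bound $\annealedP(\tau_1>u)\le\exp\left(-c(\log u)^\alpha\right)$ for $\alpha<2d/(d+1)$ (this is what underlies \eqref{eq:known_ubnd}) and hence all moments of $\tau_1$. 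So the roadmap is right and you have correctly located where $\gamma>1/2$ must enter, but what you have written describes the difficulty rather than resolving it; as a proof of the cited theorem it is incomplete at the one place where the theorem has genuine content.
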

\noindent
{\bf Remark:} This result was recently improved by Drewitz and Ram\'irez \cite{drewitz}.

\begin{conjecture}[Sznitman]
Condition $(T^\prime)$ is equivalent to ballisticity.
\end{conjecture}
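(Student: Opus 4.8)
\medskip
\noindent\textit{Proof strategy.} One implication, $(T^\prime)\Rightarrow\text{ballisticity}$, is already Theorem~\ref{thm:sznitmanspeed}, so the entire task is the converse, and that is what the plan below addresses. Write $v$ for the (assumed nonzero deterministic) limiting velocity and fix any $\ell\in S^{d-1}$ with $\langle v,\ell\rangle>0$; since Theorem~\ref{thm:sznitmanspeed} propagates $(T^\prime)$ to all such $\ell$ once it holds for one, it suffices to establish \eqref{eq:tgamma} for this $\ell$ with some $\gamma>1/2$. I would first clear away the easy regimes. If $P$ is non-nestling, a separating-hyperplane direction $u$ satisfies $\langle\Delta_\omega(0),u\rangle\geq c>0$ almost surely, so Kalikow's condition holds in direction $u$, hence $(T)$ holds there, hence $(T^\prime)$ holds in direction $\ell$ by Theorem~\ref{thm:sznitmanspeed}. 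If $P$ is plain or marginally nestling, Varadhan's large deviation principle forces the rate function to vanish on the segment joining $0$ to $v$, so \eqref{eq:ball} decays only sub-exponentially for $a$ in that segment; in the marginally nestling case Sznitman's bound \eqref{eq:marginnestle} sharpens this to $\exp(-cn^{d/(d+2)})$, which for $d\geq3$ beats $\exp(-cn^{1/2})$ and is consistent with $(T_\gamma)$ for $\gamma$ slightly above $1/2$. The genuinely open case is thus \emph{plain nestling and ballistic}.

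\medskip
\noindent\textit{Step 1: reduce $(T^\prime)$ to a single polynomial estimate.} The plan is to run a multiscale (effective-criterion) renormalization. One fixes a large exponent $M=M(d)$ and the ``polynomial condition'' asserting the existence of one slab-like box $B$ around the origin, of $\ell$-width $L$ and suitably proportioned transverse size, with
\begin{equation*}
\annealedP\bigl(\,X\text{ exits }B\text{ through its back face}\,\bigr)\ \leq\ C\,L^{-M}.
\end{equation*}
Starting from such a seed, I would recurse over an exponentially growing sequence of scales $L_0<L_1<\cdots$: to backtrack across a scale-$L_{k+1}$ box the walk must cross a whole corridor of consecutive scale-$L_k$ sub-boxes that are each atypically bad, and since these sub-boxes carry independent environments and the corridor length grows with $L_{k+1}/L_k$, one obtains a recursion of the schematic shape $p_{k+1}\lesssim(\text{combinatorial count})\,p_k^{\,cL_{k+1}/L_k}+(\text{deterministic error})$. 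If the seed $p_0$ is polynomially small with exponent above the dimensional combinatorial loss, the recursion forces $p_k\leq\exp(-(\log L_k)^{1+\delta})$, i.e.\ exactly $(T_\gamma)$ for some $\gamma>1/2$. (One must also upgrade the back-face statement to the full \eqref{eq:tgamma} using transience in direction $\ell$; this is routine.) So it suffices to exhibit one box with back-face exit probability at most $L^{-M(d)}$.

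\medskip
\noindent\textit{Step 2: reduce that to regeneration moments.} Since $\langle v,\ell\rangle>0$ the walk is transient in direction $\ell$, so the Sznitman--Zerner renewal structure gives regeneration times $0\leq\tau_1<\tau_2<\cdots$ with i.i.d.\ blocks $(\tau_{k+1}-\tau_k,\,X_{\tau_{k+1}}-X_{\tau_k})$, $k\geq1$, and $v=\annealedE[X_{\tau_2}-X_{\tau_1}\mid\text{reg}]/\annealedE[\tau_2-\tau_1\mid\text{reg}]$; in particular ballisticity is \emph{equivalent} to $\annealedE[\tau_2-\tau_1\mid\text{reg}]<\infty$ (together with $\annealedE[|X_{\tau_2}-X_{\tau_1}|\mid\text{reg}]<\infty$). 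Chaining regeneration blocks, the back-face exit probability of an $\ell$-width-$L$ slab is controlled by $\annealedP(\tau_1>L^{q}\mid\text{reg})$ and $\annealedP(\min_{n\leq\tau_1}\langle X_n-X_{\tau_1},\ell\rangle<-L\mid\text{reg})$ together with their i.i.d.\ block analogues. Hence a finite $p$-th moment for the block duration and the block backtracking would give back-face exit probability $O(L^{-cp})$, and the whole conjecture reduces to: \emph{$v\neq0$ forces $\annealedE[(\tau_2-\tau_1)^{p}\mid\text{reg}]<\infty$ for the one explicit $p=p(d)$ demanded by Step~1} --- equivalently, for all $p$.

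\medskip
\noindent\textit{Step 3 (the main obstacle).} This is where I expect the real difficulty. From $v\neq0$ the renewal identity yields only finite \emph{first} moments, so Chebyshev gives merely a power-one tail $\annealedP(\tau_2-\tau_1>t\mid\text{reg})\leq C/t$, far below the $p(d)\asymp d$ needed above, and I see no soft way to close the gap. The naive fix --- tile a width-$L$ slab with order $L$ independent blocks and union-bound over a block backtracking by $L$ --- reproduces the same exponent (indeed loses a factor $L$), so iterating the renewal structure at a coarser scale does not help. Two lines of attack seem possible. First, an induction on the exponent: feed a power-$\alpha$ bound on block backtracking into a second-level renormalization (as in Step~1) whose output is a power-$\alpha^\prime$ bound with $\alpha^\prime>\alpha$, iterating from $\alpha=1$ up to $\alpha=p(d)$; the obstruction is that the Step~1 recursion needs its seed already above the combinatorial threshold, so one must first gain several units of exponent by hand. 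Second, a quenched route: show that $v\neq0$ prevents the quenched drift $\Delta_\omega(\cdot)$ from being near $0$ or backward in direction $\ell$ too often, via an exponential-moment estimate on the time spent in such regions obtained from an exponential martingale / tilt that penalizes them, in the spirit of Sznitman's analysis of atypical quenched drift. The second line is where plain nestling fights back hardest: $0$ lies in the interior of the support of $\Delta_\omega(0)$, so arbitrarily slow and even locally backward patches occur with only polynomially small probability, and quantifying how a nonzero \emph{global} velocity caps their density and depth is the genuine content of the conjecture. Either line, if carried through, closes the proof via Steps~1--2; absent that, only the non-nestling (and, modulo the Step-1 translation, marginally nestling) cases are currently within reach.
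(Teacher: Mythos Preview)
The statement you are addressing is labelled in the paper as a \emph{Conjecture} of Sznitman, not a theorem; the paper does not prove it and does not claim to. It is stated as background and motivation, and the paper's own results (Theorems~\ref{thm:ballis} and~\ref{thm:main}) go in a different direction: they weaken the hypothesis from $(T^\prime)$ to $(T_\gamma)$ for arbitrary $\gamma>0$ in dimension $d\geq 4$, which is progress \emph{toward} the conjecture but not a proof of it. So there is no ``paper's proof'' to compare your proposal against.

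Your write-up is in fact consistent with this: it is not a proof but a research outline, and you explicitly flag Step~3 as the genuine obstruction and conclude that, absent a new idea there, the problem remains open. That assessment is accurate. Your Steps~1--2 (effective-criterion renormalization reducing $(T^\prime)$ to a polynomial seed, and the reduction of the seed to regeneration-block moment bounds) are standard and correct as far as they go; the polynomial-condition route you sketch is essentially the Drewitz--Ram\'irez program referenced after Theorem~\ref{thm:sznitmanspeed}. But the gap you identify in Step~3 --- bootstrapping from the mere finiteness of the first regeneration moment (which is all that ballisticity gives) up to the high moment needed to trigger the effective criterion --- is precisely the open content of the conjecture, and neither of the two lines of attack you describe is known to close it. So your proposal should be read as an informed discussion of an open problem, not as a proof; labelling it a ``proof'' would be misleading.
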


\subsection{Known slowdown results}

Let $d\geq 2$ and let $a\neq \veloc$ be in the convex hull of $0$ and $\veloc$. Then for $\epsilon>0$ small enough, the following is known.

\begin{theorem}[Sznitman, \cite{SznitmanTprime}]\label{thm:sznitmanslowdown}
Assume that $P$ is plain nestling, uniformly elliptic and satisfies condition $(T^\prime)$.
\begin{enumerate}
\item
There exist $C$ such that for $n$ large enough
\begin{equation}\label{eq:lbnd}
\annealedP\left(\left\|\frac{X_n}{n}-a\right\|_\infty<\epsilon\right)
>e^{-C(\log n)^d}
\end{equation}
\item
Let $\alpha<\frac{2d}{d+1}$. There exist $C$ such that for $n$ large enough.
\begin{equation}\label{eq:known_ubnd}
\annealedP\left(\left\|\frac{X_n}{n}-a\right\|_\infty<\epsilon\right)
<e^{-C(\log n)^\alpha}
\end{equation}
\end{enumerate}

\end{theorem}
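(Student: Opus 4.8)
The lower bound \eqref{eq:lbnd} is the easy half and follows the standard strategy of forcing a slowdown by hand. The plan is to use the regeneration structure of the walk (which exists under $(T^\prime)$ by Sznitman--Zerner): between regeneration times the walk increments are i.i.d., and a slowdown of the empirical speed to $a$ over time $n$ can be produced by creating, near the origin, a ``trap'' in the environment — a box of side length of order $\log n$ in which the local drift points backwards — and insisting that the walk spend a macroscopic fraction of its time inside this box before escaping and then proceeding ballistically. First I would fix a direction $\ell$ with $\langle\veloc,\ell\rangle>0$, choose a box $B$ of side $c\log n$ centered at $0$, and let $\mathcal T$ be the environmental event that inside $B$ the drift $\Delta_\omega(z)$ is pushed in the $-\ell$ direction; by i.i.d.\ uniform ellipticity and plain nestling (so that drift values near the relevant vertex of the convex hull have positive $Q$-probability), one gets $P(\mathcal T)\ge \exp(-C(\log n)^d)$, since $|B\cap\Z^d|\asymp(\log n)^d$. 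Next I would show that, conditionally on $\mathcal T$, with probability bounded below by a constant (uniformly in $n$) the walk stays in $B$ for a time of order $n$ and then exits and resumes its ballistic behavior, landing in the window $\{\|X_n/n-a\|_\infty<\epsilon\}$: the confinement-for-time-$\Theta(n)$ estimate is the Brownian/random-walk-in-a-drift-trap computation (escape time from a box of size $L$ against a drift of size $\eta$ is of order $e^{cL}$, which is polynomial in $n$ when $L\asymp\log n$, so one tunes $c$ to get the right power of $n$), and once out of the box, the large-deviation lower bound for the speed being near $\veloc$ on the remaining $\sim n$ steps is classical. Multiplying the environmental probability $\exp(-C(\log n)^d)$ by the conditional annealed probability (bounded below by a constant) gives \eqref{eq:lbnd}.

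For the upper bound \eqref{eq:known_ubnd} I would follow Sznitman's approach via the tail of the first regeneration time $\tau_1$ under $(T^\prime)$. The key input is that $\annealedP(\tau_1>u)\le \exp(-C(\log u)^\alpha)$ for $\alpha<2d/(d+1)$ (this is exactly the form in which Sznitman phrases his result, and it is what drives the bound). Write $\tau_1<\tau_2<\cdots$ for the regeneration times and $X_{\tau_k}$ for the positions; after the first regeneration the pairs $(\tau_{k+1}-\tau_k, X_{\tau_{k+1}}-X_{\tau_k})$ are i.i.d.\ with $\E[X_{\tau_2}-X_{\tau_1}]$ pointing in the $\veloc$ direction and $\E[\tau_2-\tau_1]<\infty$ (finiteness of the first moment is what $(T^\prime)$ buys, via the equivalence of the $(T_\gamma)$'s). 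A slowdown $\{\|X_n/n-a\|_\infty<\epsilon\}$ forces the number of regenerations before time $n$ to be atypically small, which in turn forces one of the regeneration increments $\tau_{k+1}-\tau_k$ (with $k\le$ some polynomial in $n$) to be of size at least a small power of $n$, say $n^\delta$; a union bound over the $O(n)$ possible indices $k$, each contributing $\exp(-C(\log n^\delta)^\alpha)=\exp(-C'(\log n)^\alpha)$, absorbs the polynomial factor and yields \eqref{eq:known_ubnd}. One has to be a little careful separating the contribution of the (non-i.i.d.) first regeneration time from the rest, but its tail obeys the same bound.

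The main obstacle in both directions is the same calibration issue: matching the box size $L\asymp\log n$ to the time scale $n$ through the exponential escape-time law $e^{cL}\asymp n^{c'}$. In the lower bound this is what produces the exponent $d$ exactly (the $(\log n)^d$ comes from the volume of a $(\log n)$-box), and in the upper bound the suboptimal exponent $2d/(d+1)<d$ is precisely the loss incurred in Sznitman's regeneration-tail estimate — closing that gap is the point of the present paper, but for Theorem~\ref{thm:sznitmanslowdown} as stated one only needs to quote \eqref{eq:marginnestle}-style regeneration tail bounds and combine them with the renewal/large-deviation machinery sketched above. I would therefore spend most of the write-up carefully stating the regeneration-time tail estimate being imported and the elementary renewal-theory lemma that converts ``few regenerations'' into ``one long regeneration interval,'' and keep the random-walk-in-a-trap confinement estimate at the level of a cited comparison with a biased random walk on $\Z$.
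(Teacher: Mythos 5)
This theorem is quoted from Sznitman's paper \cite{SznitmanTprime} and is not reproved here; the paper merely indicates (via Figure~\ref{fig:naive}) that the lower bound comes from a na\"ive trap and refers the reader to the original source. So I am comparing your sketch to the cited argument rather than to a proof in this manuscript.

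Your outline of the upper bound is sound and matches the standard regeneration–renewal reduction that this paper itself runs in the proof of Theorem~\ref{thm:main} from Proposition~\ref{prop:main}: bound $\annealedP(\tau_1>u)$, deduce that a slowdown forces some regeneration increment among the $O(n)$ relevant ones to exceed a power of $n$, union-bound, and absorb the polynomial prefactor using $\alpha>1$.

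The lower bound as you have written it has a genuine gap in the trap geometry. You take $\mathcal T$ to be the event that the local drift equals roughly $-\ell$ \emph{throughout} the box $B$ of side $c\log n$. That pins the walk only in the $\ell$-coordinate: the walk drifts to the $-\ell$ face and oscillates there, but in the $d-1$ transverse coordinates it has essentially no bias inside $B$, so it diffuses and exits through a lateral face in time of order $(\log n)^2$, which is far shorter than the $\Theta(n)$ you need. The ``escape time $\asymp e^{cL}$ against a drift of size $\eta$'' calculation you invoke is a one-dimensional fact; it only transfers to dimension $d\ge 2$ when the drift field confines in \emph{every} direction. This is exactly what the na\"ive trap in Figure~\ref{fig:naive} does: inside the ball of radius $C\log n$ the drift $\Delta_\omega(z)$ points \emph{towards the origin}, so every coordinate of the walk sees a restoring force and the escape time is genuinely exponential in the radius. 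Plain nestling is what guarantees that ``drift pointing towards the origin'' has positive $Q$-probability at every site (not merely that drift in one fixed backwards direction does), so the environmental cost is still $\eta_0^{|B|}=\exp(-C(\log n)^d)$. Replacing your event $\mathcal T$ with the centripetal drift event fixes the construction; the rest of your outline (tune the constant so the confinement time is a prescribed fraction of $n$, then let the walk run ballistically for the remaining steps to land near $a\cdot n$) is correct.

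Two smaller points. First, you should ensure the walk spends a \emph{specific} fraction $\theta=1-\|a\|/\|\veloc\|$ of the time trapped, not merely a time ``of order $n$'': the scaling $e^{cL}\asymp\theta n$ still gives $L\asymp\log n$, so the exponent $d$ survives. Second, in the upper bound the nonstationary first regeneration block is harmless because of Part~\ref{item:sz_cond} of Theorem~\ref{thm:inf_reg}, which you implicitly use when you say its tail ``obeys the same bound''; it is worth quoting that comparison explicitly since it is the reason the union bound over $k$ is legitimate for all indices including $k=0$.
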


The easy bound in Theorem \ref{thm:sznitmanslowdown} is \eqref{eq:lbnd}, which follows from the analysis of the so called na\"ive trap (See Figure \ref{fig:naive}).

\begin{figure}[t]
\begin{center}
\epsfig{figure=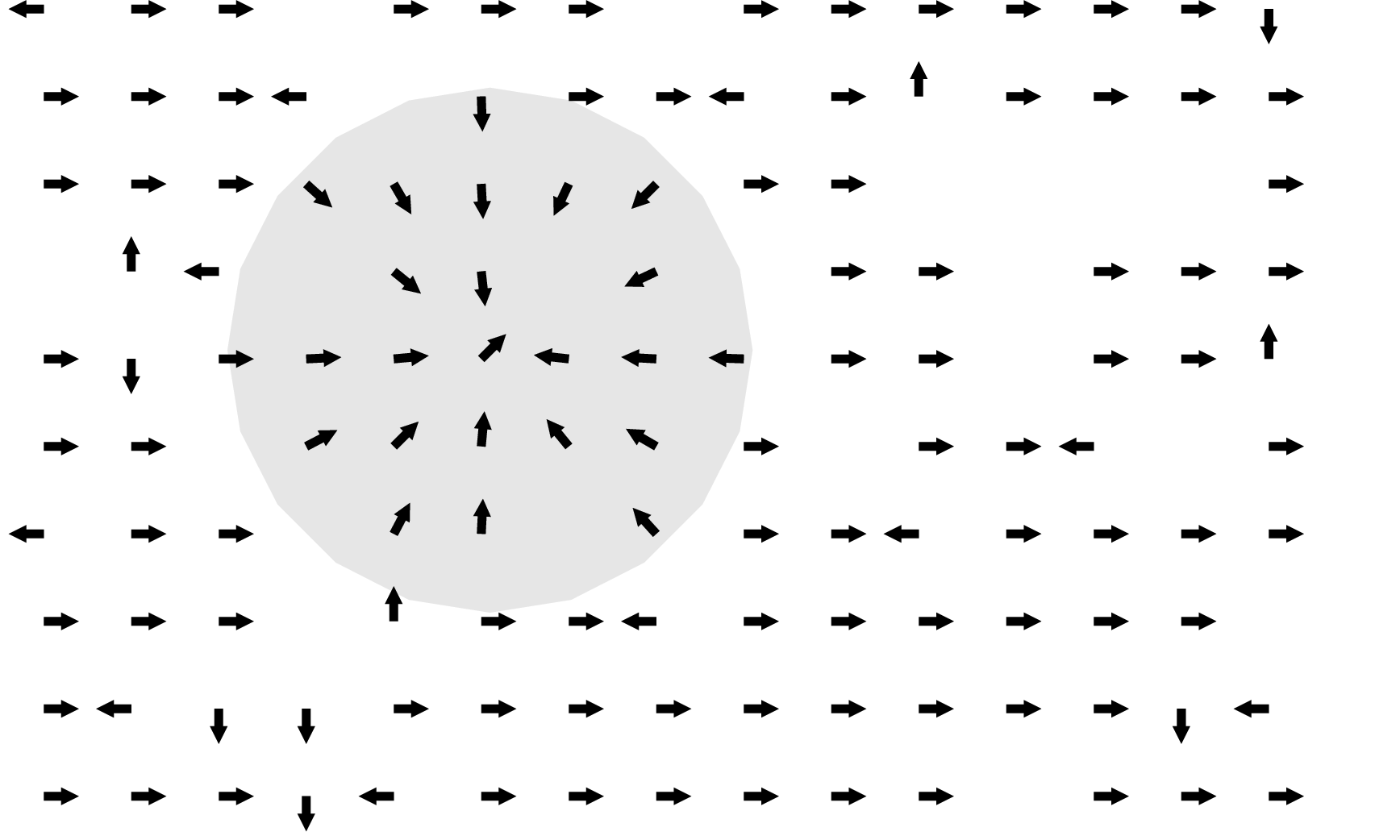, width=10cm}
\caption{\sl 
A na\"ive trap. In the shaded ball of radius $C\log n$ around the origin all local drifts are pointing towards the origin, while outside the ball the local drift goes mostly to the right. If $C$ is appropriately chosen, then this causes a linear slowdown. The probability that such configuration exists is exponential in $(\log n)^d$.
}
\label{fig:naive}
\end{center}
\end{figure}  

\subsection{Ballisticity under $(T_\gamma)$}
We prove the following result:
\begin{theorem}\label{thm:ballis}
Assume that the dimension is at least \critdim. Fix $\gamma>0$. Under the assumption of uniform ellipticity,
if  condition $(T_\gamma)$ holds for some $\ell_0$, then the RWRE is ballistic, and the limiting velocity $\veloc$ satisfies $\langle \veloc, \ell_0\rangle>0$. Furthermore, in this case $(T_\gamma)$ holds for all $\ell$ satisfying
$\langle \veloc, \ell \rangle>0$
\end{theorem}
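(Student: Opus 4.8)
The plan is to upgrade Sznitman's Theorem~\ref{thm:sznitmanspeed} — which establishes ballisticity from $(T')=(T_\gamma)$ with $\gamma>1/2$ — to all $\gamma>0$, at the expense of restricting to dimension $d\geq\critdim$. The architecture of Sznitman's argument is a renormalization/multiscale scheme: one shows that a suitable ``atypical quadrant'' estimate at scale $L$ can be propagated to scale $L_1\gg L$ with an improved exponent, and the fixed point of this recursion is exactly what is needed to run the regeneration-time machinery that yields a nonzero velocity. The bottleneck in Sznitman's version is that the recursion only closes — i.e. the error terms generated by gluing boxes together do not swamp the gain — when the input decay $\exp(-L^\gamma)$ is already fast enough, namely $\gamma>1/2$. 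My strategy is to show that in high dimension the combinatorial cost of the gluing step is cheaper, so the recursion closes for any positive $\gamma$.

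Concretely, the steps I would carry out are as follows. First, set up the seminal renormalization step exactly as in \cite{SznitmanTprime}: partition a slab of width $\sim L_1$ into $\sim (L_1/L)^{d}$ boxes of scale $L$ (more precisely, using the $(\tilde L,L)$-box geometry adapted to the direction $\ell_0$), and express the event that the walk exits the big slab in the unlikely direction in terms of a sum over ``bad paths'' through the coarse-grained box lattice, each of which must traverse a certain number $k\sim L_1/L$ of boxes in the wrong way. The probability of each such coarse path is bounded, via the Markov property at box-exit times together with uniform ellipticity to control backtracking between boxes, by roughly $\big(C\exp(-L^\gamma)\big)^{\rho k}$ for an appropriate geometric fraction $\rho$; the number of coarse paths of length $k$ is at most $C(L_1/L)^{d}$-ish per step, i.e. polynomial in the scale ratio to a power governed by $d$. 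Second — and this is the new ingredient — I would observe that in $d\geq\critdim$ one may afford a more wasteful counting: because the walk is genuinely transient with strong directional transience coming even from ellipticity alone in high dimension, the relevant coarse paths are essentially directed, so the entropy factor is $\exp(o(k\log(L_1/L)))$ rather than something comparable to the probability gain, and hence for \emph{any} $\gamma>0$ one can pick the scale ratio $L_1/L$ (e.g.\ $L_1=L^{a}$ with $a=a(\gamma,d)$ large) so that $\rho k L^\gamma$ dominates the entropy and the recursion strictly improves the effective exponent. Third, iterate the recursion to conclude that the quadrant probability $\annealedP(T^{(-\ell)}_L<T^{(\ell)}_L)$ in fact decays faster than any stretched exponential — in particular one recovers an effective $(T_{\gamma'})$ with $\gamma'$ as close to $1$ as desired, hence $(T')$ — and then invoke Theorem~\ref{thm:sznitmanspeed} verbatim to get that the RWRE is ballistic with $\langle\veloc,\ell_0\rangle>0$ and that $(T_{\gamma'})$, a fortiori $(T_\gamma)$, holds for every $\ell$ with $\langle\veloc,\ell\rangle>0$. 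The last clause of the theorem (openness of the set of good directions) is then immediate from the corresponding clause of Theorem~\ref{thm:sznitmanspeed}.

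The main obstacle I expect is the second step: making precise the sense in which ``high dimension makes the gluing cheap.'' One has to quantify exactly how the entropy of coarse paths and the probabilistic cost per box scale with $d$, and show the balance tips for all $\gamma>0$ precisely when $d\geq\critdim=\critdim$. This requires a careful choice of the box scales and of the way one chains regeneration-type estimates between boxes — in particular controlling the ``bad events'' where the walk makes long excursions backward between consecutive boxes, where uniform ellipticity \eqref{eq:unifelliptic} gives the crucial a priori bound $\eta^{-(\cdot)}$ on the cost of any fixed-length backtrack, but one must ensure these costs are summable against the gain. A secondary technical point is bootstrapping the seed of the recursion: one needs that $(T_\gamma)$ for \emph{some} small $\gamma$ already gives enough of a head start at the initial scale to enter the basin of attraction of the recursion, which amounts to checking that the improvement is strict (not merely non-decreasing) from the very first step, uniformly in a neighborhood of $\ell_0$ as required by the definition of $(T_\gamma)$.
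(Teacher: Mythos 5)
Your proposal takes a genuinely different route from the paper, and the key new step does not hold up. The paper does \emph{not} bootstrap $(T_\gamma)$ into $(T')$ by tightening Sznitman's renormalization; instead it derives Theorem~\ref{thm:ballis} directly from Proposition~\ref{prop:main}, the strong annealed tail bound $\annealedP(\tau_1>u)\leq\exp(-(\log u)^\alpha)$, which is the main technical result of the entire paper (Sections~\ref{sec:prel}--\ref{sec:prfmain}). Once $\tau_1$ has all moments, ballisticity, the sign of $\langle\veloc,\ell_0\rangle$, and the propagation of $(T_\gamma)$ to all half-space directions follow by the standard Sznitman--Zerner/Sznitman arguments; that one-line reduction is the paper's proof of this theorem, and all the real work is in establishing the regeneration tail.

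The gap in your plan is the second step, the claim that in $d\geq\critdim$ the gluing in the quadrant renormalization becomes ``cheap'' because ``the walk is genuinely transient with strong directional transience coming even from ellipticity alone in high dimension,'' so that coarse paths are ``essentially directed'' and the entropy drops below the probabilistic gain for all $\gamma>0$. This is not correct. Uniform ellipticity alone yields no directional transience in any dimension (a symmetric environment is elliptic and diffusive), and the combinatorial entropy of nearest-neighbor coarse-grained box paths is a fixed exponential $C^k$ per step, independent of $d$; it does not improve as the dimension grows. The threshold $\gamma>1/2$ in Sznitman's argument does not come from path entropy, so there is nothing for high dimension to beat in the way you describe. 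Where $d\geq\critdim$ actually enters the paper is entirely different: it is used in Proposition~\ref{prop:quenched} (via Lemmas~\ref{lem:inter}--\ref{lem:interquenched}), where the transience of the \emph{difference of two independent walks in a common environment} controls intersection counts and makes the martingale concentration for quenched exit measures work. Your proposal does not engage with that mechanism at all, and the alternative mechanism you offer in its place is unsound.
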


\subsection{Main results}
Our main result is the following theorem:

\begin{theorem}\label{thm:main}
Let $d\geq\critdim$ and $\gamma>0$
and assume that $P$ is uniformly elliptic and satisfies condition $(T_\gamma)$.
Let $a\neq \veloc$ be in the convex hull of $0$ and $\veloc$ and let $\epsilon>0$ be small enough so that 
$\veloc$ is not in the closed $\epsilon$-neighborhood of $a$. Let $\alpha<d$. Then for all $n$ large enough,
\begin{equation}\label{eq:thmmain}
\annealedP\left(\left\|\frac{X_n}{n}-a\right\|_\infty<\epsilon\right)
<e^{-(\log n)^\alpha}
\end{equation}
\end{theorem}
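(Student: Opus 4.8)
The plan is to use the regeneration structure of the walk to reduce the slowdown event to an atypical behavior of a single regeneration slab, and then to bound the cost of that atypical behavior by the cost of creating a ``naive trap'' of logarithmic radius. The starting point is Theorem~\ref{thm:ballis}: since $(T_\gamma)$ holds, the walk is ballistic with velocity $\veloc$, and moreover $(T_\gamma)$ holds in a neighborhood of the direction of $\veloc$. By Sznitman--Zerner regeneration theory this guarantees that regeneration times $\tau_1<\tau_2<\cdots$ exist, that the increments $(X_{\tau_{k+1}}-X_{\tau_k},\tau_{k+1}-\tau_k)$ are i.i.d.\ under $\annealedP$ for $k\ge 1$, and that these increments have exponential moments of order $\gamma$ (this is exactly what $(T_\gamma)$ buys). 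First I would record these facts and fix notation for the regeneration slabs.

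\textbf{Reduction to a single slab.} On the event $\{\|X_n/n-a\|_\infty<\epsilon\}$ with $a\ne\veloc$ in the segment $[0,\veloc]$, the walk has traveled a distance $\sim n$ in time $n$ but with the ``wrong'' speed, so it must have accumulated an atypical deficit in the projection onto $\ell_0$: writing $N_n$ for the number of regenerations up to time $n$, one has $N_n\approx n/\annealedE(\tau_2-\tau_1)$ typically, whereas on the slowdown event $N_n$ is off by a multiplicative constant. By a standard large-deviation argument for i.i.d.\ sums (the increments have only stretched-exponential tails, but that is all we need for a lower tail / deficit estimate), the probability that $\sum_{k=1}^{m}(\tau_{k+1}-\tau_k)$ exceeds $(1+\delta)m\,\annealedE(\tau_2-\tau_1)$ is governed by the worst single summand: either one regeneration time $\tau_{k+1}-\tau_k$ is anomalously large (of order $n$, costing $\exp(-n^{\gamma/(1+\ldots)})$, which is far smaller than the claimed bound and hence negligible), or --- the dominant mechanism --- a positive fraction of the $m$ regenerations are each ``mildly slow.'' Here is where dimension $d\ge\critdim$ enters: I would show that the cheapest way to make a regeneration slab mildly slow is to place a small naive trap (a ball of radius $r$ of inward-pointing drift) inside it, at cost $\exp(-cr^d)$, and that a trap of radius $r=c(\log n)^{?}$ already suffices to slow down the $\sim n/(\log n)^{?}$ regenerations that pass near it; optimizing gives total cost $\exp(-(\log n)^{\alpha})$ for any $\alpha<d$. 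The role of the ``almost local quenched CLT'' advertised in the abstract is precisely to control how a single trap of radius $r$ affects the passage of the walk through the surrounding region: it says that conditionally on the environment away from the trap, the walk behaves diffusively at the relevant scale, so that the only way to force the slowdown is to pay for the trap itself.

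\textbf{Order of the argument.} I would proceed as follows. (1)~Invoke Theorem~\ref{thm:ballis} and set up regenerations with their $\gamma$-exponential moments. (2)~Translate $\{\|X_n/n-a\|_\infty<\epsilon\}$ into a statement about the total regeneration time $\sum(\tau_{k+1}-\tau_k)$ being atypically large over $m\asymp n$ steps; absorb the event that any single regeneration time or regeneration radius exceeds $R(n):=\exp((\log\log n)^2)$ into an error term, which is super-polynomially small by the moment bounds. (3)~On the complementary ``all radii small'' event, the walk's trajectory decomposes into well-separated, bounded-size slabs, and slowdown forces an atypical empirical distribution of slab-passage-times; a Cram\'er-type bound reduces this to estimating the probability that a single slab contributes an extra $\Theta(1)$ delay, averaged over the environment. (4)~Bound this single-slab delay probability from above using the almost-local quenched CLT to show that the environment outside a ball of radius $r$ around some point on the path is essentially irrelevant, so the delay must come from an environmental trap inside that ball, whose probability is $\exp(-\Theta(r^d))$ by uniform ellipticity and independence. (5)~Combine: with $m\asymp n$ slabs and each trap of radius $r$ affecting polynomially-in-$r$ many slabs, the number of independent traps needed is $\asymp n/\mathrm{poly}(r)$, but we only need the \emph{total} delay to be linear, so in fact a single trap of radius $r\asymp(\log n)^{1-\epsilon'}$ suffices to delay $\asymp n/(\log n)^{O(1)}$ regenerations by a constant each; balancing the exponents yields the bound $\exp(-(\log n)^\alpha)$ for every $\alpha<d$, with the loss $d-\alpha$ coming from the polynomial correction factors.

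\textbf{Main obstacle.} The hard part will be step~(4): making rigorous the assertion that a single environmental fluctuation of radius $r$ is the \emph{only} cheap way to create a macroscopic delay, i.e.\ ruling out ``diffuse'' delays spread over many sites each contributing a tiny bias. This is exactly why the almost-local quenched CLT is needed and why $d\ge\critdim$ is required (so that the walk is transient enough that a region of radius $r$ is exited in time $O(r^2)$ with the walk rarely returning, making the ``trap or nothing'' dichotomy sharp); controlling the joint effect of the trap and the surrounding typical environment on the regeneration structure --- and in particular showing that inserting a trap does not destroy the i.i.d.\ regeneration decomposition in an uncontrollable way --- is the technical crux of the whole paper.
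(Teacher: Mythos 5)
Your high-level narrative correctly identifies the obstacle (ruling out ``diffuse'' slowdown mechanisms spread over many sites), but the proposal contains a genuine gap precisely there, and that gap is essentially the whole theorem. Steps (4) and (5) amount to asserting that the cheapest way to create linear slowdown is to build a single naive trap of radius $r$, then pricing that trap at $\exp(-cr^d)$. That is the logic of the \emph{lower} bound \eqref{eq:lbnd}. For the upper bound one must show that \emph{every} environment--path configuration producing linear slowdown has probability at most $\exp(-(\log n)^\alpha)$, and nothing in the proposal rules out, say, a diffuse constellation of many mild biases, or a trap of radius $O(1)$ combined with an atypical trajectory that revisits it superlinearly often. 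Invoking the almost-local quenched CLT ``to show the delay must come from a trap inside the ball'' is not a reduction the CLT performs; it asserts the conclusion you need to prove. Moreover the arithmetic in step (5) is off: a trap of radius $(\log n)^{1-\epsilon'}$ costs $\exp(-c(\log n)^{(1-\epsilon')d})$, so to approach $\alpha\to d$ you would need $\epsilon'\to 0$, but then the trap has radius $\asymp\log n$ and that is exactly the lower-bound construction, not an upper bound.

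The paper's strategy is dual to yours and never attempts to classify traps. All the content is in Proposition~\ref{prop:main}, the tail bound $\annealedP(\tau_1>u)\le\exp(-(\log u)^\alpha)$; Theorem~\ref{thm:main} then follows by a short decomposition-plus-Azuma argument on the regeneration sums, which resembles your step (2) and is the only part of your outline that matches the paper. Proposition~\ref{prop:main} is proved by showing that, with $P$-probability at least $1-\tfrac12\exp(-(\log u)^\alpha)$, the quenched probability of escaping the box $B_{2L}$ (with $L\asymp(\log u)^{d/\gamma}$) before returning to the start is at least $u^{\epsilon-1}$; a geometric-series estimate on the number of returns then controls $\tau_1$. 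The ``good environment'' event is a multi-scale condition: at each scale $N_k$, the number of blocks in $B_{2L}$ whose quenched exit law deviates appreciably from the annealed exit law (this is exactly where the almost-local quenched CLT, Proposition~\ref{prop:quenched}, enters) must be at most $(\log u)^{\alpha+\epsilon}$; the probability of too many bad blocks is then bounded by a binomial tail. The quenched escape probability on the good event is bounded below by constructing an auxiliary forced walk $\{Y_n\}$, which is made to exit each block through its front face and is corrected after traversing bad blocks so as to mimic the annealed exit distribution, and by controlling the Radon--Nikodym derivative of $\{Y_n\}$ against the true quenched walk (Lemma~\ref{lem:xnyn}); the ``random direction event'' supplies a rich enough family of forced trajectories to make the coupling cost affordable. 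Your proposal has no substitute for any of this machinery, so I would classify it as an incomplete sketch that mistakes the content of the theorem for a removable obstacle, rather than a valid alternative route.
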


Comparing Theorem \ref{thm:main} and \eqref{eq:lbnd} shows that the remaining gap between the upper and the lower bounds is quite small.

Theorem \ref{thm:main} deals with the annealed probability of slowdown. However, one can deduce from it a quenched bound.
\begin{corollary}\label{cor:mainquenched}
With the same assumptions as in Theorem \ref{thm:main}, for every $\alpha<d$, almost every $\omega$ and every large enough $n$,
\begin{equation}\label{eq:thmmainquenched}
\quenchedP_\omega\left(\left\|\frac{X_n}{n}-a\right\|_\infty<\epsilon\right)
<\exp\left(-\frac{n}{\exp\big((\log n)^{{\alpha}^{-1}}\big)}\right).
\end{equation}
\end{corollary}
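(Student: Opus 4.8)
The plan is to transfer the annealed bound of Theorem~\ref{thm:main} to the quenched measure by a first--moment computation over a block decomposition of the path, followed by a Borel--Cantelli argument in the scale $n$. Since $n/\exp((\log n)^{\alpha^{-1}})$ stays bounded when $\alpha\le1$, the inequality \eqref{eq:thmmainquenched} is trivial there, so we may assume $\alpha>1$ and fix an auxiliary exponent $\hat\alpha\in(\alpha,d)$. By Theorem~\ref{thm:ballis} the RWRE is ballistic in the direction $\ell_0$, so we may use the associated regeneration times $0<\tau_1<\tau_2<\cdots$: under $\annealedP$ the pairs $(X_{\tau_{k+1}}-X_{\tau_k},\,\tau_{k+1}-\tau_k)$, $k\ge1$, are i.i.d.\ and independent of $(X_{\tau_1},\tau_1)$, with $X_{\tau_k}/k\to\bar X$, $\tau_k/k\to\bar\tau\in(0,\infty)$ and $\veloc=\bar X/\bar\tau$. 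Translated into this language, the estimate actually established in the proof of Theorem~\ref{thm:main} reads, for $t$ large,
\begin{equation}\label{eq:regtail}
\annealedP\big(\tau_2-\tau_1>t\big)\le\exp\big(-(\log t)^{\hat\alpha}\big);
\end{equation}
in particular the increments $(X_{\tau_{k+1}}-X_{\tau_k},\tau_{k+1}-\tau_k)$ have finite moments of all orders, and their temporal parts have a finite exponential moment at $0$.

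Fix a small $\delta_0>0$, put $m=m(n):=\lfloor\tfrac{1-\delta_0}{2}\exp((\log n)^{\alpha^{-1}})\rfloor$, $M_0:=\lceil m/\bar\tau\rceil$, $K:=\lfloor(1-\delta_0)n/m\rfloor$ and $M:=KM_0$, so that $M_0\to\infty$, $K\to\infty$ with $K\ge\sqrt n$ eventually, and $\bar\tau M=(1-\delta_0+o(1))n$. Partition the first $M$ regeneration intervals into $K$ consecutive \emph{superblocks}, the $i$-th of duration $S_i:=\tau_{iM_0}-\tau_{(i-1)M_0}$; by the regeneration property $S_1,\dots,S_K$ are i.i.d.\ under $\annealedP$ with mean $\bar\tau M_0\sim m$, and we call superblock $i$ \emph{slow} if $S_i\ge(1+\delta_0/4)\bar\tau M_0$. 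The combinatorial heart of the argument is the purely deterministic inclusion, valid for a suitable small $c>0$,
\begin{equation}\label{eq:inclusion}
\big\{\|X_n/n-a\|_\infty<\epsilon\big\}\ \subseteq\ \big\{\text{at least }cK\text{ of the first }K\text{ superblocks are slow}\big\}\ \cup\ E_1\ \cup\ E_2 ,
\end{equation}
where $E_2:=\{\max_{i\le K}S_i\ge2\bar\tau M_0\}$ and $E_1$ is the large--deviation event on which $\tau_M<(1-2\delta_0)n$ or some ratio $X_{\tau_k}/k$ with $k\lesssim n/\bar\tau$ lies further than a fixed small distance from $\bar X$. The reasoning: if none of the first $K$ superblocks is slow and we are off $E_2$, then $\sum_{i\le K}S_i<(1-\delta_0/3+o(1))n+2cKM_0\bar\tau<n$, so $\tau_M>n$ fails, i.e.\ the walk completed at least $M$ regenerations by time $n$; off $E_1$ it then wanders at most $n-\tau_M\le2\delta_0n$ steps after $\tau_M$, which forces $X_n/n$ within $O(\delta_0)$ of $\veloc$ and hence, once $\delta_0$ is small relative to $\|a-\veloc\|_\infty-\epsilon>0$, contradicts $\|X_n/n-a\|_\infty<\epsilon$.

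Now estimate the three pieces. Since the $S_i$ are i.i.d.\ under $\annealedP$ and the one--superblock slowdown probability $p=p(n):=\annealedP\big(S_1\ge(1+\delta_0/4)\bar\tau M_0\big)$ tends to $0$ as $M_0\to\infty$ (weak law of large numbers for the $\tau_k$), the number of slow superblocks among the first $K$ is $\mathrm{Bin}(K,p)$ under $\annealedP$, whence $\annealedE\big[\quenchedP_\omega(\text{$\ge cK$ superblocks slow})\big]=\annealedP(\mathrm{Bin}(K,p)\ge cK)\le\binom{K}{\lceil cK\rceil}p^{cK}\le e^{-3K}$ for $n$ large. Next $\annealedP(E_1)$ is summable in $n$, by Cram\'er's theorem for the temporal lower deviation of $\tau_M$ and a maximal inequality for the spatial ratios, using the moments supplied by \eqref{eq:regtail}; and truncating the temporal increments at level $\bar\tau M_0^{1/2}$ and combining \eqref{eq:regtail} with Bennett's inequality gives $\annealedP(E_2)\le K\,\annealedP(S_1\ge2\bar\tau M_0)\le n\exp(-c_2(\log n)^{\hat\alpha/\alpha})+e^{-c_3\sqrt m}$, which is summable because $\hat\alpha/\alpha>1$. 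Applying Markov's inequality to each of the three nonnegative random variables $\omega\mapsto\quenchedP_\omega(\,\cdot\,)$ occurring in \eqref{eq:inclusion}, the series $\sum_n\annealedP\big(\quenchedP_\omega(\,\cdot\,)>\tfrac13e^{-K}\big)$ converges (the first piece contributes $3e^{K}\cdot e^{-3K}$, the other two are summable outright), so by Borel--Cantelli, for $P$--almost every $\omega$ and all $n$ large, $\quenchedP_\omega(\|X_n/n-a\|_\infty<\epsilon)\le e^{-K}\le\exp\big(-n/\exp((\log n)^{\alpha^{-1}})\big)$, which is \eqref{eq:thmmainquenched}.

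The only genuinely delicate point is the decomposition \eqref{eq:inclusion}: one has to argue that a slowdown — a heavy--tailed event — must be realised in a positive fraction of the $\asymp n/m$ \emph{independent} superblocks rather than concentrated in a single one, and it is precisely the isolation of the ``one big regeneration'' (i.e.\ large trap) event $E_2$ that makes the i.i.d.\ structure of the $S_i$, and hence the binomial bound, applicable; estimating $\annealedP(E_2)$ is exactly where the regeneration--time tail \eqref{eq:regtail}, i.e.\ the full strength of Theorem~\ref{thm:main}, is needed. The scale $m\asymp\exp((\log n)^{\alpha^{-1}})$ is then forced by the two competing requirements that $e^{-K}$ beat $\exp\big(-n/\exp((\log n)^{\alpha^{-1}})\big)$ while $\sum_ne^{-2K}$ still converges.
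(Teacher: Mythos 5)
The overall architecture (block decomposition, binomial count of slow superblocks, Markov plus Borel--Cantelli in $n$) is the right shape, and in particular your handling of the case $\alpha\le1$, the choice of scale $m\asymp\exp((\log n)^{\alpha^{-1}})$, and the bound on the first piece $\{\text{$\ge cK$ slow}\}$ are all correct. The gap is in the treatment of $E_2$, and it is not repairable inside your framework.

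To deduce $\quenchedP_\omega(E_2)\le\tfrac13e^{-K}$ eventually for a.e.\ $\omega$ from Markov's inequality, you need $\sum_n e^{K}\annealedP(E_2)<\infty$, not merely $\sum_n\annealedP(E_2)<\infty$. Your own estimate gives $\annealedP(E_2)\lesssim n\exp\left(-c(\log n)^{\hat\alpha/\alpha}\right)+e^{-c\sqrt m}$, which is superpolynomially small, whereas $K\sim 2n/m=n^{1-o(1)}$ so that $e^{K}$ dominates any quasi-polynomial decay; hence $e^{K}\annealedP(E_2)\to\infty$ and the series diverges badly. (For the first piece the bound is $e^{-3K}$, and for $E_1$ it is $e^{-cn}$ with $K=o(n)$, so there the Markov step works — it is specifically $E_2$ that fails.) The failure is not an artifact of a suboptimal choice of threshold: the decomposition forces the slow-but-not-huge superblocks to be bounded by $2\bar\tau M_0$ so that the counting $\sum_i S_i<n$ can close, and with that threshold the one-superblock ``big trap'' annealed probability is inherently of order $\exp(-c(\log n)^{\hat\alpha/\alpha})$, which is far above $e^{-K}$. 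Worse, the gap is genuine rather than cosmetic: for a typical environment $\omega$ the box $B_n$ does contain a trap of depth $\asymp(\log n)^{1/d}$ (this is precisely Sznitman's lower-bound mechanism), and for such $\omega$ one has $\quenchedP_\omega(E_2)$ at least of the order of the quenched probability of the walk falling once into that trap, which is vastly larger than $e^{-K}$. So the inclusion $\{\|X_n/n-a\|<\epsilon\}\subseteq\{\ge cK\text{ slow}\}\cup E_1\cup E_2$ is correct as a set inclusion but is too lossy: the resulting upper bound $\quenchedP_\omega(\cdot)\le\quenchedP_\omega(\ge cK\text{ slow})+\quenchedP_\omega(E_1)+\quenchedP_\omega(E_2)$ cannot be driven below $e^{-K}$ for typical $\omega$.

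What the Gantert--Zeitouni/Sznitman argument (which the paper invokes without repeating) does at this juncture is structurally different: the ``one huge regeneration'' possibility is eliminated by a condition on the \emph{environment}, not by the annealed probability of a walk event. One defines a good-environment event $\GG_n$ (roughly, ``no trap in the relevant spatial window that holds the walk for time of order $m$ with non-negligible probability''), proves $\sum_n P(\GG_n^c)<\infty$ using the annealed tail of Proposition~\ref{prop:main} together with a union bound over the polynomially many potential trap locations (here $d/\alpha>1$ is what makes that union bound summable), and then for $\omega\in\GG_n$ one bounds the quenched slowdown probability \emph{deterministically}: each superblock has quenched probability at least a constant of ending on time, so the probability that a positive fraction of the $K$ superblocks are slow decays like $e^{-cK}$. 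In that scheme the $E_2$-type event simply has zero or negligible quenched probability for $\omega\in\GG_n$ — there is no huge trap to produce a huge regeneration — so no Markov inequality on it is needed. You would need to replace your third piece with such an environment event to make the proof close.
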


Again, compare \eqref{eq:thmmainquenched} to the known lower bound
\begin{equation*}
\quenchedP_\omega\left(\left\|\frac{X_n}{n}-a\right\|_\infty<\epsilon\right)
>\exp\left(-\frac{Cn}{\exp\big((\log n)^{d^{-1}}\big)}\right)
\end{equation*}

which is proven in \cite{sznitmanquenched}. Corollary \ref{cor:mainquenched} follows from Theorem \ref{thm:main}
using the method developed by Gantert and Zeitouni in \cite{gantertzeitouni}, 
for transferring annealed slowdown estimates into quenched ones. This method was
 adjusted for the multi-dimensional case by Sznitman in \cite{sznitmanquenched}. The proof of Corollary \ref{cor:mainquenched} is identical to the proof of (5.45) in \cite{sznitmanquenched}, and will be
omitted from the present paper.

\subsection{Remark about lower dimensions}
In this paper we only prove Theorem \ref{thm:main} for dimensions 4 and higher. 
Here we discuss the situation in lower dimensions.

For $d=1$, the annealed slowdown probability was calculated by Dembo, Peres and Zeitouni in 1996 \cite{DPZ} and the quenched slowdown probability was calculated by Gantert and Zeitouni in 1998 \cite{gantertzeitouni}.
These results give bounds that are significantly sharper than the bounds in Theorem \ref{thm:main} and Corollary \ref{cor:mainquenched}. Nevertheless, a comparison between the results shows that the estimates in the present paper are true for dimension 1.

I conjecture that the results in this paper hold for dimensions 2 and 3. The difficulty in the proof occurs in Proposition \ref{prop:quenched}, which is currently only proved for dimensions 4 and higher. In dimension 3 I expect that a more sophisticated version of the arguments in this paper should be able to work. In dimension 2, Part \ref{item:toch0} of Proposition \ref{prop:quenched} does not hold, and therefore a new proof idea is needed.

\subsection{Structure of the paper}
In Section \ref{sec:reg} we bring the definition of regeneration times as introduced in \cite{sznit_zer}. We then reformulate Theorem \ref{thm:main} in the language of regenerations and get Proposition \ref{prop:main}. Then in Section \ref{sec:prel} we introduce some very useful notation and give some basic definitions. In Section \ref{sec:clt} we give a number of CLT type results. In particular, we give an almost local version of the quenched central limit theorem (Proposition \ref{prop:quenched}) and a general lemma about sums of approximately Gaussian variables (Lemma \ref{lem:sumapprox}). In Section \ref{sec:qrp} we
reformulate Theorem \ref{thm:main} as a statement about quenched exit properties from a large box. The first half of this construction is very similar to Sznitman's construction in \cite{SznitmanTprime}. Then in Section \ref{sec:auxwalk} we define an auxiliary walk $\{Y_n\}$ and explore its connection to the original walk $\{X_n\}$. In Section \ref{sec:randirev} we define an event regarding the walk $\{Y_n\}$, and in Section 
\ref{sec:prfmain} we use all that information in order to prove the main result.

\subsection{Remark about the writing style}
In order to avoid notational overload, language is abused in three ways in this paper: (a) the value of a constant $C$ may change from one line to the next, (b) some of the inequalities in the paper only hold for $n$ which is large enough without this being explicitly mentioned, and (c) for a probability measure $\mu$ on $\Z^d$, we use the symbol $E_\mu$ for the expectation $\sum_{x}x\mu(x)$.

In addition we use the highly convenient notations from Computer Science $O(n)$, $o(n)$, $\Omega(n)$ and $\xi(n)$, whose meanings are as described in the table below. Note that Computer Scientists write $\omega$ rather than $\xi$. However, due to the use of the letter $\omega$ for the environment in this paper, we use $\xi$ as described below.

\vspace{0.3cm}

\begin{tabular}{|l|l|}
\hline
Symbol & Meaning \\
\hline
$k=O(n)$ & as the parameter goes to infinity, $\limsup \frac{k}{n}<\infty$.\\
\hline
$k=o(n)$ & as the parameter goes to infinity, $\lim \frac{k}{n}=0$.\\
\hline
$k=\Omega(n)$ & as the parameter goes to infinity, $\liminf \frac{k}{n}>0$.\\
\hline
$k=\xi(n)$ & as the parameter goes to infinity, $\lim \frac{k}{n}=\infty$.\\
\hline
\end{tabular}

\vspace{0.3cm}

For example, if we write $f(N)=N^{-\xi(1)}$, we mean that as $N$ goes to infinity, $f(N)$ goes to zero
faster than any power of $N$.

Whenever the norm sign $\|\cdot\|$ appears without mentioning which norm we are referring to, we refer to the
$\ell^\infty$ norm on $\Z^d$.

\section{Regeneration times}\label{sec:reg}

We first define the notion of a regeneration time. Our definition is slightly different from
that given by Sznitman and Zerner in \cite{sznit_zer}. Nevertheless, all the lemmas that we quote
from \cite{SznitmanTprime} and \cite{sznit_zer} and collect into Theorem \ref{thm:inf_reg} apply equally well to the definition below.

\begin{definition}\label{def:regtimes}
Let $\{X_n\}$ be a nearest-neighbor sequence in $\Z^d$, and let $\ell\in S^{d-1}$ be a direction.
We say that $t$ is a {\em regeneration time for $\{X_n\}$ in the direction $\ell$} if the following hold.
\begin{enumerate}
\item $\langle X_s,\ell \rangle < \langle X_t,\ell \rangle$ for every $s<t$.
\item $\langle X_{t+1},\ell \rangle > \langle X_t,\ell \rangle$.
\item $\langle X_s,\ell \rangle > \langle X_{t+1},\ell \rangle$ for every $s>t+1$.
\end{enumerate}
\end{definition}

\begin{theorem}\label{thm:inf_reg}[\cite{sznit_zer,SznitmanTprime}]
Assume that $P$ satisfies condition $(T_\gamma)$ in direction $\ell_0$ for some $\gamma>0$. Then
\begin{enumerate}
\item\label{item:sz_inf}
with probability $1$, there exist infinitely many regeneration times. We call them
$\tau_1<\tau_2<\ldots$.
\item\label{item:sz_iid}
The ensemble 
\[
\{(\tau_{n+1}-\tau_n,X_{\tau_{n+1}}-X_{\tau_n})\}_{n\geq 1}
\]
is an i.i.d. ensemble under the annealed measure.
\item\label{item:sz_cond}
There exists $C$ such that for every $n$
\[
\annealedP(\tau_2-\tau_1=n)\leq C\annealedP(\tau_1=n),
\]
and for every $y\in\Z^d$
\[
\annealedP(X_{\tau_2}-X_{\tau_1}=y)\leq C\annealedP(X_{\tau_1}=y).
\]
\item\label{item:sz_decay}
There exists $C$ such that for every $n$,
\[
\annealedP\big(\exists_{k\leq\tau_1}\mbox{ s.t. } \|X_k\|>n\big)
\leq e^{-Cn^{\gamma}}.
\]
\end{enumerate}
\end{theorem}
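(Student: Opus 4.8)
Theorem~\ref{thm:inf_reg} collects results of \cite{sznit_zer} and \cite{SznitmanTprime}, so my plan is to recall the regeneration construction of those papers, to record that the cosmetic change in Definition~\ref{def:regtimes} (an extra record step) is invisible to the arguments, and to isolate the only two ways the environment enters: uniform ellipticity \eqref{eq:unifelliptic}, and transience of the walk in direction $\ell_0$ with a quantitative rate — the qualitative transience coming from Theorem~\ref{thm:ballis} and the rate from \eqref{eq:tgamma} itself. Fix $\ell=\ell_0$ (by Theorem~\ref{thm:ballis} one may take any $\ell$ with $\langle\veloc,\ell\rangle>0$) and write $D=\inf\{n\ge0:\langle X_n,\ell\rangle<\langle X_0,\ell\rangle\}$ for the backtracking time.

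\textbf{Parts \ref{item:sz_inf} and \ref{item:sz_iid}.} I would run the usual recursive scheme: let $S_1$ be the first time the walk sets a strict $\ell$-level record and then steps forward; if the walk restarted from $X_{S_1+1}$ never returns to level $\langle X_{S_1+1},\ell\rangle$ again — an event of the form $\{D\circ\theta_{S_1+1}=\infty\}$ — put $\tau_1=S_1$; otherwise restart from the first record after the return and iterate. By uniform ellipticity and transience, $\kappa:=\annealedP(D=\infty)>0$, so each attempt succeeds with probability at least some $c_0>0$, while transience forces new records to keep appearing; a geometric-domination (Borel--Cantelli) argument then gives finitely many attempts almost surely, which is \ref{item:sz_inf}. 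For \ref{item:sz_iid} one uses the renewal structure of this scheme: conditionally on the regeneration structure up to $\tau_n$ (the times $\tau_1,\dots,\tau_n$ and the path and environment below level $\langle X_{\tau_n+1},\ell\rangle$), the shifted walk evolves in an environment that is fresh strictly ahead of that level and, on the regeneration event, never revisits the environment behind it; hence $(\tau_{n+1}-\tau_n,\,X_{\tau_{n+1}}-X_{\tau_n})$ is independent of the past and distributed as $(\tau_1,X_{\tau_1})$ under the conditional law $\annealedP(\,\cdot\mid D=\infty)$. Since Definition~\ref{def:regtimes} only reshuffles the bookkeeping at the record step, this reasoning goes through unchanged.

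\textbf{Part \ref{item:sz_cond}.} This is now immediate: by the previous paragraph, for $n\ge1$ the pair $(\tau_2-\tau_1,\,X_{\tau_2}-X_{\tau_1})$ has the law of $(\tau_1,X_{\tau_1})$ under $\annealedP(\,\cdot\mid D=\infty)$, and for any event $A$ one has $\annealedP(A\mid D=\infty)=\annealedP(A\cap\{D=\infty\})/\kappa\le\kappa^{-1}\annealedP(A)$; taking $A=\{\tau_1=n\}$ and $A=\{X_{\tau_1}=y\}$ yields both inequalities with $C=\kappa^{-1}$.

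\textbf{Part \ref{item:sz_decay}, and the main obstacle.} Since the walk is nearest neighbour, $\|X_k\|_\infty\le k$, and since $\tau_1$ is a regeneration time, $\sup_{k\le\tau_1}\langle X_k,\ell\rangle=\langle X_{\tau_1},\ell\rangle$. Decomposing according to the first time the walk reaches distance $>n$ from the origin, the event $\{\exists k\le\tau_1:\|X_k\|>n\}$ is contained in the union of three events: (i) a forward overshoot $\langle X_{\tau_1},\ell\rangle>cn$; (ii) a backward excursion $\inf_{k\le\tau_1}\langle X_k,\ell\rangle<-cn$; (iii) a residual event on which the walk has not regenerated after travelling distance $\Omega(n)$ while its $\ell$-coordinate has stayed within $[-cn,cn]$, which forces $\tau_1=\Omega(n)$. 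Event (i) costs at most $e^{-cn}$, because reaching $\ell$-level $cn$ before regenerating forces each of at least $cn$ record-and-step-forward attempts at the intervening levels to fail, each with probability $\le 1-c_0$. Event (ii) is exactly where $(T_\gamma)$ enters: summing \eqref{eq:tgamma} over a dyadic sequence of forward levels, together with annealed translation invariance and $\tau_1<\infty$ a.s., bounds it by $\sum_{j\ge0}Ce^{-(2^jcn)^\gamma}\le C'e^{-c'n^\gamma}$. Event (iii) is the quantitative core, and I would treat it as in \cite{SznitmanTprime}: a long stretch with neither regeneration nor escape of the $\ell$-slab is decomposed, along a chain of $\Omega(n)$ intervening levels, into sub-excursions each of which is either a failed attempt (geometrically costly) or a backward dip (paid for by \eqref{eq:tgamma}), and the product of the costs is bounded by $e^{-cn^\gamma}$. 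The one genuinely delicate point, and the only place where the numerical value of $\gamma$ enters, is ensuring that the exponent $\gamma$ — rather than something smaller — survives this multi-scale iteration and the passage from the purely spatial estimate \eqref{eq:tgamma} to a bound on the walk's displacement before its first regeneration; Parts \ref{item:sz_inf}--\ref{item:sz_cond}, by contrast, are soft consequences of transience and ellipticity and are insensitive to $\gamma$.
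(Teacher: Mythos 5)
The paper does not prove Theorem~\ref{thm:inf_reg} at all: it cites \cite{sznit_zer} and \cite{SznitmanTprime}, with the single remark that the cosmetic modification of Definition~\ref{def:regtimes} does not affect the arguments. So there is no ``paper's own proof'' to compare against; the right question is whether your sketch is a faithful reconstruction of the cited arguments, and it largely is. Your treatment of parts~\ref{item:sz_inf}--\ref{item:sz_cond} is the standard Sznitman--Zerner renewal construction, and in particular the derivation of part~\ref{item:sz_cond} from the identity in law of $(\tau_2-\tau_1,X_{\tau_2}-X_{\tau_1})$ and $(\tau_1,X_{\tau_1})$ under $\annealedP(\cdot\mid D=\infty)$ together with $\annealedP(\cdot\mid D=\infty)\le\kappa^{-1}\annealedP(\cdot)$ is exactly the point. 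You also correctly isolate part~\ref{item:sz_decay} (which the paper restates separately as Lemma~\ref{lem:regrad}) as the one place where the quantitative hypothesis $(T_\gamma)$ enters and the exponent $\gamma$ has to be preserved through the iteration.

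One caveat: your bound for event (i) in part~\ref{item:sz_decay} — ``reaching $\ell$-level $cn$ before regenerating forces each of at least $cn$ record-and-step-forward attempts at the intervening levels to fail'' — is not correct as a counting statement. In the $S_1<R_1<S_2<\cdots$ scheme there is only one attempt per excursion, and a single excursion between $S_k$ and $R_k$ can carry the walk forward by many levels; there is no pairing of failed attempts with integer $\ell$-levels. The exponential cost of (i) instead comes from combining the geometric tail of $K$ (the number of attempts) with a tail bound on the forward overshoot of a single failed excursion, the latter itself requiring $(T_\gamma)$ or at least the $\annealedP(D=\infty)>0$ estimate; that is, (i) is not strictly cheaper or more elementary than (ii)--(iii). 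Since you in any case defer (iii) to \cite{SznitmanTprime}, this does not change the overall assessment — the sketch is a correct outline of the cited proofs — but the articulation of (i) would not survive being made rigorous in the form stated.
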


The main technical statement in this paper is the following proposition.

\begin{proposition}\label{prop:main}
For any $\gamma>0$, if the dimension $d$ is greater than or equal to \critdim, and $\annealedP$ satisfies condition $(T_\gamma)$ in one of the $2d$ principle directions, then for every $\alpha<d$ and every $u$ large enough,
\begin{equation}\label{eq:main}
\annealedP(\tau_1>u)\leq \exp\left(-(\log u)^\alpha\right).
\end{equation}
\end{proposition}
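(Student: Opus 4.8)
The goal is to show that the first regeneration time $\tau_1$ has a tail heavier than exponential only up to $\exp(-(\log u)^\alpha)$ for every $\alpha<d$. By Theorem~\ref{thm:inf_reg}\eqref{item:sz_cond}, it suffices to prove the corresponding bound for $\annealedP(\tau_2-\tau_1>u)$, i.e. for the regeneration-increment distribution, which is the i.i.d. building block of the walk. Heuristically, the only way the walk can delay a regeneration for a long time $u$ is to get stuck in a trap: a region where the local drifts conspire to push the walk back, forcing it to spend time exponential in the size of the trap before escaping. A trap that holds the walk for time $u$ must have diameter of order $\log u$, and since the environment is i.i.d.\ and uniformly elliptic, the probability that a trap of diameter $r$ exists near the walk's path is roughly $\exp(-c r^d)$ — there are $r^d$ sites whose drifts must be atypical. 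Substituting $r\sim\log u$ gives the claimed $\exp(-(\log u)^d)$-type bound. The whole difficulty is making "the only way to be slow is to be in a trap of the right size" precise.

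The plan is to run a multiscale / coarse-graining argument on the environment. First I would fix a large box $B$ of side length $L$ (to be chosen as a large constant times $\log u$) around the starting point and decompose the event $\{\tau_1>u\}$ according to whether the walk exits $B$ quickly in the direction $\ell_0$ or not. If the walk does exit $B$ in the good direction within a short time, then with high probability — using condition $(T_\gamma)$ and the fact that after leaving $B$ the walk behaves ballistically, via Theorem~\ref{thm:ballis} and Theorem~\ref{thm:inf_reg}\eqref{item:sz_decay} — a regeneration occurs soon after, so this contributes a negligible amount to $\{\tau_1>u\}$ once $u$ is large relative to $L$. The contribution from the walk failing to leave $B$ in the good direction for a long time must then be controlled by a statement about the \emph{quenched} exit distribution of the walk from $B$: for all but an exponentially (in $L^d$) small fraction of environments, the walk started anywhere in $B$ exits through the "forward" face with probability bounded below by a constant. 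This is where the almost-local quenched CLT (Proposition~\ref{prop:quenched}) enters — it is used to show that on a box whose environment is "typical at every scale," the walk's displacement is genuinely diffusive plus a ballistic drift, so it cannot be confined. Iterating the exit estimate over $O(u/e^{cL})$ excursions gives that after $u$ steps a regeneration has occurred except on an event of probability $\exp(-\Omega(L^d)) = \exp(-(\log u)^{\Omega(1)})$, and optimizing the constants yields any exponent $\alpha<d$.

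Concretely the steps are: (i) reduce \eqref{eq:main} to a bound on $\annealedP(\tau_2-\tau_1>u)$ using Theorem~\ref{thm:inf_reg}\eqref{item:sz_cond}; (ii) set up the box $B$ of side $\sim\log u$, define the notion of a "good" (trap-free, typical-at-all-scales) environment on $B$, and show the bad environments have $P$-probability $\exp(-\Omega((\log u)^d))$ by a union bound over sub-boxes and the i.i.d.\ structure; (iii) prove that on a good environment the quenched probability of exiting $B$ through the forward face in time $\le e^{cL}$ is $\ge$ const, invoking Proposition~\ref{prop:quenched} to rule out anomalous confinement; (iv) chain these excursions together with a strong Markov / regeneration argument so that $u$ steps force a regeneration with overwhelming quenched probability on good environments; (v) combine (ii) and (iv), using Theorem~\ref{thm:inf_reg}\eqref{item:sz_decay} to handle the rare long backtracking excursions, and collect the dominant term $\exp(-(\log u)^\alpha)$. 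The main obstacle is step~(iii): the almost-local quenched CLT only controls the walk at the relevant intermediate scale and must be combined with the trap-freeness of the environment to give a genuine, uniform-over-starting-point lower bound on the forward exit probability — this is exactly the point at which the dimension restriction $d\ge 4$ is used, since the error terms in Proposition~\ref{prop:quenched} are only summable/small enough in that range, and it is also the reason the argument degrades in $d=2,3$.
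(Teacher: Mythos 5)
Your outline captures the broad strokes — reduce to a quenched exit estimate, use a multiscale notion of ``good'' environment, invoke the quenched CLT (Proposition~\ref{prop:quenched}) as the engine, and flag $d\geq 4$ as the source of the summability of the errors — but it misses the central technical innovation of the paper and contains one genuine quantitative error that would stop the argument.

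The quantitative error is in step~(ii). You propose to declare an environment ``good'' when every sub-box of $B$ is trap-free/typical, and to bound the bad event by a union bound to get probability $\exp(-\Omega((\log u)^d))$. This does not work. The probability that a single \paral{} $\PP(z,N)$ is bad (in the sense of Proposition~\ref{prop:quenched}) is only $N^{-\xi(1)}$ — faster than any polynomial in $N$, but not remotely $\exp(-N^d)$. The scales $N_k$ used in the construction are small powers of $L\approx(\log u)^{d/\gamma}$, so the union bound over polynomially many blocks gives $P(\text{some bad block})\le L^{-\xi(1)}$, which decays like $\exp(-\mathrm{poly}(\log\log u))$ — far slower than the target $\exp(-(\log u)^\alpha)$. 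Consequently one \emph{cannot} require all blocks to be good; the paper instead allows up to $(\log u)^{\alpha+\epsilon}$ bad blocks at each scale (see~\eqref{eq:defgood}), and obtains $P(G^c)\le \frac12 e^{-(\log u)^\alpha}$ by binomial concentration. Once you accept that the walk \emph{will} encounter bad blocks, your step~(iii) — a constant, uniform-over-starting-point lower bound on forward exit — is no longer attainable (the starting point $x$ might sit inside a trap, and escape-without-return to $x$ is only polynomially small). The right target is the weaker bound \eqref{eq:toshowomeg}, namely $\quenchedP_\omega^x(T_{\partial B_{2L}(x)}<T_x)>u^{\epsilon-1}$.

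The missing idea is how to control the \emph{number of bad blocks the walk visits}, which is what actually makes the lower bound $u^{\epsilon-1}$ achievable. The paper constructs an auxiliary walk $\{Y_n\}$ (Section~\ref{sec:auxwalk}) that is conditioned to exit blocks forward and is artificially ``nudged'' at bad blocks so that its exit distribution matches the annealed one; Lemma~\ref{lem:xnyn} then gives a Radon--Nikodym type lower bound $\quenchedP_\omega(X=\upsilon)\ge\frac12\eta^{Q(\upsilon)L^{4\psi+2}}\quenchedP_\omega(Y=\upsilon)$, where $Q(\upsilon)$ counts the bad blocks visited. To keep $Q$ small, the paper introduces the random-direction event $W^{(w)}$ (Section~\ref{sec:randirev}), forces the auxiliary walk along a direction $w$, integrates over $w$, and chooses a good $w$ by an averaging/Markov argument (Lemmas~\ref{lem:hitpar} and~\ref{lem:sizebk}). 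This averaging over $w$ is what beats the enemy that a fixed trajectory could be steered through all the bad blocks. None of this is present in your outline, and without it the reduction in Section~\ref{sec:qrp} cannot be completed. As a minor additional point, your step~(i) cites Theorem~\ref{thm:inf_reg}\eqref{item:sz_cond} to pass from $\tau_1$ to $\tau_2-\tau_1$, but that estimate goes the other direction ($\annealedP(\tau_2-\tau_1=n)\le C\annealedP(\tau_1=n)$); the paper estimates $\tau_1$ directly.
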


We now show how to prove Theorem \ref{thm:main} assuming Proposition \ref{prop:main}. The rest of the paper will be dedicated to the proof of Proposition \ref{prop:main}.

\begin{proof}[Proof of Theorem \ref{thm:ballis} assuming Proposition \ref{prop:main}] Theorem \ref{thm:ballis} follows from Proposition \ref{prop:main} exactly the same way ballisticity is proved in \cite{sznit_zer} and \cite{SznitmanTprime}.
\end{proof}
\begin{proof}[Proof of Theorem \ref{thm:main} assuming Proposition \ref{prop:main}] Fix $\alpha<d$.
Assume without loss of generality that $\langle \veloc,e_1\rangle>0$. Note that in this case condition $(T_\gamma)$ holds with respect to the direction $e_1$. For simplicity, in this proof we denote $\bar x=\langle x,e_1\rangle$ for every $x\in\R^d$.
Fix $a$ and $\epsilon$ as in the statement of Theorem \ref{thm:main}.
Let $\rho=\annealedE(\tau_2-\tau_1)$ and let $\beta=\annealedE(\bar X_{\tau_2}-\bar X_{\tau_1})$.
Let $r$ be such that $r<\bar \veloc$ but $r>\bar x$ for every $x$ in the $\epsilon$-neighborhood of $a$.
Then it is sufficient to show that for all $n$ large enough,
\begin{equation}\label{eq:comptor}
\annealedP\left(\bar X_n<rn\right)
<e^{-(\log n)^\alpha}.
\end{equation}

Choose $b$ so that $r/\bar \veloc<b<1$, and let $m=nb/\rho$.

Then
\begin{equation}\label{eq:decomp1}
\annealedP(\bar X_n<rn)
\leq \annealedP(\tau_{m+1}>n) + \annealedP(\bar X_{\tau_{m+1}}<rn).
\end{equation}

Now, remembering that $\bar \veloc=\beta/\rho$,
\begin{eqnarray*}
\annealedP(\bar X_{\tau_{m+1}}<rn)
\leq \annealedP(\bar X_{\tau_{m+1}}-\bar X_{\tau_1}<rn)
=\annealedP\left(\sum_{k=1}^m \bar X_{\tau_{k+1}}-\bar X_{\tau_k} < \frac rb \rho m\right).
\end{eqnarray*}

Remembering that the sequence $\{\bar X_{\tau_{k+1}}-\bar X_{\tau_k}\}$ is i.i.d. and that 
$\bar X_{\tau_{k+1}}-\bar X_{\tau_k}$ is positive and its expectation $\beta$ is larger than $\frac rb \rho$,
we get that
\begin{equation*}
\annealedP(\bar X_{\tau_{m+1}}<rn) < e^{-Cn}
\end{equation*}
for some constant $C$.

We now estimate $\annealedP(\tau_{m+1}>n)$. Let $A$ be the event that $\tau_{k+1}-\tau_k<n^{1/8}$ for all $k=1,\ldots,m$ and that $\tau_1<n^{1/8}$.
Then 
\[
\annealedP(\tau_{m+1}>n)\leq \annealedP(A^c)+\annealedP(\tau_{m+1}>n|A).
\]

Fix $\alpha^\prime$ between $\alpha$ and $d$. Then, by part \ref{item:sz_cond} of Theorem \ref{thm:inf_reg} and Proposition \ref{prop:main}, for all $n$ large enough,
\[
\annealedP(A^c)\leq e^{-(\log n^{1/8})^{\alpha^\prime}}+mCe^{-(\log n^{1/8})^{\alpha^\prime}}
\leq \frac 12 e^{-(\log n)^\alpha}.
\]
Conditioned on $A$, the variables $\tau_{k+1}-\tau_k$ are independent, bounded by $n^{1/8}$ and their expectation is less than $\rho$.
Then by Azuma's inequality, for $n$ large enough,
\begin{eqnarray*}
\annealedP(\tau_{m+1}>n|A)
&\leq& \annealedP(\tau_{m+1}-\tau_1>n-n^{1/8}|A)\\
\leq \exp \left( -\frac{(n-n^{1/8}-\rho m)^2}{2mn^{1/4}}\right)
&\leq& \exp (-n^{3/4}).
\end{eqnarray*}

\eqref{eq:comptor} follows.

\end{proof}

%$\veloc \vartheta \vartheta \vartheta \vartheta$

\section{Preliminaries}\label{sec:prel}
We define $R_{k}(N)=\big[\exp((\log\log N)^{k+1})\big]$, and $R(N)=R_1(N)$.
Note that $R_{0}(N)=[\log N]$ and that for every $k$, every $M$ and for every large enough $N$,
\begin{equation}\label{eq:rs}
R_k^M(N)<R_{k+1}(N)<N.
\end{equation}

%We arbitrarily fix some $\xi>0$.

We let 
\begin{equation}\label{eq:speed_dir}
\vartheta:=\lim_{n\to\infty}\frac{X_n}{\|X_n\|}
\end{equation}
be the direction of the speed.
Note that the existence of $\vartheta$ follows from $T_\gamma$ even without ballisticity assumption, 
and is always non-zero.
%(note that we still haven't shown that the speed is non-zero.
%However from our assumptions it follows that the speed has a non-zero direction as in (\ref{eq:speed_dir})).
We assume without loss of generality
that $\langle \vartheta,e_1\rangle>0$. Note that,
by the results of \cite{SznitmanT} and \cite{SznitmanTprime}, $(T_\gamma)$ holds both in direction $e_1$ and in direction $\vartheta$.

\begin{definition}\label{def:parall}
For $z\in\Z^d$ and $N\in\N$, we define the {\em basic \paral} of size $N$ around $z$ to be
\begin{equation*}
\PP(z,N):=
\left\{
x\in\Z^d\ : \ 
\left|\langle x,e_1\rangle -\langle z,e_1\rangle \right|<N^2 \ ; \ 
\left\|
x-u(z,x)
\right\|_\infty<NR_5(N)
\right\}
\end{equation*}
where 
\begin{equation}\label{eq:uzx}
u(z,x):=z+\vartheta\cdot\frac{\langle x-z,e_1\rangle}{\langle \vartheta,e_1\rangle}.
\end{equation}\label{}
The {\em middle third} of $\PP(z,N)$ is defined to be
\begin{equation*}
\hPP(z,N):=
\left\{
x\in\Z^d\ : \ 
\left|\langle x,e_1\rangle -\langle z,e_1\rangle \right|<\frac{1}{3}N^2 \ ; \ 
\left\|
x-u(z,x)
\right\|_\infty<\frac{1}{3}NR_5(N)%^{1+\xi}
\right\}
\end{equation*}
\end{definition}

We let
\begin{equation*}
\partial\PP(z,N):=
\left\{
x\in\Z^d\setminus\PP(z,N)\ : \
\exists_{y\in\PP(z,N)} \mbox{ s.t. } \|x-y\|_1=1
\right\},
\end{equation*}
and
\begin{equation*}
\partial^+\PP(z,N):=
\left\{
x\in\partial\PP(z,N)\ : \ 
\langle x,e_1\rangle - \langle z,e_1\rangle = N^2
\right\}.
\end{equation*}

\begin{figure}[h]
\begin{center}
\epsfig{figure=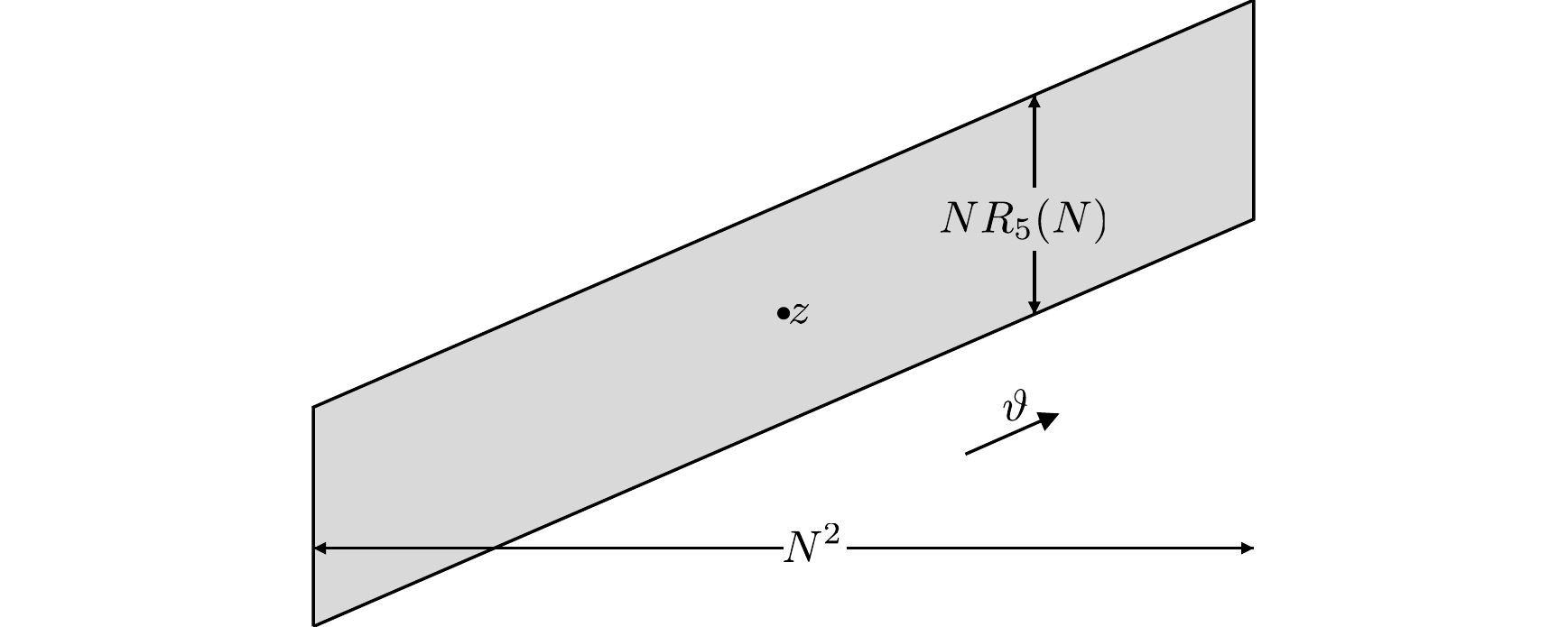,width=14cm}
\caption{\sl 
The basic block $\PP(z,N)$.
}
\label{fig:block}
\end{center}
\end{figure}

\ignore{
\begin{definition}\label{def:lattice}
The {\em basic lattice} of size $N$ is defined to be
\begin{equation*}
\LL_N:=\left[\frac{N^2}{4}\right]\Z
\times
\left(
\left[\frac{NR_5(N)}{4}\right]\Z
\right)^{d-1}.
\end{equation*}
\end{definition}

The following simple fact will be useful in what follows.
\begin{lemma}\label{lem:parandlat}
\begin{enumerate}
\item
For every $x\in\Z^d$ there exists $z\in\LL_N$ such that 
$x\in\hPP(z,N)$.
\item
$\LL_N$ can be presented as the disjoint union of $9^d$ lattices, such that
if $\LL$ is one of these lattices then for every $z_1\neq z_2$ in $\LL$,
\begin{equation*}
\PP(z_1,N)\cap\PP(z_2,N)=\emptyset.
\end{equation*} 
\end{enumerate}
\end{lemma}
}

\begin{definition}\label{def:lattice}
The {\em basic lattice} of size $N$ is defined to be
\begin{equation*}
%\LL_N:=\left[\frac{N^2}{4}\right]\Z
\LL_N:=N^2\Z
\times
\left(
\left[\frac{NR_5(N)}{4}\right]\Z
\right)^{d-1}.
\end{equation*}
\end{definition}

The following simple fact will be useful in what follows.
\begin{lemma}\label{lem:parandlat}
\begin{enumerate}
\item
For every $x\in N^2\Z\times\Z^{d-1}$ there exists $z\in\LL_N$ such that 
$x\in\hPP(z,N)$.
\item
$\LL_N$ can be presented as the disjoint union of $9^d$ lattices, such that
if $\LL$ is one of these lattices then for every $z_1\neq z_2$ in $\LL$,
\begin{equation*}
\PP(z_1,N)\cap\PP(z_2,N)=\emptyset.
\end{equation*} 
\end{enumerate}
\end{lemma}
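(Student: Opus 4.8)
The plan is to prove both parts by direct geometric computation, exploiting the fact that $\PP(z,N)$ and $\hPP(z,N)$ are ``tilted slabs'': in the $e_1$-direction they have thickness comparable to $N^2$, and in the $(d-1)$ orthogonal directions they have thickness comparable to $NR_5(N)$, the tilt being governed by the affine map $u(z,\cdot)$. The key observation I would use is that the map $x\mapsto x-u(z,x)$ is affine in $x$, vanishes when $x=z$, and depends on $x$ only through $\langle x-z,e_1\rangle$; moreover the entire picture is translation-covariant, i.e. $\PP(z,N)=z+\PP(0,N)$ and similarly for $\hPP$. So it suffices to understand $\PP(0,N)$ and to see how translating the center $z$ along the lattice $\LL_N$ moves these sets.

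For part (1), I would take $x\in N^2\Z\times\Z^{d-1}$ and produce $z\in\LL_N$ with $x\in\hPP(z,N)$. First choose the $e_1$-component of $z$: since $\langle x,e_1\rangle\in N^2\Z$ and the $e_1$-component of $\LL_N$ is exactly $N^2\Z$, I can set $\langle z,e_1\rangle=\langle x,e_1\rangle$, which makes the first constraint $|\langle x,e_1\rangle-\langle z,e_1\rangle|<\tfrac13 N^2$ trivially satisfied (the difference is $0$). With that choice $u(z,x)=z$, so the second constraint reduces to $\|x-z\|_\infty<\tfrac13 NR_5(N)$ in the coordinates orthogonal to $e_1$ (the $e_1$-coordinate of $x-z$ being zero). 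Now the orthogonal part of $\LL_N$ is $\big(\big[\tfrac{NR_5(N)}4\big]\Z\big)^{d-1}$, so I can round each of the remaining $d-1$ coordinates of $x$ to the nearest multiple of $\big[\tfrac{NR_5(N)}4\big]$; the resulting error in each coordinate is at most $\tfrac12\big[\tfrac{NR_5(N)}4\big]\le\tfrac18 NR_5(N)<\tfrac13 NR_5(N)$, for $N$ large. This gives the desired $z\in\LL_N$.

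For part (2), I would split $\LL_N$ into $9^d$ sublattices by taking residues modulo $9$ in each of the $d$ lattice directions — that is, write $\LL_N=\bigsqcup_{\mathbf r\in\{0,\dots,8\}^d}\big(\mathbf c(\mathbf r)+9\LL_N\big)$ where $\mathbf c(\mathbf r)$ is the obvious representative — and show that within any one such coarsened lattice $9\LL_N$ (translated), distinct centers give disjoint blocks. If $z_1,z_2$ lie in the same sublattice and $z_1\ne z_2$, then either their $e_1$-coordinates differ by at least $9N^2$, in which case the constraint $|\langle x,e_1\rangle-\langle z_i,e_1\rangle|<N^2$ already forces $\PP(z_1,N)\cap\PP(z_2,N)=\emptyset$; or their $e_1$-coordinates agree and some orthogonal coordinate differs by at least $9\big[\tfrac{NR_5(N)}4\big]\ge 2NR_5(N)$ (for $N$ large), in which case — since with equal $e_1$-coordinates a point $x\in\PP(z_i,N)$ satisfies $\|x-z_i\|_\infty<NR_5(N)$ in that coordinate — the two blocks are again disjoint by the triangle inequality. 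The mixed case where the $e_1$-coordinates differ by a nonzero multiple of $9N^2$ is covered by the first alternative. Hence each of the $9^d$ sublattices has the stated disjointness property.

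The only mildly delicate point — and the one I would be most careful about — is part (2) when the two centers have different $e_1$-coordinates but one still worries about the tilt $u(z,x)$: one must check that the $e_1$-separation alone already yields disjointness regardless of the orthogonal positions, which it does because the first defining inequality of $\PP(z,N)$ is a pure constraint on $\langle x,e_1\rangle$ and does not interact with $u$. Everything else is routine rounding and triangle-inequality bookkeeping, using $R_5(N)\to\infty$ to absorb the constant $\tfrac18$ versus $\tfrac13$ and $\tfrac14$ versus $\tfrac19$ discrepancies for large $N$ (consistent with the paper's running convention that inequalities need only hold for $N$ large).
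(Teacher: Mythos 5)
The paper states this lemma without proof (it is introduced as a ``simple fact''), so there is no internal argument to compare against. Your argument is correct and complete.

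One small imprecision worth tightening in the equal-$e_1$ case of part (2): you write that a point $x\in\PP(z_i,N)$ satisfies $\|x-z_i\|_\infty<NR_5(N)$ in the relevant orthogonal coordinate. That is not literally true: the defining inequality bounds $\|x-u(z_i,x)\|_\infty$, and the tilt $u(z_i,x)-z_i=\vartheta\cdot\langle x-z_i,e_1\rangle/\langle\vartheta,e_1\rangle$ can have orthogonal components of size comparable to $N^2$. The argument nevertheless goes through, for the reason your triangle-inequality sentence implicitly relies on: when $\langle z_1,e_1\rangle=\langle z_2,e_1\rangle$, the tilt offset $\delta:=u(z_i,x)-z_i$ is \emph{the same} for $i=1,2$ (it depends only on $\langle x,e_1\rangle-\langle z_i,e_1\rangle$), so from $\|(x-\delta)-z_1\|_\infty<NR_5(N)$ and $\|(x-\delta)-z_2\|_\infty<NR_5(N)$ you get $\|z_1-z_2\|_\infty<2NR_5(N)$, contradicting $\|z_1-z_2\|_\infty\geq 9\bigl[\tfrac{NR_5(N)}{4}\bigr]$. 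Stating the common-offset cancellation explicitly would make the step airtight. Everything else (the choice $\langle z,e_1\rangle=\langle x,e_1\rangle$ in part (1) so that $u(z,x)=z$, the rounding bound $\tfrac18 NR_5(N)<\tfrac13 NR_5(N)$, the mod-$9$ decomposition, and the pure $e_1$-constraint handling the unequal-$e_1$ case) is fine.
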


\section{CLT type estimates}\label{sec:clt}
In this section we derive two clt-type estimates that will be important for the proof of
the main result.
All throughout this section, we assume that our RWRE model satisfies the following requirements:
\begin{enumerate}
\item\label{item:assgamma} There exists $\gamma>0$ such that $T_\gamma$ holds.
\item\label{item:assunel} The RWRE is uniformly elliptic.
\item\label{item:assdim} The dimension $d$ is at least $4$.
\end{enumerate}

We begin with some preliminary estimates in subsections \ref{subsec:regrad} and \ref{subsec:ann_der},
and then prove CLT type estimates in subsections \ref{subsec:quenched} and \ref{subsec:sumapprox}.
\subsection{Regeneration radii}\label{subsec:regrad}
Let $X^{*(1)}:=\max\{\|X_t-X_0\|\ : \ 0\leq t\leq \tau_1\}$, and for $n>1$ let
$X^{*(n)}:=\max\{\|X_t-X_{\tau_{n-1}}\|\ : \ \tau_{n-1}\leq t\leq \tau_n\}.$

The following lemma appears in \cite{SznitmanTprime}.
\begin{lemma}\label{lem:regrad}
Let $\gamma>0$ and
assume that condition $(T_\gamma)$ holds.
Then there exist $C$ and $c$ such that for every $L$ and $n=1,2,\ldots$,
\begin{equation}\label{eq:regrad}
\annealedP(X^{*(n)}>L)\leq Ce^{-cL^{\gamma}}.
\end{equation}
for every $n$.
\end{lemma}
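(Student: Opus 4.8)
The plan is to reduce everything to the case $n=1$, which is already contained in Theorem~\ref{thm:inf_reg}, and then transfer the estimate to all $n\ge 2$ by the renewal structure of the regeneration times.

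The case $n=1$ is immediate: since the walk starts at the origin, $X^{*(1)}=\max_{0\le t\le\tau_1}\|X_t\|$, so $\{X^{*(1)}>L\}$ is precisely the event $\{\exists\,k\le\tau_1\text{ s.t. }\|X_k\|>L\}$, and part~\ref{item:sz_decay} of Theorem~\ref{thm:inf_reg} gives $\annealedP(X^{*(1)}>L)\le e^{-cL^{\gamma}}$ for a suitable $c>0$.

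For $n\ge 2$ I would invoke the path-valued form of the renewal decomposition that underlies Theorem~\ref{thm:inf_reg}. By the Sznitman--Zerner construction in \cite{sznit_zer}, each recentered path segment $(X_{\tau_{n-1}+t}-X_{\tau_{n-1}})_{0\le t\le\tau_n-\tau_{n-1}}$ with $n\ge 2$ has, under $\annealedP$, the law of $(X_t)_{0\le t\le\tau_1}$ conditioned on the standard non-backtracking event $\{D=\infty\}$ used to build the regeneration times. Now $X^{*(n)}$ is nothing but the maximal-displacement functional of the $(n-1)$st such segment --- exactly as $X_{\tau_n}-X_{\tau_{n-1}}$ and $\tau_n-\tau_{n-1}$ are its endpoint and duration functionals, to which part~\ref{item:sz_cond} of Theorem~\ref{thm:inf_reg} applies --- so
\begin{equation*}
\annealedP(X^{*(n)}>L)=\annealedP\big(X^{*(1)}>L\ \big|\ D=\infty\big)\le \frac{1}{\annealedP(D=\infty)}\,\annealedP(X^{*(1)}>L).
\end{equation*}
Finally $p:=\annealedP(D=\infty)>0$ --- this positivity is implicit already in the existence of regeneration times (part~\ref{item:sz_inf}), and also follows from the transience in direction $\ell_0$ forced by $(T_\gamma)$. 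Combining with the case $n=1$ yields $\annealedP(X^{*(n)}>L)\le p^{-1}e^{-cL^{\gamma}}$ for every $n$ and every $L$, which is the assertion with $C:=\max(1,p^{-1})$.

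The only non-routine ingredient is the structural fact used here: that the i.i.d.\ renewal decomposition of Theorem~\ref{thm:inf_reg} is valid at the level of entire path pieces, not merely their endpoints, and exhibits each piece as a conditioned copy of the initial segment. This is standard regeneration-time technology (it is what is actually proved in \cite{sznit_zer} and exploited in \cite{SznitmanTprime}); everything else is a one-line consequence of Theorem~\ref{thm:inf_reg}.
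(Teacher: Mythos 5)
Your proof is correct. The paper does not prove this lemma itself; it quotes it from \cite{SznitmanTprime}, so there is no in-paper argument to compare against. Your derivation is the natural self-contained one: the $n=1$ case is precisely part~\ref{item:sz_decay} of Theorem~\ref{thm:inf_reg}, and for $n\ge 2$ you invoke the path-valued form of the Sznitman--Zerner renewal decomposition (the $n$-th regeneration slab is a conditioned copy of the first, given the non-backtracking event $D=\infty$), which costs only the harmless factor $\annealedP(D=\infty)^{-1}<\infty$. This is exactly the mechanism behind part~\ref{item:sz_cond} of Theorem~\ref{thm:inf_reg} (the comparison of $\tau_2-\tau_1$ with $\tau_1$ and of $X_{\tau_2}-X_{\tau_1}$ with $X_{\tau_1}$), applied here to the maximum-displacement functional of a slab instead of to its endpoint or duration. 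You correctly flag the path-level (rather than endpoint-level) form of the i.i.d.\ decomposition as the one ingredient not literally contained in the statement of Theorem~\ref{thm:inf_reg}; that fact is indeed standard regeneration-time technology and is what the construction in \cite{sznit_zer} actually establishes.
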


We call $X^{*(n)}$ {\em the radius of the $n$-th regeneration.}
Recall that $R(N)=R_1(N)=\big[e^{(\log\log N)^2}\big]=\big[(\log N)^{\log\log N}\big].$
Let $A_N(\{X_n\})$ be the event that the radii of the first $N$ regenerations are all smaller than $R(N)$, namely
\begin{equation}\label{eq:defan}
A_N(\{X_n\}):=
\left\{
\forall_{1\leq n\leq N},\ X^{*(n)}<R(N)
\right\}.
\end{equation}

Then, by Lemma \ref{lem:regrad},
\begin{equation}\label{eq:probA}
\annealedP(A_N(\{X_n\}))\geq 1-CNe^{-cR(N)^\gamma}=1-N^{-\xi(1)}.
\end{equation}

\subsection{Derivatives of the annealed exit distribution}\label{subsec:ann_der}
In this subsection we show the following result on the annealed exit distribution from a \paral.
The proof is standard and straightforward using regenerations.
\begin{lemma}\label{lem:ann_der}
Assume the assumptions \ref{item:assgamma}--\ref{item:assdim} from Page \pageref{item:assgamma}.
Fix $z\in\Z^d$ and $N$, and let $z_1\in\hPP(z,N)$. Let $\{X_n\}$ be an RWRE starting at $z_1$,
and let $u:=X_{T_{\partial\PP(z,N)}}.$
Then, for large enough $N$,
\begin{enumerate}
\item\label{item:exit_from_right}
$\annealedP^{z_1}(u\notin \partial^+\PP(z,N))\leq e^{-R_5(N)}+\annealedP(A_N^c)=N^{-\xi(1)},$ where $A_N$ is as defined in \eqref{eq:defan}.
\item\label{item:zero_der}
For every $x$ in $\partial^+\PP(z,N)$,
\begin{equation*}
\annealedP^{z_1}(u=x)<CN^{{1-d}}
\end{equation*}
\item\label{item:first_der}
For every $x$ and $y$ in $\partial^+\PP(z,N)$ s.t. $\|x-y\|_1=1$, 
\begin{equation*}
\left|\annealedP^{z_1}(u=x)-\annealedP^{z_1}(u=y)\right|<CN^{{-d}}
\end{equation*}
\item\label{item:first_der_ort}
Let $\{X^\prime_n\}$ be an RWRE starting at $z_1+e_1$, and let $u^\prime:=X^\prime_{T_{\partial\PP(z,N)}}.$ Then for every $x$  in $\partial^+\PP(z,N)$,
\begin{equation*}
\left|\annealedP^{z_1}(u=x)-\annealedP^{z_1+e_1}(u^\prime=x)\right|<CN^{{-d}}
\end{equation*}
\item\label{item:second_der}
For every $x$, $y$ and $w$ in $\partial^+\PP(z,N)$ s.t. $\|x-y\|_1=1$ and $w-y=y-x$, 
\begin{equation*}
\left|\annealedP^{z_1}(u=x)+\annealedP^{z_1}(u=w)-2\annealedP^{z_1}(u=y)\right|<CN^{{-d-1}}
\end{equation*}
\item\label{item:mix_der}
For every $x$, $y$ and $w$ and $o$ in $\partial^+\PP(z,N)$ s.t. there exist $i\neq j$ such that
$x-y=w-v=e_i$ and $x-w=y-o=e_j$,
\begin{equation*}
\left|\annealedP^{z_1}(u=x)+\annealedP^{z_1}(u=o)-\annealedP^{z_1}(u=y)-\annealedP^{z_1}(u=w)\right|<CN^{{-d-1}}
\end{equation*}
\end{enumerate}
\end{lemma}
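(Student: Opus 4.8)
The plan is to reduce everything to a single mechanism: expressing the annealed exit distribution from $\PP(z,N)$ in terms of the exit distribution of the walk run up to the last regeneration time before exit, and then exploiting the fact that (conditionally on the event $A_N$ that the first $N$ regeneration radii are all below $R(N)$) the walk exits through $\partial^+\PP(z,N)$ at a point whose transverse location is, up to an error of size $O(R(N))$, a sum of $\Theta(N)$ i.i.d.\ increments. Part \ref{item:exit_from_right} is essentially a restatement of the $(T_\gamma)$ machinery: on $A_N$ the walk makes at least $\Omega(N^2/R(N)) = \xi(1)$ regenerations before its $e_1$-coordinate moves $N^2$, so by Theorem \ref{thm:inf_reg}(\ref{item:sz_decay}) and Lemma \ref{lem:regrad} the transverse displacement by the time $\langle X\cdot,e_1\rangle$ reaches $N^2$ is, with probability $1-e^{-R_5(N)}$, smaller than $\frac13 NR_5(N)$; combined with $\annealedP(A_N^c)=N^{-\xi(1)}$ from \eqref{eq:probA} this gives the claim. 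So the real content is parts \ref{item:zero_der}--\ref{item:mix_der}, which are statements that a certain random variable on $\Z^{d-1}$ has a density which is bounded by $CN^{1-d}$, has first differences bounded by $CN^{-d}$, and has second (and mixed) differences bounded by $CN^{-d-1}$ — exactly the Edgeworth-type bounds one expects for a smoothed lattice CLT at scale $N$.

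The key steps, in order. First, fix the principal direction $e_1$ and write $T=T_{\partial\PP(z,N)}$. Decompose according to the largest regeneration time $\tau_k\le T$; on $A_N$ (whose complement costs only $N^{-\xi(1)}$, absorbed into the stated error since $N^{-\xi(1)}\ll N^{-d-1}$) we have $k=\Theta(N)$, and $X_T$ differs from $X_{\tau_k}$ by at most $R(N)$. Second, condition on the regeneration structure: the increments $Y_j := X_{\tau_{j+1}}-X_{\tau_j}$ are i.i.d.\ (Theorem \ref{thm:inf_reg}(\ref{item:sz_iid})), with exponential-type transverse tails by Lemma \ref{lem:regrad}, and $\langle Y_j,e_1\rangle\ge 1$; the level at which $\langle X_\cdot,e_1\rangle$ crosses the right face determines a (random) number $k$ of summands, but one standard trick is to first condition on $k$ and on the longitudinal increments, so that the transverse coordinate is literally $\sum_{j\le k}\xi_j + O(R(N))$ for i.i.d.\ $\R^{d-1}$-valued $\xi_j$ with bounded density for $d-1\ge 2$... — here we use $d\ge 4$ so that after a fixed number of convolutions the resulting density on $\Z^{d-1}$, $d-1\ge 3$, is not only bounded but has bounded, Lipschitz first derivatives, which is what gives the second-difference estimates. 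Third, apply the local CLT with Edgeworth correction: a sum of $m=\Theta(N)$ i.i.d.\ lattice variables in $\Z^{d-1}$ with all moments of the required order finite has a mass function $p_m(x)$ satisfying $p_m(x)\le C m^{-(d-1)/2}$, $|p_m(x)-p_m(y)|\le C m^{-(d-1)/2 - 1/2}$ for $\|x-y\|_1=1$, and $|p_m(x+e)-2p_m(x)+p_m(x-e)|\le C m^{-(d-1)/2-1}$; with $m\asymp N$ and $d-1\ge 3$ this is more than enough — actually it gives $N^{-(d-1)/2}$ etc., which for the claimed bounds $N^{1-d}$, $N^{-d}$, $N^{-d-1}$ we must be careful about: the point is that $\langle x-z,e_1\rangle$ ranges over a window of length $N^2$, so the relevant number of summands is $m\asymp N^2$, giving density $\asymp N^{2\cdot(1-d)/2}=N^{1-d}$, first difference $\asymp N^{-d}$, second difference $\asymp N^{-d-1}$, exactly as stated. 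Fourth, the translation-by-$e_1$ comparison in part \ref{item:first_der_ort} follows from the same expansion: shifting the start by $e_1$ changes the longitudinal distance to the right face by $1$, hence changes $m$ by $O(1)$, and $|p_{m+1}(x)-p_m(x)|$ is controlled by a $\partial_m$-derivative of the Gaussian/Edgeworth profile, again of order $N^{-d}$; alternatively couple the two walks after their first common regeneration and use that the coupling fails with probability $O(1/N)$ while contributing a transverse discrepancy of order $R(N)$, which after convolution is smoothed to $N^{-d}$.

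The main obstacle, and the step deserving the most care, is the third one: making the local-CLT-with-derivatives estimate genuinely rigorous for the regeneration increments rather than for clean i.i.d.\ lattice variables. Three complications must be handled. (a) The number $k$ of regenerations before exit is itself random and correlated with the increments through the longitudinal coordinates; the fix is to condition on all longitudinal data first, which makes $k$ deterministic and leaves genuinely i.i.d.\ transverse increments — but one must check that the conditional law of the transverse increment given its own longitudinal component still has enough smoothness (boundedness of a convolution power's density), which uses uniform ellipticity to put a non-degenerate absolutely-continuous-on-the-lattice component into $\xi_j$. (b) The increments have only stretched-exponential tails ($e^{-cL^\gamma}$), not bounded support, so to get Edgeworth expansions with explicit $N^{-1/2}$-type corrections one truncates the increments at level $R(N)$ (consistent with working on $A_N$) and controls the truncation error, which is $N^{-\xi(1)}$ and hence negligible against $N^{-d-1}$. (c) The $O(R(N))$ gap between $X_T$ and $X_{\tau_k}$ must be absorbed: since $R(N)=N^{o(1)}$ and we are convolving $\Theta(N^2)$ times, a deterministic (or $A_N$-bounded) shift of size $R(N)$ changes $p_m$, after differencing, by at most $R(N)\cdot N^{-d-1} = N^{-d-1+o(1)}$, which is fine for parts \ref{item:zero_der}, \ref{item:first_der}, \ref{item:first_der_ort} but is borderline for the second-difference parts \ref{item:second_der}, \ref{item:mix_der} — there one instead notes that the shift is common to $x$, $y$, $w$ (resp.\ $x,y,w,o$) in the second difference, so it cancels to leading order and contributes only a third-difference-type error of order $R(N)\cdot N^{-d-2}\ll N^{-d-1}$. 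Once these three points are dispatched, assembling parts \ref{item:zero_der}--\ref{item:mix_der} is routine bookkeeping.
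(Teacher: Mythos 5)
Your high-level plan is the same as the paper's---express the annealed exit law through regeneration increments, apply a local CLT with first- and second-difference bounds, and handle the overshoot past the last pre-exit regeneration---and your scaling count ($m\asymp N^2$ summands giving density $\asymp N^{1-d}$, first difference $\asymp N^{-d}$, second difference $\asymp N^{-d-1}$) is correct. But the step you yourself flag as ``deserving the most care'' has a real gap. You propose to condition on the longitudinal increments $\langle Y_j,e_1\rangle$ and on $k$, so that the transverse location becomes a sum of conditionally independent $(d-1)$-dimensional variables $\xi_j$, and you claim uniform ellipticity gives each $\xi_j$ a non-degenerate lattice component. That is false: by Definition~\ref{def:regtimes}, $\langle Y_j,e_1\rangle=1$ forces $Y_j=e_1$ exactly (the regeneration happened in a single step), so the conditional law of $\xi_j$ given $\langle Y_j,e_1\rangle=1$ is a point mass at $0$. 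Such increments occur with constant positive probability, so under your conditioning a constant fraction of the summands are deterministic, and the Fourier argument you sketch does not go through without an additional LLN-plus-uniformity argument that a $\Theta(m)$-sized subset of the increments is non-degenerate. That is precisely where the content of the local CLT lives; it is not dispatched by ellipticity alone.

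The paper organizes the conditioning differently so that this issue never arises. It conditions only on the one-dimensional event $\hat{B}(l)$ that a regeneration occurs at level $l$ (and none again strictly below $N^2$), uses $\annealedP^{z_1}(Z_l=x\mid B(l))\,\annealedP^{z_1}(B(l))=\sum_k\annealedP^{z_1}(X_{\tau_k}=x)$ for $x\in H_l$, applies the $d$-dimensional Fourier estimate of Claim~\ref{claim:sum_der} to the genuinely i.i.d.\ sums $X_{\tau_k}$, and then sums over $k$: the $\Theta(\sqrt{M})$ values of $k$ near $M=l/\annealedE\langle X_{\tau_2}-X_{\tau_1},e_1\rangle$ contribute the main term and the tails are killed by a $2d$th-moment bound from Lemma~\ref{lem:regrad}. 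This keeps the i.i.d.\ structure intact throughout. Two smaller points are also worth noting. The overshoot is not absorbed by a convolution-smoothing or shift-cancellation argument as you suggest: in the paper's decomposition of $\annealedP^{z_1}(u=x)$ it cancels exactly by shift-invariance of $\annealedP^{z_1}(u=x\mid\hat{B}(l),X_{T_l}=w)$ in $w$, so the borderline $R(N)$ loss you worry about in your point (c) never appears. And for Part~\ref{item:first_der_ort}, the paper simply re-applies \eqref{eq:first_der_fr}, which is valid because Claim~\ref{claim:sum_der} explicitly allows $Y_1$ to have a different law from $Y_2,Y_3,\ldots$; your first alternative (an $O(1)$ change in $m$) is in that spirit, whereas your second (a coupling ``failing with probability $O(1/N)$'') is not correct and would not produce the $N^{-d}$ bound.
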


\begin{proof}
%Part (\ref{item:exit_from_right}) follows immediately from the presentation of the walk as a composition of independent %regenerations, Lemma \ref{lem:regrad} and Azuma's inequality.
To prove part (\ref{item:exit_from_right}), we need to show that 
\[
\annealedP^{z_1}(u\notin \partial^+\PP(z,N)|A_N)\leq e^{-R_5(N)}
\]
To this end, note that conditioned on $A_N$, the regenerations are independent and are all bounded by $R(N)$.
For $k=1,\ldots,N$ we now estimate the difference between $\annealedE^{z_1}(X_{\tau_k}|A_N)$ and $\annealedE^{z_1}(X_{\tau_k})$. Let $\Xi_k=X_{\tau_k}-X_{\tau_{k-1}}$ with $\Xi_1=X_{\tau_1}$.
Remember that $\annealedP^{z_1}(A_N^c)=N^{-\xi(1)}$. For a given $k$,
\begin{eqnarray*}
\annealedE^{z_1}\big(\|\Xi_k\|\cdot {\bf 1}_{A_N^c}\big)
&\leq&
\annealedE^{z_1}\big(\|\Xi_k\|\cdot {\bf 1}_{\exists_{j\neq k}X^{\star(j)}\geq R(N)}\big)
+ \annealedE^{z_1}\big(\|\Xi_k\|\cdot {\bf 1}_{X^{\star(k)}\geq R(N)}\big)\\
&\leq&
\annealedP^{z_1}(A_N^c)\annealedE^{z_1}(\|\Xi_k\|)
+\sum_{h>R(N)}h\annealedP^{z_1}(X^{\star(k)}=h)\\
&\leq&
\annealedP^{z_1}(A_N^c)\annealedE^{z_1}(\|\Xi_k\|)
+\sum_{h>R(N)}he^{-ch^{\gamma}}=N^{-\xi(1)}.
\end{eqnarray*}
Therefore, for every $k=1,\ldots,N$,
\begin{eqnarray*}
&&\|\annealedE^{z_1}(\Xi_k | A_N)-\annealedE^{z_1}(\Xi_k)\|\\
\leq \|\annealedE^{z_1}(\Xi_k \cdot {\bf 1}_{A_N})-\annealedE^{z_1}(\Xi_k)\|
&+& \|\annealedE^{z_1}(\Xi_k | A_N)-\annealedE^{z_1}(\Xi_k \cdot{\bf 1}_{A_N})\|\\
\leq \annealedE^{z_1}\big(\|\Xi_k\|\cdot {\bf 1}_{A_N^c}\big) 
&+& \annealedE^{z_1}\big(\|\Xi_k\| | A_N\big)\annealedP^{z_1}(A_N^c)
=N^{-\xi(1)}.
\end{eqnarray*}
Therefore
$\|\annealedE^{z_1}(X_{\tau_k} | A_N)-\annealedE^{z_1}(X_{\tau_k})\|=N^{-\xi(1)}$, again for every $k=1,\ldots,N$.

Now, using Azuma's inequality,
\begin{eqnarray*}
\annealedP^{z_1}(u\notin \partial^+\PP(z,N)|A_N)
&\leq& \sum_{k=1}^{N^2}\annealedP^{z_1}\left[\|X_{\tau_k}-\annealedE^{z_1}(X_{\tau_k}|A_N)\|_\infty>\frac{1}{4}NR_5(N)\right]\\
&\leq& d\sum_{k=1}^{N^2}\exp\left(\frac{-N^2R_5^2(N)/16}{2kR^2(N)}\right)\leq e^{-R_5(N)},
\end{eqnarray*}
for $N$ large enough.

To prove Parts (\ref{item:zero_der}), (\ref{item:first_der}), (\ref{item:first_der_ort}), (\ref{item:second_der}) and (\ref{item:mix_der}) we need the following standard claim.
\begin{claim}\label{claim:sum_der}
Let $\{Y_i\}_{i=1}^\infty$ be $d$-dimensional independent random variables, with joint distribution $\prob$, such that $\{Y_n\}_{n\geq 2}$ are identically distributed and such that there exists $v\in\Z^d$ such that $\prob(Y_2=v)>0$ and $\prob(Y_2=v+e_i)>0$ for $i=1,\ldots,d$. Let 
$S_n=\sum_{i=1}^nY_i$. Then there exists $C<\infty$ which is determined by the distributions of $Y_1$ and $Y_2$ such that for every $n$ and every $x$, $y$ and $w$ s.t. $\|x-y\|_1=1$ and $w-y=y-x$,
\begin{equation}\label{eq:zero_der_fr}
\prob(S_n=x)\leq Cn^{\frac{-d}{2}},
\end{equation}
\begin{equation}\label{eq:first_der_fr}
\left|\prob(S_n=x)-\prob(S_n=y)\right|\leq Cn^{\frac{-1-d}{2}}
\end{equation}
and
\begin{equation}\label{eq:second_der_fr}
\left|\prob(S_n=x)+\prob(S_n=w)-2\prob(S_n=y)\right|\leq Cn^{\frac{-2-d}{2}}.
\end{equation}
In addition, for every $x$, $y$ and $w$ and $o$  s.t. there exist $i\neq j$ such that
$x-y=w-v=e_i$ and $x-w=y-o=e_j$,
\begin{equation}\label{eq:mix_der_fr}
\left|\prob(S_n=x)+\prob(S_n=o)-\prob(S_n=y)-\prob(S_n=w)\right|<Cn^{\frac{-2-d}{2}}
\end{equation}
\end{claim}
We now use Claim \ref{claim:sum_der}. We will prove it later.
For $k$ and $l$ in $\N$, we let $B(l,k)$ be the event that $\langle X_{\tau_k},e_1\rangle=l$, we let
$B(l)=\cup_k B(l,k)$ and we let $\hat{B}(l)$ be the event 
\[
\hat{B}(l):=B(l)\cap\bigcap_{j=l+1}^{N^2}B^c(j).
\]
In addition, we define $Z_l=X_{T_l}$.
Then, for $x$ and $y$ s.t. $\|x-y\|_1=1$ and
$\langle x,e_1\rangle = \langle y,e_1\rangle = l$,
\begin{eqnarray*}
& &\annealedP^{z_1}(Z _l=x|\hat{B}(l))-\annealedP^{z_1}(Z_l=y|\hat{B}(l))\\
&=&\annealedP^{z_1}(Z _l=x|B(l))-\annealedP^{z_1}(Z_l=y|B(l))\\
&=&\frac{1}{\annealedP^{z_1}(B(l))}\sum_{k}\annealedP^{z_1}(X_{\tau_{k}}=x)-\annealedP^{z_1}(X_{\tau_{k}}=y).\\
%&=&\frac{1}{\annealedP^{z_1}(B(l))}\sum_{k=M-\sqrt{l}}^{M+\sqrt{l}}\annealedP^{z_1}(X_{\tau_{k}}=x)-\annealedP^{z_1}(X_{\tau_{k}}=y)\\
%&+&\frac{1}{\annealedP^{z_1}(B(l))}\sum_{j=0}^\infty\sum_{k=M+2^j \sqrt{l}}^{M+2^{j+1}\sqrt{l}-1}\annealedP^{z_1}(X_{\tau_{k}}=x)-\annealedP^{z_1}(X_{\tau_{k}}=y)\\
%&+&\frac{1}{\annealedP^{z_1}(B(l))}\sum_{j=0}^\infty\sum_{k=M-2^{j+1} \sqrt{l}}^{M-2^{j}\sqrt{l}-1}
%{\bf 1}_{k>0}\cdot\left[\annealedP^{z_1}(X_{\tau_{k}}=x)-\annealedP^{z_1}(X_{\tau_{k}}=y)\right]
%\sum_{k<M}\annealedP^{z_1}(X_{\tau_{k}}=x)-\annealedP^{z_1}(X_{\tau_{k}}=y)
%+\sum_{k\geq M}\annealedP^{z_1}(X_{\tau_{k}}=x)-\annealedP^{z_1}(X_{\tau_{k}}=y)
%\right).
\end{eqnarray*}
%where
Let
\[
M=\frac{l}{\annealedE^{z_1}\left[\langle X_{\tau_2}-X_{\tau_1}, e_1\rangle\right]}.
\]

For $x$ and $y$ satisfying $\langle x,e_1\rangle = \langle y,e_1\rangle = l$ and $\|x-y\|=1$, and for $k\in\N$, 
we now estimate
%using 
%(\ref{eq:first_der_fr}) we get that
\[
\left|
\annealedP^{z_1}(X_{\tau_{k}}=x)-\annealedP^{z_1}(X_{\tau_{k}}=y)
\right|.
%\leq
%\min\big(k^{\frac{-1-d}{2}}, \annealedP^{z_1}(\langle X_{\tau_k},e_1\rangle=l)\big)
\]

We consider two different cases: $k\geq M$ and $k<M$. Assume first that $k\geq M$.
Then either $\langle X_{\tau_{[k/2]}},e_1\rangle\leq l/2$ or
$\langle X_{\tau_k}-X_{\tau_{[k/2]}},e_1\rangle\leq l/2$. So
\begin{eqnarray}
\nonumber
&&\left|
\annealedP^{z_1}(X_{\tau_{k}}=x)-\annealedP^{z_1}(X_{\tau_{k}}=y)
\right|\\
\nonumber
&\leq&  \annealedP^{z_1}(X_{\tau_{k}}=x\ ;\ \langle X_{\tau_{[k/2]}},e_1\rangle\leq l/2 )\\
\label{eq:tk2}
&&\ -\ \annealedP^{z_1}(X_{\tau_{k}}=y\ ;\ \langle X_{\tau_{[k/2]}},e_1\rangle\leq l/2 )\\
\nonumber
&+& \annealedP^{z_1}(X_{\tau_{k}}=x ; \langle X_{\tau_k}-X_{\tau_{[k/2]}},e_1\rangle\leq l/2)\\
\label{eq:tk-k2}
&&\ -\ \annealedP^{z_1}(X_{\tau_{k}}=y ; \langle X_{\tau_k}-X_{\tau_{[k/2]}},e_1\rangle\leq l/2 ).
\ \ \ \ \ \ 
\end{eqnarray}
We now estimate \eqref{eq:tk2}. \eqref{eq:tk-k2} is estimated the same way.
\begin{eqnarray}
\nonumber
&&
\annealedP^{z_1}(X_{\tau_{k}}=x\ ;\ \langle X_{\tau_{[k/2]}},e_1\rangle\leq l/2 )
-\annealedP^{z_1}(X_{\tau_{k}}=y\ ;\ \langle X_{\tau_{[k/2]}},e_1\rangle\leq l/2 )\\
\nonumber
&=&
\sum_{w:\langle w,e_1\rangle\leq l/2}
\annealedP^{z_1}(X_{\tau_{[k/2]}}=w)
\big[
\annealedP^{z_1}(X_{\tau_{k}}=x|X_{\tau_{[\frac{k}{2}]}}=w)-
\annealedP^{z_1}(X_{\tau_{k}}=y|X_{\tau_{[\frac{k}{2}]}}=w)
\big]\\
\nonumber
&\leq&
Ck^{\frac{-1-d}{2}} \sum_{w:\langle w,e_1\rangle\leq l/2}
\annealedP^{z_1}(X_{\tau_{[k/2]}}=w)\\
\label{eq:kd12}
&=&
Ck^{\frac{-1-d}{2}} \annealedP^{z_1}(\langle X_{\tau_{[k/2]}},e_1\rangle \leq l/2)
\end{eqnarray}

Where the inequality follows from
%Claim \ref{claim:sum_der},
\eqref{eq:first_der_fr}.

With a similar calculation for $k < M$, we get that
\begin{eqnarray}
\nonumber
&&
\left|\annealedP^{z_1}(Z _l=x|\hat{B}(l))-\annealedP^{z_1}(Z_l=y|\hat{B}(l))\right|\\
\nonumber
&\leq&
C\sum_{k=1}^M k^{\frac{-1-d}{2}}\annealedP^{z_1}\big(\langle X_{\tau_{[k/2]}},e_1\rangle\geq l/2\big)\\
\label{eq:arriveatl1}
&+&
C\sum_{k=M}^\infty k^{\frac{-1-d}{2}}\annealedP^{z_1}\big(\langle X_{\tau_{[k/2]}},e_1\rangle\leq l/2\big).
%&\leq&
%l^{\frac{-d}{2}},
\end{eqnarray}
{}From Lemma \ref{lem:regrad} we learn that $X^{\star(n)}$ has (in particular) a finite $2d$ moment, and from standard estimates on sums of i.i.d. variables (namely that the $2d$ moment of the sum of $k$ i.i.d. mean zero variables grows like $O(k^d)$), we get that for $k<M$
\[
\annealedP^{z_1}\big(\langle X_{\tau_{[k/2]}},e_1\rangle \geq l/2\big)\leq 
\min\left[1,C\frac{k^d}{(M-k)^{2d}}\right],
\]
and for $k\geq M$
\[
\annealedP^{z_1}\big(\langle X_{\tau_{[k/2]}},e_1\rangle\leq l/2\big)\leq
\min\left[1,C\frac{k^d}{(k-M)^{2d}}\right]=\min\left[1,C\frac{k^d}{(M-k)^{2d}}\right]
.\]

Combining this with \eqref{eq:arriveatl1} we get that
\begin{eqnarray}
\nonumber
&&\left|\annealedP^{z_1}(Z _l=x|\hat{B}(l))-\annealedP^{z_1}(Z_l=y|\hat{B}(l))\right|\\
\nonumber
&\leq& C\sum_{k=1}^M k^{\frac{-1-d}{2}}\min\left[1,C\frac{k^d}{(M-k)^{2d}}\right]\\
\nonumber
&+& C\sum_{k=M}^\infty k^{\frac{-1-d}{2}}\min\left[1,C\frac{k^d}{(k-M)^{2d}}\right]\\
\label{eq:arriveatlehdhzi}
&\leq& C\sum_{k=1}^{[M/2]} k^{\frac{-1-d}{2}}\min\left[1,C\frac{k^d}{(M-k)^{2d}}\right]\\
\label{eq:arriveatlhzisrs}
&+& C\sum_{k=[M/2]+1}^{M-[M^{1/2}]} k^{\frac{-1-d}{2}}\min\left[1,C\frac{k^d}{(M-k)^{2d}}\right]\\
\label{eq:arriveatlsrssrs}
&+& C\sum_{k=M-[M^{1/2}]+1}^{M+[M^{1/2}]} k^{\frac{-1-d}{2}}\min\left[1,C\frac{k^d}{(M-k)^{2d}}\right]\\
\label{eq:arriveatlsrsinf}
&+& C\sum_{k=M+[M^{1/2}]+1}^\infty k^{\frac{-1-d}{2}}\min\left[1,C\frac{k^d}{(k-M)^{2d}}\right]\\
&\leq& Cl^{-\frac{d}{2}}.\label{eq:arriveatl}
\end{eqnarray}

To see the inequality \eqref{eq:arriveatl}, we bound each of the four sums \eqref{eq:arriveatlehdhzi}--\eqref{eq:arriveatlsrsinf} by $Cl^{-\frac{d}{2}}$.
For the expression in \eqref{eq:arriveatlehdhzi}, we note that we have $O(l)$ summands, each of which is bounded by $O(l^{-d})$, so the sum is bounded by $O(l^{1-d})$.
The expression in \eqref{eq:arriveatlhzisrs} is (up to a constant) bounded by
\[
\int_{M/2}^{M-\sqrt{M}}x^{\frac{d-1}{2}}(M-x)^{-2d}dx
\leq M^{\frac{d-1}2}\int_{\sqrt{M}}^{M/2}y^{-2d}dy
\leq CM^{\frac{d-1}2}\sqrt{M}^{1-2d}
=O(l^{-\frac d2}).
\]
The expression in \eqref{eq:arriveatlsrssrs} contains $O(\sqrt{l})$ summands, each of which is $O(l^{\frac{-1-d}{2}})$,
so the sum is $O(l^{-\frac d2})$.
The expression in \eqref{eq:arriveatlsrsinf} is taken care of the similarly to the one in \eqref{eq:arriveatlhzisrs} --- it is bounded by a constant times the integral
\begin{eqnarray*}
\int_{M+\sqrt{M}}^\infty x^{\frac{d-1}{2}}(x-M)^{-2d}dx
&=&
\int_{M+\sqrt{M}}^{2M} x^{\frac{d-1}{2}}(x-M)^{-2d}dx
+\int_{2M}^\infty x^{\frac{d-1}{2}}(x-M)^{-2d}dx\\
&\leq& CM^{\frac{d-1}{2}}\int_{\sqrt{M}}^M y^{-2d}dy
+C\int_M^\infty y^{\frac{d-1}2-2d}dy
=O(l^{-\frac d2}),
\end{eqnarray*}
where we substituted, for both integrals, $y=x-M$.

Let $H_l=\{w:\langle w,e_1 \rangle=l\}$.
Now,
\begin{eqnarray*}
\annealedP^{z_1}(u=x)=\sum_{l\leq N^2}\annealedP^{z_1}(\hat{B}(l))
\sum_{w\in H_l}\annealedP^{z_1}\left(X_{T_l}=w\ \left|\ \hat{B}(l)\right.\right)
\annealedP^{z_1}\left(u=x\ \left|\ \hat{B}(l)\ ;\ X_{T_l}=w\right.\right)
\end{eqnarray*}
and, using shift invariance and the fact that we condition on the occurrence of a regeneration at $l$, for $y=x+e_j$ ($j\neq 1$),
\begin{eqnarray*}
\annealedP^{z_1}(u=y)=\sum_{l\leq N^2}\annealedP^{z_1}(\hat{B}(l))
\sum_{w\in H_l}\annealedP^{z_1}\left(X_{T_l}=w+e_j\ \left|\ \hat{B}(l)\right.\right)
\annealedP^{z_1}\left(u=x\ \left|\ \hat{B}(l)\ ;\ X_{T_l}=w\right.\right)
\end{eqnarray*}
Noting that due to shift invariance,%symmetry,
\begin{eqnarray*}
\sum_{w\in H_l}\annealedP^{z_1}\left(u=x\ \left|\ \hat{B}(l)\ ;\ X_{T_l}=w\right.\right)=1,
\end{eqnarray*}

we get that 
\begin{eqnarray*}
\nonumber
\left|\annealedP^{z_1}(u=x)-\annealedP^{z_1}(u=y)\right|\\
%\label{eq:lastreg}
\leq
\sum_{l\leq N^2}\annealedP^{z_1}(\hat{B}(l))\max_{w\in H_l}
\left|
\annealedP^{z_1}\left(X_{T_l}=w+e_j\ \left|\ \hat{B}(l)\right.\right)
-\annealedP^{z_1}\left(X_{T_l}=w\ \left|\ \hat{B}(l)\right.\right)
\right|
\end{eqnarray*}

and breaking the last sum %in (\ref{eq:lastreg}) 
to $l\leq\frac{N^2}{2}$ and $l>\frac{N^2}{2}$, then
controlling the former using Lemma \ref{lem:regrad} and the latter using \eqref{eq:arriveatl},
we get part (\ref{item:first_der}) of the lemma. Part (\ref{item:zero_der})
% (\ref{item:second_der}) and (\ref{item:mix_der})
follows from the 
exact same calculations using \eqref{eq:zero_der_fr}. 
Part (\ref{item:first_der_ort}) follows from \eqref{eq:first_der_fr} similarly.
To see Parts (\ref{item:second_der}) and (\ref{item:mix_der}),
we run the same calculation with one main difference:
When we do the calculation equivalent to \eqref{eq:kd12}, instead of \eqref{eq:first_der_fr}, we use
\eqref{eq:second_der_fr} (for Part (\ref{item:second_der})) or \eqref{eq:mix_der_fr}
(for Part (\ref{item:mix_der})).
We then get a factor of $k^{\frac{-2-d}{2}}$ instead of $k^{\frac{-1-d}{2}}$ (this is because
the inequalities \eqref{eq:second_der_fr} and \eqref{eq:mix_der_fr} give this extra factor of $\sqrt{k}$), and we
continue to carry this factor of $\sqrt{k}$ all the way through.

%One more difference is that in the part of the calculation that is equivalent to \eqref{eq:arriveatl} we need to consider a $4d%$ moment rather than a $2d$ moment.

%\annealedP \annealedP \annealedP \annealedP \annealedP 
\end{proof}

\begin{proof}[Proof of Claim \ref{claim:sum_der}]
%Let $v_1,\ldots,v_d$
%The claim follows from Fourier 
%Let $S^\prime_n=\sum_{i=2}^nX_i$. We first prove the statements for $S^\prime_n$, and then we %transfer the result to $S_n$.

The proof is a standard Fourier calculation, and therefore we do not give complete details.
By the assumptions, the characteristic function $\chi$ of $Y_2$ has period $2\pi$ in every coordinate. In addition, since $\prob(Y_2=v)>0$ and $\prob(Y_2=v+e_i)>0$ for $i=1,\ldots,d$, we get that there exists $D>0$ and $\delta>0$ such that
\begin{enumerate}
\item
$|\chi(x)|<1-D$ for every $x\in[-\pi,\pi]^d$ such that $\|x\|\geq \delta$, and
\item
$|\chi(x)|<1-D\|x\|^2$ for every $x$ such that $\|x\|<\delta$.
\end{enumerate}

Let $S^\prime=\sum_{k=2}^nY_k$ and let $A=\{x:\|x\|<\delta\}$. Now, to see \ref{eq:zero_der_fr}, we note that
\begin{eqnarray*}
\prob(S^\prime=z) &=& \frac{1}{(2\pi)^d}\int_{[-\pi,\pi]^d}e^{-i\langle x,z\rangle}\chi^{n-1}(x)dx\\
\leq \int_{[-\pi,\pi]^d}|\chi(x)|^{n-1}dx &\leq& (1-D)^{n-1}+\int_{A}(1-D\|x\|^2)^{n-1}dx \leq Cn^{-d/2},
\end{eqnarray*}
and convolution with the distribution of $Y_1$ only decreases the supremum.

To see \eqref{eq:first_der_fr}, we see that
\begin{eqnarray*}
|\prob(S^\prime=z)-\prob(S^\prime=z+e_i)|
&=& \frac{1}{(2\pi)^d}\left|\int_{[-\pi,\pi]^d}\big(e^{-i\langle x,z\rangle}-e^{-i\langle x,z+e_i\rangle}\big)
\chi(x)^{n-1}dx\right|\\
&\leq& \frac{1}{(2\pi)^d}\int_{[-\pi,\pi]^d}\big|e^{-i\langle x,z\rangle}-e^{-i\langle x,z+e_i\rangle}\big|
|\chi(x)|^{n-1}dx.
\end{eqnarray*}
Note that $\big|e^{-i\langle x,z\rangle}-e^{-i\langle x,z+e_i\rangle}\big|\leq|\langle x,e_i\rangle|$.
% Let $B=[-1/\sqrt{n},1/\sqrt{n}]^d$. Then on $A-B$, we have that 
%\[
%\chi^{n-1}(x)\langle x,e_i\rangle\leq \|x\|(1-D\|x\|^2)^{n-1}\leq
%%n^{\frac{-d-1}{2}} \left(\frac{x}{\sqrt{n}}\right)^d
%C\|x\|\exp\big(-D(\|x\|\sqrt{n})^2\big) .
%\]

Therefore,
%\begin{eqnarray*}
%&&|\prob(S^\prime=z)-\prob(S^\prime=z+e_i)|
%\leq \frac{1}{(2\pi)^d}\int_{[-\pi,\pi]^d}\left|\langle x,e_i\rangle\right|
%|\chi(x)|^{n-1}dx\\
%&\leq& (1-D)^{n-1}
% + \frac{1}{(2\pi)^d}\left[\int_B\left|\langle x,e_i\rangle\right|dx
% + C \int_{A-B}
% \|x\|\exp\big(-D(\|x\|\sqrt{n})^2\big) \right]\\
%%\left|\chi(x)^{n-1}\right|dx.
%&\leq& Cn^{\frac{-d-1}{2}} + C\int_{n^{-1/2}}^{\delta}y^{d-1}\cdot y \cdot \exp\big(-D(y\sqrt{n})^2\big)dy\\
%&=&Cn^{\frac{-d-1}{2}}  + C\int_1^{\delta\sqrt{n}}\left[\frac{z}{\sqrt{n}}\right]^d e^{-Dz^2}\frac{dz}{\sqrt{n}}
%\leq Cn^{\frac{-d-1}{2}}.
%\end{eqnarray*}

\begin{eqnarray*}
\left|
\prob(S^\prime=z+e_1)-\prob(S^\prime=z)
\right|
&\leq&
\frac{1}{(2\pi)^d}\int_{[-\pi.\pi]^d}|\chi(x)|^{n-1}\langle x,e_1\rangle dx\\
&\leq&
 (1-D)^{n-1}+C\int_Ae^{-Dn\|x\|^2}\langle x,e_1\rangle dx,
\end{eqnarray*}
and substituting $y=x\sqrt{n}$ we get $dx=n^{-d/2}dy$ and $\langle x,e_1\rangle=n^{-1/2}\langle y,e_1\rangle$,
so the last integral is bounded by
\begin{eqnarray*}
&&Cn^{-d/2-1/2}\int_{\sqrt{n}A}e^{-D\|y\|^2}\langle y,e_1\rangle dy\\
&\leq&
Cn^{-d/2-1/2}\int_{\R^d}e^{-D\|y\|^2}\langle y,e_1\rangle dy=O\left(n^{\frac{-d-1}{2}}\right).
\end{eqnarray*}

%\eqref{eq:second_der_fr} and \eqref{eq:mix_der_fr} follow from similar calculations.

To see \eqref{eq:second_der_fr}, we note that
$\left|e^{-i\langle x,z+e_1\rangle}+e^{-i\langle x,z-e_1\rangle}-2e^{-i\langle x,z\rangle}\right|\leq \langle x,e_1\rangle^2$
for every $z\in\Z^d$. Then,
\begin{eqnarray*}
&&\left|
\prob(S^\prime=z+e_1)+\prob(S^\prime=z-e_1)-2\prob(S^\prime=z)
\right|\\
&\leq&
\frac{1}{(2\pi)^d}\int_{[-\pi.\pi]^d}|\chi(x)|^{n-1}\langle x,e_1\rangle^2dx\\
&\leq&
 (1-D)^{n-1}+C\int_Ae^{-Dn\|x\|^2}\langle x,e_1\rangle^2dx,
\end{eqnarray*}
and again substituting $y=x\sqrt{n}$, we still get that $dx=n^{-d/2}dy$ but this time $\langle x,e_1\rangle^2=n^{-1}\langle y,e_1\rangle^2$.
Therefore, this time the last integral is bounded by
\begin{eqnarray*}
&&Cn^{-d/2-1}\int_{\sqrt{n}A}e^{-D\|y\|^2}\langle y,e_1\rangle^2dy\\
&\leq&
Cn^{-d/2-1}\int_{\R^d}e^{-D\|y\|^2}\langle y,e_1\rangle^2dy=O\left(n^{\frac{-d-2}{2}}\right).
\end{eqnarray*}

The way to see  \eqref{eq:mix_der_fr} is similar -- this time what we need to notice is that
$\left|
e^{-i\langle x,z\rangle}-e^{-i\langle x,z+e_1\rangle}-e^{-i\langle x,z+e_2\rangle}+e^{-i\langle x,z+e_1+e_2\rangle}
\right|
\leq
\langle x,e_1\rangle\cdot\langle x,e_2\rangle$
and that when we substitute $y=\sqrt{n}x$ we get 
$\langle x,e_1\rangle\cdot\langle x,e_2\rangle=n^{-1}\langle y,e_1\rangle\cdot\langle y,e_2\rangle$.
\end{proof}

\begin{lemma}\label{lem:lbound}
Assume the assumptions \ref{item:assgamma}--\ref{item:assdim} from Page \pageref{item:assgamma}.
Let $\{X_n\}$ be a RWRE starting at the origin.
Let $\sigma^2$ be the (annealed) covariance matrix of $X_{\tau_2}-X_{\tau_1}$, and let $U$ be the expectation of
$X_{\tau_2}-X_{\tau_1}$. Let $\Sigma$ be the inverse matrix of $\sigma^2$. Let $\bar U=\annealedE(X_{T_{\partial\PP(0,N)}})$. Fix $a>0$. There exists a constant $c$ such that
for every $x\in\partial^+\PP(0,N)$, if
\[
(x-\bar U)^{\tiny T}\Sigma(x-\bar U)<a\cdot\frac{N^2}{\langle U,e_1\rangle},
\]
then
\[
\annealedP\big(X_{T_{\partial\PP(0,N)}}=x\big)>
%\frac{(2\pi\beta)^{-\frac 12}}{2}
cN^{1-d}e^{-3a}.
\]
\end{lemma}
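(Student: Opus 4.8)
\emph{Overview.} The plan is to bound $\annealedP(X_{T_{\partial\PP(0,N)}}=x)$ below by a disjoint sum, over the $\asymp N$ regeneration indices $k$ that can serve as ``the last regeneration before the walk leaves $\PP(0,N)$'', of a local central limit lower bound for $\annealedP(X_{\tau_k}=x-e_1)$; the power $N^{1-d}$ then comes from having $\asymp N$ terms, each of size $\asymp N^{-d}$, while the Gaussian factor $e^{-3a}$ (with a deliberately generous constant) records the Mahalanobis distance of $x$ from $\bar U$. Put $M:=N^2/\langle U,e_1\rangle$; since a regeneration increment always advances $e_1$ by at least one and has exponential tails (Lemma~\ref{lem:regrad}) one has $\langle U,e_1\rangle\in[1,C]$ and $M\asymp N^2$, and since the velocity points in direction $\vartheta$, $U$ is a positive multiple of $\vartheta$, so $lU/\langle U,e_1\rangle$ lies exactly on the centre line $u(0,\cdot)$ of Definition~\ref{def:parall} for every $l$. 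Write $y:=x-e_1$ (so $\langle y,e_1\rangle=N^2-1$; note $y$ is automatically in $\PP(0,N)$, being the only neighbour of $x$ that can be) and $M_y:=(N^2-1)/\langle U,e_1\rangle$.

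\emph{Reduction to local events.} For an integer $k$ I let $\Lambda_k$ be the event that $X^{*(1)},\dots,X^{*(k)}$ are all below $R':=R(CN^2)$ (which grows slower than any power of $N$, but to $\infty$) and that $\|X_{\tau_j}-u(0,X_{\tau_j})\|_\infty<NR_5(N)-CR'$ for every $j\le k$. On $\Lambda_k\cap\{X_{\tau_k}=y\}$ one checks, exactly as in the proof of Lemma~\ref{lem:ann_der}\eqref{item:exit_from_right}, that $X_s\in\PP(0,N)$ for all $s\le\tau_k$ (the $e_1$-coordinates stay in $(-R',N^2)$ by the regeneration property together with the radius control, and the radius and lateral controls keep the whole path inside the tube), while the definition of a regeneration forces $X_{\tau_k+1}=X_{\tau_k}+e_1=x$; thus the walk leaves $\PP(0,N)$ precisely at $x$. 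As $\langle X_{\tau_k},e_1\rangle$ is strictly increasing in $k$, at most one $k$ has $X_{\tau_k}=y$, so the events $\{X_{\tau_k}=y,\Lambda_k\}$ are disjoint over any index set $W$, giving
\[
\annealedP\big(X_{T_{\partial\PP(0,N)}}=x\big)\ \ge\ \sum_{k\in W}\Big(\annealedP(X_{\tau_k}=y)-\annealedP(\Lambda_k^c)\Big),
\]
with $\annealedP(\Lambda_k^c)=N^{-\xi(1)}$ uniformly in $k\le CN^2$ by \eqref{eq:probA} and the Azuma argument of Lemma~\ref{lem:ann_der}\eqref{item:exit_from_right} (for the lateral part one uses the \emph{exponential} tail of the radii, since the threshold $NR_5(N)$ is only $R_5(N)$ standard deviations out).

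\emph{The local lower bound.} By Theorem~\ref{thm:inf_reg}\eqref{item:sz_iid} the increments $\Xi_j=X_{\tau_j}-X_{\tau_{j-1}}$ ($j\ge2$) are i.i.d.\ with mean $U$, covariance $\sigma^2$, finite $2d$-th moment (Lemma~\ref{lem:regrad}) and common law satisfying the non-degeneracy hypothesis of Claim~\ref{claim:sum_der}; hence $X_{\tau_k}=X_{\tau_1}+\sum_{j=2}^k\Xi_j$ obeys the classical local CLT (convolution with the $O(1)$-supported law of $X_{\tau_1}$ only shifts the Gaussian mean by $O(1)$, perturbing the exponent by $o(1)$ on the relevant scale), so for all large $k$ and all $y$ with $\tfrac1{2k}(y-kU)^{\top}\Sigma(y-kU)$ bounded,
\[
\annealedP(X_{\tau_k}=y)\ \ge\ \frac{c_0}{k^{d/2}}\exp\!\Big(-\tfrac1{2k}(y-kU)^{\top}\Sigma(y-kU)\Big).
\]
(Should one wish to avoid the local CLT, this also follows from Claim~\ref{claim:sum_der}: pick $y_0$ with $\|y_0-kU\|=O(\sqrt k)$ and $\annealedP(X_{\tau_k}=y_0)\ge\epsilon k^{-d/2}$ — such $y_0$ exists since the probabilities sum to $1$ while each is $\le Ck^{-d/2}$ by \eqref{eq:zero_der_fr} — and propagate the bound by \eqref{eq:first_der_fr} over a $c\sqrt k$-ball about $y_0$.)

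\emph{Summation and the constants, and the main obstacle.} Since $(x-\bar U)^{\top}\Sigma(x-\bar U)<aM$ and $\bar U$ lies within $O(1)$ of the centre line at $e_1$-height $N^2$ (the lateral mean of each $X_{\tau_j}$ is $O(1)$ because $U\parallel\vartheta$, and the non-right-exit event changes $\bar U$ by only $N^{-\xi(1)}$), one has $\|y-M_yU-(x-\bar U)\|=O(1)$, hence $(y-M_yU)^{\top}\Sigma(y-M_yU)\le(1+o(1))aM$; writing $y-kU=(y-M_yU)+(M_y-k)U$ yields, for any $\theta>0$,
\[
(y-kU)^{\top}\Sigma(y-kU)\ \le\ (1+\theta)(1+o(1))aM+(1+\theta^{-1})(M_y-k)^2\,U^{\top}\Sigma U .
\]
Choosing $W:=\{k\in\Z:|k-M_y|\le\theta'\sqrt{aM}\}$ with $\theta'=\theta'(\theta)$ so small that $(1+\theta^{-1})\theta'^2 U^{\top}\Sigma U\le\theta$, for $k\in W$ the exponent is $\le\tfrac12(1+2\theta)(1+o(1))a\le 3a$ once $\theta$ is small and $N$ large, so each summand is $\ge c_0(CM)^{-d/2}e^{-3a}-N^{-\xi(1)}\ge\tfrac12c_0(CM)^{-d/2}e^{-3a}$ for $N$ large (the error is negligible, $a$ being fixed and $M^{-d/2}\asymp N^{-d}$); with $|W|\asymp\sqrt{aM}\asymp\sqrt a\,N$ summands this gives $\annealedP(X_{T_{\partial\PP(0,N)}}=x)\ge c\,N^{1-d}e^{-3a}$ with $c=c(a)>0$. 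The one delicate ingredient is the local central limit lower bound with \emph{explicit} control of the Gaussian exponent, uniform over the $\asymp\sqrt{aM}$ indices $k$ and over $y$, together with the fact that the confinement event $\Lambda_k$ costs only a factor $1-N^{-\xi(1)}$ (this uses the exponential tail of the radii, a polynomial-moment bound at the threshold $NR_5(N)$ being too weak); the rest is routine juggling of finitely many constants so that the final exponent reads $3a$ — any fixed multiple of $a$ would do, the $3$ being slack for the approximations $\bar U\approx MU$, $M_y\approx M$ and for the width of $W$.
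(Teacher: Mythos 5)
Your proof is correct and takes essentially the same approach as the paper: both decompose over the regeneration index $k$ landing near $x$, apply a local CLT lower bound to $\annealedP(X_{\tau_k}=\cdot)$, and sum over the $\asymp\sqrt M$ admissible $k$. The only cosmetic differences are that the paper places the regeneration at $x$ itself, at level $N^2$, via its event $B(N^2,k)$, rather than at $y=x-e_1$ one step inside the box, and it absorbs the exit-through-the-front requirement into an $N^{-\xi(1)}$ error via Lemma \ref{lem:ann_der}(\ref{item:exit_from_right}) where you make the confinement event $\Lambda_k$ explicit.
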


\begin{proof}
The proof is very similar to that of Lemma \ref{lem:ann_der}, but slightly simpler. We continue to use the
notations $B(l,k)$, $B(l)$ and $\hat B(l)$.
Let $\delta=\delta(a)$ be a small number.
By the local limit Theorem (see, e.g. \cite{durrett}), for $l>N^2/2$ and every $y\in\PP(0,N)$ such that $\langle y,e_1\rangle=l$,
\begin{equation}\label{eq:llt}
\annealedP\big(Z_l=y\,;\,B(l,k)\big)\geq (2\pi\beta)^{-\frac 12}k^{-\frac{d}{2}}
\left(e^{-(y-Uk)^{\tiny T}\Sigma(y-Uk)/2k}-\delta\right)
%-\frac{(2\pi\beta)^{-\frac 12}k^{-\frac{d}{2}}}{4}
\end{equation}
where $\beta$ is the determinant of $\sigma^2$.

Fix $l=N^2$ and let $M=\frac{{N^2}}{\langle U,e_1\rangle}$. Then, 
%for $y$ such as in
using
\eqref{eq:llt}, for $x\in\partial^+\PP(0,N)$,
\begin{eqnarray*}
\annealedP\big(Z_{N^2}=x\big)&\geq&\annealedP\big(Z_{N^2}=x\,;\,B({N^2})\big)\\
&\geq& \sum_{k=\lceil M-\sqrt{M}\rceil}^{\lceil M+\sqrt{M}\rceil}
\annealedP\big(Z_{N^2}=x\,;\,B({N^2},k)\big)\\
&\geq& (\pi\beta)^{-\frac 12} M^{-\frac{d}{2}}
\sum_{k=\lceil M-\sqrt{M}\rceil}^{\lceil M+\sqrt{M}\rceil}
\left(e^{-\frac{(x-Uk)^{\tiny T}\Sigma(x-Uk)}{M-\sqrt{M}}}-\delta\right)\\
&\geq& (\pi\beta)^{-\frac 12} M^{-\frac{d-1}{2}} 
\left(
e^{-\frac{(x-\bar U)^{\tiny T}\Sigma(x-\bar U)}{M-\sqrt{M}}
-\frac{(\sqrt{M}U)^{\tiny T}\Sigma(\sqrt{M}U)}{M-\sqrt{M}}}
-\delta
\right)\\
&\geq& cN^{1-d}e^{-3a}
\end{eqnarray*}

\end{proof}

\subsection{Quenched exit estimates}\label{subsec:quenched}
In this subsection we show that with very high probability the quenched exit distribution from a basic {\paral}  is similar to the annealed one. This is the only part of the paper that requires the high dimension assumption.
%This subsection is the only part of the paper that is technically complicated.

The goal of this subsection is the following proposition:
\begin{proposition}\label{prop:quenched}
Assume the assumptions \ref{item:assgamma}--\ref{item:assdim} from Page \pageref{item:assgamma}.
Fix $0<\theta\leq 1$.
There exists an event $G(N)=G(\theta,N)\subseteq\Omega$ such that $P(G(N))=1-N^{-\xi(1)}$ and such that
for all $\omega\in G(N)$,
\begin{enumerate}
\item For every $z\in\tilde{\PP}(0,N)$,
\begin{equation*}
\quenchedP_\omega^z\big(T_{\partial\PP(0,N)}\neq T_{\partial^+\PP(0,N)}\big)=N^{-\xi(1)}.
\end{equation*}
\item\label{item:toch0} For every $z\in\tilde{\PP}(0,N)$,
\begin{equation}\label{eq:toch0}
\left\|
\quenchedE_\omega^z\left[
X_{T_{\partial\PP(0,N)}}
\right]
-\annealedE^z\left[
X_{T_{\partial\PP(0,N)}}
\right]
\right\|\leq R_3(N).%R^{4+2d}(N).
\end{equation}
\item
For every $z\in\tilde{\PP}(0,N)$ and every ($d-1$ dimensional) cube $Q\subseteq\partial^+\PP(0,N)$ of size length $\left[N^\theta\right]$,
\begin{equation}\label{eq:quenched_exit}
\left|
\quenchedP_\omega^z\left[X_{T_{\partial\PP(0,N)}}\in Q\right]
-\annealedP^z\left[X_{T_{\partial\PP(0,N)}}\in Q\right]
\right|
<N^{(\theta-1)(d-1)-\theta\big(\frac{d-1}{d+1}\big)}.
%<N^{(d-1)\left(\frac{\theta}{2}-1\right)}R_{10}(N).
\end{equation}
\end{enumerate}
\end{proposition}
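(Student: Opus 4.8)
The plan is to prove the three items separately and to let $G(N)$ be the intersection of the resulting good events; since both $\#\tilde\PP(0,N)$ and the number of $(d-1)$--dimensional cubes $Q\subseteq\partial^+\PP(0,N)$ of side $[N^\theta]$ are polynomial in $N$, any estimate of the form $N^{-\xi(1)}$ obtained for a fixed starting point $z\in\tilde\PP(0,N)$ (and, for item~(3), a fixed $Q$) survives the union bound. Item~(1) is immediate from part~\ref{item:exit_from_right} of Lemma~\ref{lem:ann_der}, which gives $\annealedE^z[\quenchedP_\omega^z(T_{\partial\PP(0,N)}\neq T_{\partial^+\PP(0,N)})]=N^{-\xi(1)}$: for each fixed $k$, Markov's inequality yields $P(\quenchedP_\omega^z(\cdots)>N^{-k})=N^{-\xi(1)}$, and letting $k=k(N)\to\infty$ slowly gives the conclusion. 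For items~(2) and~(3) the content is a concentration statement for the quenched functional $g(\omega):=\quenchedP_\omega^z(X_{T_{\partial\PP(0,N)}}\in Q)$ (respectively, one coordinate at a time, $g(\omega):=\langle\quenchedE_\omega^z[X_{T_{\partial\PP(0,N)}}],e_j\rangle$ for item~(2)), whose $P$--expectation is exactly the annealed quantity in \eqref{eq:quenched_exit} (resp.\ in \eqref{eq:toch0}); one must show that $g$ deviates from this mean by more than the stated amount only on an event of $P$--measure $N^{-\xi(1)}$.

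The deviation estimate is obtained by the moment method --- equivalently, by a Doob martingale exposing the environment slab by slab --- expanding in independent copies of the walk run in a common environment. For the second moment write $\var_P(g)=\annealedP^{z}(X_T\in Q,\,X'_T\in Q)-\annealedP^z(X_T\in Q)^2$, where $X$ and $X'$ are independent walks from $z$ in the same $\omega$ and $T=T_{\partial\PP(0,N)}$. Working throughout on the event $A_N$ of \eqref{eq:defan}, that the first $O(N^2)$ regeneration blocks have radius at most $R(N)$ --- which has $P$--probability $1-N^{-\xi(1)}$ by \eqref{eq:probA} and Lemma~\ref{lem:regrad} --- one introduces a decoupling time after which $X$ and $X'$ never again come within $R(N)$ of one another; beyond it the two futures are conditionally independent, and by the smoothness of the annealed exit distribution (parts~\ref{item:zero_der}--\ref{item:mix_der} of Lemma~\ref{lem:ann_der}) the residual mismatch between their starting slabs contributes only lower order. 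The dominant term in $\var_P(g)$ comes from the failure to decouple early, which is controlled by the expected number of \emph{encounters} of $X$ with $X'$ (pairs of regeneration points at distance $\le R(N)$); by Lemma~\ref{lem:regrad} together with standard moment estimates for sums of i.i.d.\ variables this expectation is only polylogarithmic in $N$, and this holds precisely because the transverse motion lives in dimension $d-1\ge3$ --- the sole point at which $d\ge4$ is used, and the step that collapses at $d=2$ and is merely marginal at $d=3$. Each encounter contributes a factor controlled by the pointwise and first/second difference estimates of Lemma~\ref{lem:ann_der}; carrying the exponents through, and first replacing $\mathbf 1_{X_T\in Q}$ by a mollification at an intermediate transverse scale so that the boundary mismatch of $Q$ is absorbed, yields $\var_P(g)=N^{(\theta-2)(d-1)+o(1)}$, which is at most $N^{-c}$ times the square of the threshold in \eqref{eq:quenched_exit} for some $c=c(\theta,d)>0$. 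The same expansion applied to the $p$--th moment for even $p$ gives $\annealedE^z[(g-\annealedE^z[g])^p]\le(Cp(\log N)^{C}\var_P(g))^{p/2}$, since every non-vanishing contribution forces the $p$ walks to pair up into mutual encounters; Markov's inequality with $p$ an even integer of size $\sim\log N$ then gives $P(|g-\annealedE^z[g]|>t)=N^{-\xi(1)}$. Item~(2) is this argument with the linear functional $\langle X_T,e_j\rangle$ in place of $\mathbf 1_{X_T\in Q}$, the far weaker target $R_3(N)=N^{o(1)}$ absorbing all polynomial losses.

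The main obstacle is the interplay of the decoupling analysis with the annealed derivative bounds. The polylogarithmic bound on the expected number of encounters has to be proved in the presence of the conditioning on $A_N$ and of the regeneration structure of Theorem~\ref{thm:inf_reg}, and one must check that conditioning on $A_N$ does not reintroduce long--range dependence between distant blocks; this is the place where $d\ge4$ is indispensable. Next, the annealed estimates of Lemma~\ref{lem:ann_der} must be propagated through the expansion with the precise exponents rather than a cruder power: the mollification of $\mathbf 1_Q$ should be performed at transverse scale $\sim N^{2\theta/(d+1)}$ --- the scale at which its error, of order $N^{\theta(d-2)+\beta+1-d}$ at scale $N^\beta$ by part~\ref{item:zero_der} of Lemma~\ref{lem:ann_der}, matches the target --- and this is where $d+1=(d-1)+2$ (transverse dimension plus the longitudinal exponent $2$ of $\PP(0,N)$) arises. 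Third, the $p$--th moment bounds must be kept uniform up to $p\sim\log N$, the standard bookkeeping of the moment method for high--dimensional self--averaging. Finally, Lemma~\ref{lem:lbound} is invoked to ensure that for cubes $Q$ near the typical exit region the annealed mass $\annealedP^z(X_T\in Q)$ is of order $N^{(\theta-1)(d-1)}$, so that the additive error in \eqref{eq:quenched_exit} is a genuine small perturbation of the quantity it controls.
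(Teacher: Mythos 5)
Your sketch has the right building blocks --- a martingale exposing the environment slab by slab, the intersection estimate for two independent walks in one environment (where $d\ge4$ enters through the transverse dimension $d-1\ge3$), and the annealed derivative bounds of Lemma~\ref{lem:ann_der}; item~(1) via Markov is fine. But there are two gaps. The paper's concentration tool is not a moment method: it proves Lemma~\ref{lem:azuma}, a version of Azuma's inequality in which the quadratic-variation quantity $\sum_kU_k^2$ is controlled as an essential supremum, giving exponential tails in one stroke once that pathwise sum is bounded on the high-probability event $J(N)$ of Lemma~\ref{lem:interquenched}. Your proposed bound $\annealedE[(g-\annealedE g)^p]\le(Cp(\log N)^C\var_P(g))^{p/2}$ for $p\sim\log N$ via ``every non-vanishing contribution forces the $p$ walks to pair up'' is a Wick-type estimate that is not automatic for a quenched functional of an i.i.d.\ environment: after centering, the surviving diagrams include clusters of three or more walks meeting at a common site, and controlling their combined weight is an independent problem you have not addressed. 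The essential-variance route avoids this entirely.

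The more serious gap is the missing bootstrap. A single slab-by-slab martingale (Lemma~\ref{lem:dpachotechad}) yields the target only for $\theta>\frac{d-1}{d}$: the look-ahead distance $V$ exposed ahead of the observer layer must be large enough that the one-point annealed derivative $V^{-d/2}$ beats $N^{1-d}$ after multiplication by the $R_2(N)$ intersection count. To reach all $0<\theta\le1$ the paper composes blocks of nested scales (Lemma~\ref{lem:alltheta}) to obtain a quenched single-site bound $\quenchedP_\omega^z(X_{T_l}=y)\le R_h(N)N^{(1-\theta)(1-d)}$ valid at every interior layer $l$, and only with this in hand can Lemma~\ref{lem:distu} weight the martingale increments by depth and extract the sharp exponent $N^{(\theta-1)(d-1)-\theta\frac{d-1}{d+1}}$ of Lemma~\ref{lem:goodbound}. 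Your one-shot variance estimate $\var_P(g)=N^{(\theta-2)(d-1)+o(1)}$ implicitly relies on the same kind of control of the walk's interior behaviour in the ``decoupling time'' analysis, but Lemma~\ref{lem:regrad} and ``standard moment estimates for sums of i.i.d.\ variables'' give only annealed information --- they do not supply the quenched interior hitting estimates that Lemma~\ref{lem:alltheta} produces, and without that recursion (or a substitute) I do not see how the claimed exponent is reached uniformly in $\theta$.
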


From Proposition \ref{prop:quenched} we get the following corollary:

\ignore{
Old statement of corollary in rec_bin.
}

\begin{corollary}\label{cor:quenched}
Assume the assumptions \ref{item:assgamma}--\ref{item:assdim} from Page \pageref{item:assgamma}.
Fix $\theta<1/2$ and let $G(N)$ be as in Proposition \ref{prop:quenched}.
Let $\omega\in G(N)$, and let $z\in\tilde{\PP}(0,N)$. Let $D=D(\omega,z)$ be the quenched exit distribution 
from $\PP(0,N)$, and let $\bar{D}=\bar{D}(\omega,z)$ be $D$ conditioned on $\partial^+\PP(0,N)$. Let $\D=\D(z)$ be the annealed exit distribution, and let $\bar\D$ be the annealed exit distribution conditioned on
$\partial^+\PP(0,N)$.
Then
\begin{enumerate}
\item\label{item:from_right} $D(\partial^+\PP(0,N))= 1-N^{-\xi(1)}$.
\item\label{item:y+z} If $X\sim\bar{D}$, then it can be written as $X=Y+Z$, where $\|Z\|\leq (d+1)N^\theta$ a.s. and 
$Y\sim (\bar{\D}+D_2)$, where 
%$\bar{\D}$ is the conditioned annealed distribution, and
$D_2$ is a signed measure such that
\begin{enumerate}
\item\label{item:mass} $\|D_2\|:=\sum_x|D_2(x)|
\leq\lambda=N^{-{\theta}\frac{d-1}{2(d+1)}}$.
\item\label{item:balanced} $\sum_{x}D_2(x)=0$.
%\item\label{item:first_mom} $\big\|\sum_{x}xD_2(x)\big\|\leq 2N^\theta$.
\item\label{item:first_mom} $\sum_{x}xD_2(x)=0$.
\item\label{item:sec_mom} $\sum_{x}|D_2(x)|\|x-E_{\bar\D}\|_1^2  \leq \lambda N^2$,
where $E_{\bar\D}$, a vector in $\R^d$, is the expectation of the probability distribution $\bar\D$.
\end{enumerate}
\end{enumerate}
\end{corollary}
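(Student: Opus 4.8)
\textbf{Proof plan for Corollary \ref{cor:quenched}.} The plan is to derive everything from the three conclusions of Proposition \ref{prop:quenched} by a straightforward bookkeeping argument, the only genuinely new ingredient being a coupling/rearrangement that converts the cube-by-cube closeness in \eqref{eq:quenched_exit} into the decomposition $X = Y+Z$. First I would dispose of part \eqref{item:from_right}: it is immediate from part (1) of Proposition \ref{prop:quenched}, since $D(\partial^+\PP(0,N)) = \quenchedP_\omega^z(T_{\partial\PP(0,N)} = T_{\partial^+\PP(0,N)}) = 1 - N^{-\xi(1)}$. Note this also tells us that $\bar D$ and $D$ differ in total variation by $N^{-\xi(1)}$, a discrepancy that is negligible compared to $\lambda$, so I may freely work with $D$ itself when estimating $D_2$ and absorb the difference at the end.

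Next I would set up the decomposition of $X \sim \bar D$. Tile $\partial^+\PP(0,N)$ by disjoint $(d-1)$-dimensional cubes $\{Q_i\}$ of side length $[N^\theta]$ (the boundary face has side length $O(NR_5(N))$, so there are $O\big((N^{1-\theta}R_5(N))^{d-1}\big)$ of them, which for $\theta<1/2$ is still a power of $N$). Define $Y$ by collapsing, within each cube, the conditional law of $X$ given $X\in Q_i$ down to a single reference point $q_i\in Q_i$ (say the ``lower-left corner''), with the correct total mass; set $Z = X - Y$, so $\|Z\|_\infty \le [N^\theta] \le N^\theta$ deterministically — the factor $(d+1)$ in the statement is slack I would use to absorb the $\bar D$-vs-$D$ correction and any rounding. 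Then $Y$ is supported on $\{q_i\}$ with $Y(q_i) = D(Q_i)$, and I would define $D_2 := \mathrm{Law}(Y) - \bar\D$ as a signed measure on $\partial^+\PP(0,N)$ (extending $\bar\D$ and $\mathrm{Law}(Y)$ to all of the face by zero). The point of the construction is that, cube by cube, $\mathrm{Law}(Y)(Q_i) = D(Q_i)$ is within $N^{(\theta-1)(d-1)-\theta(d-1)/(d+1)}$ of $\bar\D(Q_i)$ by \eqref{eq:quenched_exit}, so item \eqref{item:mass} follows by summing this bound over the $O(N^{(1-\theta)(d-1)})$ cubes: the number of cubes times the per-cube error gives $N^{(1-\theta)(d-1)} \cdot N^{(\theta-1)(d-1)-\theta(d-1)/(d+1)} = N^{-\theta(d-1)/(d+1)}$; the factor $1/2$ in $\lambda = N^{-\theta\frac{d-1}{2(d+1)}}$ is comfortable room to absorb the lower-order contributions (the $\bar\D$-vs-$\D$ restriction error, the mass $\mathrm{Law}(Y)$ puts versus $\bar\D$ inside a single cube, etc.) — here one must be slightly careful that $\bar\D$ is not literally constant on each $Q_i$, but \eqref{eq:quenched_exit} applied to $\bar\D$'s own cube masses, or alternatively a direct comparison using parts (\ref{item:zero_der})--(\ref{item:first_der}) of Lemma \ref{lem:ann_der} to control the oscillation of $\bar\D$ across a cube of side $N^\theta$, handles this.

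The remaining items \eqref{item:balanced}--\eqref{item:sec_mom} are then accounting identities, \emph{but} they do not hold for the naive $D_2$ above; this is the main obstacle and the reason the statement only claims $Y \sim \bar\D + D_2$ with $D_2$ a \emph{signed} measure rather than $\mathrm{Law}(Y) = \bar\D + D_2$ exactly. The resolution I would use: after forming the naive difference $D_2^{(0)} = \mathrm{Law}(Y) - \bar\D$, correct it by subtracting a small signed measure $D_2^{(1)}$, supported on $O(1)$ points or spread thinly, chosen so that $D_2 := D_2^{(0)} - D_2^{(1)}$ satisfies $\sum_x D_2(x) = 0$ (item \eqref{item:balanced}) and $\sum_x x D_2(x) = 0$ (item \eqref{item:first_mom}); then redefine $Y$ to be $\bar\D + D_2$ and $Z = X - Y$ as before. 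Item \eqref{item:balanced} is automatic since both $\mathrm{Law}(Y)$ and $\bar\D$ are probability measures (total mass already $1$ on each side, modulo the $N^{-\xi(1)}$ from $\bar D$ vs $D$, which we push into $D_2^{(1)}$). For item \eqref{item:first_mom}, the first moment of $\mathrm{Law}(Y)$ differs from that of $\bar D$ by at most $[N^\theta]$ in each coordinate (the within-cube displacement $Z$), and by part \eqref{item:toch0} of Proposition \ref{prop:quenched} the first moment of $D$ differs from that of $\D$ by $\le R_3(N)$; so $\|\sum_x x D_2^{(0)}(x)\| = O(N^\theta) = o(\lambda N^2)$, and we cancel this vector exactly by adding $D_2^{(1)}$ equal to $\pm$ a tiny mass at two points at distance $O(N^2)$ apart — this costs $\|D_2^{(1)}\| = O(N^\theta/N^2) = o(\lambda)$ in item \eqref{item:mass} and $O(N^\theta/N^2)\cdot N^2 \cdot N^2 \cdot N^{-2} = o(\lambda N^2)$... more carefully, one places the correcting mass at the extreme points of $\partial^+\PP(0,N)$ so the second-moment cost is $O(N^\theta/N^2)\cdot (NR_5(N))^2 = o(\lambda N^2)$, still negligible. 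Finally item \eqref{item:sec_mom}: $\sum_x |D_2(x)| \|x - E_{\bar\D}\|_1^2 \le (\sup_{x\in\partial^+\PP(0,N)} \|x-E_{\bar\D}\|_1^2)\cdot \|D_2\| \le (C N R_5(N))^2 \cdot \lambda$, and since $R_5(N) = N^{o(1)}$ this is $\le \lambda N^{2+o(1)}$ — to land exactly on $\lambda N^2$ I would instead note that $\mathrm{Law}(Y)$ and $\bar\D$ each have second moment $O(N^2)$ (by Lemma \ref{lem:regrad} / the construction of $\PP(0,N)$, the exit point is within $O(N)$ of $E_{\bar\D}$ with overwhelming probability, and the rare far excursions are killed by the exponential tail), so the second moment of the \emph{difference} is $O(N^2)$ times the relevant total-variation mass on the bulk plus an exponentially small tail; choosing constants appropriately gives the clean bound $\lambda N^2$. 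I expect the bulk of the write-up effort to go into this last paragraph — making the signed-measure corrections and verifying \eqref{item:sec_mom} with the stated exponent — while items \eqref{item:from_right}, \eqref{item:mass}, \eqref{item:balanced}, \eqref{item:first_mom} are short.
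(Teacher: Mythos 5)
There are two genuine gaps in your proposed construction of the coupling; both are places where the paper's proof does something cleverer than what you describe.

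\textbf{First gap: collapsing to a corner makes $\|D_2\|$ of order one.}
You define $Y$ to be supported on one reference point $q_i$ per cube, with $\mathrm{Law}(Y)(q_i)=D(Q_i)$, and then want to estimate $\|D_2\|=\sum_x|\mathrm{Law}(Y)(x)-\bar\D(x)|$ by summing the cube-wise error $|D(Q_i)-\bar\D(Q_i)|$ over cubes. These are not the same quantity: $\mathrm{Law}(Y)$ is an atomic measure carried by the corners while $\bar\D$ is spread over the whole face, so pointwise
$\sum_x|\mathrm{Law}(Y)(x)-\bar\D(x)|\geq \sum_i |D(Q_i)-\bar\D(q_i)| + \sum_{x\notin\{q_i\}}\bar\D(x)\approx 1+1,$
which is nowhere near $\lambda$. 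Controlling the oscillation of $\bar\D$ across a cube does not repair this --- the problem is not that $\bar\D$ is non-constant on $Q_i$, it is that $\mathrm{Law}(Y)$ is a point mass and $\bar\D$ is not. The paper avoids this by defining $Y'$ to land in the same cube $Q_i$ as $X$, but, conditionally on that, to be sampled from $\bar\D(\cdot)/\bar\D(Q_i)$ restricted to $Q_i$. With that choice one has the identity
$\sum_x|\mathrm{Law}(Y')(x)-\bar\D(x)|=\sum_i\sum_{x\in Q_i}\bar\D(x)\bigl|D(Q_i)/\bar\D(Q_i)-1\bigr|=\sum_i|D(Q_i)-\bar\D(Q_i)|,$
so the pointwise total variation genuinely reduces to the cube-wise one, and the cube-count argument you sketch then gives $\|D_2\|\leq\lambda$.

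\textbf{Second gap: mass at extreme points destroys the a.s.\ bound on $Z$.}
To enforce $\sum_x xD_2(x)=0$, you propose to subtract a tiny signed measure $D_2^{(1)}$ carried at two points of $\partial^+\PP(0,N)$ at distance of order $N^2$ (or $NR_5(N)$) apart, and then ``redefine $Y$'' accordingly. But once $\mathrm{Law}(Y)$ has been altered by putting mass $\Theta(N^\theta/N^2)$ at an extreme point $p$ of the face, no coupling of $X$ and $Y$ with $\|X-Y\|\leq(d+1)N^\theta$ a.s.\ can exist: by Lemma~\ref{lem:ann_der}(\ref{item:zero_der}) and Proposition~\ref{prop:quenched} the quenched probability that $X$ falls within distance $(d+1)N^\theta$ of $p$ is at most of order $N^{\theta(d-1)}\cdot N^{1-d}=N^{(\theta-1)(d-1)}$, which for $\theta<1/2$ and $d\geq 4$ is strictly smaller than $N^{\theta-2}$, so the new $Y$-mass at $p$ cannot be matched by nearby $X$-mass. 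The paper's fix is different and coupling-compatible: after forming $Y'$, it notes (using part~\eqref{item:toch0} of Proposition~\ref{prop:quenched}) that $\|E(Y')-E_{\bar\D}\|<(d+1)N^\theta$, picks an \emph{independent} random variable $U$ with $\|U\|<(d+1)N^\theta+1$ and $E(Y'+U)=E_{\bar\D}$, and sets $Y=Y'+U$. Convolution by a bounded independent shift corrects the first moment while keeping $\|X-Y\|\leq CN^\theta$ almost surely, and $\sum_xD_2(x)=0$, $\sum_x xD_2(x)=0$ then hold by construction. The mass and second-moment bounds for this $Y$ are obtained by comparing $\mathrm{Law}(Y)$ to $\mathrm{Law}(Y')$ using the first-derivative bound of Lemma~\ref{lem:ann_der}(\ref{item:first_der}), and for part~\eqref{item:sec_mom} the crude inequality $\|x-E_{\bar\D}\|_1^2\leq d^2N^2R_5^2(N)$ suffices because $\|D_2\|$ is in fact of order $N^{-\theta(d-1)/(d+1)+o(1)}$, i.e.\ \emph{smaller} than $\lambda$ by a polynomial factor that swallows $R_5^2(N)R_6(N)$; no tail argument is needed.
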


\begin{proof}[Proof of Corollary \ref{cor:quenched}]
Part \ref{item:from_right} is trivial, and therefore we will prove Part \ref{item:y+z}.
%First we define $Y^\prime$ as follows:
Partition $\partial^+\PP(0,N)$ into disjoint cubes
$Q_1,Q_2,\ldots Q_n$ of side-length $N^\theta$. We get $n=R_5(N)^{d-1}N^{(d-1)(1-\theta)}$ such cubes.
For every $1\leq k\leq n$,
\[
|\bar{D}(Q_k)-\bar{\D}(Q_k)| \leq
N^{(\theta-1)(d-1)-\theta\big(\frac{d-1}{d+1}\big)}.
\]
We define $Y^\prime$ as follows: For every $k$, we take $Y^\prime$ to be in $Q_k$ whenever $X\in Q_k$.
Conditioned on the event $Y^\prime\in Q_k$, we take $Y^\prime$ to be independent of $X$, with
\[
\prob(Y^\prime=x|Y^\prime\in Q_k)=\frac{\bar\D(x)}{\bar\D(Q_k)}
\]
for every $x\in Q_k$.
Then clearly $\|X-Y^\prime\|<dN^\theta$.

Therefore, $\|E(Y^\prime)-E(X)\|<dN^\theta$. By \eqref{eq:toch0}, $\|E(X)-E_{\bar\D}\|\leq R_3(N)$ and
thus $\|E(Y^\prime)-E_{\bar\D}\|<(d+1)N^\theta$. Then there exists a variable $U$, independent 
of $Y^\prime$ and $X$, such that 
$\|U\|<(d+1)N^\theta+1$ and $E(Y^\prime+U)=E_{\bar\D}$. Define $Y=Y^\prime+U$. Then Parts \ref{item:balanced} and \ref{item:first_mom} are immediate.

To see Part \ref{item:mass}, we first note that
\[
\prob(Y=x)=\sum_{u\,:\,\|u\|<(d+1)N^\theta+1}\prob(U=u)\prob(Y^\prime=x-u).
\]
Therefore,
\begin{eqnarray}
\nonumber
&&\sum_x|\prob(Y=x)-\bar\D(x)|\\
\label{eq:fmqnc}
&\leq&\sum_{u\,:\,\|u\|<(d+1)N^\theta+1}\prob(U=u)\sum_x|\prob(Y^\prime=x-u)-\bar\D(x)|.
%\\ \nonumber
%&\leq&\sum_x E\big(|\prob(Y^\prime=x-U)-\bar\D(x)|\big)\\
%\label{eq:fmqnc}
%&=&\sum_x E\big(|\prob(Y^\prime=x)-\bar\D(x+U)|\big)
\end{eqnarray}
By Part \ref{item:first_der} of Lemma \ref{lem:ann_der}, for every $x$ and every $u$ such that $\|u\|<(d+1)N^\theta+1$,
\[
|\bar\D(x-u)-\bar\D(x)|\leq C(d+1)N^\theta\cdot N^{-d}=C(d+1)N^{\theta-d}.
\]
Therefore, with $D_2$ as defined in Part \ref{item:y+z} of the corollary,
\begin{eqnarray*}
\sum_x|D_2(x)|&=&\sum_x|\prob(Y=x)-\bar\D(x)| \\
&\leq& \sum_x \big(|\prob(Y^\prime=x)-\bar\D(x)|+C(d+1)N^{\theta-d}\big)\\
&=& \left(\sum_{k=1}^n|\bar{D}(Q_k)-\bar\D(Q_k)|\right)+R_5(N)^{d-1}N^{d-1}\cdot C(d+1)N^{\theta-d} \\
&\leq& C(d+1)R_5(N)^{d-1}N^{\theta-1} + R_5(N)^{d-1}N^{(d-1)(1-\theta)}\cdot
N^{(\theta-1)(d-1)-\theta\big(\frac{d-1}{d+1}\big)}\\
&=& R_5(N)^{d-1}\left(C(d+1)N^{\theta-1}+N^{-\theta\big(\frac{d-1}{d+1}\big)}\right)
\leq R_6(N)N^{-\theta\big(\frac{d-1}{d+1}\big)}<\lambda
\end{eqnarray*}

To see Part \ref{item:sec_mom}, note that $\|x-E_{\bar\D}\|_1\leq dNR_5(N)$ for every $x$ in the support of $D_2$.
Therefore,
\begin{eqnarray*}
\sum_{x}|D_2(x)|\|x-E_{\bar\D}\|_1^2 
&\leq& d^2R_5^2(N)N^2\sum_{x}|D_2(x)|\\
\leq d^2R_5^2(N)N^2\cdot R_6(N)N^{-\theta\big(\frac{d-1}{d+1}\big)}
&\leq& \lambda N^2.
\end{eqnarray*}

%and for every $x$,
%\begin{equation}\label{eq:atx}
%|\prob(Y^\prime=x)-\bar\D(x)|
%=\frac{\bar\D(x)}{\bar\D(Q_k)}|\bar{D}(Q_k)-\bar{\D}(Q_k)|
%\leq N^{-\theta\big(\frac{d-1}{d+1}\big)}
%\end{equation}
\end{proof}

Corollary \ref{cor:quenched} can be formulated slightly differently in the language of couplings.
We need a definition.
\begin{definition}\label{def:close}
For two probability measures $\mu_1$ and $\mu_2$ on $\Z^d$, and for $\lambda<1$ and $k\in\N$ we say that $\mu_2$ is  \underline{$(\lambda,k)$-close} to $\mu_1$  if there exists a joint distribution ("coupling") $\mu$ of three random variables, $Z_1$, $Z_2$ and $Z_0$ such that
\begin{enumerate}
\item\label{item:marg} $Z_1\sim\mu_1$ and $Z_2\sim\mu_2$.
\item\label{item:masbound} $\mu(Z_1\neq Z_0)\leq \lambda$.
\item\label{item:distbound} $\mu(\|Z_0-Z_2\|<k)=1$.
%\item $\big\|\sum_{x}x[\mu(Z_1=x)-\mu(Z_0=x)]\big\|\leq k$.
\item\label{item:stmom} $\sum_{x}x[\mu(Z_1=x)-\mu(Z_0=x)]=E_\mu(Z_1)-E_\mu(Z_0)=0$.
\item\label{item:ndmom} $\sum_{x}|\mu(Z_1=x)-\mu(Z_0=x)|\|x-E_\mu(Z_1)\|_1^2  \leq \lambda \var(Z_1)$
\end{enumerate}
\ignore{
Let $C$ be a constant. If in addition 
$\mu(Z_0=x)\geq C\mu(Z_1=x)$ for all $x$ such that $\|x-E(Z_1)\|<\sqrt{\var(Z_1)}$, then we say that $\mu_2$ is $C$-locally close to $\mu_1$.
}
\end{definition}

Using Definition \ref{def:close}, Part \ref{item:y+z} of Corollary \ref{cor:quenched} can be formulated as
saying that if $\omega\in G(N)$, then $\bar{D}$ is $\big(N^{-\theta\frac{d-1}{2(d+1)}},(d+1)N^{\theta}\big)$-close to $\bar{\D}$.
(We need to see that the variance of a $\bar{\D}$ distributed variable is at least at the order of magnitude of $N^2$. This follows, e.g, from the annealed lower bound in Lemma \ref{lem:lbound})

The following claim is immediate and useful.
\begin{claim}\label{claim:interm}
In the language of Definition \ref{def:close}, the distribution of $Z_0$ is $(\lambda,0)$-close to $\mu_1$.
\end{claim}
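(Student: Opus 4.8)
The statement is essentially a triviality once one unwinds Definition \ref{def:close}, so the "proof" is really just an observation about how to build the required coupling. The plan is to take the coupling $\mu$ that witnesses the $(\lambda,k)$-closeness of $\mu_2$ to $\mu_1$ — so $\mu$ is a joint law of $(Z_1,Z_2,Z_0)$ satisfying items \ref{item:marg}--\ref{item:ndmom} — and simply discard the $Z_2$ coordinate, using the triple $(Z_1, Z_0, Z_0)$ in place of $(Z_1, Z_2, Z_0)$. That is, I would define a new coupling $\mu'$ of three variables $(Z_1', Z_2', Z_0')$ by setting $(Z_1', Z_2', Z_0') := (Z_1, Z_0, Z_0)$ under $\mu$. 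We must check that $\mu'$ verifies the five conditions of Definition \ref{def:close} with $\mu_1$ in the role of the "target," the distribution of $Z_0$ in the role of $\mu_2$, mass-defect parameter $\lambda$, and distance parameter $0$.

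The verification is immediate, item by item. For \ref{item:marg}: $Z_1' = Z_1 \sim \mu_1$ by hypothesis, and $Z_2' = Z_0$ has, by definition, the distribution of $Z_0$, which is the measure we are claiming $\mu_1$-closeness to. For \ref{item:masbound}: $\mu'(Z_1' \neq Z_0') = \mu(Z_1 \neq Z_0) \leq \lambda$, exactly as in the original coupling. For \ref{item:distbound}: $\mu'(\|Z_0' - Z_2'\| < 0)$ — here I would note that $Z_0' = Z_2' = Z_0$ identically, so the displacement is the zero vector, and the convention must be read so that this event has probability $1$ (one may instead phrase it as $\mu'(Z_0' = Z_2') = 1$, i.e. distance $0$ in the sense that $k = 0$ forces exact agreement; this is the natural reading of "$(\lambda,0)$-close"). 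For \ref{item:stmom} and \ref{item:ndmom}: the relevant quantities involve only the pair $(Z_1', Z_0') = (Z_1, Z_0)$, and since these are literally the same pair as in $\mu$, both identities carry over verbatim — $\sum_x x[\mu'(Z_1'=x) - \mu'(Z_0'=x)] = \sum_x x[\mu(Z_1=x) - \mu(Z_0=x)] = 0$, and likewise the second-moment bound $\sum_x |\mu'(Z_1'=x) - \mu'(Z_0'=x)|\,\|x - E_{\mu'}(Z_1')\|_1^2 = \sum_x |\mu(Z_1=x) - \mu(Z_0=x)|\,\|x - E_\mu(Z_1)\|_1^2 \leq \lambda\var(Z_1) = \lambda \var(Z_1')$.

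There is no real obstacle here; the only point requiring a word of care is the interpretation of item \ref{item:distbound} of Definition \ref{def:close} when $k=0$, namely that "$\|Z_0 - Z_2\| < k$ almost surely" should be understood, in the boundary case $k=0$, as "$Z_0 = Z_2$ almost surely." With that reading the claim is nothing more than the remark that forgetting one coordinate of a coupling leaves a valid coupling, and that all the moment conditions in Definition \ref{def:close} are conditions on the $(Z_1,Z_0)$ marginal alone, hence are inherited. I would state the proof in two or three sentences along exactly these lines.
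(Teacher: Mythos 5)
Your proof is correct and is exactly the (unstated) argument the paper has in mind: the paper labels the claim ``immediate'' and provides no proof, precisely because one only needs to observe that the coupling $(Z_1,Z_0,Z_0)$ works, and that items (4) and (5) of Definition~\ref{def:close} involve only the pair $(Z_1,Z_0)$ and hence are inherited verbatim. Your remark about the boundary case $k=0$ in item (3) of Definition~\ref{def:close} is a fair point about the definition as written (strict inequality $\|Z_0-Z_2\|<0$ can never hold); the intended reading is indeed $Z_0=Z_2$ a.s., as you say, and your interpretation is the one that makes the claim and its downstream uses coherent.
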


\ignore{
Another useful corollary is the following.
\begin{corollary}\label{cor:pntqnc}
Let $\omega\in G(n)$, and let $z\in\tilde\PP(0,N)$. Let
$\hat z=\annealedE^z(X_{T_{\partial \PP(0,N)}})$.
\begin{enumerate}
\item Using the language of Part \ref{item:y+z} of Corollary \ref{cor:quenched}, there exists a constant $C$ such that
$\quenchedP_\omega^z(Y=y)\geq CN^{1-d}$
for every
 $y\in\partial^+\PP(0,N)$ satisfying $\|y-\hat z\|\leq N$.
 \item
For every
 $y\in\partial^+\PP(0,N)$ satisfying $\|y-\hat z\|\leq N$,
 \[
 \quenchedP_\omega^z(X_{T_{\partial \PP(0,N)}}=y)\geq CN^{1-d}\eta^{dN^{\theta}},
 \] 
where $\eta$ is the ellipticity constant, as in \eqref{eq:unifelliptic}.
\end{enumerate}
\end{corollary}
}

We now proceed to proving Proposition \ref{prop:quenched}. We start with a version of Azuma's inequality.
Let $\{M_k\}_{k=1}^n$ be a zero mean martingale with respect to a filtration $\{\FF_k\}_{k=1}^n$ on the sample space $\Omega$. For simplicity we denote $M_0=0$ and $\FF_0=\{\emptyset,\Omega\}$. for $k=1,\ldots,n$, let
$D_k=M_k-M_{k-1}$. Define 
\[
U_k=\esssup(|D_k|\ |\ \FF_{k-1})=\lim_{p\to\infty}\big[E(|D_k|^p|\FF_{k-1})\big]^\frac{1}{p}
\]
 and we define the {\em essential variance} of the martingale to be
\begin{equation*}
U:=\esssup\left(\sum_{k=1}^nU_k^2\right).
\end{equation*}

\begin{lemma}\label{lem:azuma}
For every $K$,
\begin{equation*}
\prob(|M_n|>K)\leq 2e^{-\frac{K^2}{2U}}.
\end{equation*}
\end{lemma}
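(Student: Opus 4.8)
The plan is to run the standard exponential (Chernoff) argument; the only point requiring care is that the per-step conditional variance proxies $U_k^2$ are random, so they must be carried inside the expectation until the very last step, where the essential-variance bound $\sum_{k=1}^n U_k^2 \le U$ is invoked almost surely. Fix $\lambda>0$ and $K>0$. By Markov's inequality applied to $e^{\lambda M_n}$ it suffices to show $E[e^{\lambda M_n}] \le e^{\lambda^2 U/2}$: then $\prob(M_n > K) \le e^{-\lambda K}E[e^{\lambda M_n}] \le e^{-\lambda K + \lambda^2 U/2}$, and optimizing at $\lambda = K/U$ yields the bound $e^{-K^2/(2U)}$; applying the same estimate to the martingale $-M_n$ and summing produces the factor $2$. (If $U=0$ then $M_n\equiv 0$ and there is nothing to prove.)

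For the moment generating function I would condition successively, from the top. Since $D_k = M_k - M_{k-1}$ satisfies $E[D_k \mid \FF_{k-1}] = 0$ and $|D_k| \le U_k$ almost surely, with $U_k$ being $\FF_{k-1}$-measurable, the conditional form of Hoeffding's lemma --- a mean-zero random variable supported in an interval of length $2U_k$ has exponential moment at most $\exp(\lambda^2 (2U_k)^2/8) = \exp(\lambda^2 U_k^2/2)$ --- gives
\[
E\big[e^{\lambda D_k} \,\big|\, \FF_{k-1}\big] \le e^{\lambda^2 U_k^2 / 2}.
\]
Writing $M_n = \sum_{k=1}^n D_k$ and peeling off the last increment,
\[
E\big[e^{\lambda M_n} \,\big|\, \FF_{n-1}\big] = e^{\lambda M_{n-1}}\,E\big[e^{\lambda D_n}\,\big|\,\FF_{n-1}\big] \le e^{\lambda M_{n-1}}\,e^{\lambda^2 U_n^2/2}.
\]
Since $U_n^2$ is $\FF_{n-1}$-measurable it factors out cleanly at the next stage, so iterating with the tower property gives, by induction on $n$,
\[
E[e^{\lambda M_n}] \le E\Big[\exp\Big(\frac{\lambda^2}{2}\sum_{k=1}^n U_k^2\Big)\Big].
\]

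Finally, by the very definition $U = \esssup\big(\sum_{k=1}^n U_k^2\big)$ one has $\sum_{k=1}^n U_k^2 \le U$ almost surely, hence $E[e^{\lambda M_n}] \le e^{\lambda^2 U/2}$, and the first paragraph finishes the proof. The only mildly delicate points, both routine, are the conditional version of Hoeffding's lemma (which follows from the unconditional statement applied to a regular conditional law of $D_k$ given $\FF_{k-1}$, using $|D_k|\le U_k$ with $U_k$ measurable) and the bookkeeping in the induction, where the $\FF_{k-1}$-measurability of $U_k$ is used at each step to pull the accumulated factor outside the conditional expectation; I do not expect a genuine obstacle here, as the statement is a direct adaptation of the classical Azuma--Hoeffding inequality to the essential-supremum formulation of the conditional variance.
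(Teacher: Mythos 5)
Your overall plan (exponential Chernoff + Hoeffding lemma per increment + final $\esssup$ bound, optimize $\lambda=K/U$, double up for the absolute value) is the right one, but the iteration step is broken, and the intermediate inequality you extract from it is actually false.

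The peel-off
\[
E\big[e^{\lambda M_n}\,\big|\,\FF_{n-1}\big]\le e^{\lambda M_{n-1}}e^{\lambda^2 U_n^2/2}
\]
is fine, since $U_n$ is $\FF_{n-1}$-measurable. But when you ``iterate,'' the accumulated factor $e^{\lambda^2 U_n^2/2}$ is only $\FF_{n-1}$-measurable, \emph{not} $\FF_{n-2}$-measurable, so it does not factor out of $E[\,\cdot\mid\FF_{n-2}]$; inside that conditional expectation $e^{\lambda D_{n-1}}$ and $e^{\lambda^2 U_n^2/2}$ are correlated through $\FF_{n-1}$ and you cannot apply Hoeffding's lemma to $D_{n-1}$ alone. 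The claimed consequence $E[e^{\lambda M_n}]\le E\big[\exp\big(\tfrac{\lambda^2}{2}\sum_{k\le n}U_k^2\big)\big]$ is in fact false in general. Counterexample with $n=2$: let $D_1=\pm1$ with probability $\tfrac12$ (so $U_1=1$), and given $D_1$ set $D_2=\pm a$ with probability $\tfrac12$ if $D_1=1$, and $D_2=0$ if $D_1=-1$ (so $U_2=a\mathbf{1}_{\{D_1=1\}}$). Then $E[M_2^3]=\tfrac32 a^2>0$, so the $\lambda^3$-coefficient of $E[e^{\lambda M_2}]$ is $\tfrac{a^2}{4}>0$, while the right-hand side $\tfrac12 e^{\lambda^2(1+a^2)/2}+\tfrac12 e^{\lambda^2/2}$ is even in $\lambda$; the two Taylor expansions agree up to $\lambda^2$, so the left side strictly exceeds the right for small $\lambda>0$. (The weaker target bound $e^{\lambda^2 U/2}$ with $U=1+a^2$ is not violated, because its $\lambda^2$-coefficient is strictly larger.)

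The paper dodges this by running the induction \emph{downward} in $k$ on the quantity $E\big(e^{\sum_{j\ge k}D_j}\mid\FF_{k-1}\big)$ and, at each step, replacing the accumulated variance by its \emph{essential supremum conditional on $\FF_{k-1}$}, which is $\FF_{k-1}$-measurable by construction and therefore genuinely factors out; the fact that $U_k$ is $\FF_{k-1}$-measurable then lets it be absorbed into the conditional $\esssup$ at the next step. Equivalently — and this is probably the minimal fix to your write-up — observe that
\[
L_k:=\exp\Big(\lambda M_k-\tfrac{\lambda^2}{2}\sum_{j\le k}U_j^2\Big)
\]
\emph{is} a supermartingale: the per-step factor $e^{-\lambda^2 U_k^2/2}$ is $\FF_{k-1}$-measurable and pairs correctly with $E[e^{\lambda D_k}\mid\FF_{k-1}]\le e^{\lambda^2 U_k^2/2}$. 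Hence $E[L_n]\le1$; combining this with $\sum_{k\le n}U_k^2\le U$ a.s.\ gives $L_n\ge e^{\lambda M_n-\lambda^2 U/2}$ and thus $E[e^{\lambda M_n}]\le e^{\lambda^2 U/2}$ directly, without ever asserting the false intermediate bound. From there your Chernoff optimization is correct.
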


\begin{proof}
The proof is similar to that of Azuma's inequality:
First we show that for every $k$,
\begin{equation}\label{eq:essbnd}
{\bf E} \left(e^{\sum_{j=k}^nD_j}|\FF_{k-1}\right)
\leq
e^{\hetzi\esssup\left(\sum_{j=k}^nU_j^2|\FF_{k-1}\right)}.
\end{equation}
Indeed, for $k=n$ \eqref{eq:essbnd} is clear, and assuming \eqref{eq:essbnd} for
$k+1$, we get
\begin{eqnarray*}
{\bf E} \left(e^{\sum_{j=k}^nD_j}|\FF_{k-1}\right)
&=&
{\bf E} \left( e^{D_k} {\bf E}\left({e^{\sum_{j=k+1}^nD_j}}|\FF_{k}\right)|\FF_{k-1}\right)\\
\leq
{\bf E} \left(e^{D_k}e^{\hetzi\esssup\left(\sum_{j=k+1}^{n}U_j^2|\FF_k\right)}|\FF_{k-1}\right)
&\leq&
{\bf E} \left(e^{D_k}e^{\hetzi\esssup\left(\sum_{j=k+1}^{n}U_j^2|\FF_{k-1}\right)}|\FF_{k-1}\right)\\
=
e^{\hetzi\esssup\left(\sum_{j=k+1}^{n}U_j^2|\FF_{k-1}\right)}{\bf E} \left(e^{D_k}|\FF_{k-1}\right)
&\leq&
e^{\hetzi\esssup\left(\sum_{j=k+1}^{n}U_j^2|\FF_{k-1}\right)}e^{\hetzi U_k^2}\\
&=&
e^{\hetzi\esssup\left(\sum_{j=k}^nU_j^2|\FF_{k-1}\right)}.
\end{eqnarray*}

For $k=0$ this gives us that
\begin{equation*}
{\bf E}\left(e^{M_n}\right)\leq e^{\hetzi U}
\end{equation*}
and that for every $\lambda$,
\begin{equation*}
{\bf E}\left(e^{\lambda M_n}\right)\leq e^{\hetzi \lambda^2U}.
\end{equation*}
Using Markov's inequality once with $\lambda=\frac{K}{U}$ and once with $\lambda=-\frac{K}{U}$ gives the desired result.
\end{proof}

Next we discuss the intersection structure of two independent walks in the same environment.

\begin{lemma}\label{lem:inter}
Assume the assumptions \ref{item:assgamma}--\ref{item:assdim} from Page \pageref{item:assgamma}.
Let $X^{(1)}:=\{X_n^{(1)}\}$ and $X^{(2)}:=\{X_n^{(2)}\}$ be two independent random walks running in the same environment $\omega$.
Let $[X^{(i)}]$ be the set of points visited by $\{X_n^{(i)}\}$.
Then there exists $C$ such that for every $n$,
\begin{equation*}
E\left[
\quenchedP_{\omega,\omega}\left(
\left\{\left|
\left[X^{(1)}\right]\cap\left[X^{(2)}\right]\cap\PP(0,N)\right|>nR_1^d(N) 
\right\}
\ \cap A_N(X^{(1)}) \cap  A_N(X^{(2)})
\right)
\right]<e^{-Cn}
\end{equation*}
\end{lemma}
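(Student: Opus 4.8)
The plan is to control the size of the intersection $\left[X^{(1)}\right]\cap\left[X^{(2)}\right]\cap\PP(0,N)$ on the good event $A_N(X^{(1)})\cap A_N(X^{(2)})$ by decomposing each walk into its regeneration blocks. On $A_N(X^{(i)})$, the first $N$ regenerations of $X^{(i)}$ have radius at most $R(N)=R_1(N)$, so the path between consecutive regeneration points $X^{(i)}_{\tau_{k-1}}$ and $X^{(i)}_{\tau_k}$ lies inside a ball of radius $R_1(N)$, hence visits at most $C R_1^d(N)$ sites. Therefore the intersection set is covered by the union, over pairs $(k,l)$, of the intersections of the $k$-th block of $X^{(1)}$ with the $l$-th block of $X^{(2)}$; and such a block-block intersection is nonempty only if the two (random) block-centers are within distance $2R_1(N)$ of each other, in which case it contributes at most $CR_1^d(N)$ points. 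So it suffices to show that the number of pairs $(k,l)$ of blocks whose centers come within $2R_1(N)$ of each other exceeds $Cn$ with probability at most $e^{-Cn}$; then multiplying by $CR_1^d(N)$ gives the stated bound on the number of intersection points (after adjusting constants and absorbing the factor).

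The key step is then a large-deviation estimate for the number of such close block-pairs. Here one uses that the regeneration increments $X^{(i)}_{\tau_{k}}-X^{(i)}_{\tau_{k-1}}$ are i.i.d. (Theorem \ref{thm:inf_reg}, part \ref{item:sz_iid}), with exponential tails (Lemma \ref{lem:regrad}), and crucially that in dimension $d\geq 4$ two independent random walks with a nondegenerate drift in direction $e_1$ and finite second moments have only finitely many intersections in expectation along the $e_1$-axis: fixing the hyperplane coordinate $\langle\cdot,e_1\rangle=m$, each walk has $O(1)$ regeneration points there, and the transverse displacement of the $k$-th regeneration point spreads out like $\sqrt{k}$ in $d-1\geq 3$ transverse dimensions, so the probability that a block of $X^{(1)}$ at level $m$ is within $2R_1(N)$ of a block of $X^{(2)}$ at the same level is summably small once one accounts for the $R_1^d(N)$ volume factor against the $(\sqrt{k})^{d-1}$ transverse spreading — the condition $d\geq 4$ is exactly what makes $\sum_k k^{-(d-1)/2}\cdot R_1^d(N)$ manageable (more precisely, the expected number of close pairs is $\text{polylog}(N)$, which is $o(n)$ for the relevant range, or is handled by the same geometric/large-deviation bookkeeping used in Lemma \ref{lem:ann_der}). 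Given that the expected number of close pairs is small, one obtains exponential decay of the probability of having more than $Cn$ of them by a standard stochastic-domination / Chernoff argument: the indicator that block $k$ of $X^{(1)}$ is close to \emph{some} block of $X^{(2)}$ can be dominated by independent Bernoulli variables (after conditioning appropriately on one of the two walks and using the i.i.d. block structure of the other), and a sum of independent small-mean Bernoulli variables exceeding $Cn$ has probability $e^{-Cn}$.

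The main obstacle I expect is the conditioning: the two walks run in the \emph{same} environment, so $X^{(1)}$ and $X^{(2)}$ are not independent under $\quenchedP_{\omega,\omega}$, and block structures of the two walks are not independent after we take the annealed expectation $E[\cdot]$. The way around this is to note that under the \emph{annealed} measure $\annealedP\otimes\annealedP$ (i.e. $E[\quenchedP_{\omega,\omega}(\cdot)]$) one can condition on the entire trajectory of $X^{(2)}$ together with the environment \emph{restricted to the sites visited by $X^{(2)}$}; the walk $X^{(1)}$ then explores a still-i.i.d. environment on the complement, so its regeneration structure behaves as in Theorem \ref{thm:inf_reg} away from $[X^{(2)}]$, while the contribution of the (thin, on $A_N(X^{(2)})$) set $[X^{(2)}]$ itself is negligible. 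Handling this carefully — quantifying how much conditioning on $[X^{(2)}]$ can distort the regeneration estimates and the Azuma-type bound of Lemma \ref{lem:azuma} — is the technical heart; it parallels the conditioning on $A_N$ carried out in the proof of Lemma \ref{lem:ann_der}, and the error terms will again be of the form $N^{-\xi(1)}$, hence harmless.
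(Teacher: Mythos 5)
The paper's proof is considerably more direct and leans on a ready-made decoupling statement: it slices $\PP(0,N)$ into slabs of $e_1$-width $R_1(N)$, observes (using $A_N$) that each walk visits at most $2^dR_1^d(N)$ points per slab, and then invokes Propositions 3.1, 3.4, 3.7 of \cite{BZ07} to assert that, conditioned on the intersection history up to slab $k$, the two walks fail to intersect in all later slabs with probability at least some fixed $\rho>0$. This is exactly a renewal/Markov decoupling that bounds the number of ``active'' slabs by a geometric random variable, and the exponential tail then falls out immediately. Your proposal is a genuinely different route: you want to count close regeneration-block pairs directly via the transverse spreading $(\sqrt{k})^{d-1}$ from a local CLT, show the expected number of close pairs is polylog using $d\geq 4$, and then upgrade to an exponential tail by ``a standard stochastic-domination / Chernoff argument.''

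The gap is precisely in that last step, and it is not a small one. Showing that the \emph{expected} number of close block-pairs is small does not give an exponential tail; you need the indicators of ``slab $k$ has a close pair'' to be dominated by independent Bernoullis (or to have a geometric decoupling of exactly the type \cite{BZ07} provides). Nothing in your argument establishes that domination, and it is not automatic: after conditioning on $X^{(2)}$ together with the environment on $[X^{(2)}]$, the regeneration structure of $X^{(1)}$ is perturbed precisely on the set you care about (the potential intersection set), and the transverse-spreading estimate is no longer cleanly available there. You flag this as ``the technical heart'' and assert the errors are $N^{-\xi(1)}$ and ``harmless,'' but that is the conclusion one would need to prove — it is not something that follows from the Azuma-type bookkeeping in Lemma \ref{lem:ann_der}, which deals with a single walk and conditioning on $A_N$, a very different (and much milder) conditioning. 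In short, your mean estimate is plausible, but the step from bounded mean to $e^{-Cn}$ tail is the real content of the lemma, and your proposal leaves it to an unproved domination claim that the paper instead obtains by citing the specific decoupling estimate \eqref{eq:frombz} from \cite{BZ07}.

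A secondary point: the paper also does not need any local CLT / transverse-spreading computation at all. Once the decoupling \eqref{eq:frombz} is in hand, the dimension requirement and moment conditions are absorbed into the hypotheses of the cited propositions, and the argument reduces to ``geometrically many active slabs, each contributing at most $2^dR_1^d(N)$ points.'' So even if your mean-counting argument can be made rigorous, it is doing extra work relative to the paper's proof, and still does not supply the exponential concentration on its own.
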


\begin{proof}
Let $k\geq 0$ be such that $k+R_1(N)<N$. Then from the definition of the event $A_N$, for a random walk $X=\{X_n\}$,
\begin{equation}\label{eq:inslab}
{\bf 1}_{A_N(X)}\cdot \big| \{x\,:\,x\in[X]\,;\,k<\langle x,e_1\rangle<k+R_1(N)\}\big| < 2^dR_1^d(N).
\end{equation}

For every $k$, let $Q^-_k=\PP(0,N)\cap\{x\,:\,\langle x,e_1\rangle<kR_1(N)\}$ and
$Q^+_k=\PP(0,N)\cap\{x\,:\,\langle x,e_1\rangle\geq kR_1(N) \}$. In addition, let
$\hat A_N=A_N\big(X^{(1)}\big)\cap A_N\big(X^{(2)}\big)$.
Using Propositions 3.1, 3.4 and 3.7  of \cite{BZ07}, as well as uniform ellipticity,  and, again, recalling the definition of $A_N$, we see that there exists $\rho>0$ such that for every $k$
\begin{equation}\label{eq:frombz}
E\left[
\quenchedP_{\omega,\omega}\left(
\left\{
\left[X^{(1)}\right]\cap\left[X^{(2)}\right]\cap Q^+_{k+1}=\emptyset 
\right\}
\left| 
\hat A_N\,;\, \left[X^{(1)}\right]\cap Q^-_{k};\left[X^{(2)}\right]\cap Q^-_{k}
\right.\right)
\right]>\rho.
\end{equation}

{\bf Remark:} As stated in \cite{BZ07}, Propositions 3.1, 3.4 and 3.7  of \cite{BZ07} require moment assumptions on the regeneration times. Nevertheless, examining their proofs, all they need are moment assumptions on the number of sites visited before $\tau_1$, and these moment assumptions are satisfied by Lemma \ref{lem:regrad}.

Now, let 
\[
J^{(\mbox{even})}=\bigl\{k\,:\,k\mbox{ is even and } 
\left[X^{(1)}\right]\cap\left[X^{(2)}\right]\cap Q^+_{k}\cap Q^-_{k+1}\neq\emptyset
\bigr\}
\]

and

\[
J^{(\mbox{odd})}=\bigl\{k\,:\,k\mbox{ is odd and } 
\left[X^{(1)}\right]\cap\left[X^{(2)}\right]\cap Q^+_{k}\cap Q^-_{k+1}\neq\emptyset
\bigr\}.
\]

Then, by \eqref{eq:frombz}, conditioned on $\hat A_N$, both $J^{(\mbox{even})}$ and $J^{(\mbox{odd})}$ are dominated 
by a geometric variable with parameter $\rho$.

The lemma now follows when we remember that by \eqref{eq:inslab},

\[
{\bf 1}_{\hat A_N}\cdot
\left|\left[X^{(1)}\right]\cap\left[X^{(2)}\right]\cap\PP(0,N)\right|
\leq 2^d R_1^d(N)\big(J^{(\mbox{even})}+J^{(\mbox{odd})}\big)
\]

\end{proof}

As a corollary we get the following estimate:

\begin{lemma}\label{lem:interquenched}
Assume the assumptions \ref{item:assgamma}--\ref{item:assdim} from Page \pageref{item:assgamma}.
With the same notation as in Lemma \ref{lem:inter},
\begin{equation}\label{eq:interquenched}
P\left[\omega\ :\ 
\quenchedE_{\omega,\omega}\left(\left|
\left[X^{(1)}\right]\cap\left[X^{(2)}\right]\cap\PP(0,N)\right|
%\cdot{\bf 1}_{A_N(X^{(1)})}\cdot{\bf 1}_{A_N(X^{(2)})}
\cdot{\bf 1}_{A_N(X^{(1)})\cap A_N(X^{(2)})}
\right)\geq R_2(N)
\right]=N^{-\xi(1)}
\end{equation}
\end{lemma}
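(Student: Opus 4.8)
The plan is to upgrade the annealed exponential estimate of Lemma~\ref{lem:inter} into the quenched statement \eqref{eq:interquenched} by a first-moment bound applied to the \emph{tail} of the quenched expectation, using crucially that $R_2(N)$ dominates every fixed power of $R_1(N)$ by \eqref{eq:rs}. Write $\hat A_N:=A_N(X^{(1)})\cap A_N(X^{(2)})$ and
\[
V:=\bigl|[X^{(1)}]\cap[X^{(2)}]\cap\PP(0,N)\bigr|\cdot{\bf 1}_{\hat A_N},
\]
so that $\quenchedE_{\omega,\omega}(V)$ is the quantity in \eqref{eq:interquenched}. For $t\ge 0$ put $g(\omega,t):=\quenchedP_{\omega,\omega}(V>t)$, a function of $\omega$ only, and note that $\quenchedE_{\omega,\omega}(V)=\int_0^\infty g(\omega,t)\,dt$. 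Since for $n:=\lfloor t/R_1^d(N)\rfloor$ we have $\{V>t\}\subseteq\{V>nR_1^d(N)\}$, and the latter is precisely the event controlled by Lemma~\ref{lem:inter}, we get for every $t\ge0$
\[
E[g(\cdot,t)]\ \le\ e^{-C\lfloor t/R_1^d(N)\rfloor}\ \le\ e^{C}e^{-Ct/R_1^d(N)}.
\]

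First I would split $\quenchedE_{\omega,\omega}(V)=\int_0^{R_2(N)/2}g(\omega,t)\,dt+\int_{R_2(N)/2}^{\infty}g(\omega,t)\,dt$; the first integral is at most $R_2(N)/2$ because $g\le 1$. Denoting the second integral by $h(\omega)$, Fubini--Tonelli and the displayed bound give
\[
E[h]=\int_{R_2(N)/2}^{\infty}E[g(\cdot,t)]\,dt\ \le\ \frac{e^{C}R_1^d(N)}{C}\exp\!\Bigl(-\frac{C\,R_2(N)}{2R_1^d(N)}\Bigr).
\]
By \eqref{eq:rs} applied with $M=d+1$ one has $R_1^{d+1}(N)<R_2(N)$ for large $N$, hence $R_2(N)/(2R_1^d(N))>R_1(N)/2$; as $R_1(N)=R(N)$ grows faster than any power of $\log N$, the exponential is $N^{-\xi(1)}$, and multiplication by the subpolynomial factor $e^{C}R_1^d(N)/C$ leaves it $N^{-\xi(1)}$. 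Thus $E[h]=N^{-\xi(1)}$, and Markov's inequality gives $P[h(\omega)\ge R_2(N)/2]\le 2E[h]/R_2(N)=N^{-\xi(1)}$.

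To conclude: on the event $\{h(\omega)<R_2(N)/2\}$ we have $\quenchedE_{\omega,\omega}(V)<R_2(N)/2+R_2(N)/2=R_2(N)$, so $\{\quenchedE_{\omega,\omega}(V)\ge R_2(N)\}\subseteq\{h(\omega)\ge R_2(N)/2\}$, which has $P$-probability $N^{-\xi(1)}$; this is \eqref{eq:interquenched}. The argument is essentially mechanical once Lemma~\ref{lem:inter} is available; the only step requiring care --- and the reason $R_2(N)$ is placed one level above $R_1(N)$ in the hierarchy --- is the final scale comparison, which guarantees that the super-polynomially small factor $\exp(-C\,R_2(N)/R_1^d(N))$ survives both the loss of the factor $R_1^d(N)$ from integrating the annealed tail and the loss of the factor $R_2(N)^{-1}$ from Markov's inequality, and is therefore still absorbed by the $N^{-\xi(1)}$ notation.
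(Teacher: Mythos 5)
Your proof is correct. The paper gives no argument here (it simply labels the statement a corollary of Lemma~\ref{lem:inter}), and your truncated first-moment bound is exactly the detail the word ``corollary'' is meant to cover: you correctly noticed that plain Markov applied to $E[\quenchedE_{\omega,\omega}(V)]=O(R_1^d(N))$ would only yield probability $O(R_1^d(N)/R_2(N))$, which is \emph{not} $N^{-\xi(1)}$, so the split of $\int_0^\infty g(\omega,t)\,dt$ at $R_2(N)/2$ is genuinely necessary, and the final scale comparison $R_1^{d+1}(N)<R_2(N)$ from \eqref{eq:rs} is the right way to make the surviving exponential beat every power of $N$.
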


Let $J(N)\subseteq\Omega$ be the event that
%the event in (\ref{eq:interquenched}) holds
for every starting point $z$ in the middle third of the \paral,
\begin{equation*}
\quenchedE^{z,z}_{\omega,\omega}\left(\left|
\left[X^{(1)}\right]\cap\left[X^{(2)}\right]\cap\PP(0,N)\right|
%\cdot{\bf 1}_{A_N(X^{(1)})}\cdot{\bf 1}_{A_N(X^{(2)})}
\cdot{\bf 1}_{A_N(X^{(1)})\cap A_N(X^{(2)})}
\right)\leq R_2(N).
\end{equation*}
Then, by Lemma \ref{lem:interquenched},
$P(J(N))=1-N^{-\xi(1)}.$

Fix $z\in\tilde\PP(0,N)$. For every $\omega$ and $x\in\PP(0,N)$, we let
\[
H^z(\omega,x):=\quenchedP^z_\omega(x\in [X] \mbox{ and } A_N(\{X_n\}))
\]
be the hitting probability of $x$.
Then for $\omega\in J(N)$
\begin{equation}\label{eq:heak}
\sum_{x\in\PP(0,N)}(H^z(\omega,x))^2\leq R_2(N).
\end{equation}

\begin{lemma}\label{lem:toch}
Assume the assumptions \ref{item:assgamma}--\ref{item:assdim} from Page \pageref{item:assgamma}.
There exists an event $K(N)\subseteq\Omega$ such that $P(K(N))=1-N^{-\xi(1)}$
and for every $\omega\in K(N)$ and $z\in\tilde{\PP}(0,N)$,
\begin{equation}\label{eq:toch}
\left\|
\quenchedE_\omega^z\left[
X_{T_{\partial\PP(0,N)}}
\right]
-\annealedE^z\left[
X_{T_{\partial\PP(0,N)}}
\right]
\right\|\leq 
R_3(N).
%R^{4+2d}(N).
\end{equation}
\end{lemma}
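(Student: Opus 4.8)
The plan is to prove Lemma~\ref{lem:toch} by showing that the quenched expectation of the exit point is a function of the environment with small ``influence'' of each coordinate, and then to apply the martingale concentration inequality of Lemma~\ref{lem:azuma} after exposing the environment site-by-site. Fix $z\in\tilde\PP(0,N)$ and set $F(\omega)=\quenchedE_\omega^z[X_{T_{\partial\PP(0,N)}}]$ (working coordinate by coordinate, so $F$ is real-valued). By the tower property, $\annealedE^z[X_{T_{\partial\PP(0,N)}}]=\E[F(\omega)]$, where $\E$ is expectation over $P$. So it suffices to show $|F(\omega)-\E F|\le R_3(N)$ with $P$-probability $1-N^{-\xi(1)}$. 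Enumerate the sites of $\PP(0,N)$ as $x_1,x_2,\ldots,x_m$ (with $m=O(N^2(NR_5(N))^{d-1})$, a polynomial in $N$), let $\FF_k$ be the $\sigma$-algebra generated by $\omega(x_1),\ldots,\omega(x_k)$ together with all $\omega(y)$ for $y\notin\PP(0,N)$, and set $M_k=\E[F\mid\FF_k]-\E F$. This is a zero-mean martingale with $M_m=F-\E F$ (the exit point only depends on the environment inside $\PP(0,N)$, up to the $N^{-\xi(1)}$ event $A_N^c$, which contributes negligibly).

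The key estimate is a bound on the martingale increment $D_k=M_k-M_{k-1}$, or rather on its conditional essential supremum $U_k$. Resampling $\omega(x_k)$ changes $F(\omega)$ by at most (roughly) the diameter of $\PP(0,N)$ times the probability that the walk started at $z$ ever visits $x_k$: more precisely, a standard coupling argument (run two walks in environments differing only at $x_k$, coupled to agree until the first visit to $x_k$) gives
\[
U_k \;\le\; C\cdot\diam(\PP(0,N))\cdot \sup_{z'\in\tilde\PP(0,N)}H^{z'}(\omega,x_k)\;+\;(\text{error from }A_N^c),
\]
where $H^{z'}(\omega,x_k)=\quenchedP^{z'}_\omega(x_k\in[X]\ \text{and}\ A_N(\{X_n\}))$ is the hitting-probability function introduced just before the lemma, and $\diam(\PP(0,N))=O(NR_5(N))$. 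Actually one must be a little careful: the relevant hitting probability when conditioning on $\FF_{k-1}$ is a hitting probability in a partially-resampled environment, but since $\omega\in J(N)$ controls the $\ell^2$-norm of $H$ uniformly (see \eqref{eq:heak}), and this property is stable under the resampling (or one intersects with a slightly larger good event), one gets the same polynomial control. Then
\[
U \;=\;\esssup\sum_{k=1}^m U_k^2 \;\le\; C\,(NR_5(N))^2\sum_{x\in\PP(0,N)}\big(\sup_{z'}H^{z'}(\omega,x)\big)^2\;\le\; C\,(NR_5(N))^2 R_2(N),
\]
using \eqref{eq:heak} on the event $J(N)$ (one should take the supremum over the polynomially-many starting points $z'$ inside $\tilde\PP(0,N)$, which costs only a polynomial factor, or equivalently use a version of $J(N)$ that controls $\sum_x H^2$ uniformly in $z'$). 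Recalling $R_2(N)=\exp((\log\log N)^3)$ and $R_3(N)=\exp((\log\log N)^4)$, we have $U \le N^{O(1)}R_2(N) \ll R_3(N)^2$, so Lemma~\ref{lem:azuma} with $K=R_3(N)$ gives
\[
P\big(|F(\omega)-\E F|>R_3(N)\big)\;\le\; 2\exp\!\Big(-\tfrac{R_3(N)^2}{2U}\Big)\;\le\; 2\exp\!\big(-R_3(N)^{3/2}\big)\;=\;N^{-\xi(1)}.
\]
Taking a union bound over the $d$ coordinates and over the polynomially-many starting points $z\in\tilde\PP(0,N)$ (the bound $N^{-\xi(1)}$ absorbs polynomial factors), and intersecting with $J(N)$ and with $A_N$-type good events, defines $K(N)$ and finishes the proof.

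The main obstacle is making the increment bound $U_k\lesssim \diam(\PP(0,N))\cdot H^z(\omega,x_k)$ rigorous, because conditioning on $\FF_{k-1}$ (which fixes $\omega(x_1),\ldots,\omega(x_{k-1})$ and resamples the rest) means the relevant walk runs in a partly-resampled environment, so one cannot directly invoke the fixed-$\omega$ bound \eqref{eq:heak}. The fix is to observe that the coupling argument only needs an upper bound on the hitting probability of $x_k$ that holds \emph{simultaneously} for the family of environments consistent with $\FF_{k-1}$, and this can be arranged by working on a good event stable under resampling: concretely, one can bound the conditional hitting probability by $\sup H$ over the still-free coordinates, or redo the estimate of Lemma~\ref{lem:interquenched}/\eqref{eq:heak} ``with a frozen prefix'', which goes through verbatim since the argument via Lemma~\ref{lem:inter} and \cite{BZ07} only used uniform ellipticity and regeneration-radius tails, both insensitive to freezing finitely many coordinates. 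A second, more routine, technical point is handling the $A_N^c$ contribution to $F$: since $\annealedP(A_N^c)=N^{-\xi(1)}$ and the exit point is deterministically bounded by $\diam(\PP(0,N))=N^{O(1)}$, the discrepancy between $F$ and its $A_N$-truncated version is $N^{-\xi(1)}$, far below $R_3(N)$, so it can simply be discarded.
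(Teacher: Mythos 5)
The overall strategy—exposing the environment site-by-site and applying the martingale concentration inequality of Lemma~\ref{lem:azuma}, with the increments controlled by hitting probabilities and \eqref{eq:heak}—is the right one and matches the paper's proof. However, there is a decisive quantitative gap in your increment bound.

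You bound the martingale increment by (roughly) $\diam(\PP(0,N))$ times the hitting probability of $x_k$, and then claim $U \le N^{O(1)}R_2(N) \ll R_3(N)^2$. This last inequality is false: since $R_3(N) = \exp\!\big((\log\log N)^4\big)$ grows slower than any power of $N$, one has $N^{O(1)} \gg R_3(N)^2$ for all large $N$, so the concentration bound you extract from Lemma~\ref{lem:azuma} is vacuous (the exponent $-K^2/2U$ tends to $0$, not $-\infty$). With a polynomial-size increment the best fluctuation scale you can reach is itself polynomial in $N$, far above the required $R_3(N)$.

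The fix—and the key idea of the paper's proof that your argument is missing—is that resampling $\omega(x_k)$ does \emph{not} move the exit point by order $\diam(\PP(0,N))$. The vertices are ordered lexicographically with the first coordinate most significant, so that when $\omega(x_k)$ is revealed, all sites in strictly higher $e_1$-layers are still averaged over. Consequently, once the walk regenerates past $x_k$, its conditional law given $\FF_k$ is the annealed law started from the regeneration point and is unaffected by $\omega(x_k)$. On $A_N$ the regeneration containing $x_k$ has radius at most $R(N)$, so the increment is controlled by $R^2(N)$ (a sub-polynomial quantity) times the hitting probability of a small box near $x_k$, not by the block diameter. This yields $\sum_k U_k^2 \le R_2^{2d+2}(N)$ on $J(N)$, and then Lemma~\ref{lem:azuma} with $K = R_2^{2d+2}(N) \le R_3(N)$ gives the genuinely super-polynomial failure probability. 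Without this regeneration-based improvement of the increment, the lemma does not follow.
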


\begin{proof}
Define
\begin{eqnarray*}
U(\omega,z):=
\left\| \begin{array}{c}
\quenchedE^z_\omega\left[
X_{T_{\partial\PP(0,N)}}\cdot {\bf 1}_{A_N} \cdot {\bf 1}_{T_{\partial\PP(0,N)}=T_{\partial^+\PP(0,N)}}
\right]\\
-\annealedE^z\left[
X_{T_{\partial\PP(0,N)}} \cdot {\bf 1}_{A_N} \cdot {\bf 1}_{T_{\partial\PP(0,N)}=T_{\partial^+\PP(0,N)}}
| J(N)\right] \end{array}
\right\|.
%\leq R_2^{d+2}(N).
% R^{4+2d}(N).
\end{eqnarray*}
It is sufficient to show that for a large enough set of $\omega$-s,
\begin{equation}\label{eq:toch2}
U(\omega,z)
\leq R_2^{2d+2}(N).
\end{equation}

\eqref{eq:toch2} is sufficient, because for a set $M$ of environments of measure $1-N^{-\xi(1)}$, for every $\omega\in M$
we have that $\quenchedP^z_\omega(T_{\partial\PP(0,N)}=T_{\partial^+\PP(0,N))})=N^{-\xi(1)}$. Since
$\|X_{T_{\partial\PP(0,N)}}\|_\infty<CN^2$, on every $\omega\in M$ the contribution of the event
$\big\{T_{\partial\PP(0,N)}\neq T_{\partial^+\PP(0,N))}\big\}$ to the expectation of $X_{T_{\partial\PP(0,N)}}$ is bounded
by $1$.

To show \eqref{eq:toch2}, we order the vertices in $\PP(0,N)$ lexicographically, $x_1,x_2,\ldots$, with the first coordinate being
the most significant. Let $\FF_n$ be the $\sigma$-algebra on the sample space $\big(J(N)\subseteq\Omega,P(\cdot|J(N))\big)$ which is determined by $\omega|_{x_1,\ldots,x_n}$ and let $\{M_k\}$ be the martingale
\[
M_k:=E\left[\left.
\quenchedE_\omega\left[
X_{T_{\partial\PP(0,N)}}\cdot {\bf 1}_{A_N} \cdot {\bf 1}_{T_{\partial\PP(0,N)}=T_{\partial^+\PP(0,N)}}
\right]
\right|\ \FF_k\right]
\]

Next we calculate $\esssup(M_k-M_{k-1}|\FF_{k-1})$. The argument is similar to the one used in \cite{BZ07}, which is based on ideas from \cite{bolthausen_sznitman}.
Let
\begin{equation*}
B(x):=\{y\ :\ \langle y,e_1\rangle = \langle x,e_1\rangle-1\mbox{ and } \|y-x\|\leq R_1^2(N)\}.
\end{equation*}
Note that if $x$ is visited and $A_N$ holds, then the first visit to the layer
\begin{equation*}
H(x):=\{y\ :\ \langle y,e_1\rangle = \langle x,e_1\rangle-1\}
\end{equation*}
is in $B(x)$.

Therefore,

\begin{eqnarray}
\nonumber
U_k=
\esssup(M_k-M_{k-1}|\FF_{k-1})
&\leq&
R^2(N)\joint(x_k \in [X]\ | \ \FF_{k-1})\\
\nonumber
\leq
R^2(N)\sum_{y\in B(x_k)} \joint(X_{T_{H(x_k)}}=y\ |\ \FF_{k-1})
&=&
R^2(N)\sum_{y\in B(x_k)} \quenchedP_\omega(X_{T_{H(x_k)}}=y)\\
\nonumber
&\leq&
R^2(N)\sum_{y\in B(x_k)} \quenchedP_\omega(y\in[X]),\\ \label{eq:411a}
\end{eqnarray}

where the first inequality follows from the fact that the regeneration containing $x_k$ is of size no more than
$R^2(N)$, and after this regeneration the distribution of the walk is the annealed distribution.
%and, 
Remembering that $|B(x_k)|\leq 2^dR_2^{d}(N)$ and that every $y$ is in $B(x)$ for at most
$2^dR_2^{d}(N)$ points $x$,

\begin{eqnarray}
\nonumber
\sum_{k=1}^n U_k^2
&\leq&
\sum_{k=1}^n R^4(N) \left[\sum_{y\in B(x_k)} \quenchedP_\omega(y\in[X])\right]^2\\
\nonumber
&\leq&
2^dR_2^d(N)R^{4}(N)\sum_{k=1}^n\sum_{y\in B(x_k)} \quenchedP_\omega(y\in[X])^2\\
\nonumber
&\leq&
2^{2d}R_2^{2d}(N)R^{4}(N)\sum_{y\in\PP(0,N)} H^2(\omega,y)
\leq 2^{2d}R_2^{2d}(N)R^{4}(N)\cdot R_2(N) \leq R_2^{2d+2}(N).\\ \label{eq:411b}
\end{eqnarray}

Therefore, by Lemma \ref{lem:azuma}, 
\begin{eqnarray*}
P\left(\left.\omega\ :\ 
U(\omega)
%\left\| \begin{array}{c}
%\quenchedE_\omega\left[
%X_{T_{\partial\PP(0,N)}}\cdot {\bf 1}_{A_N} \cdot {\bf 1}_{T_{\partial\PP(0,N)}=T_{\partial^+\PP(0,N)}}
%\right]\\
%-\annealedE\left[
%X_{T_{\partial\PP(0,N)}} \cdot {\bf 1}_{A_N} \cdot {\bf 1}_{T_{\partial\PP(0,N)}=T_{\partial^+\PP(0,N)}}
%\right] \end{array}
%\right\|> R^{4+2d}(N).
>R_2^{2d+2}(N) \ \right| \ J(N) \right)
<2e^{-\frac{R_2^{4d+4}(N)}{2R_2^{2d+2}(N)}}=N^{-\xi(1)}.
\end{eqnarray*}

\eqref{eq:toch2} follows.

\end{proof}

We now estimate the quenched exit distribution from $\PP(0,N)$. Fix a starting point for the walk
$z\in\tilde{\PP}(0,N)$. We start with the following 
%relatively simple
lemma. Recall that for every $k$, we define
$H_k$ to be the hyper-plain $H_k=\{v\in\Z^d:\langle v,e_1\rangle=k\}$.

\begin{lemma}\label{lem:dpachotechad}
Assume the assumptions \ref{item:assgamma}--\ref{item:assdim} from Page \pageref{item:assgamma}.
Fix $0<\theta\leq 1$. 
Let $B^{\theta}(N)\subseteq\Omega$ be the event that for every $\frac 25N^2\leq M\leq N^2$ and every ($d-1$ dimensional) cube $Q$ of side length $N^{\theta}$ which is contained in $H_M$,
%and for every $z\in\hPP(0,N)$,
\begin{equation*}
\left|
\quenchedP_\omega^z\left(X_{T_M}\in Q\ ; \ A_N\right)
-\annealedP^z\left(X_{T_M}\in Q\ ; \ A_N\right)
\right|
\leq N^{(\theta-1)(d-1)}.
\end{equation*}

then for $\theta>\frac{d-1}{d}$,
\[
P\left(B^{\theta}(N)\right) = 1-N^{-\xi(1)}.
\]
\end{lemma}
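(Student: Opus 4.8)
The plan is to follow the martingale-method template already established in the proofs of Lemma~\ref{lem:toch} and Lemma~\ref{lem:inter}, but now applied to the indicator of the event $\{X_{T_M}\in Q\}$ rather than to the coordinate functions of the exit point. Fix $\theta>\frac{d-1}{d}$, fix $z\in\tilde\PP(0,N)$, and fix both a level $\frac25 N^2\le M\le N^2$ and a $(d-1)$-dimensional cube $Q\subseteq H_M$ of side length $N^\theta$. Work on the probability space $(J(N),P(\cdot\mid J(N)))$, order the vertices of $\PP(0,N)$ lexicographically (first coordinate most significant) as $x_1,x_2,\dots$, let $\FF_k$ be the $\sigma$-algebra generated by $\omega|_{x_1,\dots,x_k}$, and set
\[
M_k:=E\!\left[\,\quenchedP_\omega^z\!\left(X_{T_M}\in Q\,;\,A_N\right)\,\middle|\,\FF_k\right].
\]
This is a zero-mean martingale (after subtracting its mean) whose terminal value is $\quenchedP_\omega^z(X_{T_M}\in Q\,;\,A_N)$ and whose mean is $\annealedP^z(X_{T_M}\in Q\,;\,A_N\mid J(N))$, which differs from $\annealedP^z(X_{T_M}\in Q\,;\,A_N)$ by only $N^{-\xi(1)}$ since $P(J(N))=1-N^{-\xi(1)}$. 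So it suffices to show $|M_n-M_0|\le \frac12 N^{(\theta-1)(d-1)}$ with probability $1-N^{-\xi(1)}$, which will come from Lemma~\ref{lem:azuma} once the essential variance $U=\esssup\sum_k U_k^2$ is controlled.

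The key step is bounding $U_k=\esssup(|M_k-M_{k-1}|\mid\FF_{k-1})$. Exactly as in \eqref{eq:411a}, changing $\omega$ at the single site $x_k$ can affect $\quenchedP_\omega^z(X_{T_M}\in Q;A_N)$ only through walks that, on the event $A_N$, visit $x_k$; such walks must enter the layer $H(x_k)$ through the small set $B(x_k)$ of size $O(R_2^d(N))$, and after the regeneration containing $x_k$ (of size at most $R^2(N)$) the future law is annealed. But now there is an extra gain compared with Lemma~\ref{lem:toch}: after that regeneration, the walk still has to travel a macroscopic distance (at least of order $N^2/5$ in the $e_1$-direction, since $M\ge\frac25 N^2$ and $x_k$ lies in $\PP(0,N)$) before reaching $H_M$, so by the annealed derivative/regularity estimate — Part~\eqref{item:zero_der} of Lemma~\ref{lem:ann_der}, or rather the observation that the annealed exit point is spread over a region of diameter $\Omega(N)$ in each transverse direction with density $O(N^{-(d-1)})$ — the conditional probability that it lands in the prescribed cube $Q$ of side $N^\theta$ is only $O(N^{(\theta-1)(d-1)})$. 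Hence
\[
U_k\le C R^2(N)\Bigl(\sum_{y\in B(x_k)}\quenchedP_\omega^z(y\in[X];A_N)\Bigr)\cdot N^{(\theta-1)(d-1)},
\]
and then, squaring and summing over $k$ exactly as in \eqref{eq:411b} — using $|B(x_k)|\le 2^d R_2^d(N)$, the double-counting bound that each $y$ lies in at most $2^dR_2^d(N)$ sets $B(x)$, and the intersection estimate \eqref{eq:heak} $\sum_{y\in\PP(0,N)}H^z(\omega,y)^2\le R_2(N)$ valid for $\omega\in J(N)$ — one gets
\[
U=\esssup\sum_{k=1}^n U_k^2\le R_2^{O(1)}(N)\cdot N^{2(\theta-1)(d-1)}.
\]
Plugging into Lemma~\ref{lem:azuma} with $K=\tfrac12 N^{(\theta-1)(d-1)}$ gives failure probability $\le 2\exp(-K^2/2U)\le 2\exp(-c N^{0}/R_2^{O(1)}(N))$; this is $N^{-\xi(1)}$ precisely because $R_2(N)=[\exp((\log\log N)^3)]$ is sub-polynomial, so $N^{2(\theta-1)(d-1)}/(K^2/U)$... — more carefully, $K^2/U\ge N^{0}/R_2^{O(1)}(N)=\exp(\text{polylog}\log N)$, giving a bound that is $N^{-\xi(1)}$. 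Finally a union bound over the $O(N^2)$ choices of $M$ and the $O(R_5^{d-1}(N)N^{(1-\theta)(d-1)})=N^{O(1)}$ choices of $Q$ (and over a polynomial number of starting points $z\in\tilde\PP(0,N)$, though $z$ is essentially immaterial) is absorbed, since $N^{O(1)}\cdot N^{-\xi(1)}=N^{-\xi(1)}$; intersecting the resulting good event with $J(N)$ yields $B^\theta(N)$ with $P(B^\theta(N))=1-N^{-\xi(1)}$.

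The main obstacle I expect is making rigorous the claim that, after the regeneration straddling $x_k$, the annealed probability of exiting through the specific cube $Q$ of side $N^\theta$ is $O(N^{(\theta-1)(d-1)})$ — i.e. genuinely extracting the $N^{(\theta-1)(d-1)}$ factor rather than just a crude $O(1)$. This needs the annealed hitting distribution of the plane $H_M$ to be approximately uniform at scale $N$ in the transverse directions with the correct $N^{1-d}$ pointwise density; this is essentially the content of Part~\eqref{item:zero_der} of Lemma~\ref{lem:ann_der} (adapted from exit through $\partial^+\PP$ to exit through an intermediate plane $H_M$, which is where the restriction $M\ge\frac25 N^2$, guaranteeing that the remaining $e_1$-displacement is $\Omega(N^2)$ and hence the transverse spread is $\Omega(N)$, is used), combined with summing the pointwise bound over the $N^{\theta(d-1)}$ points of $Q$. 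One must also be a little careful that the cube $Q$ may lie near the transverse boundary of $\PP(0,N)$, where the annealed density could be larger; but since we only need an upper bound of the right order and the annealed exit density is $O(N^{1-d})$ uniformly by Part~\eqref{item:zero_der}, this causes no difficulty. Everything else is a routine repetition of the bookkeeping in the proof of Lemma~\ref{lem:toch}, with the single scalar function $\1_{\{X_{T_M}\in Q\}}$ replacing the vector $X_{T_{\partial\PP(0,N)}}$.
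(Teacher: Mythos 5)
Your template is right — the martingale bound from Lemma~\ref{lem:toch} plus the intersection estimate~\eqref{eq:heak} is indeed the mechanism here — but the proposal is missing the key device in the paper's proof and, as a consequence, the arithmetic at the end fails. Two concrete problems.

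\textbf{The increment bound is false near $H_M$.} Your estimate
$U_k\lesssim R^2(N)\bigl(\sum_{y\in B(x_k)}H^z(\omega,y)\bigr)\,N^{(\theta-1)(d-1)}$
relies on the walk, after the regeneration straddling $x_k$, still having to travel a macroscopic $e_1$-distance before reaching $H_M$. You assert this distance is ``at least of order $N^2/5$'', but that is not true: $x_k$ ranges over \emph{all} of $\PP(0,N)$ below $H_M$ (the sites above $H_M$ have zero increment), and for $x_k$ within distance $O(R(N))$ of $H_M$ the regeneration slab containing $x_k$ may already straddle the target plane, so changing $\omega$ at $x_k$ can alter the conditional probability of landing in $Q$ by $O(1)$, not by $O(N^{(\theta-1)(d-1)})$. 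For those $x_k$ you only have the Lemma~\ref{lem:toch}-type bound $U_k\lesssim R^2(N)\sum_{y\in B(x_k)}H^z(\omega,y)$, which kills the claimed gain. This is exactly the obstruction the paper avoids by \emph{not} using the filtration of all sites and the hitting distribution on $H_M$: instead, the paper fixes a buffer $V=\lceil N^{2\theta'}\rceil$ with $\frac{d-1}{d}<\theta'<\theta$, reveals only the environment below $H_M$ (i.e.\ works with the $\sigma$-algebra $\GG$ on $\PP^M(0,N)$), and bounds the conditional expectation $E[\quenchedP_\omega(X_{T_{M+V}}=v;A_N)\mid\GG]$ of the quenched \emph{pointwise} probability of a single $v\in H_{M+V}$. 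Because every revealed site is at $e_1$-distance $\ge V$ from the target plane, the derivative estimate uniformly gives $U_k\lesssim R(N)H(\omega,x_k)V^{-d/2}$, and the problematic sites near the target are gone. One then passes from $H_{M+V}$ back to $H_M$ and from points $v$ to the cube $Q$ via the annealed approximations \eqref{eq:qlobndann}--\eqref{eq:qupbndque}, which is where $\sqrt{V}\ll N^\theta$ (i.e.\ $\theta'<\theta$) is used.

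\textbf{Even granting your $U$ bound, the Azuma exponent is wrong.} You set $K=\frac12 N^{(\theta-1)(d-1)}$ and claim $U\lesssim R_2^{O(1)}(N)\,N^{2(\theta-1)(d-1)}$. Then
$K^2/U \approx R_2^{-O(1)}(N)\to 0$,
so $\exp(-K^2/2U)\to 1$; your displayed claim that $K^2/U\ge \exp(\text{polylog}\log N)$ has the sign of the exponent backwards. Notice also that the hypothesis $\theta>\frac{d-1}{d}$ never actually enters your computation — a red flag, since the lemma is genuinely false without it. In the paper's scheme the pointwise error is $\delta\sim N^{1-d}$ while the variance is $U\sim R^6(N)V^{-d}$, so $\delta^2/U\sim N^{2(1-d)+2\theta' d}/R^6(N)$, which tends to infinity exactly when $\theta'>\frac{d-1}{d}$. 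Summing the resulting pointwise deviation bound over the $\sim N^{\theta(d-1)}$ points of $Q^{(i)}$ then yields the target $\frac12 N^{(\theta-1)(d-1)}$. Directly estimating the cube probability, as you do, loses this gain because both $K$ and $\sqrt{U}$ scale with the cube volume in the same way, leaving no room in the exponential.
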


\begin{proof}[Proof of Lemma \ref{lem:dpachotechad}]
Fix $\theta$, and let $\frac{d-1}{d}<\theta^\prime<\theta$.
Let 
\[
V=\bigl[N^{2\theta^\prime}\bigr].
\]
Fix $\frac 25N^2\leq M\leq N^2$.
Let $v\in H_{M+V}$, and let $\GG$ be the $\sigma$-algebra that is determined by the configuration on 
\[
\PP^M(0,N)=\PP(0,N)\cap\{x\,:\,\langle x,e_1\rangle\leq M\}.
\]

 We are interested in the quantity 
\[
J^{(M)}(v)=E\bigl[
\quenchedP_\omega(X_{T_{M+V}}=v\,;\, A_N)\, |
\GG
\bigr].
\]
%i.e. the probability of hitting $x$ conditioned on the environment in $\PP(0,N)$ and ignoring the %environment elsewhere.

\begin{figure}[h]
\begin{center}
\epsfig{figure=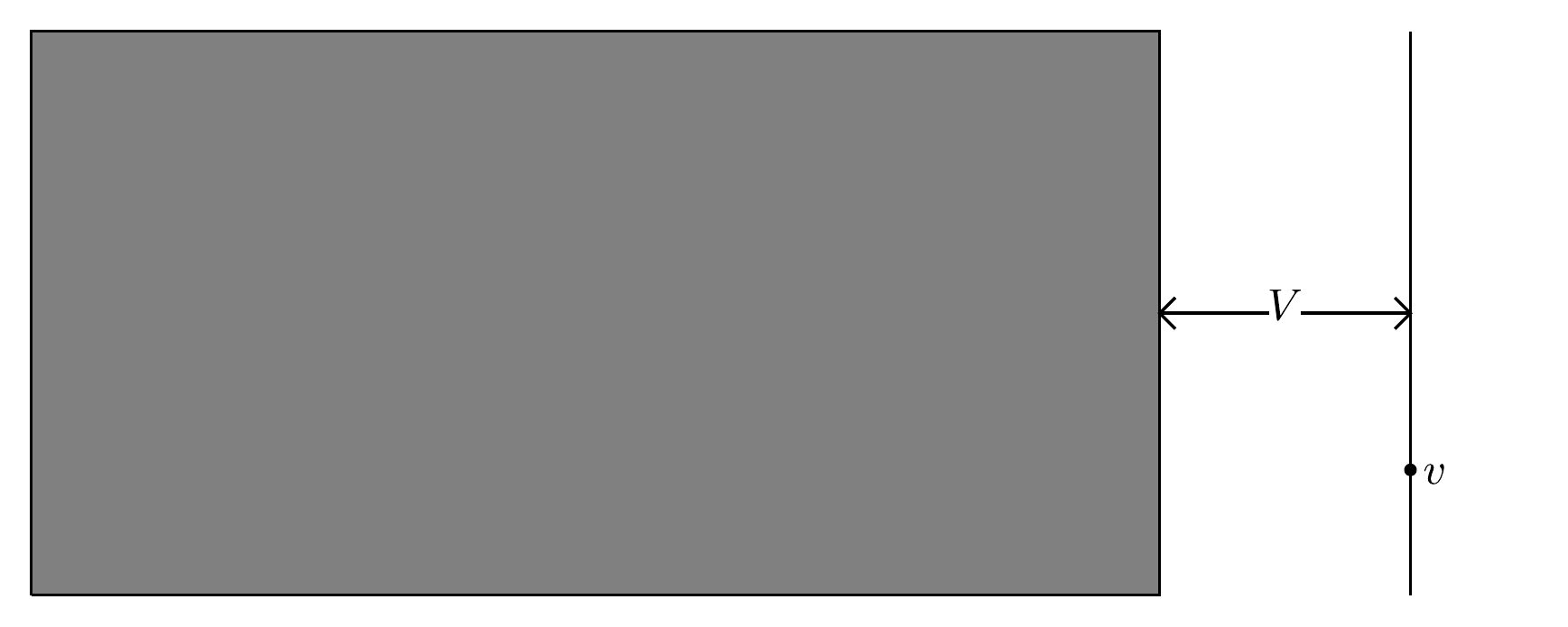, width=10cm}
\caption{\sl 
The quantity $J^{(M)}(v)$ is the probability of hitting the point $v$, conditioned on the environment in the shaded area, and averaged over the environment elsewhere.
}
\label{fig:sigalg}
\end{center}
\end{figure}

Similar to the proof of Lemma \ref{lem:toch}, we let $\{x_i\}_{i=1}^n$ be a lexicographic ordering of the 
vertices in 
$
\PP^M(0,N),
$ and let $\{\FF_i\}$ be the $\sigma$-algebra on $J(N)$ which is determined by $\omega|_{x_1,\ldots,x_i}$.

We consider the martingale 
$
M_i=E\bigl[
\quenchedP_\omega(X_{T_{M+V}}=v\,;\,A_N)\, |
\FF_i
\bigr].
$
In order to use Lemma \ref{lem:azuma}, we will need to bound
$
U_i
=\esssup(M_{i}-M_{i-1}\,|\,\FF_{i-1}).
$
Remember that
$x_i$ is the vertex s.t. $\omega_{x_i}$ is measurable with respect to $\FF_i$ but not with respect to $\FF_{i-1}$.
Then we claim that%by Part \ref{item:first_der} of Lemma \ref{lem:ann_der}, Part \ref{item:first_der_ort}
\begin{equation}\label{eq:main_est_d-1}
U_i
\leq CR(N)E\big[\quenchedP_\omega(x_i\mbox{ is hit })\,|\,\FF_{i-1}\big]V^{-d/2}.
\end{equation}
We now show the main estimate \eqref{eq:main_est_d-1}.
Let $\omega^\prime$ be an environment that agrees with $\omega$ everywhere except, possibly, $x_i$.
We let $\prob$ be the distribution of a walk that follows the law $\omega$ on $\{x_k:k\leq i\}$ and the annealed distribution on $\Z^d\setminus\{x_k:k\leq i\}$. Equivalently, let $\prob^\prime$ be the distribution of a walk that follows the law $\omega^\prime$ on $\{x_k:k\leq i\}$ and the annealed distribution on $\Z^d\setminus\{x_k:k\leq i\}$.
More precisely, for an event $B\subseteq (\Z^d)^\N$ on the space of possible paths for the walk,
\[
\prob(B)=\joint(B\times\Omega | \omega_{x_1},\ldots,\omega_{x_i}; A_N),
\]
and equivalently for $\prob^\prime$. Then
\begin{equation}\label{eq:uibound}
U_i\leq \sup_{\omega^\prime}\big|\prob^\prime(X_{T_{M+V}}=v)-\prob(X_{T_{M+V}}=v)\big|, 
\end{equation}
where the supremum is taken over all environments $\omega^\prime$ that agree with $\omega$ on $\Z^d\setminus\{x_i\}$.
Note that conditioned on the event that $x_i$ is not visited, the distributions $\prob$ and $\prob^\prime$ are the same.
Now, for both measures $\prob$ and $\prob^\prime$, condition on the event that $x_i$ is visited. Let $u$ be the first
regeneration point after $x_i$. Then $\prob$ and $\prob^\prime$ a.s, $\|u-x_i\|_1<dR(N)$. This follows from the
conditioning on $A_N$.
Therefore, from Parts \ref{item:first_der} and \ref{item:first_der_ort} of Lemma \ref{lem:ann_der} we get that
\[
|\prob(X_{T_{M+V}}=v | x_i \mbox{ is visited})-\annealedP^{x_i}(X_{T_{M+V}}=v)|<CR(N)V^{-d/2}
\]
and 
\[
|\prob^\prime(X_{T_{M+V}}=v| x_i \mbox{ is visited})-\annealedP^{x_i}(X_{T_{M+V}}=v)|<CR(N)V^{-d/2}
\]
Therefore,
\[
U_i\leq CR(N)V^{-d/2}\prob(x_i \mbox{ is visited}).
\]
\eqref{eq:main_est_d-1} follows.

Using \eqref{eq:main_est_d-1}, conditioned on $J(N)$, and based on the same calculation as in \eqref{eq:411a} and \eqref{eq:411b},
%the proof of Lemma \ref{lem:toch},
\begin{eqnarray*}
U&=&\esssup(\sum_{i=1}^nU_i^2)\\
&\leq& R^6(N)V^{-d}.
\end{eqnarray*}

Therefore, by Lemma \ref{lem:azuma}, for every $v\in H_{M+V}$ and every number $\delta$,
\begin{eqnarray*}
P\left(\left|
E\bigl[
\quenchedP_\omega(X_{T_{M+V}}=v)\,;\, A_N\, |
\GG
\bigr]-
\annealedP(X_{T_{M+V}}=v\,;\, A_N)\,
\right|>\delta
\right)\\
\leq 2P(J(N)^c) + 2e^{-\frac{\delta^2}{2R^6(N)V^{-d}}}
\end{eqnarray*}
In particular, if 
$
\delta=\frac 14N^{1-d}=\frac 14V^{-d/2}V^{\eta},
$
with $\eta=\frac{d+\frac{1-d}{\theta^\prime}}{2}>0$, then we get that
\[
P\left(\left|
E\bigl[
\quenchedP_\omega(X_{T_{M+V}}=v\,\,;\, A_N)\, |
\GG
\bigr]-
\annealedP(X_{T_{M+V}}=v\,;\, A_N\,)
\right|>\frac 14N^{1-d}
\right)
=N^{-\xi(1)}
\]

and

\begin{eqnarray*}
P\left(\left|
E\bigl[
\quenchedP_\omega(X_{T_{M+V}}=v) |
\GG
\bigr]-
\annealedP(X_{T_{M+V}}=v)
\right|>\frac 12N^{1-d}
\right)
\leq P\big(\omega\,:\,\quenchedP_\omega(A_N^c)\geq \frac 14N^{1-d}\big)\\
+
P\left(\left|
E\bigl[
\quenchedP_\omega(X_{T_{M+V}}=v\,\,;\, A_N)\, |
\GG
\bigr]-
\annealedP(X_{T_{M+V}}=v\,;\, A_N\,)
\right|>\frac 14N^{1-d}
\right)%\\
=N^{-\xi(1)}.
\end{eqnarray*}

Let $T(N)$ be the event that 
\[
\left|
E\bigl[
\quenchedP_\omega(X_{T_{M+V}}=v)\, |
\GG
\bigr]-
\annealedP(X_{T_{M+V}}=v)
\right|\leq\frac 12N^{1-d}
\]
for every $\frac 25N^2\leq M\leq N^2$ and every $v\in H_{M+V}\cap \PP(0,2N)$.
Then $P(T(N))=1-N^{-\xi(1)}$. Now consider $\omega\in T(N)$, and fix $\frac 25N^2\leq M\leq N^2$
and a cube $Q$ of side length $N^{\theta}$ which is contained in $H_M$.

We want to estimate
\begin{equation}\label{eq:hefreshcube}
L(Q)=
\left|
\quenchedP_\omega^z\left(X_{T_M}\in Q\ ; \ A_N\right)
-\annealedP^z\left(X_{T_M}\in Q\ ; \ A_N\right)
\right|.
\end{equation}

Let $c(Q)$ be the center of the cube $Q$, and let
$
c^\prime(Q)=c(Q)+V\frac{\vartheta}{\langle \vartheta,e_1\rangle}.
$
Then we let
\[
Q^{(1)}=\{v\in H_{V+M}\,:\,\|v-c^\prime(Q)\|_\infty<\frac 12(0.9)^{1/d}N^{\theta}\}
\]
and
\[
Q^{(2)}=\{v\in H_{V+M}\,:\,\|v-c^\prime(Q)\|_\infty<\frac 12(1.1)^{1/d}N^{\theta}\}.
\]

Then by simple annealed estimates,
\begin{equation}\label{eq:qlobndann}
\annealedP^z(X_{T_{V+M}}\in Q^{(1)})<\annealedP^z(X_{T_{M}}\in Q)+N^{-\xi(1)},
\end{equation}

\begin{equation}\label{eq:qupbndann}
\annealedP^z(X_{T_{V+M}}\in Q^{(2)})>\annealedP^z(X_{T_{M}}\in Q)-N^{-\xi(1)},
\end{equation}

\begin{equation}\label{eq:qlobndque}
E\big[\quenchedP^z_\omega(X_{T_{V+M}}\in Q^{(1)})|\GG\big]<\quenchedP^z_\omega(X_{T_{M}}\in Q)+N^{-\xi(1)},
\end{equation}
and
\begin{equation}\label{eq:qupbndque}
E\big[\quenchedP^z_\omega(X_{T_{V+M}}\in Q^{(2)})|\GG\big]>\quenchedP^z_\omega(X_{T_{M}}\in Q)-N^{-\xi(1)}.
\end{equation}

From the definition of $T(N)$ and \eqref{eq:qlobndann}, \eqref{eq:qupbndann}, \eqref{eq:qlobndque} and \eqref{eq:qupbndque}, it follows that $T(N)\subseteq B^{\theta}(N)$.

Therefore, $P(B^{\theta}(N))\geq P(T(N))=1-N^{-\xi(1)}$.

\end{proof}

Using Lemma \ref{lem:dpachotechad} as a building block, we can get a similar yet weaker result for every choice of
$\theta$.

\begin{lemma}\label{lem:alltheta}
Assume the assumptions \ref{item:assgamma}--\ref{item:assdim} from Page \pageref{item:assgamma}.
For every $0<\theta\leq 1$ and $h$ let $\bar{B}^{(\theta,h)}(N)$ be the event that
for every $z\in\hPP(0,N)$, every $\frac 12N^2\leq M\leq N^2$ and every cube $Q$ of side length $N^{\theta}$ which is contained in $H_M$,
%and for every $z\in\hPP(0,N)$,
\begin{equation}\label{eq:alltheta}
%\left|
\quenchedP_\omega^z\left(X_{T_M}\in Q\ ; \ A_N\right)
%-\annealedP^z\left(X_{T_M}\in Q\ ; \ A_N\right)
%\right|
\leq R_h(N) N^{(\theta-1)(d-1)}.
\end{equation}
Then for every $0<\theta\leq 1$ there exists $h=h(\theta)$ such that  $P(\bar{B}^{(\theta,h)}(N))=1-N^{-\xi(1)}$
\end{lemma}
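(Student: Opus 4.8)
The plan is to bootstrap, in finitely many rounds, from Lemma~\ref{lem:dpachotechad}: each round couples the estimate already established at a coarser scale with the Markov property at an intermediate hyperplane, and since each round roughly squares the range of admissible exponents, for any fixed $\theta$ the argument terminates after finitely many rounds. It is convenient to prove $\bar{B}^{(\theta,h)}(N)$ for the hyperplane range $\frac25 N^2\le M\le N^2$ of Lemma~\ref{lem:dpachotechad}, which contains the range $\frac12 N^2\le M\le N^2$ of the statement; each round below raises the lower endpoint by only $o(N^2)$, so after the finitely many rounds it still lies below $\frac12 N^2$ for $N$ large.

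\emph{Base case ($\theta>\frac{d-1}{d}$).} Intersecting over $z\in\hPP(0,N)$ the events $B^{\theta}(N)$ of Lemma~\ref{lem:dpachotechad} (one per starting point $z$) yields, by a union bound over the $N^{O(1)}$ choices of $z$, an event of probability $1-N^{-\xi(1)}$ on which, for every such $z$, every admissible $M$ and every cube $Q$ of side $N^\theta$ in $H_M$,
\[
\quenchedP_\omega^z\big(X_{T_M}\in Q\,;\,A_N\big)\ \le\ \annealedP^z\big(X_{T_M}\in Q\big)+N^{(\theta-1)(d-1)}\ \le\ CN^{(\theta-1)(d-1)},
\]
the last step being the standard annealed local-limit estimate $\annealedP^z(X_{T_M}=x)\le CN^{1-d}$ (cf.~the proof of Lemma~\ref{lem:ann_der}) summed over the $N^{\theta(d-1)}$ sites of $Q$. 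Since $R_1(N)\to\infty$ this gives $\bar{B}^{(\theta,1)}(N)$.

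\emph{Recursion step.} I claim the lemma for the parameters $\theta'$ and $\theta/\theta'$ (with $0<\theta\le\theta'\le1$) implies it for $\theta$. Set $\tilde N=N^{\theta'}$ and $V=\tilde N^2$. For $z\in\hPP(0,N)$, admissible $M$, and a cube $Q\subseteq H_M$ of side $N^\theta$, condition the walk from $z$ on its first visit to $H_{M-V}$: on $A_N$ the first $N$ regenerations occur before $T_{M-V}$ (as $\langle X_{\tau_N},e_1\rangle\le NR(N)\ll M-V$), so the Markov property at $T_{M-V}$ gives
\[
\quenchedP_\omega^z\big(X_{T_M}\in Q\,;\,A_N\big)=\sum_{w\in H_{M-V}}\quenchedP_\omega^z\big(X_{T_{M-V}}=w\,;\,A_N\big)\,\quenchedP_\omega^{w}\big(X_{T_M}\in Q\big).
\]
Now: (i) recentered at $w$, the kernel $\quenchedP_\omega^{w}(X_{T_M}\in Q)$ is, on the event $\hat A_w$ that every intervening regeneration has radius $<R(\tilde N)$, an instance at scale $\tilde N$ of the lemma with parameter $\theta/\theta'$ (the block $\PP(w,\tilde N)$ reaches $e_1$-distance $\tilde N^2=V$ past $w$, and $Q$ has side $N^\theta=\tilde N^{\theta/\theta'}$), hence $\quenchedP_\omega^{w}(X_{T_M}\in Q\,;\,\hat A_w)\le R_{h_1}(\tilde N)\,\tilde N^{(\theta/\theta'-1)(d-1)}=R_{h_1}(\tilde N)\,N^{(\theta-\theta')(d-1)}$; (ii) on $\hat A_w$, Azuma's inequality applied to the (bounded) regeneration increments confines $X_{T_M}$ to within $\sqrt V\,R(\tilde N)^{10}$ transversely of the $\vartheta$-pullback of $w$, so the kernel is $N^{-\xi(1)}$ unless $w$ lies in the transverse ball $B_w\subseteq H_{M-V}$ of radius $\sqrt V\,R(\tilde N)^{10}+N^\theta$ about the $\vartheta$-pullback of the center of $Q$ (the point reached by moving back from it along $\vartheta$ by $e_1$-distance $V$); and (iii) $\quenchedP_\omega^{w}(\hat A_w^c)=N^{-\xi(1)}$ uniformly in $w$ on a good environment (Lemma~\ref{lem:regrad} and Markov in $\omega$). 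Hence the sum is at most $R_{h_1}(\tilde N)N^{(\theta-\theta')(d-1)}\,\quenchedP_\omega^z(X_{T_{M-V}}\in B_w\,;\,A_N)+N^{-\xi(1)}$. Finally, covering $B_w$ by $O\big(R(\tilde N)^{10(d-1)}\big)$ cubes of side $N^{\theta'}$ and bounding the $\quenchedP_\omega^z(X_{T_{M-V}}\in\cdot\,;A_N)$-mass of each by $R_{h_2}(N)N^{(\theta'-1)(d-1)}$ (the lemma for $\theta'$ at scale $N$), the powers of $N$ multiply to $N^{(\theta-1)(d-1)}$, the remaining factors are sub-polynomial (products and powers of $R_j$'s, using $R_{h_1}(N^{\theta'})\le R_{h_1}(N)$), and the $N^{-\xi(1)}$ errors are negligible against $N^{(\theta-1)(d-1)}$; so $\bar{B}^{(\theta,h)}(N)$ holds for a suitable $h=h(\theta,\theta',d)$ on an intersection of finitely many events of probability $1-N^{-\xi(1)}$. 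Each use of the lemma at scale $\tilde N=N^{\theta'}$ costs only a union bound over the $N^{O(1)}$ relevant $w$.

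\emph{Iteration, and the main difficulty.} The base case gives the lemma for all $\theta>\frac{d-1}{d}$; and the recursion step with $\theta'=\sqrt\theta$ (both $\theta'$ and $\theta/\theta'$ equal $\sqrt\theta$, which exceeds a threshold $c$ exactly when $\theta>c^2$) upgrades ``known for all $\theta>c$'' to ``known for all $\theta>c^2$''. Iterating from $c=\frac{d-1}{d}<1$, after $k$ rounds the lemma holds for all $\theta>\big(\frac{d-1}{d}\big)^{2^k}$, so every fixed $\theta>0$ is reached in finitely many rounds, with a finite $h(\theta)$; the $o(N^2)$ drift of the hyperplane range over these rounds is harmless as noted. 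I expect essentially all the work to be in the recursion step, and within it the delicate point is the bookkeeping tying $A_N$ to the conditioning at $T_{M-V}$, together with the verification that the coarse-scale control of $X_{T_{M-V}}$ --- which only ``sees'' the rescaled regeneration-radius event --- nonetheless confines the kernel $\quenchedP_\omega^{w}(X_{T_M}\in Q)$ to the small ball $B_w$; this is precisely where Azuma's inequality for bounded regeneration increments enters. The remaining ingredients (annealed local-limit estimates, covering a ball by cubes, tracking the $R_j$-factors) are routine.
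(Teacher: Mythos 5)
Your proposal is correct and follows essentially the same multi-scale bootstrap as the paper: Lemma \ref{lem:dpachotechad} for the base case $\theta>\frac{d-1}{d}$, then a Markov decomposition at an intermediate hyperplane $H_{M-V}$ (with $V$ a lower-order scale) that multiplies a coarse-scale bound on $\quenchedP_\omega^z(X_{T_{M-V}}\in\cdot)$ against a fine-scale bound on $\quenchedP_\omega^w(X_{T_M}\in Q)$ at the rescaled block, and a finite iteration. The one genuine (but inessential) divergence is the choice of the scale split: you take $\theta'=\sqrt{\theta}$ and invoke the induction hypothesis at \emph{both} levels, so the admissible threshold squares each round (doubly-exponential decay $c\mapsto c^2$). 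The paper instead writes $\rho=\theta/\theta'$, constrains $\rho>\frac{d-1}{d}$ so that the coarse-scale estimate $\bar B^{(\rho,1)}(N)$ is simply the base case (needing only the union bound over cubes, not the induction hypothesis), and uses the induction hypothesis only at the fine scale $[N^\rho]$; the threshold then decays geometrically ($c\mapsto\frac{d-1}{d}c$). Both terminate in finitely many rounds for any fixed $\theta>0$, and both need the same shifted-and-union-bounded fine-scale event and the same $R_h$-bookkeeping via \eqref{eq:rs}. The paper's variant is marginally more economical because the coarse factor never needs to be re-derived inductively; yours is symmetric and converges faster. Your remark about the hyperplane-range drift and the $A_N$/$\hat A_w$ bookkeeping at $T_{M-V}$ matches the role of $T(N,\rho)$ in the paper's event $L$.
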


\begin{proof} 
%{\LARGE Work a little more on this proof.}
We prove the lemma by descending induction on $\theta$. 
From Lemma \ref{lem:dpachotechad}, $P\big(\bar{B}^{(\theta,1)}(N)\big)=1-N^{-\xi(1)}$ for every
$1\geq\theta>\frac{d-1}{d}.$ For the induction step, fix $\theta$ and assume that the statement of the lemma holds for some $\theta^\prime$ such that $\theta>\frac{d-1}{d}\theta^\prime$, and let $h^\prime=h(\theta^\prime)$. We write $\rho=\theta/\theta^\prime.$ Let $\sigma$ be the natural shift of $\Z^d$.
Let 
\[
L = \bar{B}^{(\rho,1)}(N) \cap \bigcap_{z\in\PP(0,2N)}
\sigma_z\bigl(\bar{B}^{(\theta^\prime,h^\prime)}([N^\rho])\bigr)\cap T(N,\rho)
,
\]
where 
\[
T(N,\rho)=\{
\omega\in\Omega\,:\, \forall_{v\in\PP(0,N)}\,,\
P_\omega^v\big(X_{T_{\partial\PP(v,[N^\rho])}}\notin\partial^+\PP(v,[N^\rho])\big)<e^{-R_1(N)}
\}.
\]
Clearly, $P(L)=1-N^{-\xi(1)}$. Therefore, all we need to show is that for some $h$ and all $N$ large enough, we have that $L\subseteq\bar{B}^{(\theta,h)}(N).$
To this end we fix $\omega\in L$, fix $z$, fix $\frac 12N^2\leq M\leq N^2$ and fix a cube $Q$ of side length $N^{\theta}$ in $\PP(0,N)\cap H_M$. Let $x$ be the center of $Q$, let
$V=[N^\rho]^2$ and let $x^\prime=x-V\frac{\vartheta}{\langle \vartheta,e_1\rangle}.$

Since 
\[
\omega\in\bigcap_{z\in\PP(0,2N)}
\sigma_z\bigl(\bar{B}^{(\theta^\prime,h^\prime)}([N^\rho])\bigr)
, 
\]
we get that for every $v\in H_{M-V}$,
\begin{equation}\label{eq:katan}
%\big|
\quenchedP_\omega^v(X_{T_M}\in Q)
%-\annealedP_v(X_{T_M}\in Q)\big|
<R_{h^\prime}(N) N^{\rho(\theta^\prime-1)(d-1)}=R_{h^\prime}(N) N^{(\theta-\rho)(d-1)}.
\end{equation}
We Remember that by the Markov property and the fact that $\omega\in T(N,\rho)$,

\begin{equation}\label{eq:markov}
\quenchedP_\omega^z(X_{T_M}\in Q)
=\sum_{v\in H_{M-V}\cap\PP(x^\prime,[N^\rho])}
\quenchedP_\omega^z(X_{T_{M-V}}=v)
\quenchedP_\omega^v(X_{T_M}\in Q)+N^{-\xi(1)}
\end{equation}

Now, $H_{M-V}\cap\PP(x^\prime,[N^\rho])$ is the union of $2^{d-1}R_5(N)^{d-1}<R_6(N)$ cubes of side length $N^\rho$.

Since $\omega\in \bar{B}^{(\rho,1)}(N)$, we get that for every cube $Q^\prime$ of side length $N^\rho$ that is contained in $H_{M-V}\cap\PP(0,N)$,
\begin{equation}\label{eq:gadol}
%\big|
\quenchedP_\omega^z(X_{T_{M-V}}\in Q^\prime)
%-\annealedP_z(X_{T_{M-V}}\in Q^\prime)\big|
<R_1(N)N^{(\rho-1)(d-1)}.
\end{equation}

Combining \eqref{eq:katan}, \eqref{eq:markov} and \eqref{eq:gadol}, we get that
\begin{eqnarray*}
&&\quenchedP_\omega^z(X_{T_M}\in Q)\\
&\leq& R_6(N)R_{h^\prime}(N) N^{(\theta-\rho)(d-1)}\cdot
R_1(N)N^{(\rho-1)(d-1)}+N^{-\xi(1)}\\
&\leq& R_h(N)N^{(\theta-1)(d-1)}
\end{eqnarray*}
for $h=\max(6,h^\prime)+1.$

\end{proof}

Next we prove a lemma which significantly strengthens the previous lemma.
For the proof of this lemma we will
use Lemma \ref{lem:alltheta} and a more careful treatment of the proof technique of Lemma 
\ref{lem:dpachotechad}. We start with the following preliminary lemma:

\begin{lemma}\label{lem:distu}
Assume the assumptions \ref{item:assgamma}--\ref{item:assdim} from Page \pageref{item:assgamma}.
Let $\GG$ be the $\sigma$-algebra generated by $\{\omega(z)\,:\,\langle z,e_1\rangle\leq N^2\}.$ Let $\eta>0$, let $V=\big[N^\eta]$ and let $B(N,V)$ be the event that for every $z\in\hPP(0,N)$ and every $v\in H_{N^2+V}$,
\[
\left|
E\bigl[\quenchedP_\omega^z(X_{T_{N^2+V}}=v)\, |\GG\bigr]
-\annealedP^z\bigl[X_{T_{N^2+V}}=v\bigr]
\right|\leq N^{1-d}V^{\frac{1-d}{6}}
.\]
Then $P(B(N,V))=1-N^{-\xi(1)}$.
\end{lemma}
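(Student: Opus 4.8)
Throughout, fix $\gamma>0$ and $d\ge 4$ as in the standing assumptions. The plan is to run the martingale/Azuma scheme of the proof of Lemma \ref{lem:dpachotechad}, but now (a) keeping track of the exact $e_1$-distance from the revealed vertex to the target hyperplane $H_{N^2+V}$, and (b) exploiting that the annealed exit law at $H_{N^2+V}$ has sub-Gaussian tails, so a revealed vertex only contributes if it lies in a thin ``backward cone'' of $v$. We work inside a block $\PP(0,N^*)$ chosen so that $H_{N^2+V}\cap\PP(0,N^*)$ sits well inside it (take $N^*=2N$ if $\eta\le 2$ and $N^*=[N^{1+\eta/2}]$ otherwise), and on a good event $\Omega_0$ with $P(\Omega_0)=1-N^{-\xi(1)}$ carrying the usual controls for $\PP(0,N^*)$: the analogues of $A_N$, $J(N)$, $K(N)$ (so $\sum_x H^z(\omega,x)^2=N^{o(1)}$, the first regeneration after any visited vertex has radius $<R(N)$, and the walk from $\hPP(0,N)$ does not leave $\PP(0,N^*)$ before time $T_{N^2+V}$), and such that Lemma \ref{lem:alltheta} together with its translates $\sigma_w(\bar B^{(\theta,h)}([N_0]))$ control the quenched cube-exit probabilities at every level crossed by the walk. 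Fix $z\in\hPP(0,N)$ and $v\in H_{N^2+V}\cap\PP(0,N^*)$; a union bound over the $N^{O(1)}$ pairs $(z,v)$ (and the observation that for $v$ farther than $N^{o(1)}\sqrt{N^2+V}$ from the ballistic image of $z$ both quantities are already $N^{-\xi(1)}$) will finish the argument.

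Order the vertices of $\PP(0,N^*)\cap\{\langle\cdot,e_1\rangle\le N^2\}$ lexicographically with the $e_1$-coordinate most significant, put $\FF_i=\sigma(\omega|_{x_1,\dots,x_i})$ on $(\Omega_0,P(\cdot\mid\Omega_0))$ and $M_i=E[\quenchedP_\omega^z(X_{T_{N^2+V}}=v)\mid\FF_i]$, so that $M_0=\annealedP^z(X_{T_{N^2+V}}=v)+N^{-\xi(1)}$ and $M_n=E[\quenchedP_\omega^z(X_{T_{N^2+V}}=v)\mid\GG]$. Exactly as in \eqref{eq:uibound}, changing $\omega(x_i)$ matters only when $x_i$ is visited, conditionally on which there is a regeneration within $\ell^1$-distance $dR(N)$ of $x_i$, and two such regeneration points are within $dR(N)$ of each other. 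Parts \ref{item:first_der} and \ref{item:first_der_ort} of Lemma \ref{lem:ann_der}, in their hyperplane version (the estimate produced inside that proof, with $H_{N^2+V}$ in place of $\partial^+\PP$), then give, writing $k_i=\langle x_i,e_1\rangle$ and $\Delta_i=N^2+V-k_i\ge V$,
\[
U_i=\esssup(|M_i-M_{i-1}|\mid\FF_{i-1})\ \le\ CR(N)\,\Delta_i^{-d/2}\sum_{y\in B(x_i)}\quenchedP_\omega^z(y\in[X];A_N)\ +\ N^{-\xi(1)},
\]
and this is $N^{-\xi(1)}$ unless $x_i$ lies in the cone $\CC_i$: the $(d-1)$-dimensional cube of side $\asymp R(N)\sqrt{\Delta_i}$ about the point of $H_{k_i}$ whose ballistic image is $v$.

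The crux is the bound on $U=\esssup\sum_i U_i^2$. Grouping by the level $k$: on $\Omega_0$, Lemma \ref{lem:alltheta} and its translates (combined with the Markov property for a decomposition at level $k/2$) give, for a single site $y$ at level $k$, $\quenchedP_\omega^z(y\in[X];A_N)\le N^{o(1)}k^{-(d-1)/2}$, which for $k\asymp N^2$ is $N^{o(1)}N^{1-d}$; and the hitting mass inside $\CC_i$ at level $k$ is at most $N^{o(1)}\min\{1,(\sqrt{\Delta_k}/N)^{d-1}\}$, since the cone captures only a small fraction of the scale-$N$ transverse spread of the walk once $\Delta_k\ll N^2$. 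Feeding these into $\sum_{x_i\in\CC_i,\,k_i=k}U_i^2\le N^{o(1)}\Delta_k^{-d}\cdot N^{1-d}\cdot(\sqrt{\Delta_k}/N)^{d-1}$ on the range $\tfrac12N^2\le k\le N^2$ and, for $k<\tfrac12N^2$, using $\Delta_k^{-d}\le CN^{-2d}$ with $\sum_k k^{-(d-1)/2}=O(1)$, and then summing over $k$ (the series $\sum_{s\ge V}s^{-(d+1)/2}\lesssim V^{(1-d)/2}$ converges because $d\ge 4$), one reaches the target estimate $U\le N^{o(1)}N^{-2(d-1)}V^{(1-d)/2}$ (and in the degenerate regime $\eta\ge 2$ a variant with $V^{-d}$-type decay). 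Applying Lemma \ref{lem:azuma} with $K=\delta:=N^{1-d}V^{(1-d)/6}$ gives a deviation probability $\le 2\exp(-\delta^2/(2U))$, and $\delta^2/U\ge N^{-o(1)}V^{(d-1)/6}\to\infty$ polynomially in $N$; so the deviation is $N^{-\xi(1)}$, and summing over the $N^{O(1)}$ pairs $(z,v)$ proves the lemma.

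The main obstacle is precisely this bound on $U$. Two points need care: extracting from Lemma \ref{lem:ann_der}, applied at $H_{N^2+V}$, both the local-CLT factor $\Delta_i^{-d/2}$ \emph{and} the cone restriction (the sub-Gaussian decay of the annealed exit law away from the ballistic image); and controlling the quenched hitting probabilities $\quenchedP_\omega^z(y\in[X];A_N)$ at \emph{every} level $k\le N^2$ — including $k<\tfrac12 N^2$, which lies outside the range of Lemma \ref{lem:alltheta} and must be reached by applying that lemma on suitably shifted blocks together with the Markov property. Once these are in place the level-by-level summation is a routine geometric-series estimate, and the exponent $\tfrac16$ in $V^{(1-d)/6}$ is calibrated to keep $\delta^2/U\to\infty$.
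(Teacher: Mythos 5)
Your proposal follows essentially the same route as the paper's proof: the same lexicographic martingale and Azuma step (Lemma \ref{lem:azuma}), the same backward cone $F(v)$ around the ballistic image of $v$ (the paper's $\PP^{(k)}(v)$), the same use of Lemma \ref{lem:alltheta} for quenched single-site hitting probabilities, and the same level-by-level summation of $\Delta_k^{-d}\sum_i h_i^2$ yielding a convergent series in $\Delta_k$ (the paper uses the intersection estimate/event $J(N)$ for the far slab $k<N^2/2$ where your $\sum_k k^{-(d-1)/2}$ argument plays the same role). One bookkeeping caveat: Lemma \ref{lem:alltheta} gives $R_{h(\theta)}(N)N^{(\theta-1)(d-1)}$ for a \emph{fixed} $\theta>0$ (with $h$ depending on $\theta$), not $N^{o(1)}N^{1-d}$, so your bound on $U$ actually carries an extra polynomial factor $N^{\theta(d-1)}$; the paper absorbs this by fixing $\theta<\eta/20$, whence $\delta^2/U\gtrsim V^{(d-1)/6}N^{-\theta(d-1)}$ still goes to infinity and the Azuma step closes---this is a detail to track, not a different idea.
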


\begin{proof}
Let $v\in H_{N^2+V}$ and let $\theta>0$ be such that $\theta < \frac 1{20}\eta$.
Let $K$ be an integer such that $2^{-K}{N^2}>V\geq 2^{-K-1}{N^2}$, and for $1\leq k< K$ we define
\[
\PP^{(k)}=\PP(0,N)\cap\{x\,:\,2^{-k-1}{N^2}\leq {N^2}-\langle x,e_1\rangle< 2^{-k}{N^2}\}
.\]
In addition we take
\[
\PP^{(K)}=\PP(0,N)\cap\{x\,:\,0\leq {N^2}-\langle x,e_1\rangle< 2^{-K}{N^2}\}
,\]
and
\[
\PP^{(0)}=\PP(0,N)\cap\{x\,:\,{N^2}/2\leq {N^2}-\langle x,e_1\rangle\}
.\]

In addition, 
we define 
\[
F(v)=\{x\in\PP(0,N)\,:\,\|x-u(v,x)\|\leq|\langle v-x,e_1\rangle|^{1/2}R_2(N)\},
\]
where $u(v,x)$ is as in \eqref{eq:uzx}.
Then, for $0\leq k\leq K$, we define
\[
\PP^{(k)}(v)=\PP^{(k)}\cap F(v),
\]
and
\[
\hat{\PP}^{(k)}(v)=\{y: \exists_{x\in\PP^{(k)}(v)}\,\mbox{s.t.}\,\|x-y\|<R_2(N)\}.
\]
Note that $\PP^{(k)}(v)\subseteq\hat{\PP}^{(k)}(v)$.
\begin{figure}[h]
\begin{center}
\epsfig{figure=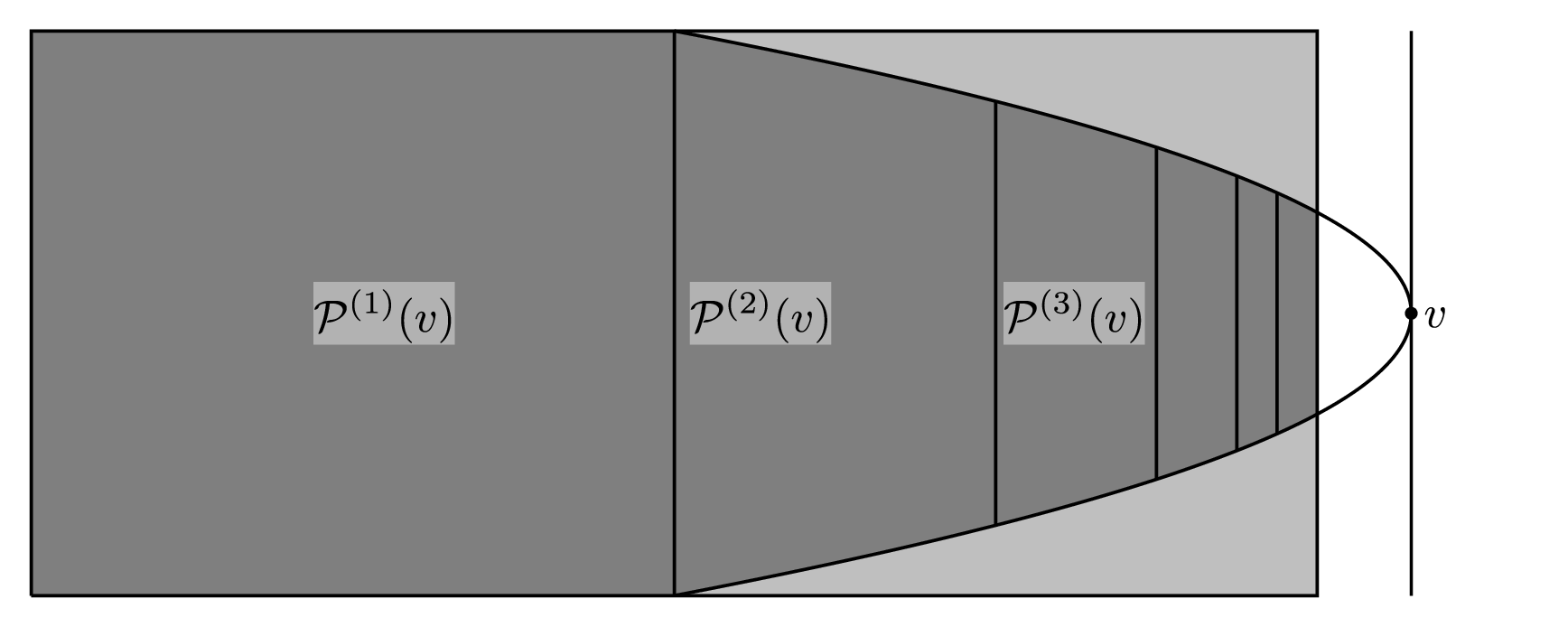, width=12cm}
\caption{\sl 
The darker areas are ${\PP}^{(k)}(v)$ for different values of $k$. The environment in the 
light-gray area has negligible influence on the probability of hitting $v$.
}
\label{fig:withpar}
\end{center}
\end{figure}

Condition on the event $\bar{B}^{(\theta,h)}$, with $h$ such that by Lemma \ref{lem:alltheta}
$P(\bar{B}^{(\theta,h)})=1-N^{\xi(1)}$.

For $0\leq k\leq K$ and $\omega\in\bar{B}^{(\theta,h)},$ we want to estimate

\[
V(k)=E_{\omega,\omega}\left[\big[X^{(1)}\big]\cap\big[X^{(2)}\big]\cap\PP^{(k)}(v)\right].
\]

For $k=0$,

\[
V(0)\leq E_{\omega,\omega}\left[\big[X^{(1)}\big]\cap\big[X^{(2)}\big]\cap\PP(0,N)\right]
\leq R_2(N).
\]

For $k>0$,
\begin{eqnarray}
\nonumber
V(k)&=&\sum_{x\in \PP^{(k)}(v)}\big[\quenchedP^z_\omega(x \mbox{ is visited})\big]^2\\
\nonumber
&\leq& \sum_{x\in \PP^{(k)}(v)}\left[
\sum_{y:\|y-x\|<R(N)}P^z_\omega(X_{T_{\langle y,e_1\rangle}}=y)\right]^2+N^{-\xi(1)}\\
\nonumber
&\leq& R^{2d}(N)
\sum_{y\in\hat{\PP}^{(k)}(v)}\big[P^z_\omega(X_{T_{\langle y,e_1\rangle}}=y)\big]^2+N^{-\xi(1)}\\
&\leq& R_2(N)\sum_{y\in\hat{\PP}^{(k)}(v)}R_h(N)N^{2(1-\theta)(1-d)}\label{eq:bcbt} \\
\nonumber
&\leq&
R_{h+1}(N)N^{2\bigl(\frac{d+1}{2}+(1-\theta)(1-d)\bigr)}2^{-k\left[\frac{d+1}{2}\right]}
\end{eqnarray}

where the inequality \eqref{eq:bcbt} follows from the fact that $\omega\in\bar B^{(\theta,h)}(N)$.

As before, we now use the same filtration $\{\FF_i\}$ as in the proof of Lemma \ref{lem:toch}, and consider the martingale 
$
M_i=E\bigl[
\quenchedP_\omega^z(X_{T_{N^2+V}}=v\,;\, A_N)\, |
\FF_i
\bigr].
$
Again, in order to use Lemma \ref{lem:azuma}, we need to bound
$
U_i
=\esssup(|M_{i}-M_{i-1}|\,|\,\FF_{i-1}).
$
Let $x$ be s.t. $\omega_x$ is measurable with respect to $\FF_i$ but not with respect to $\FF_{i-1}$.
Then $U_i=N^{-\xi(1)}$ if $x\notin F(v)$, while if $x\in F(v)$, then
\[
U_i
\leq R(n)
E[\quenchedP_\omega^z(x\mbox{ is hit })\,|\,\FF_{i-1}]
D(N^2+V-\langle x,e_1\rangle)
\]
where $D(n)$ is the maximal first derivative of the annealed distribution at distance $n$.
By Lemma \ref{lem:ann_der}, $D(N^2+V-\langle x,e_1\rangle)\leq CN^{-d}2^{k\frac{d}{2}}$ for $x\in\PP^{(k)}(v)$.
Therefore,
\begin{eqnarray*}
U&=&\esssup\bigl(\sum_{i}U_i^2\bigr)\\
&\leq& C\sum_{k=0}^K V(K)N^{-2d}2^{kd}+N^{-\xi(1)}\\
&\leq& CR_{h+1}(N)N^{-2d}\\
&+& CR_{h+1}(N)N^{2\bigl(\frac{d+1}{2}+(1-\theta)(1-d)\bigr)-2d}\sum_{k=1}^K2^{kd-k\frac{d+1}{2}}
+N^{-\xi(1)}\\
&\leq& CR_{h+1}(N)\left(N^{-2d}+N^{ 3-3d + 2(d-1)\theta}2^{K\frac{d-1}{2}}\right)\\
&\leq& CR_{h+1}(N)\left(N^{-2d}+N^{ 2-2d + 2(d-1)\theta}V^{-\frac{d-1}{2}}\right)\\
&\leq& CN^{ 2-2d}V^{-\frac{d-1}{6}+\epsilon}
\end{eqnarray*}
for small enough $\epsilon$.

Therefore, using Lemma \ref{lem:azuma}, with probability $1-N^{-\xi(1)}$,

\[
\left|
E\bigl[\quenchedP_\omega^z(X_{T_{N^2+V}}=v\,;\, A_N)\, |\GG\bigr]
-\annealedP\bigl[X_{T_{N^2+V}}=v\,;\, A_N\bigr]
\right|\leq N^{1-d}V^{\frac{1-d}{6}}
.\]

A simple union bound coupled with the fact that $\annealedP(A_N)=N^{-\xi(1)}$ completes the proof of the lemma.

\end{proof}

\begin{lemma}\label{lem:goodbound}
Assume the assumptions \ref{item:assgamma}--\ref{item:assdim} from Page \pageref{item:assgamma}.
For every $0<\theta\leq 1$ let $D^{(\theta)}(N)\subseteq\Omega$ be the event that
for every $z\in\hPP(0,N)$ and every cube $Q$ of side length $N^{\theta}$ which is contained in $\partial^+\PP(0,N)$,
\begin{equation}\label{eq:goodbound}
\left|
\quenchedP_\omega^z\left(X_{T_{\partial \PP(0,N)}}\in Q\ \right)
-\annealedP^z\left(X_{T_{\partial \PP(0,N)}}\in Q\ \right)
\right|
\leq N^{(\theta-1)(d-1)-\theta\big(\frac{d-1}{d+1}\big)}.
\end{equation}
Then $P(D^{(\theta)}(N))=1-N^{-\xi(1)}$
\end{lemma}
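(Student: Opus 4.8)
The plan is to run the environment-exposure martingale argument of Lemmas \ref{lem:toch}, \ref{lem:dpachotechad} and \ref{lem:distu}, with the single target point $v$ of Lemma \ref{lem:distu} replaced by the whole cube $Q$, the annealed-derivative input supplied by Lemma \ref{lem:ann_der} applied to cubes, and the a priori hitting bounds supplied by Lemma \ref{lem:alltheta}. First I would reduce to a statement on the hyperplane $H_{N^2}$: since $Q\subseteq\partial^+\PP(0,N)\subseteq H_{N^2}$ and, on the intersection of $A_N$ with the event (of $P$-probability $1-N^{-\xi(1)}$, obtained from the Lemma \ref{lem:toch} estimates together with the lateral Azuma bound, as in part (\ref{item:exit_from_right}) of Lemma \ref{lem:ann_der}) that from every $z\in\hPP(0,N)$ the quenched walk leaves $\PP(0,N)$ through $\partial^+\PP(0,N)$ with probability $1-N^{-\xi(1)}$, one has
\[
\quenchedP_\omega^z\bigl(X_{T_{\partial\PP(0,N)}}\in Q\bigr)=\quenchedP_\omega^z\bigl(X_{T_{N^2}}\in Q\,;\,A_N\,;\,T_{\partial\PP(0,N)}=T_{\partial^+\PP(0,N)}\bigr)+N^{-\xi(1)},
\]
and likewise for $\annealedP^z$; so it suffices to compare the two quenched/annealed probabilities on the right.

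Fix $Q$ and $z$, condition on $J(N)\cap\bar{B}^{(\theta_0,h)}(N)$ for a small auxiliary exponent $\theta_0$ (so that by Lemmas \ref{lem:interquenched} and \ref{lem:alltheta} the conditioning event has $P$-probability $1-N^{-\xi(1)}$), order the vertices of $\PP(0,N)$ lexicographically with $e_1$ most significant, let $\FF_i$ be generated by the environment on the first $i$ of them, and set
\[
M_i=E\bigl[\quenchedP_\omega^z\bigl(X_{T_{N^2}}\in Q\,;\,A_N\,;\,T_{\partial\PP(0,N)}=T_{\partial^+\PP(0,N)}\bigr)\ \big|\ \FF_i\bigr],
\]
which is genuinely $\FF_n$-measurable (the indicator ${\bf 1}_{\{T_{\partial\PP(0,N)}=T_{\partial^+\PP(0,N)}\}}$ is inserted for exactly this reason, as in Lemma \ref{lem:toch}), so that $M_0=\annealedP^z(X_{T_{N^2}}\in Q;A_N)+N^{-\xi(1)}$ and $M_n$ equals the quenched probability up to $N^{-\xi(1)}$. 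As in \eqref{eq:main_est_d-1}--\eqref{eq:411a}, exposing $\omega_{x_i}$ changes the quenched probability of hitting $Q$ by at most $CR(N)$ times the conditional probability that $x_i$ is visited, times the maximal first derivative $D_Q(n)$ of the annealed exit-into-$Q$ distribution at distance $n=N^2-\langle x_i,e_1\rangle$: one uses that given $A_N$ the regeneration following the visit to $x_i$ is within $dR(N)$ of $x_i$, after which the walk is annealed, together with Parts (\ref{item:first_der}) and (\ref{item:first_der_ort}) of Lemma \ref{lem:ann_der} and the distance-$l$ estimate \eqref{eq:arriveatl} from its proof. Summing the single-site first derivative over $Q$ gives $D_Q(n)\le CN^{\theta(d-1)}n^{-d/2}$ when $N^\theta\le\sqrt n$, and a smaller bound when $Q$ is larger than the transversal fluctuation. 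Hence $U_i\le CR(N)\,E[\quenchedP_\omega^z(x_i\text{ visited})\mid\FF_{i-1}]\,D_Q(N^2-\langle x_i,e_1\rangle)$.

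I would then bound the essential variance $U=\esssup(\sum_iU_i^2)$ exactly as in the proof of Lemma \ref{lem:distu}: split $\PP(0,N)$ into the dyadic slabs $\PP^{(k)}$ at distance $\sim 2^{-k}N^2$ from $H_{N^2}$, intersect each with the tube of transversal width $\sim N^\theta+(2^{-k}N^2)^{1/2}R_2(N)$ about the axis of $Q$ (outside which $A_N$ makes the contribution $N^{-\xi(1)}$), bound $\sum_x[\quenchedP_\omega^z(x\text{ visited};A_N)]^2$ over that region by the expected number of common sites of two independent walks there, and estimate the latter via $\bar{B}^{(\theta_0,h)}(N)$ and the intersection estimate behind Lemma \ref{lem:inter}, just as in \eqref{eq:bcbt}. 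Carrying the extra factor $D_Q(2^{-k}N^2)^2$ through this dyadic sum and optimizing over $k$ yields a bound on $U$ which, inserted into Lemma \ref{lem:azuma} and combined with a union bound over the polynomially many choices of $z$ and $Q$, produces \eqref{eq:goodbound} on an event of $P$-probability $1-N^{-\xi(1)}$; the extra power $N^{-\theta\frac{d-1}{d+1}}$ over the trivial bound $N^{(\theta-1)(d-1)}$ comes out of this optimization.

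The main obstacle is exactly this last step. Both $D_Q(2^{-k}N^2)$ and the slab-$k$ intersection count change their functional form at the dyadic scale $2^{-k}\sim N^{2\theta-2}$, where the side length $N^\theta$ of $Q$ becomes comparable to the transversal fluctuation, so the dyadic sum must be split there and the two regimes estimated separately; one then has to check that the resulting exponent is indeed $2(\theta-1)(d-1)-2\theta\frac{d-1}{d+1}$ (up to a harmless $\epsilon$ and sub-polynomial factors $R_\bullet(N)$) uniformly for $\theta\in(0,1]$, and that the auxiliary exponent $\theta_0$ and the tube width can be chosen consistently with the hypotheses of Lemmas \ref{lem:alltheta} and \ref{lem:inter}. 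Modulo this bookkeeping, the argument is a direct transcription of the ones already carried out for Lemmas \ref{lem:distu} and \ref{lem:dpachotechad}.
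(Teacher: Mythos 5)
Your plan is a genuinely different route from the paper's, and it appears sound in outline, but you defer to ``bookkeeping'' exactly the step where the exponent $\theta\frac{d-1}{d+1}$ has to be extracted, so the proposal is not yet a proof.

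The paper proves Lemma~\ref{lem:goodbound} in two stages. First, Lemma~\ref{lem:distu} runs the environment-exposure martingale for a \emph{single} target point $v$ lying on $H_{N^2+V}$, while the martingale filtration $\GG$ exposes the environment only up to $H_{N^2}$: the buffer $V$ ensures every exposed site is at distance at least $V$ from the target, which caps the per-site derivative by $D(V)\sim V^{-d/2}$ and controls the dyadic sum. Second, the proof of Lemma~\ref{lem:goodbound} sums the resulting pointwise bound over the cube $Q$, passing from $\partial^+\PP(0,N)$ to $H_{N^2+V}$ by sandwiching $Q^{(1)}\subseteq Q^{(2)}$ of side lengths $N^\theta\mp R_3(N)\sqrt V$, and the boundary error $(|Q^{(2)}|-|Q^{(1)}|)N^{1-d}$ is traded against $|Q|N^{1-d}V^{(1-d)/6}$ by choosing $V$. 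You instead aim the martingale directly at the cube $Q$ on $H_{N^2}$, with no buffer. What replaces the buffer in your plan is the cube's own length scale: for exposed sites at distance $\ell\gtrsim N^{2\theta}$ from $H_{N^2}$ the cube derivative is $\lesssim|Q|\,D(\ell)$, while for $\ell\lesssim N^{2\theta}$ the cube is wider than the transversal fluctuation and this bound saturates, and the dyadic sum must be split at the transition $\ell\sim N^{2\theta}$. You correctly flag this two-regime split as the main obstacle; it is indeed the analogue of the paper's optimization in $V$, and (together with the constraint $\theta_0<\theta\frac{d-1}{2(d+1)}$ on the auxiliary exponent from Lemma~\ref{lem:alltheta}) it is where the power $\theta\frac{d-1}{d+1}$ must fall out.

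Two cautions on the deferred computation. First, ``summing the single-site first derivative over $Q$'' should be understood as a telescope onto the two opposite $(d-2)$-dimensional faces of $Q$; this is what makes the near-$Q$ regime $\ell<N^{2\theta}$ tractable, and one should also check the innermost slabs $\ell\lesssim R(N)^2$, where the cube derivative is only $O(1)$, separately. Second, in the far slabs the in-tube second-moment bound \eqref{eq:bcbt} is obtained from the $\bar B^{(\theta_0,h)}$ hitting estimate of Lemma~\ref{lem:alltheta}, not from the intersection bound of Lemma~\ref{lem:inter}; the global intersection count $R_2(N)$ from \eqref{eq:heak} enters only for the slab nearest $z$. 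Keeping these two sources straight is necessary for the exponent to come out as claimed.
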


\begin{proof}
Take $\frac 34\theta<\theta^\prime<\theta$ and $V=\left[N^{\frac{8\theta^\prime}{d+1}}\right]$.
Then by Lemma \ref{lem:distu} we know that $P\big(B(N,V)\big)=1-N^{-\xi(1)}$. As before,
all we need to show is that $B(N,V)\subseteq D^{(\theta)}(N)$. The way we do this will be completely identical to the last step of the proof of Lemma \ref{lem:dpachotechad}. Let $\omega\in B(N,V)$, and let $Q$ be a cube of side length $N^{\theta}$ which is contained in $\partial^+\PP(0,N)$. Let $x$ be the center of $Q$, and let $x^\prime=x+V\frac{\vartheta}{\langle \vartheta,e_1\rangle}$.

Let $Q^{(1)}$ and $Q^{(2)}$ be $d-1$ dimensional cubes that are contained in $H_{N^2+V}$ and are centered in $x^\prime$, such that the side length of $Q^{(1)}$ is $N^{\theta}-R_3(N)\sqrt{V}$ and the side length of $Q^{(2)}$ is $N^{\theta}+R_3(N)\sqrt{V}$.

Then, on $B(N,V)$, for $i=1,2$
\begin{equation}
\left|
E\bigl[\quenchedP_\omega^z(X_{T_{N^2+V}}\in Q^{(i)})\, |\GG\bigr]
-\annealedP\bigl[X_{T_{N^2+V}}\in Q^{(i)} \bigr]
\right|
\leq |Q^{(i)}|N^{1-d}V^{\frac{1-d}{6}}.
\end{equation}

In addition, exactly as in the proof of Lemma \ref{lem:dpachotechad},

\begin{equation}\label{eq:qlobndannad}
\annealedP^z(X_{T_{V+N^2}}\in Q^{(1)})<\annealedP^z(X_{T_{N^2}}\in Q)+N^{-\xi(1)},
\end{equation}

\begin{equation}\label{eq:qupbndannad}
\annealedP^z(X_{T_{V+N^2}}\in Q^{(2)})>\annealedP^z(X_{T_{N^2}}\in Q)-N^{-\xi(1)},
\end{equation}

\begin{equation}\label{eq:qlobndquead}
E\big[\quenchedP_\omega^z(X_{T_{V+N^2}}\in Q^{(1)})|\GG\big]<\quenchedP^z_\omega(X_{T_{N^2}}\in Q)+N^{-\xi(1)},
\end{equation}
and
\begin{equation}\label{eq:qupbndquead}
E\big[\quenchedP_\omega^z(X_{T_{V+N^2}}\in Q^{(2)})|\GG\big]>\quenchedP^z_\omega(X_{T_{N^2}}\in Q)-N^{-\xi(1)}.
\end{equation}

Therefore, for $\omega\in B(N,V)$,
\begin{eqnarray*}
\left|
\quenchedP_\omega^z\left(X_{T_{\partial \PP(0,N)}}\in Q\ \right)
-\annealedP^z\left(X_{T_{\partial \PP(0,N)}}\in Q\ \right)
\right|\\
\leq
\big(|Q^{(1)}|+|Q^{(2)}|\big)N^{1-d}V^{\frac{1-d}{6}}
+C\big(|Q^{(2)}|-|Q^{(1)}|\big)N^{1-d}+N^{-\xi(1)}\\
\leq
C\left(
N^{(1-\theta)(1-d)}V^{\frac{1-d}{6}} + R_3(N)N^{(1-d)+(d-2)\theta}\sqrt{V}
\right).
\end{eqnarray*}

The lemma follows from the choice of $V$.

%\begin{eqnarray*}
%N^{(1-d)+(d-2)\theta}\sqrt{V}\\
%=N^{(1-d)(1-\theta)-\theta}\sqrt{V}\\
%=N^{(1-d)(1-\theta)-\theta+\frac{2\theta}{d+1}}\\
%=N^{(1-d)(1-\theta)-\theta\left(\frac{d-1}{d+1}\right)}
%\end{eqnarray*}

\end{proof}

\begin{proof}[Proof of Proposition \ref{prop:quenched}]
Proposition \ref{prop:quenched} follows from Lemma \ref{lem:toch} and Lemma \ref{lem:goodbound}.
\end{proof}

\subsection{Sums of approximate gussians}\label{subsec:sumapprox}
The purpose of this subsection is to prove Lemma \ref{lem:sumapprox} below.
\label{page:defdn}
Let $\DD(N)$ be the annealed distribution starting from zero of
$
X_{T_{\partial\PP(0,N)}}
$
conditioned on $\partial^+\PP(0,N)$.

\ignore{
Old statement in rec_bin with its proof.
}

\begin{lemma}\label{lem:sumapprox} Assume the assumptions \ref{item:assgamma}--\ref{item:assdim} from Page \pageref{item:assgamma}.
Let $0<\lambda<1$ and $n$ be so that $n<\lambda^{-1}$. Let
$K$ be so that $N>K\geq 1$. Let $h\geq 5$. Assume further that  $N>K^4$ and $N>\lambda^{-4}$,
and that $\lambda N>2KnR_{h+1}(N)$. % and $\lambda N>n^{d+2}$. 
Let $\{X_i\}_{i=1}^n$  be random variables such that for every $i$, 
conditioned on $X_1,\ldots,X_{i-1}$, the distribution of $X_i$ 
is $(\lambda,K)$-close to $\DD(N)$.

Let $S=\sum_{i=1}^n X_i$. Then the distribution of $S$ is $(\lambda R_{h+1}(N),2nKR_{h+1}(N))$-close to $\DD(N\sqrt{n})$.

\ignore{
If in addition the conditional distribution of $X_i$ is $C$-local to $\DD(N)$ for some constant $C$, then
$S$ is $C/2$-local to $\DD(N\sqrt{n})$.
}
\end{lemma}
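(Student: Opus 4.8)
The plan is to pass from $S$ to $\DD(N\sqrt n)$ through the clean $n$-fold convolution $\DD(N)^{*n}$. \textbf{Step 1 (unwinding the definition).} By Definition \ref{def:close}, conditionally on $X_1,\dots,X_{i-1}$ write $X_i=X_i^{(0)}+\xi_i$ with $\|\xi_i\|\le K$ and with the conditional law of $X_i^{(0)}$ equal to $\DD(N)+D_2^{(i)}$, where the signed measure $D_2^{(i)}$ (possibly depending on the past) has total variation $\le\lambda$, zero total mass, zero first moment, and $\sum_x|D_2^{(i)}(x)|\,\|x-E_{\DD(N)}\|_1^2\le\lambda\var(\DD(N))$; here $\var(\DD(N))\asymp N^2$ by Lemmas \ref{lem:lbound} and \ref{lem:regrad}. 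Since $\|\sum_i\xi_i\|\le nK$, this part fits comfortably inside the shift budget $2nKR_{h+1}(N)$, and it suffices to analyse $S^{(0)}:=\sum_i X_i^{(0)}$. A crucial point: because each $D_2^{(i)}$ has zero first moment, $X_i^{(0)}$ has mean exactly $E_{\DD(N)}$ for \emph{every} value of the past, so the $X_i^{(0)}$ are pairwise uncorrelated, $\var(S^{(0)})=n\var(\DD(N))(1+O(\lambda))$, and each of the error terms produced below will automatically have zero mean.

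\textbf{Step 2 (clean convolution, and a smoothing bound).} $\DD(N)^{*n}$ is $(N^{-\xi(1)},O(1))$-close to $\DD(N\sqrt n)$: both live on $H_{nN^2}$ and, by the local central limit theorem applied to the regeneration decomposition (Theorem \ref{thm:inf_reg}, Lemma \ref{lem:regrad}, Lemma \ref{lem:lbound}), each equals up to total variation $N^{-\xi(1)}$ the same transverse Gaussian of covariance $\asymp nN^2$ and mean $\frac{nN^2}{\langle\vartheta,e_1\rangle}\vartheta+O(1)$; for $\DD(N)^{*n}$ one uses in addition that a regeneration falls within $R(N)$ of $\partial^+\PP(0,N)$ with probability $1-N^{-\xi(1)}$ (as in Lemma \ref{lem:ann_der}(\ref{item:exit_from_right})), so stacking $n$ size-$N$ blocks reproduces, up to $N^{-\xi(1)}$, the annealed exit of $H_{nN^2}$, itself a sum of $\asymp nN^2$ i.i.d.\ regeneration increments. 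The same Fourier input (the characteristic-function bound on a regeneration increment in Claim \ref{claim:sum_der}) yields, for all $j\le n$,
\begin{equation}\label{eq:hessbd}
\|\nabla^2\DD(N)^{*j}\|_\infty\le C\,(jN^2)^{-\frac{d+1}{2}},
\end{equation}
since $|\widehat{\DD(N)^{*j}}(\xi)|\le e^{-cjN^2\|\xi\|^2}$ near the origin and is bounded away from $1$ elsewhere.

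\textbf{Step 3 (absorbing the perturbations; conclusion).} Expand $\mathrm{law}(S^{(0)})=\E[(\DD(N)+D_2^{(1)})*\cdots*(\DD(N)+D_2^{(n)})]$ into $2^n$ terms indexed by $S\subseteq[n]$, the term $S=\emptyset$ being $\DD(N)^{*n}$. The workhorse estimate is that for a signed measure $D$ with zero mass, zero first moment and $\sum_x|D(x)|\,\|x-c\|_1^2\le t$,
\begin{equation}\label{eq:smoothing}
\|\DD(N)^{*j}*D\|_{\mathrm{TV}}\le C\,\frac{t}{jN^2}\,R_h(N),
\end{equation}
proved by Taylor-expanding $\DD(N)^{*j}$ about $c$ (the zeroth- and first-order terms vanish against $D$; the remainder is $\le\tfrac12\|\nabla^2\DD(N)^{*j}\|_\infty\,t$ by \eqref{eq:hessbd}; only $\asymp(jN^2)^{(d-1)/2}R_h(N)$ transverse lattice points contribute appreciably, with Gaussian tails beyond, the polylogarithmic loss absorbed via \eqref{eq:rs}). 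The term for $S=\{i_1<\dots<i_m\}$ equals $\DD(N)^{*(i_1-1)}*\bigl(\text{a convolution of $m$ of the }D_2\bigr)*\DD(N)^{*(n-i_m)}$, its inner factor having zero mass, zero first moment, total variation $\le\lambda^m$, and second moment $\le Cm\lambda^m N^2$; feeding this into \eqref{eq:smoothing} with $j=n-i_m$ bounds the term by $Cm\lambda^m R_h(N)/\max(n-i_m,1)$, and the sum over $i_1<\dots<i_m$ costs only a harmonic factor $\sum_{j\ge1}1/j=O(\log n)$ rather than a power of $n$; summing over $m$ and using $n\lambda<1$ gives $\|\mathrm{law}(S^{(0)})-\DD(N)^{*n}\|_{\mathrm{TV}}\le C'\lambda R_h(N)$. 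Chaining with Step 2 by transitivity of closeness (the shifts $\sum_i\xi_i$ and the $O(1)$ discrepancy fitting inside $2nKR_{h+1}(N)$, the means aligned exactly by an independent perturbation as in the proof of Corollary \ref{cor:quenched}) shows $S$ is close to $\DD(N\sqrt n)$ with total-variation parameter $\le C''\lambda R_h(N)\le\lambda R_{h+1}(N)$ and shift $2nKR_{h+1}(N)$; clause (\ref{item:ndmom}) holds since each error piece of total variation $\epsilon_m$ is, up to Gaussian tails, supported within $O(N\sqrt n\,R_h(N))$ of the mean, so the weighted sum is $\le\bigl(\sum_m\epsilon_m\bigr)\,O\bigl(nN^2R_h(N)^2\bigr)\le\lambda R_{h+1}(N)\var(\DD(N\sqrt n))$ by \eqref{eq:rs}.

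\textbf{Main obstacle.} The difficulty concentrates in \eqref{eq:smoothing} and the bookkeeping of Step 3. Naively, each $D_2^{(i)}$ convolved against the rest costs $\lambda$ in total variation, for a naive total $n\lambda$, which is \emph{not} bounded by $\lambda R_{h+1}(N)$: the hypotheses only give $n<\lambda^{-1}$ and $n\lesssim N/R_{h+1}(N)$, and $N$ dwarfs $R_{h+1}(N)^2$. What rescues the estimate is the gain of a factor $1/j$ when a mean-zero, first-moment-zero, bounded-second-moment signed measure is smeared against the wide near-Gaussian $\DD(N)^{*j}$ — precisely where the two second-moment clauses of Definition \ref{def:close}, the transverse Gaussianity \eqref{eq:hessbd} of $\DD(N)^{*j}$, and the harmonic summation over the position of the perturbation all enter — and every polylogarithmic loss must be checked against \eqref{eq:rs} to stay inside $R_{h+1}(N)$.
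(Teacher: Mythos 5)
Your argument isolates the paper's central mechanism --- smoothing a mean-zero, first-moment-zero signed perturbation of second moment $O(\lambda N^2)$ against the wide near-Gaussian $\DD(N)^{*j}$ costs $O(\lambda R_h(N)/j)$ in total variation (this is precisely what Lemma~\ref{lem:arit} together with Lemma~\ref{lem:ann_der}(\ref{item:second_der})--(\ref{item:mix_der}) delivers in the paper), so that the harmonic sum over $j$ produces $O(\lambda R_h(N)\log n)\le\lambda R_{h+1}(N)$ via~\eqref{eq:rs}. But the paper realizes this by a descending induction, conditioning on the past at every step, and your attempt to package it as a single global multilinear expansion introduces a genuine error. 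The identity $\mathrm{law}(S^{(0)})=\E\bigl[(\DD(N)+D_2^{(1)})*\cdots*(\DD(N)+D_2^{(n)})\bigr]$ is false, because the $D_2^{(i)}$ are \emph{conditional} perturbations: each $D_2^{(i)}$ is $\FF_{i-1}$-measurable, and it is the conditional law of $X_i^{(0)}$ given $\FF_{i-1}$ that equals $\DD(N)+D_2^{(i)}$. Already for $n=2$ the correct expression for the law of the sum is $\sum_y(\DD(N)+D_2^{(1)})(y)\,(\DD(N)+D_2^{(2),y})(z-y)$ with $D_2^{(2),y}$ the perturbation on the event $\{Y_1=y\}$, whereas your formula decouples the shift $y$ from the randomness inside $D_2^{(2)}$. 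Consequently the $2^n$-term expansion, its $m\ge 2$ terms, and the object ``a convolution of $m$ of the $D_2$'' do not arise from any valid decomposition of $\mathrm{law}(S^{(0)})$, and the bookkeeping of Step~3 has nothing to attach to.

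What you should substitute is the sequential linearized identity
\[
P_n=\DD(N)^{*n}+\sum_{k=1}^n\DD(N)^{*(n-k)}*E_k,
\qquad
E_k(z):=\E\Bigl[D_2^{(k)}\bigl(z-S^{(0)}_{k-1}\bigr)\Bigr],\quad S^{(0)}_{k-1}:=\textstyle\sum_{i<k}X_i^{(0)},
\]
obtained by unrolling $P_k=\DD(N)*P_{k-1}+E_k$, where $P_k$ is the law of $S^{(0)}_k$. Each $E_k$ is an average of translates of $\FF_{k-1}$-measurable signed measures obeying the moment clauses of Definition~\ref{def:close} pointwise in the history; your smoothing estimate is translation invariant, so it applies under the expectation to give $\|\DD(N)^{*(n-k)}*E_k\|_{\mathrm{TV}}\le C\lambda R_h(N)/(n-k)$ term by term, and summing over $k$ recovers the bound. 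This is exactly the paper's descending induction written out linearly; the higher-$m$ ``terms'' in your expansion are an artifact of the false factorization. With that repair the rest of the argument goes through, though one further point: in Step~2, the claim that $\DD(N)^{*n}$ is $(N^{-\xi(1)},O(1))$-close to $\DD(N\sqrt n)$ overstates what Claim~\ref{claim:annealerr} gives (a coupling with shift of size $R_h(N)$, not $O(1)$), which is still well within the budget $2nKR_{h+1}(N)$.
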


{\bf Remark:} We need the assumptions  \ref{item:assgamma}--\ref{item:assdim} because they give us some control over the distribution $\DD(N)$.

We use the following simple fact, which follows from the decomposition of the annealed RWRE into regenerations.
\ignore{
\begin{claim}\label{claim:annealerr}
Let $\{U_i\}$ be i.i.d. $\DD(N)$, and let $\bar{U}:=\sum_{i=1}^nU_i.$ Then $\bar{U}$ can be represented as $\bar{U}=\hat{U}+U^\prime$ s.t. $\hat{U}\sim\DD(N\sqrt{n})$ and for every $k$,
\[
\prob\left(\frac{U^\prime}{n}>k\right)<Cke^{-ck^{\gamma}}
\]
for some constants $C$ and $c$.

\end{claim}
}

\begin{claim}\label{claim:annealerr}
Assume the assumptions \ref{item:assgamma}--\ref{item:assdim} from Page \pageref{item:assgamma}.
For $j>1$,
let $\hat{\DD}^{(j)}$ be the convolution of $\DD(N)$ and $\DD(N\sqrt{j-1})$.
Let $U\sim \hat{\DD}^{(j)}$.
Then $U$ can be represented as $U=\hat{U}+U^\prime$ s.t. $\hat{U}\sim\DD(N\sqrt{j})$
and for every $k$,
\begin{equation}\label{eq:bndregsq}
\prob\left(\|U^\prime\|>k\right)<Ce^{-ck^{\gamma}} + N^{-\xi(1)}
\end{equation}
for some constants $C$ and $c$. In particular, there exists some constant $C$, independent 
of $N$ and $j$ such that
\begin{equation}\label{eq:bndregsqE}
\|E(U^\prime)\|\leq E(\|U^\prime\|)<C.
\end{equation}
\end{claim}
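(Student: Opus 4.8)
The plan is to realize the convolution $\hat{\DD}^{(j)}$ explicitly as the annealed exit law of a walk that is forced to run through two independent copies of itself — one living in a \paral\ of width $N$, the other in a \paral\ of width $N\sqrt{j-1}$ — and then to recognize that, up to a small regeneration overshoot, this coincides with the exit law from a \paral\ of width $N\sqrt{j}$. First I would recall that, by construction of $\DD(M)$ and Part (\ref{item:exit_from_right}) of Lemma \ref{lem:ann_der}, for any $M$ the distribution $\DD(M)$ is (up to total-variation error $M^{-\xi(1)}$) the annealed law of $X_{T_{H_{M^2}}}$, i.e. the position of the walk at the first time its $e_1$-coordinate reaches $M^2$; this is because with probability $1-M^{-\xi(1)}$ the walk exits $\PP(0,M)$ through $\partial^+\PP(0,M)$ and, on the event $A_M$, it does so within distance $R(M)$ of the hyperplane $H_{M^2}$, and the sideways constraint $\|x-u(0,x)\|_\infty<MR_5(M)$ is satisfied automatically by Azuma with the same probability. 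Having reduced to hyperplane exit laws, the key identity is that, by the strong Markov property applied at $T_{H_{N^2}}$, a walk run until it reaches $H_{N^2+N^2(j-1)}=H_{jN^2}$ has position at that time distributed exactly as $\DD(N)\ast\DD(N\sqrt{j-1})$ up to the sideways/overshoot error — that is, $\hat{\DD}^{(j)}$ is (modulo $N^{-\xi(1)}$) the law of $X_{T_{H_{jN^2}}}$.

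Next I would produce the splitting $U=\hat U+U'$. Set $\hat U:=$ the (coupled) value of $X$ at the first regeneration time $\tau$ at which $\langle X_\tau,e_1\rangle\ge jN^2$; more precisely, couple a walk whose exit law is $\hat{\DD}^{(j)}$ with the same walk continued until the first regeneration past level $jN^2$, and let $\hat U$ be the regeneration position, which by Theorem \ref{thm:inf_reg}(\ref{item:sz_iid})--(\ref{item:sz_cond}) together with the hyperplane interpretation above is distributed as $\DD(N\sqrt j)$ (again up to $N^{-\xi(1)}$, which I absorb into the $N^{-\xi(1)}$ term in \eqref{eq:bndregsq}). Then $U'=U-\hat U$ is controlled by the overshoot of a single regeneration step: $\|U'\|$ is at most the maximal displacement $X^{*(n)}$ of the regeneration block straddling level $jN^2$, so \eqref{eq:bndregsq} follows directly from Lemma \ref{lem:regrad}, which gives $\annealedP(X^{*(n)}>k)\le Ce^{-ck^\gamma}$, plus the $N^{-\xi(1)}$ error from the hyperplane-vs-\paral\ reduction. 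Finally \eqref{eq:bndregsqE} is an immediate consequence: $E(\|U'\|)\le\sum_{k\ge0}\annealedP(\|U'\|>k)\le\sum_{k\ge0}\big(Ce^{-ck^\gamma}+N^{-\xi(1)}\big)$, and although the $N^{-\xi(1)}$ term is summed over all $k$ I would instead bound $E(\|U'\|\cdot\mathbf 1_{A})$ on the good event $A$ where the overshoot is genuinely a single regeneration block (so the sum is a convergent $\gamma$-stretched-exponential series giving a constant $C$ independent of $N,j$) and note that on $A^c$, whose probability is $N^{-\xi(1)}$, we have $\|U'\|=O(N^2)$, contributing $N^{2}\cdot N^{-\xi(1)}=N^{-\xi(1)}=o(1)$; hence $E(\|U'\|)<C$ uniformly.

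The main obstacle I anticipate is bookkeeping the various $N^{-\xi(1)}$ error terms so that they genuinely remain $N^{-\xi(1)}$ after being combined and, in the second moment / expectation step, after being multiplied by the trivial deterministic bound $\|X_{T_{\partial\PP(0,M)}}\|_\infty=O(M^2)$ on the bad events; the point is that $N^{-\xi(1)}$ beats any polynomial in $N$, and $j\le n<\lambda^{-1}<N^{1/4}$ so the width $N\sqrt j$ is still polynomial in $N$, keeping all reductions valid. A secondary technical point is making the coupling between the "$\hat{\DD}^{(j)}$-walk" and the "run-to-the-next-regeneration walk" precise — but this is exactly the standard regeneration decomposition of the annealed RWRE (Theorem \ref{thm:inf_reg}), and the uniformity of $C$ in $j$ comes for free because a single regeneration overshoot has a law that does not depend on how far out we are.
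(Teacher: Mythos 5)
Your high-level idea is the same as the paper's: realize both $U$ and $\hat U$ via regeneration slabs and bound $U'=U-\hat U$ by the radii of a few slabs, so that Lemma \ref{lem:regrad} gives the stretched-exponential tail. However, the central step of your argument has a genuine gap. You assert, ``by the strong Markov property applied at $T_{H_{N^2}}$,'' that $X_{T_{H_{jN^2}}}$ is distributed as $\DD(N)\ast\DD(N\sqrt{j-1})$ up to $N^{-\xi(1)}$. This is not correct. The annealed law is the average of the quenched laws over $\omega$ and is \emph{not} a Markov chain, so you cannot restart at $T_{H_{N^2}}$. Independence is restored only at regeneration times, and even then the increment past a regeneration is governed by the ``later-slab'' law of $(\tau_2-\tau_1,\,X_{\tau_2}-X_{\tau_1})$, not the ``first-slab'' law of $(\tau_1,X_{\tau_1})$. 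Since $\DD(N)$ and $\DD(N\sqrt{j-1})$ each begin with an independent first slab, their convolution contains \emph{two} first-slab contributions, whereas $X_{T_{H_{jN^2}}}$ contains only one. The resulting total-variation discrepancy is of order one (a stretched-exponential tail), not $N^{-\xi(1)}$, so you cannot absorb it into the $N^{-\xi(1)}$ term as you propose.

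A related problem is that your coupling is not actually constructed: $\hat{\DD}^{(j)}$ is defined as a convolution, not as an exit law of any walk, so ``a walk whose exit law is $\hat{\DD}^{(j)}$, continued until the first regeneration past $jN^2$'' does not name a well-defined object. The paper's proof fills exactly this hole. It builds $U$ and $\hat U$ from one explicit i.i.d.\ sequence of regeneration ensembles $\{\BbbL_n\}$ plus \emph{two} independent first-slab ensembles $\hat\BbbL_1,\hat\BbbL_2$: $\hat U$ is the hitting position of level $jN^2$ using $\hat\BbbL_1,\BbbL_1,\BbbL_2,\dots$; $U=V_1+V_2$ where $V_1$ hits level $(j-1)N^2$ using the same blocks and $V_2$ restarts at that level from the extra first slab $\hat\BbbL_2$ and reuses the shared later blocks. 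The difference $U-\hat U$ is then controlled by the radii of only finitely many slabs (namely $\hat\BbbL_2$, the slab straddling level $N^2(j-1)$, and the slabs straddling level $jN^2$ in the two constructions), which is what makes Lemma \ref{lem:regrad} applicable. Without this explicit shared-block construction, the coupling, the identification of $\|U'\|$ with a regeneration overshoot, and the uniformity of the constant $C$ in $j$ are all left unjustified. Your proposal would become correct if you replaced the ``strong Markov / $N^{-\xi(1)}$'' shortcut with this concrete shared-randomness construction.
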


%Let $\hat{\DD}^{(j)}$ be the convolution of $\DD(N)$ and $\DD(N\sqrt{n-j})$
\begin{proof}
\eqref{eq:bndregsqE} follows immediately from \eqref{eq:bndregsq} (In order to handle the $N^{-\xi(1)}$ error, note that $U^\prime$ is bounded by $3NR_5(N)$), and therefore we shall only prove 
\eqref{eq:bndregsq}.

We will define a coupling between a random variable $U$ which is approximately $\hat{\DD}^{(j)}$ distributed
and a random variable  $\hat U$ which is approximately $\DD(N\sqrt{j})$ distributed
such that
\[
\prob\left(\|U-\hat U\|>k\right)<Ce^{-ck^{\gamma}}+ N^{-\xi(1)}.
\]

We now construct the coupling.

We define an ensemble $\BbbL=\{\U,\T\}$ where $\U$ is a positive integer, and
$\T$ is a nearest neighbor path of length $\U$, taking values in $\Z^d$ and starting at $0$.

Let $\{\BbbL_n=\{\U_n,\T_n\}\}_{n=1}^\infty$ be i.i.d. ensembles, such that 
$\U_1$ is sampled according to the annealed distribution of $\tau_2-\tau_1$, and the path $\T_1$ is distributed according to the annealed distribution of $X_{\tau_1+.}-X_{\tau_1}$, run up to time $\tau_2-\tau_1$ and conditioned on $\tau_2-\tau_1=\U_1$.

Additionally, define $\hat\BbbL_1=\{\hat\U_1,\hat\T_1\}$ and $\hat\BbbL_2=\{\hat\U_2,\hat\T_2\}$ to be two independent and identically distributed ensembles s.t. $\hat\U_1$ is sampled according to the annealed distribution of $\tau_1$ and
$\hat\T_1$ is distributed according to the annealed distribution of $X_{.}$, run up to time $\tau_1$ and conditioned on $\tau_1=\U_1$. In addition, we require independence of $\hat\BbbL_1$ and $\hat\BbbL_2$ and $\{\BbbL_n\}_{n=1}^\infty$.

In other words, $\hat\BbbL_1$ and $\hat\BbbL_2$ are distributed according to the annealed distribution of the first regeneration slab, and $\{\BbbL_n\}$ are distributed according to the annealed distribution of regeneration slabs that are not the first one.

We now construct paths from the ensembles that we defined. The choice of the distribution of the ensembles will guarantee that the paths are distributed according to the annealed RWRE distribution. The variables $U$ and $\hat U$ will be taken to be certain hitting locations of these paths, and the fact that $U$ and $\hat U$ will be built from the same ensembles will make it easy for us to estimate the difference $U-\hat U$.

Let $\Gamma_n=\hat\T_1(\hat\U_1)+\sum_{k=1}^n\T_k(\U_k)$, and let
$T_1=\max(h:\langle e_1,\Gamma_h\rangle<N^2j)$ We take 
\[
\hat U=\Gamma_{T_1}+\T_{T_1+1}\big(\min(i:\langle e_1,\T_{T_1+1}(i)+\Gamma_{T_1}\rangle=N^2j)\big).
\]
Let $T_2=\max(h:\langle e_1,\Gamma_h\rangle<N^2(j-1))$, and
\[
V_1=\Gamma_{T_2}+\T_{T_2+1}\big(\min(i:\langle e_1,\T_{T_2+1}(i)+\Gamma_{T_2}\rangle=N^2(j-1))\big)
\]
Let $\Gamma^\prime_n=\hat\T_2(\hat\U_2)+\Gamma_{T_2+n}-\Gamma_{T_2}$.
Let $T_3=\max(h:\langle e_1,\Gamma^\prime_h\rangle<N^2)$, and
\[
V_2=\Gamma^\prime_{T_3}+\T_{T_2+T_3+1}\big(\min(i:\langle e_1,\T_{T_2+T_3+1}(i)+\Gamma_{T_2+T_3}\rangle=N^2)\big)
\]
We now take $U=V_1+V_2$.

By Lemma \ref{lem:regrad} and Part \ref{item:exit_from_right} of Lemma \ref{lem:ann_der}, 
up to an error of $N^{-\xi(1)}$, the variables $U$ and $\hat U$ are distributed (respectively) according to 
$\hat{\DD}^{(j)}$ and $\DD(N\sqrt{j})$.

The difference $U-\hat U$ is bounded by the sums of the radii of the regeneration slabs $\hat\BbbL_2$,
$\BbbL_{T_2}$, and $\BbbL_{h}$ for $h$ between $T_2+T_3$ and $T_1$. Lemma \ref{lem:regrad} now gives us the desired bound. 
\end{proof}

We also use the following lemma, which is nothing but a second order Taylor expansion.

\begin{lemma}\label{lem:arit}
Let $\mu$ be a finite signed measure on $\Z^d$, and let $f:\Z^d\to\R$. Assume that $m$, $k$, $J$, $L$ in $\N$  and $\varrho\in\Z^d$ are such that
\begin{enumerate}
\item for every $x,y$ such that $x-y\in\{\pm e_i\}_{i=1}^d$, we have that $|f(x)-f(y)|<m$.
% and $|f(x)+f(z)-2f(y)|<k$.
\item for every $x,y,z,w$ and $1\leq i,j\leq d$ such that $x-y=z-w=e_i$ and $x-z=y-w=e_j$,
we have that $|f(x)+f(w)-f(y)-f(z)|<k$ (note that if $i=j$ then this is the discrete pure second derivative,
and if $i\neq j$ it is the discrete mixed second derivative).
\item $\sum_x\mu(x)=0$.
\item $\big\|\sum_xx\mu(x)\big\|_1<L$.
\item $\sum_x\|x-\varrho\|_1^2|\mu(x)|<J$.
\end{enumerate}
Then
\[
\left|\sum_{x}\mu(x)f(x)\right|\leq Lm+\frac{1}{2}Jk.
\]
\end{lemma}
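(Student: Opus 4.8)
The plan is to run a discrete second–order Taylor expansion of $f$ about $\varrho$ and then pair it with $\mu$: hypothesis (3) annihilates the constant term, hypotheses (1) and (4) control the linear term, and hypotheses (2) and (5) control the quadratic remainder.

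First I would set up the expansion. For $x\in\Z^d$ put $a_i=x_i-\varrho_i$ and introduce the centered discrete gradient $\beta_i:=\tfrac12\bigl(f(\varrho+e_i)-f(\varrho-e_i)\bigr)$, so that $|\beta_i|<m$ by (1). The claim to establish is
\[
f(x)=f(\varrho)+\sum_{i=1}^{d}\beta_i a_i+R(x),\qquad |R(x)|\le\tfrac{k}{2}\,\|x-\varrho\|_1^2 .
\]
To get this, telescope along the monotone lattice path from $\varrho$ to $x$ that first changes coordinate $1$, then $2$, \dots, then $d$; writing $p_i=\varrho+\sum_{j\le i}a_je_j$ one has $f(x)-f(\varrho)=\sum_i\bigl(f(p_i)-f(p_{i-1})\bigr)$, each summand being a one–dimensional increment in direction $e_i$. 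Note that hypothesis (2) says exactly that any discrete first difference $D_i^{\pm}f$ changes by at most $k$ under a unit shift in any coordinate direction (take $w=p$, $z=p+e_i$, $y=p+e_j$, $x=p+e_i+e_j$ in (2), with $j=i$ allowed). Expanding the one–dimensional increment $f(p_i)-f(p_{i-1})$ around the centered derivative of $f$ at the base point $p_{i-1}$ therefore produces a remainder of size at most $\tfrac{k}{2}a_i^2$, and replacing that base–point derivative by $\beta_i$ (the derivative at $\varrho$) costs an extra $k\,\|p_{i-1}-\varrho\|_1\,|a_i|=k\bigl(\sum_{j<i}|a_j|\bigr)|a_i|$. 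Summing over $i$ and using $\sum_i a_i^2+2\sum_{j<i}|a_j||a_i|=\|x-\varrho\|_1^2$ gives the asserted bound on $R$.

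Then I would pair with $\mu$. Since $\sum_x\mu(x)=0$ one may subtract $f(\varrho)$, and the linear part simplifies because $\sum_x\mu(x)(x_i-\varrho_i)=\sum_x x_i\mu(x)$; hence
\[
\sum_x\mu(x)f(x)=\sum_{i=1}^{d}\beta_i\sum_x x_i\mu(x)+\sum_x\mu(x)R(x).
\]
The first sum is at most $\max_i|\beta_i|\cdot\bigl\|\sum_x x\mu(x)\bigr\|_1<mL$ by (1) and (4); the second is at most $\tfrac{k}{2}\sum_x|\mu(x)|\,\|x-\varrho\|_1^2<\tfrac{k}{2}J$ by the bound on $R$ together with (5). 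Adding the two estimates yields $\bigl|\sum_x\mu(x)f(x)\bigr|<Lm+\tfrac12 Jk$, as required. The only point that requires any care is the bookkeeping in the Taylor step — specifically the choice of the \emph{centered} derivative $\beta_i$, which is what lets the one–dimensional remainders and the base–point–correction terms recombine into exactly $\tfrac{k}{2}\|x-\varrho\|_1^2$ rather than a larger constant; everything else is a routine computation.
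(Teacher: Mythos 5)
Your proof is correct and follows the same basic strategy as the paper's: expand $f$ to second order around $\varrho$, use $\sum_x\mu(x)=0$ to kill the constant term, bound the linear part by $mL$ using hypotheses (1) and (4), and bound the quadratic remainder by $\tfrac12 Jk$ using hypotheses (2) and (5). The one place you depart from the paper is a genuine (if small) improvement: the paper defines the comparison affine map $g$ by matching $f$ at $\varrho$ and at $\varrho+e_i$ (a \emph{forward} discrete gradient) and then asserts $|f(x)-g(x)|<\tfrac12 k\|x-\varrho\|_1^2$, but that bound fails already at $x=\varrho-e_i$, where $f-g$ equals the centered second difference and is only bounded by $k$, not $k/2$ (the paper's own source contains a commented-out note expressing doubt about exactly this point). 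Your choice of the \emph{centered} gradient $\beta_i=\tfrac12\bigl(f(\varrho+e_i)-f(\varrho-e_i)\bigr)$ is what makes the constant $\tfrac12$ come out on the nose: the one-dimensional remainder along each leg of the path is $\le\tfrac{k}{2}a_i^2$, the cost of transporting the base-point derivative from $\varrho$ to $p_{i-1}$ is $\le k\bigl(\sum_{j<i}|a_j|\bigr)|a_i|$, and $\sum_i a_i^2+2\sum_{j<i}|a_j||a_i|=\|x-\varrho\|_1^2$ recombines these exactly. Since $|\beta_i|<m$ by two applications of (1), the linear term is still $<mL$, and the argument closes. (For the paper's purposes the constant is immaterial since the lemma is only applied up to multiplicative constants, so the discrepancy is cosmetic; nonetheless your version is the one that literally proves the stated inequality.)
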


\begin{proof}
$\sum_x\mu(x)=0$ and therefore, $\sum_{x}\mu(x)f(x)=\sum_{x}\mu(x)(f(x)+c)$ for every $c$.
Therefore, without loss of generality we may assume that $f(\varrho)=0$.
Let $g:\R^d\to\R$ be the affine function such that $g(\varrho)=f(\varrho)=0$ and $g(\varrho+e_i)=f(\varrho+e_i)$ for $i=1,\ldots,d$. Then
$|f(x)-g(x)|<\frac 12 k\|x-\varrho\|_1^2$ for $x\in\Z^d$.
\ignore{
why?
look at $h=f-g$. it has the exact second derivatives as $f$, and the maximum directional derivative at 
$x$ and along the shortest path o it is bounded by $k\|x\|_1$. Thus, we get a bound
of 
$|f(x)-g(x)|<k\|x\|_1^2$ for $x\in\Z^d$, which is within a constant from what's written.
Whether what's written is really true 
}
Note also that since $\sum_x\mu(x)=0$, we get that $\sum_x(x-\varrho)\mu(x)=\sum_xx\mu(x)$ and thus
$\big\|\sum_x(x-\varrho)\mu(x)\big\|<L$.
Therefore,
\[
\left|\sum_{x}\mu(x) f(x)-\sum_{x}\mu(x) g(x)\right|\leq
\sum_x |\mu(x)||f(x)-g(x)|\leq\frac 12 Jk.
\]
In addition,
\[
\left|\sum_{x}\mu(x)g(x)\right|=\left|g\left(\sum_{x}(x-\varrho)\mu(x)\right)\right|
\leq Lm.
\]
The lemma follows.
\end{proof}

\ignore{
Old proof in rec_bin.
}
\begin{proof}[Proof of Lemma \ref{lem:sumapprox}]

For $k=1,\ldots,n$, conditioned on $X_1,\ldots,X_{k-1}$, the distribution of $X_k$ is
$(\lambda,K)$-close to $\DD(N)$. Therefore there exist variables $\{Y_k\}_{k=1}^n$,
playing the role of $Z_0$ in Definition \ref{def:close}, such that for every $k$, conditioned on
$X_1,Y_1,\ldots,X_{k-1},Y_{k-1}$, the following hold:
\begin{enumerate}
\item\label{item:y_masbound} $\sum_x|\prob(Y_k=x)-\DD(N)(x)|   \leq \lambda$.
\item\label{item:y_distbound} $\prob(\|Y_k-X_k\|<K)=1$.
\item\label{item:y_stmom} $E(Y_k)=E_{\DD(N)}$.
\item\label{item:y_ndmom} $\sum_{x}|\prob(Y_k=x)-\DD(N)(x)|\|x-E_{\DD(N)}\|_1^2  \leq \lambda N^2$.
\end{enumerate}

What we need to show is that there exists a random variable $Y^\prime$ such that
\begin{enumerate}
\item\label{item:yp_masbound} $\sum_x|\prob(Y^\prime=x)-\DD(\sqrt{n}N)(x)|   \leq \lambda R_{h+1}(N)$.
\item\label{item:yp_distbound} $\prob(\|Y^\prime-S\|<2nKR_{h+1}(N))=1$.
\item\label{item:yp_stmom} $E(Y^\prime)=E_{\DD(\sqrt{n}N)}$.
\item\label{item:yp_ndmom} $\sum_{x}|\prob(Y^\prime=x)-\DD(\sqrt{n}N)(x)|\|x-E_{\DD(\sqrt{n}N)}\|_1^2 
\leq \lambda nN^2R_{h+1}(N)$.
\end{enumerate}

\ignore{
If in addition for every $k$, conditioned on $X_1,Y_1,\ldots,X_{k-1},Y_{k-1}$,
\begin{equation}\label{eq:localif}
P(Y_k=x)\geq C\DD(N)(x)
\end{equation}
for all $x$ such that $\|x-E_{\DD(N)}\|<N$, then we need to show that 
\begin{equation}\label{eq:localthen}
P(Y^\prime=x)\geq \frac{C}{2}\DD(N)(x)
.\end{equation}
for all $x$ such that $\|x-E_{\DD(\sqrt{n}N)}\|<\sqrt{n}N$.
}

To this end, we let

\[
S^{(j)}=\sum_{k=j}^nY_k.
\]

First we will show using descending induction, that conditioned on $X_1,\ldots,X_{j-1}$, we can represent $S^{(j)}$ as
$S^{(j)}=Y^{(j)}+Z^{(j)}$ such that $\|Z^{(j)}\|\leq (n-j)R_h(N)$ a.s. and $Y^{(j)}\sim(\DD(N\sqrt{n-j+1})+D_2^{(j)})$ where $D_2^{(j)}$ is a signed measure such that
%\begin{enumerate}
%\item
$\|D_2^{(j)}\|\leq\lambda^{(j)}$ with $\lambda^{(n)}=\lambda$ and
$\lambda^{(j)}\leq \lambda^{(j+1)}+\frac{2}{n-j}\lambda\cdot R_5(N)$ for $j<n$.
%\end{enumerate}

For $j=n$ the statement clearly holds, with $Z^{(n)}=0$. We now assume that the statement holds for $j+1$, and prove it for $j$.

Let $\prob$ be the joint distribution of $Y_j$ and $Y^{(j+1)}$ conditioned on $X_1,\ldots,X_{j-1}$. Let
$H=Y_j+Y^{(j+1)}$. For each $z$,
\begin{eqnarray*}
\prob(H=z)
=\sum_{x}\prob(Y_j=x)\prob\big(Y^{(j+1)}=z-x\big|Y_j=x\big)
\end{eqnarray*}
Let $\DD^{(j)}$ be the convolution of $\DD(N\sqrt{n-j})$ and the  $\prob$ distribution of $Y_j$. Then
\begin{eqnarray}
\nonumber
&&\sum_z\big|\prob(H=z)-\DD^{(j)}(z)\big|\\
\nonumber
&\leq&
\sum_z\sum_x \prob(Y_j=x)\left|\prob\big(Y^{(j+1)}=z-x\big|Y_j=x\big)-\DD(N\sqrt{n-j})(z-x)\right|\\
\nonumber
&=&
\sum_{x,y}\prob(Y_j=x)\left|\prob\big(Y^{(j+1)}=y\big|Y_j=x\big)-\DD(N\sqrt{n-j})(y)\right|\\
\label{eq:inhar}
&\leq& \esssup\|D_2^{(j+1)}\|.
\end{eqnarray}

As in Claim \ref{claim:annealerr} let $\hat{\DD}^{(j)}$ be the convolution of $\DD(N)$ and $\DD(N\sqrt{n-j})$.
Then for given $z$,
by Lemma \ref{lem:arit} and Parts \ref{item:second_der} and \ref{item:mix_der} of Lemma \ref{lem:ann_der},
\begin{eqnarray}
\nonumber
&& |\hat{\DD}^{(j)}(z)-\DD^{(j)}(z)|\\
\nonumber
&=&\sum_x \DD(N\sqrt{n-j})(x) \big(\prob(Y_j=z-x)-\DD(N)(z-x)\big)\\
\label{eq:compk} %\nonumber
&\leq& %KR_h(N)\cdot N^{-d}(n-j)^{\frac{-d}{2}} +
\lambda N^2\cdot N^{-d-1}(n-j)^{\frac{-d-1}{2}}
= \lambda N^{1-d}(n-j)^{\frac{-d-1}{2}}.
\end{eqnarray}

Note that for $z$ such that
%$\|z-E_{\hat{\DD}^{(j)}}\|_1>R_5(N)N^{d-1}(n-j)^{\frac{d-1}{2}}$,
$\|z-E_{\hat{\DD}^{(j)}}\|_1>R_5(N)N(n-j)^{\frac{1}{2}}$,
both $\hat{\DD}^{(j)}(z)$ and $\DD^{(j)}(z)$ are bounded by 
\begin{equation}\label{eq:monst}
\exp\left(-
\left(\left.\frac{\|z-E_{\hat{\DD}^{(j)}}\|_1}{R_1(N)}\right)^2
\right/ N^{2(d-1)}(n-j)^{d-1}
\right)\leq e^{-R_4(N)}.
\end{equation}

From \eqref{eq:inhar}, \eqref{eq:compk} and \eqref{eq:monst},
% and the fact that there are at most
%$R_5(N)N^{d-1}(n-j)^{\frac{d-1}{2}}$ possible values of $z$,
we get that the distribution of $H$ can be presented as
$\hat{\DD}^{(j)}+\bar{D}_2^{(j)}$ such that
\[
\|\bar{D}_2^{(j)}\|\leq \|D_2^{(j+1)}\|+ \lambda(N)R_5(N)(n-j)^{-1}.
\]

By Claim \ref{claim:annealerr}, and again conditioned on $X_1,Y_1,\ldots,X_{j-1},Y_{j-1}$, there exists $Z^\prime(j)$ such that
$\prob(Z^\prime(j))>R_h(N))<\exp(-R_{h-1}(N))$, and the distribution of $H+Z^\prime(j)$ is 
$\DD(N\sqrt{n-j+1})^{(j)}+\bar{D}_2^{(j)}$.

Let 
\[
\bar{H}(j)=H+Z^\prime(j)\cdot{\bf 1}_{\|Z^\prime(j)\|<R_h(N)}.
\]

Then the distribution of $\bar{H}(j)$ is $\DD(N\sqrt{n-j+1})^{(j)}+\hat{D}_2^{(j)}$ with 
\[
\|\hat{D}_2^{(j)}\|\leq\|\bar{D}_2^{(j)}\|+\exp(-R_{h-1}(N))\leq \|D_2^{(j+1)}\|+ 2\lambda(N)R_5(N)(n-j)^{-1}.
\]

\ignore{
The expectation $E(\bar{H}(j))$ satisfies
\[
E(\bar{H}(j))
=
\sum_{k=j}^{n}E(Y_k)+\sum_{k=j}^{n}E
\]
}

We let 
\[
Z^{(j)}=Z^{(j+1)}+Z^\prime(j)\cdot{\bf 1}_{\|Z^\prime(j)\|<R_h(N)},
\]
and $Y^{(j)}=S^{(j)}-Z^{(j)}$.
Then we get that $\|Z^{(j)}\|\leq (n-j)R_{h}(N)$ and the distribution of $Y^{(j)}$ is
$\DD(N\sqrt{n-j+1})+D_2^{(j)})$ where $D_2^{(j)}$ is a signed measure such that
$\|D_2^{(j)}\|\leq\lambda^{(j)}$ with 
\[
\lambda^{(j)}\leq \lambda^{(j+1)}+\frac{2R_5(N)}{n-j}\lambda.
\]

We calculate the expectation of $Y^{(1)}$:
\begin{eqnarray*}
E(Y^{(1)})=E(S^{(1)})-E(Z^{(1)})=nE(Y_1)-E(Z^{(1)})=nE_{\DD(N)}-E(Z^{(1)}).
\end{eqnarray*}

Therefore, again by Claim \ref{claim:annealerr},
\begin{eqnarray*}
\|E(Y^{(1)})-E_{\DD(\sqrt{n}N)}\|\leq Cn+nR_h(N)<nR_{h+1}(N)
\end{eqnarray*}

As in the proof of Corollary \ref{cor:quenched}, we can find a variable $U$ which is independent of all 
of the variables we have seen so far, such that $\|U\|\leq nR_{h+1}(N)+1$ almost surely and 
$
E(U)=E_{\DD(\sqrt{n}N)}-E(Y^{(1)}).
$

We define $Y^\prime=Y^{(1)}+U$. By the same calculation as in \eqref{eq:fmqnc}, we get that $Y^\prime$
satisfies Parts \ref{item:yp_masbound}, \ref{item:yp_distbound} and \ref{item:yp_stmom}.

\vspace{0.25cm}

Thus, all that is left is to show that $Y^\prime$ also satisfies Part \ref{item:yp_ndmom}.
To this end, Let $D_{2}$ be the signed measure such that
$Y^\prime\sim(\DD(\sqrt{n}N)+D_{2})$.

We are interested in
\[
\sum_{x}|D_2(x)|\|x-E_{\DD(\sqrt{n}N)}\|_1^2.
\]

As a first step,  we estimate

\[
\var(D_{2},i)
:=\sum_z \langle z,e_i\rangle^2D_{2}(z)
\]

for a unit vector $e_i$ with $i\neq 1$.

For $x,y,z\in\Z^d$, we write $\hat x, \hat y, \hat z$ for their projection on the $e_i$ axis.

Let $W$ be a random variable distributed
according to $\DD(\sqrt{n}N)$. By Claim \ref{claim:annealerr}, there exists another random variable
$W^\prime$ such that $W^\prime\sim\DD(N)^{\star n}$ ($\DD(N)^{\star n}$ is the $n$-fold convolution of $\DD(N)$) and
$\prob(\|W-W^\prime\|>nk)<Cn\exp(-ck^{-\gamma})$
for every $k$.
  
By the definition of $Y^\prime$, we know that $U^\prime=Y^\prime-S^{(1)}$ satisfies
$\|U^\prime\|\leq 2nR_{h+1}(N)$.

In addition note that $\cov(Y_j,Y_k)=0$ for $j\neq k$,
and that for every $j$,
\begin{eqnarray*}
&&|\var\big(\langle Y_j,e_i\rangle\big)-\var_{\DD(N)}(\hat x)|\\
&=&\sum_x (\hat x-E_{\DD(N)}(\hat z))^2(\prob(Y_j=x)-\DD(N)(x))\\
&\leq& \sum_x (\hat x-E_{\DD(N)}(\hat z))^2|\prob(Y_j=x)-\DD(N)(x)|\leq \lambda N^2.
\end{eqnarray*}
Therefore, 
%\begin{eqnarray*}
%E\big(\langle S^{(1)},e_i\rangle^2\big)
%&=&E\big(\langle S^{(1)},e_i\rangle\big)^2+\var\big(\langle S^{(1)},e_i\rangle\big)\\
%5&=&n^2E_{\DD(N)}^2(\hat x)+\sum_{j=1}^n\var \big(\langle Y_j,e_i\rangle\big),
%\end{eqnarray*}
%and
\begin{eqnarray}
\nonumber
\left|\var\big(\langle S^{(1)},e_i\rangle\big) - \var\big(\langle W^\prime,e_i\rangle\big)\right|
&=&\left|E\big(\langle S^{(1)},e_i\rangle^2\big) - E\big(\langle W^\prime,e_i\rangle^2\big)\right|\\
\leq \sum_{j=1}^n\left|\var \big(\langle Y_j,e_i\rangle\big)-\var_{\DD(N)}(\hat x)\right|
\label{eq:ywprime}
&\leq& \lambda n N^2.
\end{eqnarray}

Now,
\begin{eqnarray}\label{eq:yprimey}
\nonumber
&&\left|\var\big(\langle Y^\prime,e_i\rangle\big) - \var\big(\langle S^{(1)},e_i\rangle\big)\right|\\
\nonumber
&=&\left|\var\langle S^{(1)}+U^\prime,e_i\rangle - \var\langle S^{(1)},e_i\rangle\right|\\
\nonumber
%&=&\left|E\left[\langle U^\prime,e_i\rangle^2+2\langle S^{(1)},e_i\rangle\langle U^\prime,e_i\rangle
%\right]\right|\\
%&\leq&4n^2R_{h+1}(N)^2+4n^2R_{h+1}(N)E_{\DD(N)}
%%
&\leq& 2\esssup(\|U^\prime\|)\sqrt{\var(S^{(1)})}+\esssup(\|U^\prime\|)^2\\
&\leq& 2Cn^{3/2}R_{h+1}(N)N+n^2R_{h+1}^2(N)
\leq 3Cn^{3/2}R_{h+1}(N)N,
\end{eqnarray}

and

\begin{eqnarray}\label{eq:wprimew}
\nonumber
&&\left|\var\big(\langle W,e_i\rangle\big) - \var\big(\langle W^\prime,e_i\rangle\big)\right|\\
\nonumber
&\leq& N^2n^2\prob\big(\|W-W^\prime\|>nR_5(N)\big)
+2nR_5(N)\sqrt{{\var(W^\prime)}}+2n^2R_5(N)^2\\
&\leq& Cn^{3/2}R_5(N)N.
\end{eqnarray}

From \eqref{eq:ywprime}, \eqref{eq:yprimey} and \eqref{eq:wprimew} and the fact that
$E(Y^\prime)=E(W)$, we get that

\begin{eqnarray}\label{eq:vard2}
\nonumber
|\var(D_2,i)|&=&\left|\sum_{x}\langle x,e_i\rangle^2\big(\DD(\sqrt{n}N)(x)-\prob(Y^\prime=x)\big)\right|\\
\nonumber
&=&\left| E\big(\langle W,e_i\rangle^2\big) - E\big(\langle Y^\prime,e_i\rangle^2\big) \right|\\
\nonumber
&=&\left| \var\big(\langle W,e_i\rangle\big) - \var\big(\langle Y^\prime,e_i\rangle\big) \right|\\
&\leq& \lambda n N^2 + 4Cn^{3/2}R_h(N)N
\leq 2\lambda n N^2.
\end{eqnarray}

We now decompose the measure $D_{2}$ into its positive and negative parts
$D_{2}^+$ and $D_{2}^-$. %Then in order to prove part \ref{item:s_sec_mom} of the lemma,
We need to bound
\[
\sum_x (\hat x-E_{\DD(\sqrt{n}N)})^2 |D_{2}|=\sum_x (\hat x-E_{\DD(\sqrt{n}N)})^2 D_{2}^+
+ \sum_x (\hat x-E_{\DD(\sqrt{n}N)})^2 D_{2}^-.
\]
We know that 
\begin{eqnarray}\label{eq:from_var}
\nonumber
\left| \sum_x (\hat x-E_{\DD(\sqrt{n}N)})^2 D_{2}^+(x) 
- \sum_x (\hat x-E_{\DD(\sqrt{n}N)})^2 D_{2}^-(x)\right|\\
=\left| \sum_x \hat x^2 D_{2}^+(x) - \sum_x \hat x^2 D_{2}^-(x)\right|
=|\var(D_{2},i)|\leq 2\lambda nN^2.
\end{eqnarray}

In addition, note that $D_{2}^-(x)\leq\DD(\sqrt{n}N)(x)$ for all $x$, and therefore
\[
D_{2,n}^-(x)<e^{-(x-E_{\DD(\sqrt{n}N)})^2/CnN^2R_1(N)}.
\]
Combined with the fact that $\|D_{2}^-\|\leq\|D_{2}\|\leq\lambda R_{h}(N)$, we get that
\begin{equation}\label{eq:boundforminus}
\sum_x (\hat x-E_{\DD(\sqrt{n}N)})^2 D_{2}^- \leq R_2(N)R_h(N)\lambda nN^2.%<R_{h+1}(N)\lambda nN^2.
\end{equation}

Thus, by \eqref{eq:from_var} and \eqref{eq:boundforminus} we get that
\begin{eqnarray*}
&&\sum_x (\hat x-E_{\DD(\sqrt{n}N)})^2 D_{2}^+ + \sum_x (\hat x-E_{\DD(\sqrt{n}N)})^2 D_{2}^-\\
&\leq& 2\sum_x (\hat x-E_{\DD(\sqrt{n}N)})^2 D_{2}^- +\big|\sum_x (\hat x-E_{\DD(\sqrt{n}N)})^2 D_{2}^+ - \sum_x (\hat x-E_{\DD(\sqrt{n}N)})^2 D_{2}^-\big|\\
&\leq& CR_{h}R_2(N)(N)\lambda nN^2.
\end{eqnarray*}

Therefore, 
\begin{eqnarray*}
\sum_x \|x-E_{\DD(\sqrt{n}N)}\|_1^2|D_{2}(x)|
&\leq& (d-1)\sum_x \| x-E_{\DD(\sqrt{n}N)}\|_2^2|D_{2}(x)|\\
= (d-1)\sum_{i=2}^d \sum_x \langle  x-E_{\DD(\sqrt{n}N)}, e_i \rangle^2|D_{2}(x)|
&\leq& (d-1)^2 R_{h}(N)R_{2}(N)\lambda nN^2\\
&\leq& R_{h+1}(N)\lambda nN^2.
\end{eqnarray*}

\ignore{
Therefore, 

%\item\label{item:yp_ndmom} $\sum_{x}|\prob(Y^\prime=x)-\DD(\sqrt{n}N)(x)|\|x\|_1^2 
%\leq \lambda nN^2R_{h+1}(N)$.

\begin{eqnarray*}
&&\sum_{x}|\prob(Y^\prime=x)-\DD(\sqrt{n}N)(x)|\|x\|_1^2 \\
&=& \sum_{x}|\prob(H_n+U=x)-\DD(\sqrt{n}N)(x)|\|x\|_1^2 \\
&\leq& \sum_{x}\left(|\prob(H_n=x)-\DD(\sqrt{n}N)(x)|+\frac{nR_h(N)}{N^{-d}n^{-d/2}}\right)\|x\|_1^2 \\
&\leq& \sum_{x}\left(|\prob(H_n=x)-\DD(N)^{\star n}(x)|+\frac{2nR_h(N)}{N^{d}n^{d/2}}\right)\|x\|_1^2 \\
&\leq& \sum_x \|x\|_1^2|D_{2,n}(x)|+2R_h(N)Nn^{d/2+2}
\leq CR_{h+1}(N)\lambda nN^2.
\end{eqnarray*}

}

Part \ref{item:yp_ndmom} follows.

\end{proof}

\ignore{
\begin{lemma}\label{lem:pntsum}
Let $X_1,\ldots,X_n$ satisfy the assumptions of Lemma \ref{lem:sumapprox}. Let $Y_1,\ldots,Y_n$ be as in the (beginning) of the proof of Lemma \ref{lem:sumapprox}.
Assume further that for some $C>0$ and every $k$, conditioned on $X_1,Y_1,\ldots,X_{k-1},Y_{k-1}$, for every $y\in\partial^+\PP(0,N)$ such that $\|y-E_{\DD(N)}\|\leq N$, we have $P(Y_k=y)>CN^{1-d}$.

Then for every $y\in\partial^+\PP(0,\sqrt{n}N)$ such that $\|y-E_{\DD(\sqrt{n}N)}\|\leq N$, we have that
$P(Y^\prime=y)>\frac{1}{2}CN^{1-d}n^{\frac{1-d}{2}}$.

\end{lemma}

\begin{proof}
We continue with notations as in the proof of Lemma \ref{lem:sumapprox}. Conditioned on $X_1$ and $Y_1$, we have that $Y^{(2)}\sim(\DD(N\sqrt{n-1})+D_2^{(2)})$, with $\|D_2^{(2)}\|\leq\lambda^{(2)}$. 
Recall that 
\[
\lambda^{(2)}\leq\lambda R_6(N)
\]

\end{proof}
}

\section{Reduction to quenched return probabilities}\label{sec:qrp}
\subsection{Basic calculations}\label{sec:fromsznit}
In this subsection we repeat a calculation from \cite{SznitmanT}.  Our main goal is to control the probability of the event $\tau_1>u$. to this end, we take $L=\left\lceil(\log u)^{\frac{d}{\gamma}}\right\rceil$ and notice that
\[
\annealedP(\tau_1>u)
\leq \annealedP(\tau_1>T_L)+\annealedP(T_L>u)
\leq e^{-(\log u)^d}+\annealedP(T_L>u),
\]
where the last inequality follows from  \eqref{eq:regrad}. Let
$
B_L:=[-L,L]\times[-L^2,L^2]^{d-1}.
$
Then, again by \eqref{eq:regrad}, $\annealedP(T_L\neq T_{\partial B_L})\leq e^{-(\log u)^d}$, and thus it is sufficient to show that 
\begin{equation*}
\annealedP(T_{\partial B_L}>u)<Ce^{-c(\log u)^\alpha}
\end{equation*}
for appropriate constants $C$ and $c$.

On the event $\{T_{\partial B_L}>u\}$, there exists a point $x\in B_L$ that is visited more than $\frac{u}{|B_L|}$ times before the walk leaves $B_L$. Therefore, it is sufficient to show that
\begin{equation}\label{eq:toshowbl}
\annealedP\left(\exists_{x\in B_L} \st T^{\frac{u}{|B_L|}}_x<T_{\partial B_L}\right)<Ce^{-c(\log u)^\alpha},
\end{equation}
where $T_x^k$ is defined to be the $k^{\mbox{th}}$ hitting time of $x$.
Let $G\subseteq\Omega$ be an event. Then,
\begin{equation*}
\label{eq:withstuff}
\annealedP\left(\exists_{x\in B_L} \st T^{\frac{u}{|B_L|}}_x<T_{\partial B_L}\right)
\leq
P(G^c)+\sup_{\omega\in G}
\quenchedP_\omega\left(\exists_{x\in B_L} \st T^{\frac{u}{|B_L|}}_x<T_{\partial B_L}\right),
\end{equation*}
and
\begin{eqnarray*}
%\label{eq:withstuff}
\quenchedP_\omega\left(\exists_{x\in B_L}\st  T^{\frac{u}{|B_L|}}_x<T_{\partial B_L}\right)
&\leq&
%|B_L|\quenchedP_\omega^0\left(T^{\frac{u}{|B_L|}}_0<T_{\partial B_{2L}}\right).
\sum_{x\in B_L}P_\omega^0
\left(T^{\frac{u}{|B_L|}}_x<T_{\partial B_L}\right)\\
=
\sum_{x\in B_L}P_\omega^0 
\left(T_x<T_{\partial B_L}\right)
P_\omega^x
\left(T^{\frac{u}{|B_L|}-1}_x<T_{\partial B_L}\right)
&\leq&
\sum_{x\in B_L}P_\omega^x
\left(T^{\frac{u}{|B_L|}-1}_x<T_{\partial B_L}\right).
\end{eqnarray*}

Note that due to the strong Markov property,
\begin{eqnarray*}
\quenchedP_\omega^x\left(T^{\frac{u}{|B_L|}-1}_x<T_{\partial B_{L}}\right)
=
\left[\quenchedP_\omega^x\left(T_x<T_{\partial B_{L}}\right)\right]^{\frac{u}{|B_L|}-1},
\end{eqnarray*}
and therefore \eqref{eq:withstuff} will follow 
if we find an event $G$ such that $P(G^c)<\frac 12e^{-(\log u)^\alpha}$ and
%for certain $\omega$ if we prove that 
for some $\epsilon>0$, every $\omega\in G$  and every $x$,
\begin{eqnarray}
\label{eq:toshowomeg1}
\quenchedP_\omega^x\left(T_{\partial B_{L}}<T_x\right)>u^{\epsilon-1}.
\end{eqnarray}
In turn, we may replace \eqref{eq:toshowomeg1} by 
\begin{eqnarray}
\label{eq:toshowomeg}
\quenchedP_\omega^x\left(T_{\partial B_{2L}(x)}<T_x\right)>u^{\epsilon-1},
\end{eqnarray}
where $B_{2L}(x)$ is the cube of the same dimensions as $B_{2L}$, centered at $x$. The choice of 
$B_{2L}(x)$ is slightly more convenient than $B_{L}$ because now the condition is translation invariant with respect to the choice of $x$. 
\subsection{Definition of the event $G$}\label{sec:defgood}

We now define the event $G$, and show that $P(G^c)<\frac 12e^{-(\log u)^\alpha}$. In Sections 
\ref{sec:auxwalk}, \ref{sec:randirev} and \ref{sec:prfmain} we will show that \eqref{eq:toshowomeg} holds for every 
$\omega\in G$.

%Fix $\xi>0$.
Let $\epsilon>0$ be so that
\begin{equation}\label{eq:chooseepsilon}
2d\epsilon < d-\alpha.
\end{equation}

Fix $\psi>0$ so that
\begin{equation}\label{eq:choosepsi}
\psi\leq\frac{\gamma\epsilon}{30d}
\end{equation}

and $\chi>0$ so that
\begin{equation}\label{eq:choosechi}
\chi<\frac{\psi^2}{2}\cdot\frac{d-1}{2(d+1)}.
\end{equation} 
We say that a basic {\paral} $\PP(z,N)$ is {\em good} with respect to the environment $\omega$ if the assertion of
proposition \ref{prop:quenched} holds for every {\paral} of size at least $N^{\chi}$ that is contained in $\PP(z,N)$, with $\theta=\psi/2$.
Otherwise, we say that $\PP(z,N)$ is {\em bad}.
We define our {\em scales} $N_1,\ldots,N_\iota$ as follows:
\begin{enumerate}
\item
$N_1:= \lceil L^\psi\rceil$
\item 
We define $\rho_k=\frac{\chi}{2}+\frac{\chi}{2^k}$.
%$\rho_1=\chi$, and $\rho_{k+1}=\frac{\rho_k+\frac{\chi}{2}}{2}$.
\item
$N_{k+1}:= N_k\cdot \lceil L^{\rho_k}\rceil$
\item
$\iota$ is defined to be the largest $k$ s.t. $N_k^2<2L$.
\end{enumerate}

For every $k=1,\ldots,\iota$, we let $B_{2L}(k)$ be the set of all $z\in\LL_{N_k}$ such that $\PP(z,N_k)\cap B_{2L}\neq\emptyset$. We now define the event $G$:
We say that the environment $\omega$ is in $G$ if for every $k=1,\ldots,\iota$,
\begin{equation}\label{eq:defgood}
\left|
\left\{
z\in B_{2L}(k)\ :\ \PP(z,N_k)\mbox{ is not good w.r.t. } \omega
\right\}
\right|<
(\log u)^{\alpha+\epsilon}
\end{equation}

\begin{lemma}\label{lem:probgoodevent} For $u$ large enough,
$P(G)\geq 1-\frac 12e^{-(\log u)^\alpha}$.
\end{lemma}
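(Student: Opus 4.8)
The plan is a union bound over the $\iota$ scales, combined at each scale with a binomial large-deviation estimate exploiting the (near-)independence of the block-goodness events across disjoint blocks. The first ingredient is a bound on the probability that one fixed block $\PP(z,N_k)$ is bad. By definition this block is bad iff the conclusion of Proposition \ref{prop:quenched} (with $\theta=\psi/2$) fails for at least one sub-block $\PP(z',M)\subseteq\PP(z,N_k)$ with $M\ge N_k^{\chi}$. There are at most polynomially many such sub-blocks --- at most $N_k$ choices of the side $M$ and at most $|\PP(z,N_k)|\le N_k^{3d}$ choices of $z'$ --- and, by Proposition \ref{prop:quenched} and translation invariance, each of them fails with probability $M^{-\xi(1)}\le(N_k^{\chi})^{-\xi(1)}$. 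Since $\chi>0$ is fixed, a power of $N_k$ times $(N_k^{\chi})^{-\xi(1)}$ is again $N_k^{-\xi(1)}$, so
\[
p_k:=\sup_{z}P\big(\PP(z,N_k)\text{ is bad}\big)\le N_k^{-\xi(1)} .
\]
Because $N_k\ge N_1=\lceil L^{\psi}\rceil$ with $L=\lceil(\log u)^{d/\gamma}\rceil$, the scale $N_k$ is at least a fixed positive power of $\log u$, and hence for every $A>0$ one has $p_k\le(\log u)^{-A}$ for all $u$ large.

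The core of the proof is the concentration step. The number of blocks of scale $N_k$ meeting $B_{2L}$ is polynomial in $L$, hence in $\log u$: using $N_k^2<2L$ one checks that $|B_{2L}(k)|\le(\log u)^{D}$ for a constant $D=D(d,\gamma)$. By Lemma \ref{lem:parandlat}(2), thinned by a further bounded factor if necessary so that the blocks, enlarged by a fixed dilation, are pairwise disjoint, $B_{2L}(k)$ splits into $C^{d}$ classes inside each of which the events $\{\PP(z,N_k)\text{ is bad}\}$ are independent. If more than $(\log u)^{\alpha+\epsilon}$ blocks of $B_{2L}(k)$ are bad, then at least $m:=\lceil(\log u)^{\alpha+\epsilon}/C^{d}\rceil$ of them are bad inside a single class, so a binomial tail bound gives
\[
P\big(\ge m\text{ bad in one class}\big)\le\binom{|B_{2L}(k)|}{m}p_k^{\,m}\le\Big(\tfrac{e\,|B_{2L}(k)|\,p_k}{m}\Big)^{m} .
\]
Choosing $A:=D+1$ makes $e\,|B_{2L}(k)|\,p_k/m\le e(\log u)^{-1}\le\tfrac12$ for $u$ large (recall $m\ge1$), so the right-hand side is at most $2^{-m}\le\exp\big(-c(\log u)^{\alpha+\epsilon}\big)$ with $c:=C^{-d}\log2$. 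Summing over the $C^{d}$ classes and over $k=1,\dots,\iota$ (and using $\iota=O(1)$) gives
\[
P(G^{c})\le\iota\,C^{d}\exp\big(-c(\log u)^{\alpha+\epsilon}\big)\le\tfrac12\,e^{-(\log u)^{\alpha}}
\]
for $u$ large, since $\alpha+\epsilon>\alpha$, which is the claim.

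The only genuinely delicate point is the independence invoked above: one must check that the event $\{\PP(z,N_k)\text{ is good}\}$ is, up to a fixed dilation of the block, measurable with respect to the environment near $\PP(z,N_k)$. This is visible from the proof of Proposition \ref{prop:quenched}, where all the walks considered are stopped on leaving the block and conditioned on the regeneration-radius event $A_N$, so that suitably separated blocks carry independent goodness events; this costs only a bounded factor in the number of classes. The rest is bookkeeping --- the mechanism being simply that $N^{-\xi(1)}$ beats every power of $\log u$ (already the smallest scale $N_1$ is a positive power of $\log u$), while we only need to beat $\exp(-(\log u)^{\alpha})$ and in fact have of order $(\log u)^{\alpha+\epsilon}$ independent rare events at our disposal.
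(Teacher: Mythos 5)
Your proof is correct and follows the same route as the paper's: union bound over the $\iota$ scales, the single-block bad probability $p_k=N_k^{-\xi(1)}$ (which beats every power of $\log u$ since $N_1\geq L^{\psi}$ is itself a power of $\log u$), the decomposition into $O(1)$ classes of pairwise-disjoint blocks via Lemma \ref{lem:parandlat}, and a binomial upper-tail estimate giving $\exp(-c(\log u)^{\alpha+\epsilon})$ per class. Your extra caution about needing a ``fixed dilation'' to get measurability of the goodness event is unnecessary --- all walks entering the definition of $G(N)$ in Proposition \ref{prop:quenched} are stopped on leaving $\PP(z,N_k)$, so goodness is measurable with respect to $\omega|_{\PP(z,N_k)}$, and the disjointness of Lemma \ref{lem:parandlat}(2) already gives independence --- but this over-caution is harmless.
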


\begin{proof}
Let 
\[
J_k:=
\left|
\left\{
z\in B_{2L}(k)\ :\ \PP(z,N_k)\mbox{ is not good w.r.t. } \omega
\right\}
\right|.
\]
First we note that
\[
P(G^c)\leq\sum_{k=1}^\iota P\left[
J_k\geq
(\log u)^{\alpha+\epsilon}
\right],
\]
and $\iota$ is bounded. Now by Proposition \ref{prop:quenched} and Corollary \ref{cor:quenched}, for given $k$ and $z\in B_{2L}(k)$,
\[
p_k:=P(\PP(z,N_k)\mbox{ is not good})=N_k^{-\xi(1)}=o\left(\left|B_{2L}\right|^{-1}\right).
\]  %Was N_k^{-\omega(N_k)}

By Lemma \ref{lem:parandlat}, we can present $J_k$ as $J_k=J_k^{(1)}+\ldots+J_k^{(9^d)}$, and 
\[
J_k^{(h)}\sim \mbox{Bin}(p_k,D_k)
\]
with $D_k<|B_{2L}|$. Thus for $u$ large enough, $J_k^{(h)}$ is binomial with expected value which is less than 1. Therefore, again assuming that $u$ is large enough,
\[
P\left[J_k^{(h)}>\frac{(\log u)^{\alpha+\epsilon}}{9^d }\right]<
\exp\left(-
\frac{(\log u)^{\alpha+\epsilon}}{9^d }
\right).
\]

Therefore,
\begin{eqnarray*}
P(G^c)
&\leq& P\left(\bigcup_{k=1}^\iota\bigcup_{h=1}^{9^d}J_k^{(h)}\right)\\
\leq 9^d\iota\exp\left(-
\frac{(\log u)^{\alpha+\epsilon}}{9^d }
\right)
&\leq&\frac 12 e^{-(\log u)^\alpha}.
\end{eqnarray*}

\end{proof}

\section{The auxiliary walk}\label{sec:auxwalk}

Fix an environment $\omega\in G$. In this section we define a new random walk $\{Y_n\}$ on the environment $\omega$, whose law is different from that of the quenched random walk $\{X_n\}$ on $\omega$. However, we show an obvious relation between the laws of $\{Y_n\}$ and $\{X_n\}$ that we will exploit in sections \ref{sec:randirev} and \ref{sec:prfmain} in order to prove \eqref{eq:toshowomeg}.

We first give an informal description of the random walk $\{Y_n\}$ in Subsection \ref{sec:auxinf}, then define it properly in Subsection \ref{sec:auxdef}, and then collect some useful facts about it in Subsection \ref{sec:auxprop}

\subsection{Informal description of $\{Y_n\}$}\label{sec:auxinf}
$\{Y_n\}$ is a quenched random walk on $\omega$, which is forced to ``behave well''  in a number of different ways, which we list below.

\begin{enumerate}
\item\label{item:exright}
Once the walk  $\{Y_n\}$ reached the center of certain basic \parals, it is only allowed to exit them through their right boundaries.
\item
If the walk is in a bad \paral, then once it exists the block, it is forced to make a number of steps on the right boundary of the \paral { }that will force the eventual exit distribution to be similar to the annealed distribution. We use Lemma \ref{lem:sumapprox} to control the number of forced steps that are needed. When the walk exits a good basic \paral, no
such correction is necessary, because the distribution is already close enough to the annealed.
\item
Upon leaving the origin the walk is forced to make a number of steps to the right. This together with part (\ref{item:exright}) makes sure that $\{Y_n\}$ leaves $B_{2L}$ before returning to the origin.
\end{enumerate}

The resulting random walk $\{Y_n\}$ is a random walk that, most of the time, behaves locally similarly to the quenched random walk, but behaves globally similarly to the annealed random walk. We will quantify and then use those similarities in order to control the behavior of the quenched walk.

\subsection{Definition of $\{Y_n\}$}\label{sec:auxdef}

The process $\{Y_n\}$ is a nearest neighbor random walk, which starts at $0$ and stops when it reaches $\partial^+B_{2L}$. Below we describe its law.

%Before we define the process $\{Y_n\}$,
We first need some preliminary definitions.

For every $j=1,2,\ldots$ and every $k=1,2,\ldots,\iota$, we let $U_k(j)$ be the layer
\[
U_k(j)=H_{jN_k^2}=\{x\ :\ \langle x,e_1\rangle=jN_k^2\}.
\]

We define $T^Y_k(j)=\inf\{n:Y_n\in U_k(j)\}.$ 

%We define $T^{Y}_k(j)=\min(n:Y_n\in U_k(j))$

For every $x\in B_{2L}$, and for every $k$, 
we define $z(x,k)$ as follows: if $\langle x,e_1\rangle$ is divisible by $N_k^2$, then $z(x,k)$ is a point $z\in\LL_{N_k}$ such that $\langle x,e_1\rangle=\langle z,e_1\rangle$ and $x\in\tilde\PP(z,N_k)$. If more than one such point exists, then we choose one according to some arbitrary rule. If $\langle x,e_1\rangle$ is not divisible by $N_k^2$, then we take $z(x,k)$ to be $0$.

%so that
%$\langle z,e_1\rangle = N_k^2\left\lfloor\frac{\langle z,e_1\rangle}{N_k^2}\right\rfloor$
%and $\|x-u(z,x)\|_\infty < \frac{N_kR_5(N_k)}{3}$.

For $x\in B_{2L}$, and for every $k$, we define $\PP^{(k)}(x)=\PP(z(x,k),N_k)$.

%let  be a {\paral} of size $N_k$ around a point $z=z(x,k)$ in $\LL_{N_k}$ such that $x\in
%%\hPP(z,N_k)$. If more than one such $z$ exists, then we make some arbitrary choice. If such $z$ %exists with 
%\begin{equation}\label{eq:xeqz}
%\langle x,e_1\rangle=\langle z,e_1\rangle, 
%\end{equation}
%then we take $z$ satisfying (\ref{eq:xeqz}).

%We say that a {\paral} $\PP(z,N)$ is {\em positively good} if it is good, and if
%$\PP(z,N)\cap\{x:\langle x,e_1\rangle<0\}=\emptyset$.

For every $x\in B_{2L}$, we define
\begin{equation}\label{eq:levelx}
k(x)=\max
\{k\leq\iota\ :\ 
	\langle x,e_1\rangle=\langle z(x,k),e_1\rangle \mbox{ and }
	\PP(z(x,k),N_k) \mbox{ is 
	%positively 
	good}
\},
\end{equation}
and $k(x)=0$ if no such $k$ exists.

In addition, for a random variable $X$, a distribution $\DD$ and a number $\lambda<1$, we define a \underline{$(\lambda,\DD)$-companion} of $X$ as follows:
Let $\nu$ be the distribution of $X$, and let $K$ be the smallest number such that $\nu$ is $(\lambda,K)$-close to $\DD$ . Let $\mu$ be an arbitrarily chosen coupling of three variables $Z_0,Z_1,Z_2$ demonstrating, as in Definition \ref{def:close}, that $\nu$ is $(\lambda,k)$-close to $\DD$. The roles of the variables $Z_0,Z_1,Z_2$ are exactly as in Definition \ref{def:close}. In particular, $Z_1\sim\DD$ and $Z_2\sim\nu$.
We say that a variable $Y$ is a \underline{$(\lambda,\DD)$-companion} of $X$ if the joint distribution of $X$ and $Y$ is the same as the $\mu$-joint distribution of $Z_2$ and $Z_0$. For every $X$, $\lambda$ and $\DD$ we can construct such companion: For every $x$, on the event $\{X=x\}$, we sample $Y$ according to the $\mu$-distribution of $Z_0$ conditioned on the event $\{Z_2=x\}$.
Similarly, we can define the $(\lambda,\DD)$-companion of $X$ conditioned on a $\sigma$-algebra $\FF$: We work with the conditional distribution of $X$ given $\FF$ instead of the (unconditional) distribution, and proceed as before.
Note that $\|Y-X\|<K$ and that by Claim \ref{claim:interm} the distribution of $Y$ is $(\lambda,0)$-close to $\DD$.

We now simultaneously  define the walk $\{Y_n\}$, its accompanying sequence of times $\{\zeta_m\}$, and random variables $\beta_{k,j}$.
% taking values
%such that the input of $\beta_{k,j}$ is the path $\{Y_n:n\leq T^{Y}_k(j)\},$ and its output is a random point
%in $U_k(j)$.
%We postpone the definition of the variables $\{\beta_{k,j}\}$.
The precise definition of the variables $\{\beta_{k,j}\}$ is postponed to the end of the subsection. However, we make the following comment on $\{\beta_{k,j}\}$ at this point:
%\begin{enumerate}
%\item
For every $j$ and $k$, a.s. $\langle\beta_{k,j},e_1\rangle=0$.
%\item For every $j$ and $k$, the variable $\beta_{k,j}$ is a random function of $Y_n:n\leq T^Y_k(j)$ and $\beta_{k,j^%\prime}:j^\prime<j$.
%\end{enumerate}
%However, we mention here that they are defined in a way that if $j_1N_{k_1}=j_2N_{k_2}$, and 
%$k_2>k_1$, then $\beta_{k_1,j_1}=0$.

% initial push. Almost cost free.

For $j\leq N_1^2$, we define $Y_j=je_1$. In addition, $\zeta_0=0$ and $\zeta_1=N_1^2$.

%In addition, we let $\beta_{k,j}\in U_k(j)$ be a random function, to be defined below, of
%$\{Y_n:n\leq T^{Y}_k(j)\}.$
%The variables $\{\beta_{k,j}\}$ are defined in a way that if $j_1N_{k_1}=j_2N_{k_2}$, and 
%$k_2>k_1$, then $\beta_{k_1,j_1}=0$.
%Assuming for a moment that $\{\beta_{k,j}\ :\ j\geq 1\ ;\ 1\leq k\leq \iota\}$ are defined, the walk $\{Y_n\}$ %and its accompanying sequence of times $\{\zeta_n\}$ are defined as follows:

Given $\zeta_0,\ldots,\zeta_n$ and $\{Y_\ell\ :\ \ell=0,\ldots,\zeta_n\}$, we define
$x^\prime=Y_{\zeta_n}$. Let $k^\prime$ be the largest $k$ such that $x^\prime\in U_k(j)$ for some $j$. Then we let 
%$x=x^\prime+\beta_{k^\prime,j}%(Y_1,\ldots,Y_{\zeta_n})$, 
$x=x^\prime+\sum_{k=1}^{k^\prime}\beta_{k,j(k)}$, where $j(k)$ is the value of $j$ such that $x^\prime\in U_k(j)$.
We let $\kappa=\|x^\prime-x\|_1+2$ and choose
$\{Y_{\zeta_n},\ldots,Y_{\zeta_n+\kappa-2}\}$ to be a shortest path from $x^\prime$ to $x$.
We then take $Y_{\zeta_n+\kappa-1}=x+e_1$ and $Y_{\zeta_n+\kappa}=x$.
Let $\zeta_n^\prime=\zeta_n+\kappa$.

Let $k=k(x)$. If $k(x)>0$ then $\{Y_\ell\ :\ \ell=\zeta^\prime_n,\ldots,T_{\partial\PP^{(k)}(x)}\}$
is chosen to be a random walk starting at $x$ on the random environment $\omega$ conditioned on the event
$\{T_{\partial\PP^{(k)}(x)}=T_{\partial^+\PP^{(k)}(x)}\}$ and $\zeta_{n+1}=T_{\partial^+\PP^{(k)}(x)}$.
Conditioned on $\omega$, $\zeta^\prime_n$ and $x$,
the path $\{Y_\ell\ :\ \ell=\zeta^\prime_n,\ldots,T_{\partial\PP^{(k)}(x)}\}$ is chosen
independently of the path prior to $\zeta^\prime_n$ and of
$
\big\{\beta_{k,j(k)}\ : \ k \mbox{ and } j \mbox{ are such that }
jN_k^2\leq\langle x,e_1\rangle\big\}.$
If $k=0$ then $\zeta_{n+1}=\zeta^\prime_n+N_1^2$ and for $\zeta^\prime_n<j\leq\zeta_{n+1}$, we take $Y_j=x+(j-\zeta^\prime_n)e_1$.

We define $x^\prime_n=Y_{\zeta_n}$ and $x_n=Y_{\zeta^\prime_n}$.
Note that for every $n$, both $\langle x_n,e_1\rangle$ and $\langle x^\prime_n,e_1\rangle$ are divisible by $N_1^2$ (remember that $\langle \beta_{k,j},e_1\rangle=0$).

All that is now left is to define $\beta_{k,j}$. $\beta_{1,1}$ is simply defined to be the $(0,\DD(N_1))$-companion of the (deterministic) variable $Y_{N_1^2}$.

%Let $X$ be distributed according to $\DD(N_1)$. Then
% $\beta_{1,1} = X-N_1^2e_1$. 

For $\beta_{k,j}$ for other values of $k$ and $j$,
we first list some conditions under which $\beta_{k,j}$ is zero.

\begin{enumerate}
%\item\label{item:1}
%If there exist $k^\prime>k$ and $j^\prime$ such that $j^\prime N_{k^\prime}^2=jN_k^2$ then
%$\beta_{k,j}=0$
\item\label{item:1}
If there exist no $n$ such that $\zeta_n=T^Y_{k}(j-1)$ then $\beta_{k,j}=0$.
\item\label{item:infty}
Otherwise, let $n$ be such that $\zeta_n=T^Y_{k}(j-1)$, and let $x=Y_{\zeta^\prime_n}$. 
If $\PP^{(k)}(x)$ is good, then $\beta_{k,j}=0$.
\end{enumerate}

Now assume that neither one of conditions \ref{item:1}--\ref{item:infty} holds. 
For $k=1,\ldots,\iota$ let $\lambda_k=L^{-\chi}R_{5+k}(L)$.

We define $\beta_{k,j}$ recursively - we use the values of
$\{\beta_{k^\prime,j^\prime}\,:\,k^\prime<k, j^\prime=j\frac{N_k^2}{N_{k^\prime}^2}\}$
in the definition of $\beta_{k,j}$.

%Conditioned on $\omega$, $x$ and $Y_{T^Y_k(j)}$, the ensemble 
%$\{\beta_{k^\prime,j^\prime}\,:\,k^\prime<k, j^\prime=j\frac{N_k^2}{N_{k^\prime}^2}\}$
%of the path and of 
%$\{\beta_{k^\prime,j^\prime}\,:\,k^\prime\leq\iota, j^\prime<j\frac{N_k^2}{N_{k^\prime}^2}\}.$
%If $k=1$, then we use no $\beta$ values for the definition. 

Let $x=Y_{\zeta^\prime_n}$, where as before $n$ us such that $\zeta_n=T^Y_{k}(j-1)$, and for $k^\prime<k$ let $j(k^\prime)$ be the unique value satisfying $U_{k^\prime}(j(k^\prime))=U_k(j)$.

%We then let $\DD^{(k)}(x,\omega)$ be  the distribution of the variable 
Let
\[X=Y_{T^Y_k(j)}-x+\sum_{k^\prime=1}^{k-1}\beta_{k^\prime,j(k^\prime)}\]
% conditioned on $\{Y_{\ell}:\ell=1,\ldots,\zeta^\prime_n\}$ and $\omega$. 

Recall the definition of $\DD(N)$ from Page \pageref{page:defdn}. Then $\DD(N_k)$ is the annealed distribution of $X_{T_{k}(j)}-x$ for a walk starting at $x$, conditioned on exiting $\PP(x,N_k)$ through the front.

We now take $\hat Z$ to be an (arbitrarily chosen) $(\lambda_k,\DD(N_k))$-companion of $X$, conditioned on 
$\{Y_{\ell}:\ell=1,\ldots,\zeta^\prime_n\}$ and $\omega$, and let $\beta_{k,j}=\hat Z - X$.

%Let $K$ be the smallest number such that $\DD^{(k)}(x,\omega)$ is $(\lambda_k,K)$-close to $\DD(N_k)$. (see %Definition \ref{def:close} on page \pageref{def:close}).
%
%We define a coupling $\mu$  of $\DD^{(k)}(x,\omega)$-distributed $X_1$ and $\DD(N_k)$-distributed $X_2$, to be an %arbitrarily chosen coupling that demonstrates, as in Definition \ref{def:close}, that
%$\DD^{(k)}(x,\omega)$ is $(\lambda_k,K)$-close to $\DD(N_k)$.

%Of the set of all three-fold couplings of $X_1\sim\DD^{(k)}(x,\omega)$, $X_2\sim\DD(N_k)$ and $X_3$ 
%such that the probability that $X_2\neq X_3$ is bounded by $\lambda_k$,
%we take $\bar{\DD}$ to be one which minimizes $\esssup\left(\|X_1-X_3\|_1\right)$.

%We let $X_1=Y_{T^Y_k(j)}$, and let $X_2$ be distributed according to $\mu$ conditioned on $X_1$. We take $%\beta_{k,j}=X_2-X_1$.

Thus we defined the process $\{Y_n\}$.

\begin{remark}\label{rem:Y_0}
Note that in our definition, if $\beta_{k,j}\neq 0$ then the distribution of $\hat Z=X+\beta_{k,j}$ is $(\lambda_k,0)$-close to $\DD(N_k)$.
\end{remark}

\subsection{Basic properties of $\{Y_n\}$}\label{sec:auxprop}
We prove a few facts regarding the process $\{Y_n\}$ which we will use in Sections \ref{sec:randirev} and \ref{sec:prfmain}.

\begin{lemma}\label{lem:noreturn}
$\{Y_n\}$ reaches $\partial B_L$ before returning to the origin.
\end{lemma}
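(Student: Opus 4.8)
The plan is to show two things about the walk $\{Y_n\}$ defined in Subsection \ref{sec:auxdef}: that $\langle Y_\ell,e_1\rangle>0$ for every $\ell\geq 1$ (so that $\{Y_n\}$ never returns to the origin), and that $\langle Y_\ell,e_1\rangle\to\infty$, so that the walk does leave $B_L$. Both follow by tracking the ``checkpoint'' values $\langle Y_{\zeta_n},e_1\rangle$ through the construction. I would first record, by induction on $n\geq 1$, that $\langle Y_{\zeta_n},e_1\rangle$ is a positive integer multiple of $N_1^2$ and that $\langle Y_{\zeta_{n+1}},e_1\rangle>\langle Y_{\zeta_n},e_1\rangle$.

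For the base case, $Y_{\zeta_1}=Y_{N_1^2}=N_1^2e_1$ by the deterministic initial segment, so $\langle Y_{\zeta_1},e_1\rangle=N_1^2$. For the inductive step, write $x_n^\prime=Y_{\zeta_n}$ and $x_n=Y_{\zeta_n^\prime}=x_n^\prime+\sum_{k\leq k^\prime}\beta_{k,j(k)}$. Since $\langle\beta_{k,j},e_1\rangle=0$ for all $k,j$ we have $\langle x_n,e_1\rangle=\langle x_n^\prime,e_1\rangle$, and any shortest lattice path from $x_n^\prime$ to $x_n$ only changes coordinates $2,\dots,d$ (their $e_1$-coordinates already agree), so the whole correction segment from $\zeta_n$ to $\zeta_n^\prime$ stays in the hyperplane $\{x:\langle x,e_1\rangle=\langle x_n^\prime,e_1\rangle\}$, apart from the single excursion $x_n\to x_n+e_1\to x_n$. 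Between $\zeta_n^\prime$ and $\zeta_{n+1}$ there are two cases: if $k(x_n)=0$ the walk makes $N_1^2$ deterministic $e_1$-steps, so $\langle Y_{\zeta_{n+1}},e_1\rangle=\langle x_n,e_1\rangle+N_1^2$; if $k(x_n)=k\geq 1$ the walk traverses the good block $\PP^{(k)}(x_n)=\PP(z(x_n,k),N_k)$ conditioned to exit through $\partial^+$, so $\langle Y_{\zeta_{n+1}},e_1\rangle=\langle z(x_n,k),e_1\rangle+N_k^2=\langle x_n,e_1\rangle+N_k^2$. As $N_1\mid N_k$, in both cases the increment is a positive multiple of $N_1^2$, completing the induction; in particular $\langle Y_{\zeta_n},e_1\rangle\geq nN_1^2$.

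Next I would deduce $\langle Y_\ell,e_1\rangle\geq 1$ for all $\ell\geq 1$. On the initial segment $\langle Y_j,e_1\rangle=j\geq 1$. On a correction segment, or on a march segment with $k(x_n)=0$, we have $\langle Y_\ell,e_1\rangle\geq\langle x_n^\prime,e_1\rangle\geq N_1^2>0$ by the induction. On a block traversal with $k(x_n)=k$, every visited point $y$ lies in $\PP(z(x_n,k),N_k)$, hence $\langle y,e_1\rangle>\langle z(x_n,k),e_1\rangle-N_k^2=\langle x_n,e_1\rangle-N_k^2$; but $k(x_n)=k$ forces $\langle x_n,e_1\rangle$ to be divisible by $N_k^2$ (by the definition of $z(x_n,k)$ and of $k(x_n)$ in \eqref{eq:levelx}), and $\langle x_n,e_1\rangle\geq N_1^2>0$, so $\langle x_n,e_1\rangle\geq N_k^2$ and therefore $\langle y,e_1\rangle>0$. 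Consequently $Y_\ell\neq 0$ for every $\ell\geq 1$, i.e.\ $\{Y_n\}$ never returns to the origin.

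Finally, since $\langle Y_{\zeta_n},e_1\rangle\geq nN_1^2\to\infty$ and each inter-checkpoint segment is a.s.\ finite (either a deterministic march, or a walk conditioned to exit a finite block through $\partial^+$, which happens in finite time by uniform ellipticity), the walk leaves $B_L$ in finite time; and the transversal displacement it accumulates before $\langle Y_\ell,e_1\rangle$ first exceeds $L$ stays far below the lateral half-width $L^2$ of $B_L$ — it is $o(L^2)$, being governed by the $O(L)$ drift of the centerlines $u(\cdot,\cdot)$ together with block-scale fluctuations $O(\max_k N_kR_5(N_k))=o(L)$ and the accumulated $\beta$-corrections, also $o(L)$ by the size bounds of Corollary \ref{cor:quenched} — so the exit occurs through the face $\langle\cdot,e_1\rangle=L$; hence the walk reaches $\partial B_L$, and by the previous paragraph it has not returned to $0$ by then. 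The crux of the argument is the positivity on block traversals: the whole statement rests on the arithmetic observation that $k(x_n)=k$ forces $N_k^2\mid\langle x_n,e_1\rangle$, which together with $\langle x_n,e_1\rangle>0$ gives $\langle x_n,e_1\rangle\geq N_k^2$ and thus keeps the traversed block — and the walk — on the positive side of $\{\langle\cdot,e_1\rangle=0\}$; the remaining ingredients are just bookkeeping about the construction (corrections preserve the $e_1$-coordinate, blocks are exited through $\partial^+$).
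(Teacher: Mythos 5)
Your argument is correct and follows the paper's approach: track the checkpoint values $\langle Y_{\zeta_n},e_1\rangle$, note that each traversed block $\PP^{(k(x_n))}(x_n)$ lies in the positive half-space (your divisibility observation $N_k^2\mid\langle x_n,e_1\rangle$ fills in a step the paper leaves implicit) and that the walk always exits through $\partial^+$, and conclude $\{Y_n\}$ never returns to $0$. Your final paragraph about exiting through the right face of $B_L$ is not needed — by construction the walk eventually hits $\partial^+B_{2L}$, which already forces a prior crossing of $\partial B_L$ — so the hand-waving there does no harm to the proof.
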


\begin{proof}
By the definition, $U_1(1)$ is reached before returning to the origin. Then for every $n$, if $x=Y_{\zeta_n}$, then $\PP^{(k(x))}(x)$ is contained in the positive half space, and $\{Y_n\}$ exits $\PP^{(k(x))}(x)$ through $\partial^+\PP^{(k)}(x)$. Therefore $\{Y_n\}$ cannot 
return to the origin.
\end{proof}

\begin{lemma}\label{lem:betasmall}
For every $k$ and $j$, with probability $1$,
\begin{equation}\label{eq:betasmall}
\beta_{k,j}<L^{4\psi}
\end{equation}
\end{lemma}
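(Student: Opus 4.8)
I would bound $\|\beta_{k,j}\|_1$ by unwinding the recursive definition of the correction terms. Recall that if $\beta_{k,j}\ne 0$, then by construction $\hat Z$ is a $(\lambda_k,\DD(N_k))$-companion of $X$ where $X = Y_{T^Y_k(j)}-x+\sum_{k'<k}\beta_{k',j(k')}$, and $\beta_{k,j}=\hat Z-X$. The key point from the notion of $(\lambda,\DD)$-companion is the deterministic bound $\|\hat Z - X\|<K$, where $K$ is the smallest integer for which the distribution of $X$ (conditioned on the past and on $\omega$) is $(\lambda_k,K)$-close to $\DD(N_k)$. So the plan is: first control $K$ for the relevant $X$, then sum the resulting bounds over the scales $k'\le k$.

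First I would establish a bound on the companion displacement at a single scale. By the inductive hypothesis (descending in the recursion on $k$, or rather built up from $\beta_{1,1}$ which has displacement $0$), the quantity $Y_{T^Y_k(j)}-x$ together with the accumulated corrections $\sum_{k'<k}\beta_{k',j(k')}$ is, conditioned on the environment $\omega\in G$ and the past, within a controlled distance of $\DD(N_k)$. More precisely, the walk $\{Y_n\}$ inside each good \paral{} of scale $N_{k'}$ exits with a distribution that is $(N_{k'}^{-\xi(1)},\,(d+1)N_{k'}^{\psi/2})$-close to the annealed exit distribution $\DD(N_{k'})$ (this is Corollary~\ref{cor:quenched}, applied with $\theta=\psi/2$, together with the fact that $\omega\in G$ makes the relevant \parals{} good). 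Composing these across the $N_k^2/N_{k'}^2$ sub-\parals{} of scale $N_{k'}$ that tile one scale-$N_k$ \paral, Lemma~\ref{lem:sumapprox} tells us that $X$ is $(\lambda_k,K_k)$-close to $\DD(N_k)$ with $K_k \le 2 n_{k'} K_{k'} R_{h+1}(N_k)$ for appropriate $n_{k'}$; iterating, $K_k$ is bounded by a product of factors of the form $L^{\rho_{k'}}R_{h+1}$ over scales, which is at most $L^{O(\chi)}$ up to polylogarithmic $R$-factors. Thus $\|\beta_{k,j}\|_1 \le K_k$ is comfortably $\le L^{2\psi}$ (say), using $\chi < \psi^2/2 \cdot \frac{d-1}{2(d+1)} \ll \psi$ from \eqref{eq:choosechi}, once $L$ is large.

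Then I would sum over the nested scales contributing to a single shift. When $\{Y_n\}$ sits at a point $x^\prime$ on a layer $U_k(j)$, the actual shift applied is $\sum_{k''=1}^{k^\prime}\beta_{k'',j(k'')}$ where $k^\prime\le\iota$, and each $\beta_{k'',j(k'')}$ is at most the per-scale bound above. Since $\iota$ is bounded (indeed $\iota = O(\log\log L)$ because $N_k$ grows by a factor $\lceil L^{\rho_k}\rceil \ge L^{\chi/2}$ at each step while $N_\iota^2 < 2L$), the total displacement is at most $\iota \cdot L^{2\psi} \le L^{3\psi}$, and hence $\beta_{k,j} < L^{4\psi}$ with room to spare. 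That gives \eqref{eq:betasmall}.

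\textbf{Main obstacle.} The delicate part is the bookkeeping in the recursion: $\beta_{k,j}$ is defined in terms of the already-chosen $\{\beta_{k',j'}: k'<k\}$, so one must verify that the hypotheses of Lemma~\ref{lem:sumapprox} (namely $n<\lambda^{-1}$, $N>\lambda^{-4}$, and crucially $\lambda N > 2KnR_{h+1}(N)$) are actually met at every scale with the choices $\lambda_k = L^{-\chi}R_{5+k}(L)$, $N=N_k$, and the inductively-bounded $K=K_k$. This is where the specific inequalities \eqref{eq:choosepsi} and \eqref{eq:choosechi} among $\psi$, $\chi$, $\epsilon$, $\gamma$ get used — they are exactly what makes $\chi$ small enough relative to $\psi$ that the accumulated companion-distances $K_k$ stay below $L^{\psi}$ while the mass-errors $\lambda_k$ stay summably small. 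I would carry out this verification scale by scale, checking at each step that $\lambda_{k} \le \lambda_{k+1}$ (up to the $R$-factor increments from Lemma~\ref{lem:sumapprox}) and that the displacement bound does not blow up, so that by the time one reaches scale $\iota$ everything is still $\le L^{O(\psi)}$.
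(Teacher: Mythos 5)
Your plan to control $\beta_{k,j}$ via the companion-displacement $K$ is on the right track, and you have correctly identified Corollary~\ref{cor:quenched}, Lemma~\ref{lem:sumapprox}, and Remark~\ref{rem:Y_0} as the relevant tools. But the central step --- iterating Lemma~\ref{lem:sumapprox} across all scales $k'\le k$ and accumulating a product $\prod_{k'} n_{k'} R_{h+1}(\cdot)$ --- does not close, and in fact the exponent arithmetic you give is off. The accumulated factor is not $L^{O(\chi)}$: applying the displacement recursion $K_{k'}\le 2\,(N_{k'}/N_{k'-1})^2\,K_{k'-1}\,R_{h+1}(N_{k'-1})$ from scale $1$ up to scale $k$ produces a factor $(N_k/N_1)^2$, which can be as large as $L^{1-2\psi}$ since $N_k^2$ may be close to $2L$ while $N_1^2=L^{2\psi}$. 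This is vastly larger than $L^{4\psi}$, so the iterated bound is useless.

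The paper avoids this blow-up entirely by never iterating the recursion past one level. In the proof of Lemma~\ref{lem:betasmall}, after reducing to the case where $\PP^{(k)}(x)$ is bad, one writes the displacement across $\PP^{(k)}(x)$ as a sum $\sum_{h=1}^m X_h$ of $m=(N_k/N_{k-1})^2$ increments across scale-$(k-1)$ sub-blocks, where each increment already includes the level-$(k-1)$ correction $\beta_{k-1,\cdot}$. The crucial observation is that, conditioned on the past, every such $X_h$ is \emph{uniformly} $(\lambda_{k-1},\,N_{k-1}^{\psi/2})$-close to $\DD(N_{k-1})$ --- not by unwinding lower scales, but directly: if the $(k-1)$-block is good then Corollary~\ref{cor:quenched} gives the closeness with displacement $(d+1)N_{k-1}^{\psi/2}$ and no correction is applied; if it is bad, then by the very definition of $\beta_{k-1,\cdot}$ (Remark~\ref{rem:Y_0}) the corrected increment is $(\lambda_{k-1},0)$-close. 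In other words, the $\beta$-corrections at level $k-1$ have already ``reset'' the closeness parameter, so no multiplicative accumulation occurs. A single application of Lemma~\ref{lem:sumapprox} with $n=m$, $N=N_{k-1}$, $K=N_{k-1}^{\psi/2}$ then gives $\beta_{k,j}\le 2m\,N_{k-1}^{\psi/2}\,R_{k+6}(L)\le N_{k-1}^{\psi/2}\cdot R_{k+6}(L)N_k^2/N_{k-1}^2\le L^{4\psi}$, using $m\le L^{2\chi}$, $N_{k-1}^{\psi/2}\le (2L)^{\psi/4}$, and $\chi\ll\psi$ from \eqref{eq:choosechi}. The base case $k=1$ is trivial since the whole block $\PP(0,N_1)$ has diameter $<L^{4\psi}$.

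Two smaller remarks. First, your final paragraph about summing the shifts $\sum_{k''\le k'}\beta_{k'',j(k'')}$ over the nested scales is answering a different question: the lemma bounds each individual $\beta_{k,j}$, not the total shift applied at a layer, so this step is not needed. Second, your ``main obstacle'' paragraph correctly flags the hypotheses $\lambda N > 2Kn R_{h+1}(N)$ etc.\ of Lemma~\ref{lem:sumapprox} as the place where \eqref{eq:choosepsi} and \eqref{eq:choosechi} must be invoked, but the relevant verification takes place only once (at a single pair of adjacent scales $N_{k-1}\to N_k$), not scale-by-scale across a recursion.
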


\begin{proof}
%We show the statement by induction on $k$.
For $k=1$, the size of the {\paral} $\PP(0,N_1)$ is less than $L^{4\psi}$, and therefore for every $j$, we have that $\beta_{1,j}<L^{4\psi}$.

%Now assume that $\beta_{k^\prime,j}<L^{4\psi}$ for all $j$ and $k^\prime<k$.
Now assume that $k\geq 1$. In this case, we assume that there exists $n$ such that
$\zeta_n=T^Y_k(j-1)$, because otherwise $\beta_{k,j}=0$. Let $i$ be such that $U_k(j-1)=U_{k-1}(i)$.

Let $x=Y_{\zeta^\prime_n}$. If $\PP^{(k)}(x)$ is good, then $\beta_{k,j}=0$. Therefore we may assume that $\PP^{(k)}(x)$ is not good. In this case there exist $n_0=n, n_1, n_2, n_3, \ldots, n_m$ such that
$m$ satisfies that $U_k(j)=U_{k-1}(i+m)$ and for $h=0, 1, \ldots, m$, we have that
$\zeta_{n_h}=T^Y_{k-1}(i+h)$.

For $1\leq h< m$, let $X_h=Y_{\zeta^\prime_{n_h}}-Y_{\zeta^\prime_{n_{h-1}}}$,
and let $X_m=Y_{\zeta^\prime_{n_m}}-Y_{\zeta^\prime_{n_{m-1}}}-\beta_{k,j}$.
%By the definition of the variables $\beta_{k,j}$,
We now claim that for every $1\leq h\leq m$, conditioned on
$X_1,\ldots,X_{h-1}$, the distribution of $X_h$ is $(\lambda_{k-1},N_{k-1}^{\psi/2})$-close to $\DD(N_{k-1})$.
Indeed, if
$\PP^{(k-1)}\big(Y_{\zeta^\prime_{n_{h-1}}}\big)$
%$\PP^{(k-1)}(Y_{\zeta^\prime_{n_{h-1}}})$
is good, then this claim follows from Corollary \ref{cor:quenched}. 
Otherwise,
%from
%the definition of the variables $\beta_{k,j}$, applied to $k-1$, and from Claim \ref{claim:interm} we
as in Remark \ref{rem:Y_0},
the distribution of  $X_h$ is 
$(\lambda_{k-1},0)$-close to $\DD(N_{k-1})$ (and in particular
$(\lambda_{k-1},N_{k-1}^{\psi/2})$-close to $\DD(N_{k-1})$).

Therefore, by Lemma \ref{lem:sumapprox}, the distribution of 
\[
Y_{T^Y_k(j)}-x+\sum_{k^\prime=1}^{k-1}\beta_{k^\prime,j(k^\prime)}=\sum_{h=1}^mX_h
\]
is $(\lambda_k,R_{k+6}(L)N_{k-1}^{\psi/2}N^2_k/N^2_{k-1})$-close to $\DD(N_k)$. Therefore we get that with probability 1,
\[
\beta_{k,j}\leq N_{k-1}^{\psi/2}\cdot\frac{R_{k+6}(L)N^2_k}{N^2_{k-1}}\leq L^{4\psi}
.\]

\end{proof}

\begin{lemma}\label{lem:stpt}
For $j$ and $k$, if there exists $n$ such that $x_n\in U_k(j)$, then at least one of the following holds:
\begin{enumerate}
\item There exist $j^\prime$ such that $U_k(j)=U_\iota(j^\prime)$,
\item There exists $k^\prime$ and $j^\prime$ such that $U_k(j)=U_{k^\prime}(j^\prime)$ and
$x_{n-1}\in U_{k^\prime}(j^\prime-1)$ and $x_{n-1}$ is contained in a {\paral} $\PP(z,N_{k^\prime + 1})$ s.t. $z\in\LL_{N_{k^\prime+1}}$ and $\PP(z,N_{k^\prime + 1})$ is not good.
\item or 
%There exists $k^\prime$ and $j^\prime$ such that $U_k(j)=U_{k^\prime}(j^\prime)$ and
%$j^\prime<\left(\frac{N_{k^\prime}}{N_{k^\prime-1}}\right)^2$.
$j\leq\left(\frac{N_{k+1}}{N_{k}}\right)^2$.
\end{enumerate}
\end{lemma}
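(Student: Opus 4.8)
The plan is to extract, from the construction of $\{Y_n\}$, an exact relation between the hyperplane occupied by the checkpoint $x_n$ and the one occupied by $x_{n-1}$, and then to run a downward induction on $n$ through the hierarchy of scales $N_1,\dots,N_\iota$. Note that the statement is purely combinatorial: it uses only how the walk $\{Y_n\}$ is built from the partition of $\Z^d$ into \parals\ of the various scales and which of them are good, not the probabilistic content of Proposition \ref{prop:quenched}.

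First I would record the basic mechanism. For $n\ge1$ the portion of the walk between $\zeta'_{n-1}$ and $\zeta_n$ is either a copy of the $\omega$-walk started at $x_{n-1}$ and conditioned to leave $\PP^{(k(x_{n-1}))}(x_{n-1})$ through its right face $\partial^+$ (when $k(x_{n-1})>0$) or a deterministic run of $N_1^2$ steps in the direction $e_1$ (when $k(x_{n-1})=0$); and since every $\beta_{k,j}$ has vanishing $e_1$-component, the passage from $\zeta_n$ to $\zeta'_n$ does not change $\langle\cdot,e_1\rangle$ either. Writing $k'=\max(k(x_{n-1}),1)$ and using that $\langle z(x_{n-1},k'),e_1\rangle=\langle x_{n-1},e_1\rangle$ together with the fact that $\partial^+\PP(z,N)$ sits at $e_1$-height $\langle z,e_1\rangle+N^2$, this yields
\[
\langle x_n,e_1\rangle=\langle x_{n-1},e_1\rangle+N_{k'}^2 .
\]
Moreover, because $k(x_{n-1})$ is by definition the \emph{largest} level at which $x_{n-1}$ sits on the central hyperplane of a good \paral, at level $k'+1$ one of two things must fail at $x_{n-1}$: either $\langle x_{n-1},e_1\rangle$ is not divisible by $N_{k'+1}^2$, or it is and $\PP(z(x_{n-1},k'+1),N_{k'+1})$ is bad.

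Next I would split on $k'$ versus $k$. When $k'\ge k$ we have $N_k^2\mid N_{k'}^2$, so $\langle x_n,e_1\rangle$ is a multiple of $N_{k'}^2$; if $k'<\iota$ this lets us replace the given instance by the instance at scale $k'$ (in which the step level and the examined level coincide, reducing to the case below), while if $k'=\iota$ then $\langle x_n,e_1\rangle$ is a multiple of $N_\iota^2$ and alternative (1) holds. So assume $k'\le k$. Then $N_{k'}^2\mid N_k^2$, hence $\langle x_{n-1},e_1\rangle=\langle x_n,e_1\rangle-N_{k'}^2$ is a multiple of $N_{k'}^2$; put $i=\langle x_{n-1},e_1\rangle/N_{k'}^2$, so that $x_{n-1}\in U_{k'}(i)$, $x_n\in U_{k'}(i+1)$ and $U_{k'}(i+1)=U_k(j)$. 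If $\langle x_{n-1},e_1\rangle$ is divisible by $N_{k'+1}^2$, then by the previous paragraph $\PP(z(x_{n-1},k'+1),N_{k'+1})$ is bad, and since $x_{n-1}\in\hPP(z(x_{n-1},k'+1),N_{k'+1})\subseteq\PP(z(x_{n-1},k'+1),N_{k'+1})$ with $z(x_{n-1},k'+1)\in\LL_{N_{k'+1}}$, alternative (2) holds with $k^{\prime\prime}=k'$ and $j^{\prime\prime}=i+1$. Otherwise $\langle x_{n-1},e_1\rangle$ is not divisible by $N_{k'+1}^2$, and I would invoke the induction hypothesis for $x_{n-1}\in U_{k'}(i)$: its alternative (1) is impossible here (it would force $N_\iota^2$, hence $N_{k'+1}^2$, to divide $\langle x_{n-1},e_1\rangle$), and its alternatives (2) and (3), fed through the identity above and the arithmetic $N_k\mid N_{k+1}$, $N_{k+1}/N_k=\lceil L^{\rho_k}\rceil\ge2$, are converted into one of the three alternatives for $x_n$. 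The base case is the initial deterministic segment, where $\langle x_n,e_1\rangle\le N_1^2$ and alternative (3) is immediate.

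The step I expect to be the main obstacle is that last one --- propagating the induction hypothesis forward through possibly several scales while keeping the bound $j\le(N_{k+1}/N_k)^2$ of alternative (3) sharp. The real content is that the only ways a checkpoint's $e_1$-height can get past $N_{k+1}^2$ are that the walk has performed a $\beta$-correction (equivalently, exited a bad \paral) inside the current level-$(k+1)$ slab, which delivers alternative (2) at some level $k^{\prime\prime}\le k$, or that it has already climbed to the top scale $\iota$, which delivers alternative (1); everything else is governed by the elementary divisibility structure of the nested scales, and the bookkeeping is in organizing this so that the forced predecessor in alternative (2) is exactly $x_{n-1}$.
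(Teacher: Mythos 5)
Your proposal is structured as a one-step induction on $n$, whereas the paper's proof is not inductive: once it reaches the case $N_{k'+1}^2 \nmid \langle x_{n-1},e_1\rangle$, it looks directly at the checkpoint $x_{n'}$ (possibly many steps back) satisfying $\langle x_{n'},e_1\rangle = N_{k'+1}^2\lfloor \langle x_{n-1},e_1\rangle/N_{k'+1}^2\rfloor$, and disposes of the remaining cases by a single good/bad/origin trichotomy at $x_{n'}$. The substantive difference is not just presentation. The paper's closing argument rests on a contradiction you have not reproduced: if $\PP^{(k'+1)}(x_{n'})$ is good and $\langle x_{n'},e_1\rangle\neq0$, then $k(x_{n'})\geq k'+1$, so $\zeta_{n'+1}$ is the exit time from $\PP^{(k'+1)}(x_{n'})$, whose $e_1$-height already exceeds $\langle x_{n-1},e_1\rangle$ --- contradicting that there is a checkpoint at height $\langle x_{n-1},e_1\rangle$. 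That contradiction is the engine of the proof; it is exactly what lets the paper conclude without examining any intermediate checkpoints.

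This is also where your induction genuinely breaks: when you invoke the hypothesis for $x_{n-1}\in U_{k'}(i)$ and it returns alternative (2), the bad $N_{k''+1}$-block it hands you is one containing $x_{n-2}$ (the \emph{forced predecessor of $x_{n-1}$}), not $x_{n-1}$ --- and alternative (2) for the target instance $x_n\in U_k(j)$ requires a bad block around $x_{n-1}$. There is no general reason $x_{n-1}$ lies in the same bad block as $x_{n-2}$ (it can be the first checkpoint past the block's right face), so ``feeding (2) through the identity and arithmetic'' does not produce one of the three alternatives for $x_n$. You flag this step as the obstacle yourself, and indeed it is the gap: to close it one would either have to strengthen the induction hypothesis to track the last good-exit level, or, as the paper does, abandon one-step induction and jump straight to $x_{n'}$ where the good-block case is killed by the contradiction above. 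The non-problematic parts of your reduction (the increment identity $\langle x_n,e_1\rangle=\langle x_{n-1},e_1\rangle+N_{k'}^2$ and the trichotomy at scale $k'+1$) do match the paper's opening moves.
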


\begin{proof}
Assume that $x_n\in U_k(j)$. Let $k^\prime=k(x_{n-1})$. If $k^\prime=0$ then case (2) holds. Assume 
$k^\prime>0$.
Then the intersection of
$U_k(j)$ and $\partial^+\PP^{(k^\prime)}(x_{n-1})$ is not empty. Therefore, by the definition of $k(x)$, we get that there is some $j^\prime$ such that $U_k(j)=U_{k^\prime}(j^\prime)$ and
$x_{n-1}\in U_{k^\prime}(j^\prime-1)$, and that one of the following occurs:
\begin{enumerate}
\item\label{item:iota} $k^\prime=\iota$.
\item\label{item:notgood} There exists $z\in\LL_{N_{k^\prime+1}}$ such that $\langle z,e_1\rangle=\langle x_{n-1},e_1\rangle$, and
$\PP^{(k^\prime+1)}(x_{n-1})$ is not good.
\item\label{item:z} No $z\in\LL_{N_{k^\prime+1}}$ exists with
$\langle z,e_1\rangle=\langle x_{n-1},e_1\rangle$.
\end{enumerate}
In cases \ref{item:iota} and \ref{item:notgood}, the lemma holds. Thus we assume that the case that occurs is \ref{item:z}.

In this case, there exists $n^\prime<n-1$ such that 
$\langle x_{n^\prime},e_1\rangle = N_{k^\prime+1}^2\left\lfloor\frac{\langle x_{n-1},e_1\rangle}{N_{k^\prime+1}^2}\right\rfloor$.
$x_{n-1}$ is in $\PP^{(k^\prime+1)}(x_{n^\prime})$. If $\PP^{(k^\prime+1)}(x_{n^\prime})$ is not good, 
then  $x_{n-1}$ is contained in a {\paral} $\PP(z,N_{k^\prime + 1})$ s.t. $z\in\LL_{N_{k^\prime+1}}$
and $\PP(z,N_{k^\prime + 1})$ is not good. If $\PP^{(k^\prime+1)}(x_{n^\prime})$ is good and
$\langle x_{n^\prime},e_1\rangle\neq 0$ then $\zeta_{n^\prime+1}$ is the exit time from
$\PP^{(k^\prime+1)}(x_{n^\prime})$, which stands in contradiction to the assumptions. If
$\langle x_{n^\prime},e_1\rangle=0$, then $j\leq\left(\frac{N_{k+1}}{N_{k}}\right)^2$.

\end{proof}

We now let $M$ be the number of stopping times $\zeta_n$ in the definition of $\{Y_n\}$.

\begin{lemma}\label{lem:nstp}
Let $[Y]$ be the set of points visited by $\{Y_n\}$. For every $k=1,\ldots,\iota$, let
\[
Q_k(\{Y_n\})=\#
\left\{
z\in\LL_{N_k}\ :\ [Y]\cap\PP(z,N_k)\neq\emptyset
\mbox{ and }
\PP(z,N_k)
\mbox{ is bad }
\right\}.
\]
Then 
\[
M\leq \frac{2L}{N_\iota^2} + L^{2\chi}\sum_{k=1}^\iota Q_k(\{Y_n\}) + \iota L^{2\chi}
\leq L^{2\chi}\cdot\left(\iota + 2 + \sum_{k=1}^\iota Q_k(\{Y_n\})\right).
\]

\end{lemma}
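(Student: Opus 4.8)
The plan is to bound the number $M$ of stopping times $\zeta_n$ by classifying each stopping time according to the scale $k$ and the layer $U_k(j)$ to which $x_n = Y_{\zeta_n'}$ belongs, and then to invoke Lemma \ref{lem:stpt} to see that only a controlled number of such $(k,j)$ pairs can actually occur. First I would observe that every stopping time $\zeta_n$ (for $n\geq 1$) comes with a point $x_n = Y_{\zeta_n'}$ whose $e_1$-coordinate is divisible by $N_1^2$, and that by construction $\langle x_n,e_1\rangle$ is in fact divisible by $N_{k(x_n)}^2$ when $k(x_n)>0$. The walk between consecutive stopping times either crosses a good {\paral} of scale $k(x_n)$ (so $\langle x_{n+1},e_1\rangle - \langle x_n,e_1\rangle = N_{k(x_n)}^2$) or, when $k(x_n)=0$, advances by exactly $N_1^2$. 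So I would group the stopping times: for each $k$, count the stopping times $\zeta_n$ with $x_n\in U_k(j)$ for some $j$, i.e. such that $N_k^2 \mid \langle x_n,e_1\rangle$.

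The key step is to apply Lemma \ref{lem:stpt}: if $x_n\in U_k(j)$, then one of three things happens. Case (1), $U_k(j) = U_\iota(j')$ for some $j'$ — there are at most $2L/N_\iota^2$ such layers in $B_{2L}$, giving the first term. Case (3), $j \leq (N_{k+1}/N_k)^2$ — this happens for at most $(N_{k+1}/N_k)^2 \le L^{2\rho_k} \le L^{2\chi}$ layers $U_k(j)$ per scale, contributing at most $\iota L^{2\chi}$ in total (the $\iota L^{2\chi}$ term). Case (2), $x_{n-1}$ lies in a bad {\paral} $\PP(z, N_{k'+1})$ with $z\in\LL_{N_{k'+1}}$, for the scale $k'$ with $U_k(j) = U_{k'}(j'+1)$ and $x_{n-1}\in U_{k'}(j'-1)$; so this stopping time is "charged" to a bad {\paral} at some scale $k'+1$ which is visited by $[Y]$. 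I would then bound the number of stopping times charged to a single bad {\paral}: a bad {\paral} $\PP(z,N_{k'+1})$ has $e_1$-extent $N_{k'+1}^2$, and each stopping time charged to it corresponds to a layer $U_{k'}(j')$ inside that extent, of which there are $(N_{k'+1}/N_{k'})^2 \le L^{2\chi}$; moreover each such stopping time is charged to exactly one bad {\paral}. Hence the number of stopping times in case (2) is at most $L^{2\chi}\sum_{k} Q_k(\{Y_n\})$, where $Q_k$ counts the bad {\parals} of scale $k$ that $[Y]$ meets. Summing the three cases gives $M \le 2L/N_\iota^2 + L^{2\chi}\sum_{k=1}^\iota Q_k(\{Y_n\}) + \iota L^{2\chi}$, and the second, cruder inequality follows since $2L/N_\iota^2 \le N_\iota^2/N_\iota^2\cdot(\text{something}) $ — more precisely since $N_\iota^2 < 2L \le N_{\iota+1}^2 \le N_\iota^2 L^{2\rho_\iota} \le N_\iota^2 L^{2\chi}$, we get $2L/N_\iota^2 \le L^{2\chi}$, so $M \le L^{2\chi}(\iota + 1 + \sum_k Q_k)$; replacing $\iota+1$ by $\iota+2$ is harmless.

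The main obstacle I anticipate is the bookkeeping in case (2): making precise the claim that each stopping time falling into case (2) is charged to a \emph{unique} bad {\paral}, and that the number charged to a given bad {\paral} is at most $L^{2\chi}$. The subtlety is that the scale $k'$ appearing in Lemma \ref{lem:stpt} depends on $n$ (it is $k(x_{n-1})$), and one must check that the bad {\paral} $\PP(z,N_{k'+1})$ is indeed one of those counted by $Q_{k'+1}$ (i.e. $z\in\LL_{N_{k'+1}}$ and $[Y]$ meets it, both of which are part of the statement of Lemma \ref{lem:stpt}), and that distinct stopping times in case (2) with the same associated bad {\paral} correspond to distinct layers $U_{k'}(j')$ within it, of which there are at most $(N_{k'+1}/N_{k'})^2 \le L^{2\rho_{k'}} \le L^{2\chi}$. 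Once this charging scheme is set up carefully, the rest is routine arithmetic using the definitions of the scales $N_k$ and $\rho_k$.
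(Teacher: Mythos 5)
Your proposal is correct and follows essentially the same approach as the paper's (one-sentence) proof: apply Lemma \ref{lem:stpt} to each $x_n$, and then tally the three cases — at most $2L/N_\iota^2$ stopping times with $N_\iota^2 \mid \langle x_n,e_1\rangle$, at most $\iota L^{2\chi}$ in the ``small $j$'' case, and at most $L^{2\chi}$ per bad block in the charging case — exactly as you describe. Your spelled-out charging scheme in case (2) is the intended argument, which the paper leaves implicit.
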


\begin{proof}
This follows from Lemma \ref{lem:stpt}. There are at most $\frac{2L}{N_\iota^2}$ stopping times 
that are caused by reaching the end of a $N_\iota$ block, $\iota L^{2\chi}$ stopping times 
that are caused by the beginning and at most $L^{2\chi}\sum_{k=1}^\iota Q_k(\{Y_n\}) $ stopping times 
that are caused by visiting {\parals} that are not good.
\end{proof}

We now draw the connection between the walks $\{Y_n\}$ and $\{X_n\}$.

\begin{lemma}\label{lem:xnyn}
Let $\upsilon=(v_1,v_2,\ldots v_{N_\upsilon})$ ($N_\upsilon$ is the length of the path $\upsilon$) be a nearest-neighbor path starting at the origin, never returning to the origin, and ending at $\partial^+ B_L$. For every $k=1,\ldots,\iota$, let
\[
Q_k(\upsilon)=\#
\left\{
z\in\LL_{N_k}\ :\ \upsilon\cap\PP(z,N_k)\neq\emptyset
\mbox{ and }
\PP(z,N_k)
\mbox{ is bad }
\right\}.
\]

and let %$Q(\upsilon)=\frac{2L}{N_\iota^2} + L^{2\chi}\sum_{k=1}^\iota Q_k(\upsilon) + \iota L^{2\chi}.$
$Q(\upsilon)=L^{2\chi}\cdot\big(\iota + 2 + \sum_{k=1}^\iota Q_k(\upsilon)\big).$
Then
\begin{equation}\label{eq:xnyn}
\frac{
{\quenchedP_\omega}\left(
X_j=v_j \mbox{ for all } j<N_\upsilon
\right)
}{
{\quenchedP_\omega}\left(
Y_j=v_j \mbox{ for all } j<N_\upsilon
\right)
}
\geq
%\exp\left(
%-L^{4\psi}\left(1+\sum_{k=1}^\iota Q_k\right)
%\right)
%\frac{1}{2}\eta^{
%\left(1+\sum_{k=1}^\iota Q_k\right)\cdot L^{4\psi}
\frac{1}{2}\eta^{
Q(\upsilon)\cdot(L^{4\psi}+2)
},
\end{equation}
where $\eta$ is the ellipticity constant, as in \eqref{eq:unifelliptic}.
\end{lemma}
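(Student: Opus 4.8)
The plan is to compare the two path probabilities phase by phase, exploiting that the law of $\{Y_n\}$ is obtained from the quenched law by three cheap modifications. We may assume $\quenchedP_\omega(Y_j=v_j\text{ for all }j<N_\upsilon)>0$, since otherwise the right-hand side of \eqref{eq:xnyn} is read as $+\infty$ and there is nothing to prove (note $\quenchedP_\omega(X_j=v_j\text{ for all }j<N_\upsilon)=\prod_{j=1}^{N_\upsilon-1}\omega(v_{j-1},v_j-v_{j-1})$, writing $v_0=0$, is always positive by uniform ellipticity). Inspecting the definition of $\{Y_n\}$ in Subsection \ref{sec:auxdef}, the steps $0,1,\ldots,N_\upsilon-1$ of $\{Y_n\}$ decompose into successive \emph{phases} of three kinds: \emph{free phases}, during which $\{Y_n\}$ runs inside a good block $\PP^{(k)}(x)$ as the quenched walk conditioned on $\{T_{\partial\PP^{(k)}(x)}=T_{\partial^+\PP^{(k)}(x)}\}$; \emph{companion phases}, during which a variable $\beta_{k,j}=\hat Z-X$ is produced by an independent draw $\hat Z$ from a $(\lambda_k,\DD(N_k))$-companion coupling (no walk steps occur here); and \emph{forced phases}, during which $\{Y_n\}$ performs a deterministic sequence of steps — the opening segment $Y_j=je_1$, the shortest path from $x^\prime_n$ to $x_n=x^\prime_n+\sum_{k\le k^\prime}\beta_{k,j(k)}$, the $x_n\mapsto x_n+e_1\mapsto x_n$ wiggle, and the $N_1^2$ forward pushes used when $k(x)=0$.

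Using the chain rule together with the conditional independences built into the construction of $\{Y_n\}$, both $\quenchedP_\omega(X_j=v_j,\,j<N_\upsilon)$ and $\quenchedP_\omega(Y_j=v_j,\,j<N_\upsilon)$ factor as products over these phases, and I would estimate the ratio of per-phase contributions. For a free phase inside a good block $\PP^{(k)}(x)$ traversed by a sub-path $w$ ending on $\partial^+\PP^{(k)}(x)$, one has $\quenchedP_\omega(Y\text{ traverses }w)=\quenchedP_\omega^x(X\text{ traverses }w)/\quenchedP_\omega^x(T_{\partial\PP^{(k)}(x)}=T_{\partial^+\PP^{(k)}(x)})$, so such a phase contributes to $\quenchedP_\omega(X\cdots)/\quenchedP_\omega(Y\cdots)$ the factor $\quenchedP_\omega^x(T_{\partial\PP^{(k)}(x)}=T_{\partial^+\PP^{(k)}(x)})\ge 1-N_k^{-\xi(1)}$, where the inequality is Part~1 of Proposition~\ref{prop:quenched}, valid since $\PP^{(k)}(x)$ is good and $x\in\tilde\PP(z(x,k),N_k)$. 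A companion phase contributes to $\quenchedP_\omega(Y\cdots)$ only a factor $\le 1$ (the probability that $\hat Z$ takes the value forced by $\upsilon$) and nothing to $\quenchedP_\omega(X\cdots)$, hence contributes a factor $\ge 1$ to the ratio. A forced phase contributes $1$ to $\quenchedP_\omega(Y\cdots)$ and $\prod\omega(\cdot)\ge\eta^{(\#\text{steps in the phase})}$ to $\quenchedP_\omega(X\cdots)$ by uniform ellipticity.

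It then remains to count and to assemble. On the event considered, the bad blocks met by $\{Y_n\}$ before time $N_\upsilon$ are among those met by $\upsilon$, so Lemma~\ref{lem:nstp} (with $Q_k(\{Y_n\})$ replaced by $Q_k(\upsilon)$) bounds the number of stopping times $\zeta_n$ with $\zeta_n<N_\upsilon$ by $Q(\upsilon)$; hence there are at most $Q(\upsilon)$ free phases and at most $Q(\upsilon)$ forced phases. Each free-phase factor is $\ge 1-N_k^{-\xi(1)}\ge 1-N_1^{-\xi(1)}$ with $N_1=\lceil L^\psi\rceil$, and since $Q(\upsilon)$ is polynomial in $L$ while $N_1^{-\xi(1)}$ decays faster than any power of $L$, the product of all free-phase factors is $\ge(1-N_1^{-\xi(1)})^{Q(\upsilon)}\ge\tfrac12$ for $L$ large. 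For the forced phases, Lemma~\ref{lem:betasmall} gives $\beta_{k,j}<L^{4\psi}$; since $k^\prime\le\iota$ with $\iota$ bounded and the bound of Lemma~\ref{lem:betasmall} is far from tight, $\|x^\prime_n-x_n\|_1$ together with the length-$2$ wiggle, and also the opening and $k(x)=0$ segments of length $N_1^2$, each involve at most $L^{4\psi}+2$ steps; summing over the $\le Q(\upsilon)$ forced phases, the total number of forced steps is $\le Q(\upsilon)(L^{4\psi}+2)$, contributing a factor $\ge\eta^{Q(\upsilon)(L^{4\psi}+2)}$ to the ratio. Multiplying the three contributions gives \eqref{eq:xnyn}. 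I expect the main obstacle to be exactly the bookkeeping of the first two paragraphs: making the phase decomposition of $\{Y_n\}$ precise and justifying, via the tower property and the specific conditional-independence clauses in the construction, that the $\{Y_n\}$-path probability genuinely factorizes into free-walk factors, companion-coupling factors, and deterministic ($=1$) forced factors, with the free-walk factors differing from those of $\{X_n\}$ only by the exit-through-$\partial^+$ conditioning probabilities.
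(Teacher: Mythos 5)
Your proof is essentially the same as the paper's. Both decompose the trajectory $\upsilon$ into the phases generated by the stopping-time sequence $\zeta_n,\zeta'_n$, use that inside a good block $\{Y_n\}$ is the quenched walk conditioned to exit through $\partial^+$ (so the ratio picks up the factor $\quenchedP_\omega^{x_n}\big(T_{\partial\PP^{(k)}}=T_{\partial^+\PP^{(k)}}\big)\ge 1-N_k^{-\xi(1)}$, whose product over $\le Q(\upsilon)$ phases is $\ge 1/2$ by Proposition~\ref{prop:quenched}), observe that the companion-coupling factors only help the ratio, and bound the deterministic (forced) steps by uniform ellipticity using Lemma~\ref{lem:betasmall} and the stopping-time count from Lemma~\ref{lem:nstp} applied to $\upsilon$, giving $\eta^{Q(\upsilon)(L^{4\psi}+2)}$.
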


%\end{document}

\begin{proof}
First note that due to uniform ellipticity, 
\[
\quenchedP_\omega\left(X_j=v_j \mbox{ for all } j<N_\upsilon\right)>0
\]
for every $\upsilon$. Therefore without loss of generality we can restrict ourselves to considering only $\upsilon$-s such that 
\[
{\quenchedP_\omega}\left(Y_j=v_j \mbox{ for all } j<N_\upsilon\right)>0.
\]

For such $\upsilon$, we define the sequences of times $\zeta_n$ and $\zeta^\prime_n$ in a fashion that is
very similar to the definition in the construction of $Y$-process:
$\zeta_0=\zeta^\prime_0=0$ and $\zeta_1=N_1^2$. Given $\zeta_0,\ldots,\zeta_n$ and $\zeta^\prime_0,\ldots,\zeta^\prime_{n-1}$, let $x^\prime_n=\upsilon_{\zeta_n}$. Let $\zeta^\prime_n$ be the smallest $\ell>\zeta_n$ such that $\langle \upsilon_{\ell-1},e_1\rangle>\langle x^\prime_n,e_1\rangle$, and let 
$x_n=\upsilon_{\zeta^\prime_n}$. Let $k=k(x_n)$. If $k>0$, then we let $\zeta_{n+1}=T_{\partial^+\PP^{(k)}(x_n)}(\upsilon)$. Otherwise, $\zeta_{n+1}=\zeta^\prime_n+N_1^2$.

Then,
\begin{eqnarray*}
\quenchedP_\omega\left(Y_j=v_j \mbox{ for all } j<N_\upsilon\right)\\
\leq
\prod_{n:k(x_n)>0} \quenchedP^{x_n}_\omega  %Was \prob
\left(X_\ell=\nu_{\ell+\zeta^\prime_{n}}\,;\,\ell=1,\ldots,\zeta_{n+1}-\zeta^\prime_n
|T_{\partial\PP^{(k)}(x_n)}=T_{\partial^+\PP^{(k)}(x_n)}
\right),
\end{eqnarray*}
and
\begin{eqnarray}
\nonumber
\quenchedP_\omega\left(X_j=v_j \mbox{ for all } j<N_\upsilon\right)\\
\nonumber
\geq
\prod_{n:k(x_n)>0} \quenchedP^{x_n}_\omega  %Was \prob
\left(X_\ell=\nu_{\ell+\zeta^\prime_{n}}\,;\,\ell=1,\ldots,\zeta_{n+1}-\zeta^\prime_n
|T_{\partial\PP^{(k)}(x_n)}=T_{\partial^+\PP^{(k)}(x_n)}
\right)\\ \label{eq:condit}
\cdot
\prod_{n:k(x_n)>0} \quenchedP^{x_n}_\omega  %Was \prob
\left(T_{\partial\PP^{(k)}(x)}=T_{\partial^+\PP^{(k)}(x_n)}
\right)\\ \label{eq:ellipt}
\cdot
%\prod_{n:k(x_{n-1})>0}\eta^{\|x^\prime_n-x_n\|_1+2}
\prod_{n}\eta^{\|x^\prime_n-x_n\|_1+2}
\cdot
\prod_{n:k(x_{n})=0}\eta^{L^{2\psi}}.
\end{eqnarray}

The first inequality follows from the fact that inside the good \parals { } $\{Y_n\}$ performs quenched random walk on the environment $\omega$. For the second inequality, the first term and \eqref{eq:condit} count the probability of all steps in the
good \parals. In addition, at each stopping time, the process $\{X_n\}$ has to walk from $x_n^\prime$ to $x_n$, and when $k(x_{n})=0$ it also needs to traverse through an $N_1$ block. In \eqref{eq:ellipt} we bound the probability of all of these steps by ellipticity.

By Proposition \ref{prop:quenched}, the product in \eqref{eq:condit} is no less than a half. By the definitions of $k(x)$ and $\beta_{k,j}$, by Lemma \ref{lem:betasmall}, and by uniform ellipticity with constant $\eta$, the product in \eqref{eq:ellipt} is bounded below by 
\[
\eta^{
Q(\upsilon)\cdot(L^{4\psi}+2)
}.
%\eta^
%{
%\left(1+\sum_{k=1}^\iota Q_k\right)\cdot L^{4\psi}.
%}
\]
 Therefore,
 \[
 \frac{
{\quenchedP_\omega}\left(
X_j=v_j \mbox{ for all } j<N_\upsilon
\right)
}{
{\quenchedP_\omega}\left(
Y_j=v_j \mbox{ for all } j<N_\upsilon
\right)
}
\geq
\frac{1}{2}\eta^{
Q(\upsilon)\cdot(L^{4\psi}+2)
}.
\]

\end{proof}

\ignore{
\begin{lemma}\label{lem:entropy}
For every $j$ and $k$, let $\DD^{\omega}(k,j)$ be the distribution of $Y_{T^Y_k(j)}$. Then if $X\sim\DD^{\omega}(k,j)$ then $X$ can be presented as $X=W+Z$ where $Z\sim\DD(\sqrt{j}N_k)$ and
$\|W\|<L^{20\psi}$. Furthermore, we can present 
\[
Y_{T^Y_k(j)}-Y_{T^Y_k(j-1)}=Z^\prime+W^\prime
\]
with $\|W^\prime\|<L^{20\psi}$ and  $Z^\prime\sim\DD(N_k)$ and independent of 
\[\{Y_j:j=1,\ldots,T^Y_k(j-1)\}.\]
\end{lemma}
}

For $k$ and $j$, we define $T^{\prime Y}_k(j)$ as follows:
If there exists $n$ such that $\zeta_n=T^Y_k(j)$, then
$T^{\prime Y}_k(j)=\zeta^\prime_n$. Otherwise,
$T^{\prime Y}_k(j)=T^Y_k(j)$.

\begin{lemma}\label{lem:entropy}
%For every $j$ and $k$, the distribution of $Y_{T^{\prime Y}_k(j)}$
%is $(\lambda_\iota,L^{20\psi})$-close to $\DD(\sqrt{j}N_k)$.
%Furthermore,
Conditioned on $\{Y_\ell\, :\, \ell\leq T^{\prime Y}_k(j-1)\}$, the distribution
of $Y_{T^{\prime Y}_k(j)}-Y_{T^{\prime Y}_k(j-1)}$ 
is $(\lambda_\iota,2L^{4\psi})$-close to $\DD(N_k)$.
%we can present 
%\[
%Y_{T^Y_k(j)}-Y_{T^Y_k(j-1)}=Z^\prime+W^\prime
%\]
%with $\|W^\prime\|<L^{20\psi}$ and  $Z^\prime\sim\DD(N_k)$ and independent of 
%\[\{Y_j:j=1,\ldots,T^Y_k(j-1)\}.\]
\end{lemma}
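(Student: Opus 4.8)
I would prove Lemma~\ref{lem:entropy} by a case analysis on the level $k$ and on whether the relevant \paral\ is good, reducing everything to results already established: Corollary~\ref{cor:quenched} (closeness of the quenched exit distribution of a good \paral\ to the annealed one), Remark~\ref{rem:Y_0} (the definition of $\beta_{k,j}$ forces $\hat Z=X+\beta_{k,j}$ to be $(\lambda_k,0)$-close to $\DD(N_k)$), Lemma~\ref{lem:sumapprox} (closeness is preserved under sums), and Lemma~\ref{lem:betasmall} (the $\beta$'s are bounded by $L^{4\psi}$). The key observation is that $Y_{T^{\prime Y}_k(j)}-Y_{T^{\prime Y}_k(j-1)}$ is, by the very construction of $\{Y_n\}$ in Subsection~\ref{sec:auxdef}, either (i) the displacement of a quenched walk across a good \paral\ of scale $\ge k$, or (ii) equals $X+\beta_{k,j}=\hat Z$ where $X$ is the uncorrected displacement and $\beta_{k,j}$ is the correction term attached to a bad \paral\ at scale $k$. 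Case (ii) is exactly Remark~\ref{rem:Y_0}, so the displacement is $(\lambda_k,0)$-close, hence a fortiori $(\lambda_\iota,2L^{4\psi})$-close since $\lambda_k\le\lambda_\iota$ (the $\lambda_k=L^{-\chi}R_{5+k}(L)$ are increasing in $k$) and $0\le 2L^{4\psi}$.

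For case (i), if the \paral\ containing $Y_{\zeta^\prime_n}$ at level $k$ is good, Corollary~\ref{cor:quenched} gives that the quenched exit distribution $\bar D$ is $\big(N_k^{-\theta\frac{d-1}{2(d+1)}},(d+1)N_k^\theta\big)$-close to $\DD(N_k)$ with $\theta=\psi/2$; one then checks that $N_k^{-\theta\frac{d-1}{2(d+1)}}\le\lambda_\iota$ by \eqref{eq:choosechi}--\eqref{eq:choosepsi} and that $(d+1)N_k^\theta\le(d+1)N_k^{\psi/2}\le L^{4\psi}\le 2L^{4\psi}$ for $n$ large, using $N_k\le 2L$. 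If instead $Y_{\zeta^\prime_n}$ lies at a level $k'>k$ with $\PP^{(k')}$ good, the displacement across $U_k(j-1)$ to $U_k(j)$ is a \emph{piece} of a single good-\paral\ crossing; but in fact when $k(x)>0$ the construction makes $\zeta_{n+1}$ the exit time from the good \paral\ $\PP^{(k(x))}(x)$, so there is no stopping time strictly between, and $T^{\prime Y}_k(j-1)$ and $T^{\prime Y}_k(j)$ coincide with consecutive $\zeta$'s only in the configurations handled above. The remaining subcase---where $U_k(j)$ is reached inside a larger good \paral\ whose crossing distribution is controlled by Corollary~\ref{cor:quenched} at scale $N_{k'}\ge N_k$---still yields $(\lambda_\iota,2L^{4\psi})$-closeness to $\DD(N_{k'})$; but one wants closeness to $\DD(N_k)$. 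Here I would invoke a marginalization: projecting the $N_{k'}$-scale annealed exit distribution onto the intermediate hyperplane $H_{jN_k^2}$ gives $\DD(N_k)$ convolved with an annealed remainder of bounded size (this is exactly the content of Claim~\ref{claim:annealerr} and Part~\ref{item:exit_from_right} of Lemma~\ref{lem:ann_der}), which preserves the closeness estimate with the stated parameters.

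\textbf{The main obstacle.} The genuinely delicate point is the bookkeeping in the last subcase---verifying that when $Y$ crosses $U_k(j)$ \emph{in the interior} of a crossing at a coarser good scale $k'>k$, the resulting conditional distribution is still $(\lambda_\iota,2L^{4\psi})$-close to $\DD(N_k)$ rather than only to the coarser $\DD(N_{k'})$. One must combine the annealed local-CLT decomposition of Claim~\ref{claim:annealerr} (to pass from $\DD(N_{k'})$ restricted to an intermediate layer back to $\DD(N_k)$) with the quenched-versus-annealed comparison of Lemma~\ref{lem:goodbound}/Corollary~\ref{cor:quenched}, and track that the total mass error stays below $\lambda_\iota$ and the total displacement error below $2L^{4\psi}$. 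The constant $2$ in $2L^{4\psi}$ is precisely there to absorb the extra $\|x_n^\prime-x_n\|_1$-type shift coming from the $\beta$-corrections (bounded by $L^{4\psi}$ via Lemma~\ref{lem:betasmall}) on top of the $(d+1)N_k^{\psi/2}<L^{4\psi}$ coupling displacement from Corollary~\ref{cor:quenched}; checking this addition is routine once the structure is in place. I would organize the proof so that \emph{all} cases funnel into one of two templates---``Remark~\ref{rem:Y_0} verbatim'' or ``Corollary~\ref{cor:quenched} plus Claim~\ref{claim:annealerr}''---and verify the numerical inequalities $\lambda_k\le\lambda_\iota$, $(d+1)N_k^{\psi/2}+L^{4\psi}\le 2L^{4\psi}$ once, at the end.
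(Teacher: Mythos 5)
Your core dichotomy---$\PP^{(k)}\big(Y_{T^{\prime Y}_k(j-1)}\big)$ good $\Rightarrow$ Corollary~\ref{cor:quenched}, bad $\Rightarrow$ the definition of $\beta_{k,j}$ via Remark~\ref{rem:Y_0}---is exactly the paper's two-sentence proof, and your numerical checks ($\lambda_k\le\lambda_\iota$ since $\lambda_k=L^{-\chi}R_{5+k}(L)$ is increasing in $k$; $(d+1)N_k^{\psi/2}\le L^{4\psi}$) are sound. Where you part ways with the paper is in the ``interior of a coarser good \paral'' subcase, and there your plan takes a wrong turn. You propose to pass from $(\lambda,K)$-closeness to $\DD(N_{k'})$ at the coarse scale down to closeness to $\DD(N_k)$ by marginalizing the $N_{k'}$-scale exit distribution onto $H_{jN_k^2}$ via Claim~\ref{claim:annealerr}. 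That is not the right object: what one needs is the distribution of the walk at the \emph{first hitting time} of $H_{jN_k^2}$, not a layer-restriction of the $N_{k'}$-exit law, and Claim~\ref{claim:annealerr} does not deliver that conversion.

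The resolution is simpler and is built into the definitions. ``$\PP^{(k')}(\cdot)$ is good'' asserts, by Subsection~\ref{sec:defgood}, that the conclusion of Proposition~\ref{prop:quenched} holds (with $\theta=\psi/2$) for \emph{every} sub-\paral\ of size at least $N_{k'}^\chi$ contained in it. Since $N_k\ge N_1=\lceil L^\psi\rceil$ and $N_{k'}^\chi\le(2L)^{\chi/2}$, the constraint $\chi<\frac{\psi^2}{2}\cdot\frac{d-1}{2(d+1)}$ from \eqref{eq:choosechi} gives $N_k\ge N_{k'}^\chi$ for $L$ large. Hence the $N_k$-sub-\paral\ containing $Y_{T^{\prime Y}_k(j-1)}$ satisfies Proposition~\ref{prop:quenched} directly at scale $N_k$, and Corollary~\ref{cor:quenched} applies at that scale with no change of scale at all. (The additional conditioning on exiting the coarse good block through its front distorts the law by only $L^{-\xi(1)}$ and is absorbed into $\lambda_\iota$.) In short: both of your ``templates'' are correct, but the coarse-block subcase falls into the first template (Corollary~\ref{cor:quenched} at scale $N_k$) rather than requiring the annealed marginalization you describe; the latter would complicate the argument and would not in fact produce the claim as stated.
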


\begin{proof}
%To see the "furthermore" part,
We look into two different cases: If
$\PP^{(k)}\big(Y_{T^{\prime Y}_k(j-1)}\big)$
is good, then it follows from Corollary \ref{cor:quenched}. Otherwise, it follows from the definition of $\beta_{k,j}$.
%The main part of the lemma follows from the "furthermore" part through Lemma \ref{lem:sumapprox}.
\end{proof}

From Lemma \ref{lem:entropy}, we get the following useful corollary.

\begin{corollary}\label{cor:entropy} Assume that $u$ is large enough.
Condition on $\{Y_\ell\, :\, \ell\leq T^{\prime Y}_k(j-1)\}$, and let
$\bar Y=Y_{T^{\prime Y}_k(j-1)}+\annealedE(X_{T_{N_k^2}})$. For 
every $x\in U_k(j)$ such that $\|x-\bar Y\|<4N_k$,

%\begin{equation}\label{eq:ent}
%\quenchedP^\omega\big(
%Y_{T^{\prime Y}_k(j)} = x 
%\, \big| \, 
%Y_\ell\, :\, \ell\leq T^{\prime Y}_k(j-1)
%\big) > N_k^{1-d}\eta^{2L^{4\psi}}.
%\end{equation}

\begin{equation}\label{eq:ent}
\quenchedP_\omega\big(
\|Y_{T^{\prime Y}_k(j)} - x \| < N_k
\, \big| \, 
Y_\ell\, :\, \ell\leq T^{\prime Y}_k(j-1)
\big) > \rho
\end{equation}
for some constant $\rho>0$.
\end{corollary}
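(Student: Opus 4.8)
The plan is to deduce the estimate from Lemma~\ref{lem:entropy}, which controls the one-level displacement of $\{Y_n\}$ in the sense of Definition~\ref{def:close}, together with the annealed lower bound of Lemma~\ref{lem:lbound}, which says that $\DD(N_k)$ is genuinely spread out over a Euclidean ball of radius of order $N_k$ about its mean. Write $\FF$ for the $\sigma$-algebra generated by $\{Y_\ell\,:\,\ell\le T^{\prime Y}_k(j-1)\}$, set $w:=Y_{T^{\prime Y}_k(j-1)}$ (which is $\FF$-measurable) and $W:=Y_{T^{\prime Y}_k(j)}-w$. We must show $\quenchedP_\omega(\|W-(x-w)\|<N_k\mid\FF)>\rho$ whenever $x\in U_k(j)$ satisfies $\|x-w-\annealedE(X_{T_{N_k^2}})\|<4N_k$.

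Step 1, the annealed input. Apply Lemma~\ref{lem:lbound} at scale $N_k$ with a constant $a=a(d)$ chosen so large that $\|\Sigma\|_{\mathrm{op}}\,(6N_k)^2<a\,N_k^2/\langle U,e_1\rangle$. Then every $y\in\partial^+\PP(0,N_k)$ with $\|y-\bar U\|\le 6N_k$, where $\bar U=\annealedE(X_{T_{\partial\PP(0,N_k)}})$, satisfies $\annealedP(X_{T_{\partial\PP(0,N_k)}}=y)\ge cN_k^{1-d}e^{-3a}$. Since $\bar U$ lies near the center of the transverse box $\partial^+\PP(0,N_k)$, whose side length $N_kR_5(N_k)$ is much larger than $N_k$, for any $y_0$ with $\|y_0-\bar U\|\le 5N_k$ the ball $B(y_0,N_k/2)$ is contained in $\partial^+\PP(0,N_k)$ and contains at least $c'N_k^{d-1}$ lattice points, each within $6N_k$ of $\bar U$; combined with part~\ref{item:exit_from_right} of Lemma~\ref{lem:ann_der} (which makes the normalizing constant in the definition of $\DD(N_k)$ equal to $1-N_k^{-\xi(1)}$) this yields
\[
\DD(N_k)\bigl(B(y_0,N_k/2)\bigr)\ \ge\ 2\rho
\]
for some constant $\rho>0$ and all $u$ large. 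Standard regeneration estimates, together with part~\ref{item:exit_from_right} of Lemma~\ref{lem:ann_der}, also give $\|E_{\DD(N_k)}-\annealedE(X_{T_{N_k^2}})\|=o(N_k)$.

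Step 2, the transfer. By Lemma~\ref{lem:entropy}, conditionally on $\FF$ the law of $W$ is $(\lambda_\iota,2L^{4\psi})$-close to $\DD(N_k)$, so Definition~\ref{def:close} provides a conditional coupling carrying $V\sim\DD(N_k)$ and an auxiliary variable $Z_0$ with $\|W-Z_0\|<2L^{4\psi}$ surely, $\quenchedP_\omega(Z_0\ne V\mid\FF)\le\lambda_\iota$, and (mean-matching clause) $\|E(W\mid\FF)-E_{\DD(N_k)}\|<2L^{4\psi}$. Since $\lambda_\iota=L^{-\chi}R_{5+\iota}(L)=N_k^{-\xi(1)}$ and $2L^{4\psi}=o(N_k)$ for $u$ large, the hypothesis $\|x-w-\annealedE(X_{T_{N_k^2}})\|<4N_k$ forces $\|(x-w)-E_{\DD(N_k)}\|\le 5N_k$, so Step~1 applies with $y_0=x-w$. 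On $\{Z_0=V\}$ the event $\{\|V-(x-w)\|<N_k/2\}$ together with $\|W-Z_0\|<2L^{4\psi}<N_k/2$ forces $\|W-(x-w)\|<N_k$; therefore
\[
\quenchedP_\omega\bigl(\|W-(x-w)\|<N_k\bigm|\FF\bigr)\ \ge\ \DD(N_k)\bigl(B(x-w,N_k/2)\bigr)-\lambda_\iota\ \ge\ 2\rho-\lambda_\iota\ \ge\ \rho
\]
for $u$ large, which is the assertion.

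The one point demanding genuine care is the uniform bound $2L^{4\psi}=o(N_k)$: the estimate $L^{4\psi}$ furnished by Lemma~\ref{lem:betasmall} is far too crude and in fact exceeds $N_k$ for the smallest scales. What is actually needed, and what the proofs of Lemmas~\ref{lem:betasmall}, \ref{lem:sumapprox} and~\ref{lem:entropy} provide, is the sharper displacement bound $\|\beta_{k,j}\|\lesssim N_{k-1}^{\psi/2}(N_k/N_{k-1})^2R_{k+6}(L)$ for $k\ge2$, while for $k=1$ the increment $W$ is exactly $\DD(N_1)$-distributed by construction of $\beta_{1,1}$; together with the choice of scales (notably $\rho_k<\chi<\psi^2$ and $N_k\ge N_1=\lceil L^\psi\rceil$, so $N_k$ grows by more than the companion corrections at each level) this gives $\|\beta_{k,j}\|=o(N_k)$ uniformly, and hence the displacement between consecutive levels $T^{\prime Y}_k(j-1)$ and $T^{\prime Y}_k(j)$ differs from a $\DD(N_k)$ variable by $o(N_k)$, as was used above.
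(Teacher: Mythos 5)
Your proof is correct and follows the same route as the paper's own argument: reduce via the coupling of Lemma~\ref{lem:entropy} to the annealed distribution $\DD(N_k)$, then invoke the pointwise lower bound of Lemma~\ref{lem:lbound} (together with the near-unit normalization of $\DD(N_k)$ from part~(\ref{item:exit_from_right}) of Lemma~\ref{lem:ann_der}) to see that $\DD(N_k)$ puts mass $\geq 2\rho$ on a ball of radius of order $N_k$ about any point near its mean. You are also right to flag the displacement constant: the $2L^{4\psi}$ appearing in the statement of Lemma~\ref{lem:entropy} exceeds $N_k$ at the smallest scales (e.g. $N_1\approx L^{\psi}$), so the $N_k/2$ shrinkage in the paper's displayed inequality $\DD(N_k)(y:\|y-x\|<N_k/2)-\lambda_\iota$ tacitly relies on the sharper control that the \emph{proof} of Lemma~\ref{lem:entropy} actually supplies — displacement $0$ on a bad block (Remark~\ref{rem:Y_0}) and $(d+1)N_k^{\psi/2}$ on a good block (Corollary~\ref{cor:quenched} with $\theta=\psi/2$) — which is indeed $o(N_k)$. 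The one small inaccuracy in your last paragraph is the claim that for $k=1$ the increment is \emph{exactly} $\DD(N_1)$-distributed: that holds only for $j=1$, where $\beta_{1,1}$ is chosen to make it so; for $j>1$ at level $k=1$ the same good/bad dichotomy gives displacement $O(N_1^{\psi/2})=o(N_1)$, which still suffices, so your conclusion is unaffected. (Also, $\lambda_\iota\approx L^{-\chi}$ is a fixed negative power of $N_k$, not $N_k^{-\xi(1)}$, but all that is needed is $\lambda_\iota\to0$, which holds.)
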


\ignore{
\begin{proof}
If
$\PP^{(k)}\big(Y_{T^{\prime Y}_k(j-1)}\big)$
is not good, then it follows from $(\lambda_k,0)$-closeness to $\DD(N_k)$ and 
Lemma \ref{lem:lbound}. If $\PP^{(k)}\big(Y_{T^{\prime Y}_k(j-1)}\big)$ is good, then
it follows from Corollary \ref{cor:quenched}, Lemma \ref{lem:lbound} and uniform ellipticity with
constant $\eta$.
\end{proof}
}

\begin{proof} By Lemma \ref{lem:entropy},
the quenched distribution of 
$Y_{T^{\prime Y}_k(j)}-Y_{T^{\prime Y}_k(j-1)}$
conditioned on the history of the walk 
is $(\lambda_\iota,2L^{4\psi})$-close to the annealed distribution $\DD(N_k)$.
Therefore,
\[
\quenchedP_\omega\big(
\|Y_{T^{\prime Y}_k(j)} - x \| < N_k
\, \big| \, 
Y_\ell\, :\, \ell\leq T^{\prime Y}_k(j-1)
\big) > 
\DD(N_k)(y:\|y-x\|<\frac{N_k}{2})-\lambda_\iota.
\]
By Lemma \ref{lem:lbound}, $\DD(N_k)(y:\|y-x\|<\frac{N_k}{2})$ is bounded away from zero. On the other hand, $\lambda_\iota$ goes to zero as $L$ goes to infinity. The corollary follows.
\end{proof}

\begin{lemma}\label{lem:bjk}
Conditioned on $\{Y_\ell\, :\, \ell\leq T^{\prime Y}_k(j-1)\}$, the (quenched) probability
that $\{Y_\ell\}_{\ell\geq T^{\prime Y}_k(j-1)}$
exits $\PP^{(k)}(Y_{T^{\prime Y}_k(j-1)})$ through
$\partial^+\PP^{(k)}(Y_{T^{\prime Y}_k(j-1)})$
is 
$1-L^{-\xi(1)}$.
%of $Y_{T^{\prime Y}_k(j)}-Y_{T^{\prime Y}_k(j-1)}$ 
%is $(\lambda_\iota,2L^{4\psi})$-close to $\DD(N_k)$.
\end{lemma}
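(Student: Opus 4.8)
The plan is to distinguish cases according to the level $k(x)$ of the point $x:=Y_{T^{\prime Y}_k(j-1)}$, whose first coordinate equals $(j-1)N_k^2$, i.e.\ the mid-level (in the $e_1$-direction) of the slab spanned by $\PP^{(k)}(x)$. If the first arrival at $U_k(j-1)$ is not one of the stopping times $\zeta^\prime_n$, then by the analysis behind Lemma \ref{lem:stpt} the walk is at that moment in the interior of a good box of some level $>k$, and the argument below for the case $k(x)>k$ applies verbatim; so we may assume $x=Y_{\zeta^\prime_n}$ for the appropriate $n$. If $k(x)=k$ then, by the very definition of $\{Y_n\}$, from time $\zeta^\prime_n$ onward the walk runs a quenched random walk in $\PP^{(k)}(x)$ conditioned on $\{T_{\partial\PP^{(k)}(x)}=T_{\partial^+\PP^{(k)}(x)}\}$, hence exits through $\partial^+\PP^{(k)}(x)$ with probability one. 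If $k(x)>k$ then $\PP^{(k)}(x)\subseteq\PP^{(k(x))}(x)$, the latter box is good, and the walk runs a quenched random walk in $\PP^{(k(x))}(x)$ conditioned on exiting through $\partial^+\PP^{(k(x))}(x)$; since $\langle x,e_1\rangle+N_{k(x)}^2\ge jN_k^2$, any such path must meet the hyperplane $U_k(j)$, and it exits $\PP^{(k)}(x)$ through $\partial^+$ unless, before first meeting $U_k(j)$, it either backtracks below $e_1$-level $(j-2)N_k^2$ or leaves the transverse ball of radius $N_kR_5(N_k)$ around the axis of $\PP^{(k)}(x)$. By Proposition \ref{prop:quenched} and goodness of $\PP^{(k(x))}(x)$ the quenched law in $\PP^{(k(x))}(x)$ is within $N_{k(x)}^{-\xi(1)}$ of the annealed one, and the conditioning costs at most a factor $2$ (Part \ref{item:from_right} of Corollary \ref{cor:quenched}). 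For the annealed walk a backtrack by $N_k^2$ has probability at most $e^{-cN_k^{2\gamma}}$ by condition $(T_\gamma)$ and Theorem \ref{thm:inf_reg}, part \ref{item:sz_decay}; and the transverse displacement accumulated over the $O(N_k^2)$ regeneration increments needed to advance $N_k^2$ in the $e_1$-direction is, after truncating each increment at $R(N_k)$ (Lemma \ref{lem:regrad} gives the required tail) and applying Lemma \ref{lem:azuma}, at most $N_kR_5(N_k)$ with probability $1-N_k^{-\xi(1)}$ --- here one uses that $R_5(N_k)/R(N_k)\to\infty$, so that the Azuma exponent beats every power of $\log N_k$. As $N_k\ge N_1=\lceil L^\psi\rceil$, all error terms are $L^{-\xi(1)}$.

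The main case is $0\le k(x)<k$, so that $\PP^{(k)}(x)$ is bad. The structural point is that no $e_1$-level strictly between $(j-1)N_k^2$ and $jN_k^2$ is divisible by $N_{k'}^2$ for any $k'\ge k$; consequently, while crossing $\PP^{(k)}(x)$ the walk only performs (i) conditioned quenched random walks inside good sub-boxes of level $<k$, hence of $e_1$-depth $\le 2N_{k-1}^2<N_k^2$, (ii) straight $N_1$-marches in bad regions, and (iii) correction teleports, which have $\langle\,\cdot\,,e_1\rangle=0$ and norm $<L^{4\psi}$ by Lemma \ref{lem:betasmall}. Since the walk advances monotonically across box boundaries and backtracks only inside a single sub-box of $e_1$-depth $<N_k^2$, starting from $e_1$-level $(j-1)N_k^2$ it never reaches $e_1$-level $(j-2)N_k^2$; so the left face of $\PP^{(k)}(x)$ is never hit, and the whole issue is the transverse displacement before first meeting $U_k(j)$.

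To control it, record the corrected positions at successive level-$(k-1)$ hyperplanes: put $x_0:=x$ and $x_\ell:=Y_{T^{\prime Y}_{k-1}(i_0+\ell)}$ for $\ell=1,\dots,m$, with $i_0=(j-1)(N_k/N_{k-1})^2$ and $m=(N_k/N_{k-1})^2$. By Lemma \ref{lem:entropy}, conditionally on the past the increment $x_\ell-x_{\ell-1}$ is $(\lambda_\iota,2L^{4\psi})$-close to $\DD(N_{k-1})$, and by the first-moment clause of Definition \ref{def:close}, part \ref{item:stmom}, its conditional expectation is $E_{\DD(N_{k-1})}$; hence $x_\ell-x-\ell\,E_{\DD(N_{k-1})}$ is a martingale whose transverse increments are bounded in absolute value by the transverse diameter of a level-$(k-1)$ box plus $2L^{4\psi}$, i.e.\ by $O(N_{k-1}R_5(N_{k-1}))$. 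Lemma \ref{lem:azuma} together with a union bound over $\ell\le m$ then gives
\[
\quenchedP_\omega\Bigl(\max_{1\le\ell\le m}\bigl\|x_\ell-x-\ell\,E_{\DD(N_{k-1})}\bigr\|>\tfrac14 N_kR_5(N_k)\Bigm|\text{past}\Bigr)=L^{-\xi(1)},
\]
again because $R_5(N_k)/R_5(N_{k-1})\to\infty$ (its logarithm is a positive power of $\log\log N_{k-1}$), so the Azuma exponent beats every power of $\log L$. Combined with the fact that $x$ lies in the middle third of $\PP^{(k)}(x)$ and that $\ell\,E_{\DD(N_{k-1})}$ follows the axis of $\PP^{(k)}(x)$ up to a polylogarithmic error (a regeneration estimate, as in the setup of Lemma \ref{lem:lbound}), this shows that with probability $1-L^{-\xi(1)}$ each $x_\ell$ stays well inside $\PP^{(k)}(x)$; since each good sub-box $\PP^{(k(x_\ell))}(x_\ell)$ has transverse radius $\le N_{k-1}R_5(N_{k-1})\ll N_kR_5(N_k)$ and each teleport has norm $<L^{4\psi}$, the excursions within those sub-boxes and the teleport corridors also stay inside $\PP^{(k)}(x)$.

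The remaining --- and main --- difficulty is the straight $N_1$-marches: each one drifts off the axis of $\PP^{(k)}(x)$ by $O(N_1^2)$, so their contribution must be controlled. They occur only from points $x_\ell$ with $k(x_\ell)=0$, i.e.\ inside level-$1$ boxes that are bad; on the event $G$ these are sparse, and one bounds the number of $N_1$-marches performed while crossing $\PP^{(k)}(x)$ by the bookkeeping of Lemmas \ref{lem:stpt} and \ref{lem:nstp} (counting the bad sub-boxes visited), so that their total transverse contribution is $o(N_kR_5(N_k))$ --- this is where the quantitative choices \eqref{eq:choosepsi}--\eqref{eq:choosechi} of $\psi$ and $\chi$ relative to $\iota$ enter, and it is the place where the estimates are tightest and the argument must be run most carefully. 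With the $N_1$-marches absorbed, the walk stays inside $\PP^{(k)}(x)$ until it meets $U_k(j)=\partial^+\PP^{(k)}(x)$, which is exactly the assertion of the lemma.
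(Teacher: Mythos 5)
Your proposal aims at the same target (reduce to Azuma at scale $k-1$), but it skips the key structural device that makes the paper's argument work, and along the way misreads one of the definitions.

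\textbf{Missing induction and the boundedness of the Azuma increments.} The paper's proof is an induction on $k$. In the inductive step it first introduces the event $A$ that the walk exits every level-$(k-1)$ sub-block through its right face; the induction hypothesis gives $\quenchedP_\omega(A\mid\cdots)=1-L^{-\xi(1)}$, and only \emph{conditionally on $A$} are the increments $I_h$ bounded (by the transverse diameter of a level-$(k-1)$ block, so $O(N_{k-1}R_5(N_{k-1}))$), which is what Lemma \ref{lem:azuma} needs. Your proposal replaces the induction with an appeal to Lemma \ref{lem:entropy}, and then asserts that the increments are $O(N_{k-1}R_5(N_{k-1}))$ a.s. But $(\lambda,K)$-closeness gives no almost-sure bound on $Z_2\sim\mu_2$: on the $\lambda$-mass where $Z_0\neq Z_1$, the variable $Z_0$ (and hence $Z_2$, which is only within $K$ of $Z_0$) is unconstrained. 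Moreover $\lambda_\iota$ is not small compared to $1/m$ (indeed $m\lambda_\iota$ is large for $k=2$), so one cannot even restore the boundedness by a union bound over the $m$ increments. The conditioning on $A$ --- that is, the inductive hypothesis --- is exactly what is needed here and is not dispensable.

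\textbf{Misreading of Definition \ref{def:close}.} You write that, by part \ref{item:stmom} of Definition \ref{def:close}, the conditional expectation of $x_\ell-x_{\ell-1}$ is $E_{\DD(N_{k-1})}$, so that $x_\ell-x-\ell E_{\DD(N_{k-1})}$ is a martingale. That clause only says $E_\mu(Z_1)=E_\mu(Z_0)$; the actual increment has the law of $Z_2$, which differs from $Z_0$ by as much as $K$, so its conditional mean can be off by $K$. In the paper's proof the drift per increment is controlled by $N_{k-1}^{\psi/2}$ (the $\|Z\|\le(d+1)N^\theta$ clause from Corollary \ref{cor:quenched} with $\theta=\psi/2$ for good sub-blocks, and zero for bad ones, where Remark \ref{rem:Y_0} gives $(\lambda_k,0)$-closeness), and it is this sharper bound, not the crude $2L^{4\psi}$ of Lemma \ref{lem:entropy}, that keeps the cumulative drift $m\cdot N_{k-1}^{\psi/2}$ below $N_kR_5(N_k)$. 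Plugging in $2L^{4\psi}$ as the per-step drift would give a total drift of order $L^{2\chi+4\psi}$, which is \emph{not} $o(N_kR_5(N_k))$ when $N_k$ is near $N_1\sim L^\psi$.

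\textbf{A couple of smaller points.} The paragraph about straight $N_1$-marches is a red herring: the $\beta$-corrections and the conditioning on $A$ already absorb whatever transverse discrepancy the straight marches create, and the paper does not give them separate treatment. Also, the $k(x)>k$ case in your proposal needs more care; the paper's one-sentence dismissal of the good case is itself terse, but it is the bad case (the induction) where the real content lies, and that is where the gaps above occur.

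In short: the martingale-plus-Azuma outline is the right one, but the proof cannot bypass the induction on $k$; the event $A$ is what makes Lemma \ref{lem:azuma} applicable, and the drift must be controlled via the $N_{k-1}^{\psi/2}$ bound from Corollary \ref{cor:quenched}, not the uniform $2L^{4\psi}$ from Lemma \ref{lem:entropy}, nor by pretending the drift is zero.
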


\begin{proof}
We denote by $E$ the event whose probability we are trying to estimate.
If $\PP^{(k)}(Y_{T^{\prime Y}_k(j-1)})$ is good, then the lemma follows by the definition of a good block.
Therefore we may assume that $\PP^{(k)}(Y_{T^{\prime Y}_k(j-1)})$ is a bad block.
In this case we prove the lemma using induction on $k$. For $k=1$ this follows immediately from the definition of
the auxiliary walk on bad $N_1$ blocks.

Now assume $k>1$. We assume that the lemma holds for $Y_{T^{\prime Y}_{k-1}(h)}$ for every $h$.
(if the block
$\PP^{(k-1)}(Y_{T^{\prime Y}_{k-1}(h-1)})$
is good, then we already proved it. If the block is bad then this is the induction hypothesis).

Let $l$ be such that $lN^2_{k-1}=(j-1)N_k^2$, and let $m$ be such that $(l+m)N^2_{k-1}=jN_k^2$

For $h=1,\ldots,m$, let
\[
I_h=Y_{T^{\prime Y}_{k-1}(l+h)}-Y_{T^{\prime Y}_{k-1}(l+h-1)}.
\]

Let $A$ be the event that for every $h=1,\ldots,m$, the walk $\{Y_\ell\}$ leaves 
$\PP^{(k-1)}\big(Y_{T^{\prime Y}_{k-1}(j-1)}\big)$ through its front. Then by the induction hypothesis,
$\quenchedP_\omega\left(A|Y_\ell\ ;\ \ell=1,\ldots,T^{\prime Y}_{k}(j-1)\right) = 1-L^{-\xi(1)}$.

Now,
\begin{eqnarray}\label{eq:wntbjk}
\nonumber
&&\quenchedP_\omega\left(\left.
%Y_{T^{\prime Y}_{\partial\PP^{(k)}\left(Y_{T^{\prime Y}_{k}(j-1)}\right) }}
%\notin \partial^+\PP^{(k)}\left(Y_{T^{\prime Y}_{k}(j-1)}\right)
E^c
\right|Y_\ell\ ;\ \ell=1,\ldots,T^{\prime Y}_{k}(j-1)\right) \\
&\leq& 
\nonumber
\quenchedP_\omega\left(A^c|Y_\ell\ ;\ \ell=1,\ldots,T^{\prime Y}_{k}(j-1)\right)\\
&+& \quenchedP_\omega\left(E^c|A\ ;\ Y_\ell\ ;\ \ell=1,\ldots,T^{\prime Y}_{k}(j-1)\right),
\end{eqnarray}

and 

\newcommand{\AY}{\begin{array}{c}A,\\ Y_\ell:\ell=1,\ldots,T^{\prime Y}_{k}(j-1)\end{array}}

\begin{eqnarray}\label{eq:wntbjk2}
\nonumber
&&\quenchedP_\omega\left(E^c\left|
%A\ ;\ Y_\ell\ ;\ \ell=1,\ldots,T^{\prime Y}_{k}(j-1)
\AY
\right.\right)\\
\nonumber
&\leq&
\quenchedP_\omega
\left(\left.
\exists_{1\leq h\leq m} 
\left\|\sum_{i=1}^h I_i - hN^2_{k-1}\frac{\vartheta}{\langle \vartheta,e_1\rangle}\right\| > \frac 12N_kR_5(N_k)
\right| \AY%A\ ;\ Y_\ell\ ;\ \ell=1,\ldots,T^{\prime Y}_{k}(j-1)
\right)\\
%\nonumber
&\leq&
\sum_{h=1}^m
\quenchedP_\omega
\left(\left.
\left\|\sum_{i=1}^h I_i - hN^2_{k-1}\frac{\vartheta}{\langle \vartheta,e_1\rangle}\right\| > \frac 12N_kR_5(N_k)
\right|\AY %A\ ;\ Y_\ell\ ;\ \ell=1,\ldots,T^{\prime Y}_{k}(j-1)
\right)
\end{eqnarray}

It is sufficient to show that for every $h$, the probability in \eqref{eq:wntbjk2} is $L^{-\xi(1)}$.

Fix $h$. 
Conditioned on $A$, the variable $J_i=I_i-N^2_{k-1}\frac{\vartheta}{\langle \vartheta,e_1\rangle}$ is bounded
by $2N_{k-1}R_5(N_{k-1})$. Furthermore, the quenched expectation of $J_i$ conditioned on
$A,J_1,\ldots,J_{i-1}$ and $Y_\ell\ ;\ \ell=1,\ldots,T^{\prime Y}_{k}(j-1)$ is bounded by $N_{k-1}^{\psi/2}$ (see \eqref{eq:choosepsi}).

Therefore, using the Azuma-H\"offding inequality, we get that 

\begin{eqnarray*}
&&\quenchedP_\omega\left(E^c\left|\AY\right.\right)\\
&\leq&
C\exp\left(
\frac
{-N_k^2R_5^2(N_k)}
{8N_{k-1}^2R_5^2(N_{k-1})\cdot(N_k/N_k-1)^2}
\right)\\
&=&
C\exp\left(
\frac
{-R_5^2(N_{k})}
{8R_5^2(N_{k-1})}
\right)\\
&\leq&
C_1\exp\left(-C_2
\exp\left([\log(\rho_1+\ldots+\rho_{k-1}+\rho_k)-\log(\rho_1+\ldots+\rho_{k-1})][\log\log L]^5
\right)
\right)\\
&=&L^{-\xi(1)},
\end{eqnarray*}
where the last inequality follows from the definition of $N_k$, the definition of $R_k(N)$, and a first order Taylor approximation.
\end{proof}

\section{The random direction event}\label{sec:randirev}

In this section we consider an event $W^{(w)}$ which we call the random direction event. First we construct an event $W^{(w)}$. Then we show that the probability that $W^{(w)}$ occurs is more than $u^{\epsilon-\hetzi}$. Then we show some estimates on the hitting probabilities of the walk conditioned on the occurence of $W^{(w)}$. In the next section we will show that these estimates
are sufficient for proving \eqref{eq:toshowomeg}, and thus Theorem \ref{thm:main}.
% of paths. We then show two facts about this set of paths $W^{(w)}$:

\ignore{
\begin{enumerate}
\item\label{item:probw}

\item\label{item:probifw}
For every $\upsilon\in W^{(w)}$,
\[
\frac{P_\omega(\forall_n\, X_n=\upsilon_n)}{P_\omega(\forall_n\, Y_n=\upsilon_n)}
>u^{-\hetzi-\epsilon}.
\]
\end{enumerate}
}

\subsection{Definition of $W^{(w)}$}\label{sec:wdef}
Let $M=\left[(\log u)^{1-\epsilon}\right]$, and for $k=1,\ldots,\iota$ let 
$\Epsilon_k=\annealedE^0(X_{T_{\partial\PP(0,N_k)}}|T_{\partial\PP(0,N_k)}=T_{\partial^+\PP(0,N_k)})$
be the annealed expectation of the point of exit of $\PP(0,N_k)$. Let $A_1=1$, and for every $k>1$, let $A_k$ be the smallest integer number such that
%\[
%A_k N_k^2 > \sum_{h=1}^{k-1} (M+A_h)N_h^2.
%\]
$
A_k N_k^2 > (M+A_{k-1})N_{k-1}^2.
$

Note that $A_k\leq M$.

For $k=1,\ldots,\iota$ and $j>A_k$, we define
${\mathbb B}_k(j)$ to be the event that $\{Y_n\}$ leaves $\PP^{(k)}(Y_{T^{\prime Y}_k(j-1)})$ through
$\partial^+\PP^{(k)}(Y_{T^{\prime Y}_k(j-1)})$.

Fix $w\in[-1,1]^{d-1}$.
For $j>A_k$ we then define the event $W^{(w)}_k(j)$ as follows:
\[
W^{(w)}_k(j)=\big\{\|Y_{T^{\prime Y}_k(j)} - Y_{T^{\prime Y}_k(A_k)}  - (j-A_k)\Epsilon_k-w(j-A_k)N_k\|<N_k\big\}
.\]
%\[
%W^{(w)}_k(j)=B_k(j)\bigcap
%\left\{
%\|Y_{T^{\prime Y}_k(j)} - Y_{T^{\prime Y}_k(j-1)}  - \Epsilon_k-w_kN_k\|<N_k
%\right\}
%.\]

Then,
%$W_k$ is the event that for every $j=A_k+1,\ldots,A_k+M$,
\[
W^{(w)}_k=\bigcap_{j=A_k+1}^{A_k+M} [W^{(w)}_k(j)\cap {\mathbb B}_k(j)],
%\|Y_{T^{\prime Y}_k(j)} - Y_{T^{\prime Y}_k(A_k)}  - j\Epsilon_k-wN_k\|<N_k.
\]
and $W^{(w)}$ is defined to be the intersection
\[
W^{(w)}=\bigcap_{k=1}^\iota W^{(w)}_k.
\]

\subsection{The probability of $W^{(w)}$}
In this subsection we bound from below the probability of the event $W^{(w)}$.
\begin{lemma}\label{lem:wkj}
\begin{enumerate}
\item\label{item:wkj}
There exists some $\rho>0$ such that for $1\leq k\leq \iota$ and $A_k<j\leq A_k+M$,
\[
\quenchedP_{\omega}
\big(
W^{(w)}_k(j)\,
\big|\,W^{(w)}_1,\ldots,W^{(w)}_{k-1},W^{(w)}_k(A_k+1),\ldots,W^{(w)}_k(j-1),
{\mathbb B}_k(A_k+1),\ldots,{\mathbb B}_k(j-1)
\big)>\rho
%\eta^{2L^{4\psi}}.
\]
\item\label{item:bkj}
For $1\leq k\leq \iota$ and $A_k<j\leq A_k+M$,
\[
\quenchedP_{\omega}
\big(
{\mathbb B}_k(j)\,
\big|\,W^{(w)}_1,\ldots,W^{(w)}_{k-1},W^{(w)}_k(A_k+1),\ldots,W^{(w)}_k(j-1),
{\mathbb B}_k(A_k+1),\ldots,{\mathbb B}_k(j-1)
\big) = 1-o(1).
\]
\end{enumerate}
\end{lemma}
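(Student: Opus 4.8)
The plan is to prove both parts of Lemma \ref{lem:wkj} simultaneously, using the tools built up in the previous sections. The two events $W^{(w)}_k(j)$ and ${\mathbb B}_k(j)$ concern only the increment $Y_{T^{\prime Y}_k(j)}-Y_{T^{\prime Y}_k(j-1)}$ together with the requirement that the walk exits the current $N_k$-block through its front; crucially, all the conditioning events in the statement are measurable with respect to $\{Y_\ell : \ell \le T^{\prime Y}_k(j-1)\}$, so by Lemma \ref{lem:entropy} and Lemma \ref{lem:bjk} we have clean control of the conditional law of this increment regardless of the history. Concretely, first I would observe that conditioned on $\{Y_\ell : \ell \le T^{\prime Y}_k(j-1)\}$, the increment $Y_{T^{\prime Y}_k(j)}-Y_{T^{\prime Y}_k(j-1)}$ is $(\lambda_\iota, 2L^{4\psi})$-close to $\DD(N_k)$ (Lemma \ref{lem:entropy}), and the event ${\mathbb B}_k(j)$ that the walk exits through the front has conditional probability $1-L^{-\xi(1)} = 1-o(1)$ (Lemma \ref{lem:bjk}). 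The latter is exactly Part \ref{item:bkj}.

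For Part \ref{item:wkj}, the key point is that $W^{(w)}_k(j)$ asks for $Y_{T^{\prime Y}_k(j)}-Y_{T^{\prime Y}_k(j-1)}$ to lie in a ball of radius $N_k$ around the shifted target $\Epsilon_k + wN_k$. Since $w\in[-1,1]^{d-1}$, this target point is within $O(N_k)$ of $\Epsilon_k = \annealedE^0(X_{T_{\partial\PP(0,N_k)}}\mid\text{front exit})$, so in the notation of Corollary \ref{cor:entropy} the point $x = Y_{T^{\prime Y}_k(j-1)} + \Epsilon_k + wN_k$ satisfies $\|x - \bar Y\| < 4N_k$ (here $\bar Y = Y_{T^{\prime Y}_k(j-1)} + \annealedE(X_{T_{N_k^2}})$, and $\Epsilon_k$ differs from $\annealedE(X_{T_{N_k^2}})$ conditioned on front exit by only $N^{-\xi(1)}$ mass times a bounded quantity, by Part \ref{item:exit_from_right} of Lemma \ref{lem:ann_der}). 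Then Corollary \ref{cor:entropy} gives precisely
\[
\quenchedP_\omega\big(\|Y_{T^{\prime Y}_k(j)}-x\| < N_k \,\big|\, Y_\ell : \ell\le T^{\prime Y}_k(j-1)\big) > \rho,
\]
which is the assertion of Part \ref{item:wkj}, once one checks that $W^{(w)}_k(j)$ written in terms of the telescoped sum $Y_{T^{\prime Y}_k(j)}-Y_{T^{\prime Y}_k(A_k)}-(j-A_k)\Epsilon_k-w(j-A_k)N_k$ is the same as a single-step statement: indeed, conditioning on $W^{(w)}_k(A_k+1),\ldots,W^{(w)}_k(j-1)$ pins down $Y_{T^{\prime Y}_k(j-1)}-Y_{T^{\prime Y}_k(A_k)}$ to within $(j-1-A_k)$ times the $N_k$-scale errors, which is still $O(MN_k)$; but in fact the definition of $W^{(w)}_k(j)$ involves $(j-A_k)\Epsilon_k + w(j-A_k)N_k$, so that the difference $W^{(w)}_k(j)$ minus $W^{(w)}_k(j-1)$ is exactly the requirement that the latest increment lands within $\approx N_k$ of $\Epsilon_k + wN_k$ shifted by the discrepancy between $Y_{T^{\prime Y}_k(j-1)}-Y_{T^{\prime Y}_k(A_k)}$ and $(j-1-A_k)(\Epsilon_k+wN_k)$. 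I would therefore set up the event inclusion so that the conditioning on $W^{(w)}_k(A_k+1),\ldots,W^{(w)}_k(j-1)$ forces the center $x$ of the target ball for the increment to be within $2N_k$ of $Y_{T^{\prime Y}_k(j-1)}+\Epsilon_k$, making Corollary \ref{cor:entropy} applicable with a suitably enlarged (still $O(1)$) radius constant.

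The main obstacle, I expect, is the bookkeeping that shows the center of the relevant target ball stays within the $O(N_k)$-window of $Y_{T^{\prime Y}_k(j-1)}+\annealedE(X_{T_{N_k^2}})$ required by Corollary \ref{cor:entropy}, uniformly over $j$ in the range $A_k < j \le A_k+M$ and over $w\in[-1,1]^{d-1}$. Because the events $W^{(w)}_k(i)$ for $i<j$ only control $Y_{T^{\prime Y}_k(i)}$ to within $N_i = N_k$ (they all live at scale $k$), the accumulated slack over $M$ steps is a priori $O(MN_k)$, which is too large. The resolution is that $W^{(w)}_k(j)$ is phrased with the \emph{cumulative} target $(j-A_k)(\Epsilon_k + wN_k)$, so that once we condition on $W^{(w)}_k(j-1)$ (the single immediately preceding cumulative event, not all of them), the displacement $Y_{T^{\prime Y}_k(j-1)}-Y_{T^{\prime Y}_k(A_k)}$ is within $N_k$ of $(j-1-A_k)(\Epsilon_k+wN_k)$, hence the required location for $Y_{T^{\prime Y}_k(j)}-Y_{T^{\prime Y}_k(A_k)}$ is within $N_k$ of a point at distance exactly $\Epsilon_k + wN_k$ from $Y_{T^{\prime Y}_k(j-1)}$ — a bounded multiple of $N_k$ away, so Corollary \ref{cor:entropy} applies. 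I would write the argument as: $W^{(w)}_k(j)\cap W^{(w)}_k(j-1)$ is implied by $W^{(w)}_k(j-1)$ together with $\{\|Y_{T^{\prime Y}_k(j)}-Y_{T^{\prime Y}_k(j-1)} - \Epsilon_k - wN_k\| < c N_k\}$ for an appropriate constant $c$, then invoke Corollary \ref{cor:entropy} with target $x = Y_{T^{\prime Y}_k(j-1)} + \Epsilon_k + wN_k$ (legitimate since $\|\Epsilon_k + wN_k - \annealedE(X_{T_{N_k^2}})\| < 4N_k$ for $L$ large), obtaining the uniform lower bound $\rho$. Part \ref{item:bkj} then follows directly from Lemma \ref{lem:bjk}, since the conditioning events are all measurable with respect to the walk up to time $T^{\prime Y}_k(j-1)$.
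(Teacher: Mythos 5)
Your high-level strategy matches the paper exactly: Part 2 follows directly from Lemma \ref{lem:bjk} since all the conditioning events are measurable with respect to $\{Y_\ell: \ell\le T^{\prime Y}_k(j-1)\}$; and Part 1 uses $W^{(w)}_k(j-1)$ to pin down $Y_{T^{\prime Y}_k(j-1)}$ so that Corollary \ref{cor:entropy} can be applied. However, the final formalization in your proposal does not close. You propose to apply Corollary \ref{cor:entropy} with target
$x = Y_{T^{\prime Y}_k(j-1)} + \Epsilon_k + wN_k$, and separately assert that $W^{(w)}_k(j-1)$ together with $\{\|Y_{T^{\prime Y}_k(j)} - Y_{T^{\prime Y}_k(j-1)} - \Epsilon_k - wN_k\| < cN_k\}$ implies $W^{(w)}_k(j)$ for ``an appropriate constant $c$'', then speak of applying Corollary \ref{cor:entropy} with ``a suitably enlarged (still $O(1)$) radius constant.'' Neither of those steps works: the Corollary has the \emph{fixed} radius $N_k$ (it gives a lower bound on $\quenchedP_\omega(\|Y_{T^{\prime Y}_k(j)}-x\|<N_k\mid\cdot)$, nothing more); and the proposed inclusion fails for every $c>0$, because $W^{(w)}_k(j-1)$ gives $\|Y_{T^{\prime Y}_k(j-1)} - Y_{T^{\prime Y}_k(A_k)} - (j-1-A_k)(\Epsilon_k+wN_k)\| < N_k$, so adding a radius-$cN_k$ deviation on the increment puts $Y_{T^{\prime Y}_k(j)}$ only within $(1+c)N_k$ of the cumulative target, which overshoots the $N_k$ tolerance that $W^{(w)}_k(j)$ demands.

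The correct move (and what the paper does) is to take the target in Corollary \ref{cor:entropy} to be $x = Y_{T^{\prime Y}_k(A_k)} + (j-A_k)(\Epsilon_k + wN_k)$, i.e. the \emph{exact} center specified in the definition of $W^{(w)}_k(j)$. Then $\{\|Y_{T^{\prime Y}_k(j)}-x\|<N_k\}$ \emph{is} $W^{(w)}_k(j)$, and it only remains to check the hypothesis $\|x - \bar Y\| < 4N_k$, which follows from conditioning on $W^{(w)}_k(j-1)$: that event gives $\|x - (Y_{T^{\prime Y}_k(j-1)} + \Epsilon_k + wN_k)\| < N_k$, so $\|x-\bar Y\| < N_k + \|wN_k\| + \|\Epsilon_k - \annealedE(X_{T_{N_k^2}})\| < 4N_k$. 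Your proposal has the right mechanism and nearly the right computation; you only need to aim the Corollary at the cumulative target rather than the per-step shifted point.
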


\begin{proof} For Part \ref{item:wkj},
Conditioned on
$W^{(w)}_1\cap\ldots\cap W^{(w)}_{k-1}\cap W^{(w)}_k(A_k+1)\cap\ldots\cap W^{(w)}_k(j-1),
{\mathbb B}_k(A_k+1),\ldots,{\mathbb B}_k(j-1)$,
we get that
\[
\|Y_{T^{\prime Y}_k(j-1)} - Y_{T^{\prime Y}_k(A_k)}  - (j-1)\Epsilon_k-w(j-1)N_k\|<N_k
.\]
Therefore,
\[
\|Y_{T^{\prime Y}_k(A_k)}  + j\Epsilon_k+wjN_k - (Y_{T^{\prime Y}_k(j-1)}+\Epsilon_k)\|<4N_k.
\]
By Corollary \ref{cor:entropy} and the definition of $W_k(j)$,
we get that
\begin{equation*}
\quenchedP_{\omega}
\big(
W^{(w)}_k(j)\,
\big|\,W^{(w)}_1,\ldots,W^{(w)}_{k-1},W^{(w)}_k(A_k+1),\ldots,W^{(w)}_k(j-1),
{\mathbb B}_k(A_k+1),\ldots,{\mathbb B}_k(j-1)
\big)>\rho
\end{equation*}

as desired.

Part \ref{item:bkj} follows from Lemma \ref{lem:bjk}.
%the construction of the walk $\{Y_\ell\}$: If the block 
%$\PP^{(k)}(Y_{T^{\prime Y}_k(j)})$ is good, then it follows from Proposition \ref{prop:quenched}.
%Otherwise, 

\end{proof}

As a result of Lemma \ref{lem:wkj} and the choice of $M$, we get the following lemma:
\begin{lemma}\label{lem:probw}
The probability of $W^{(w)}$ is bounded from below by $u^{\epsilon-1/2}$.
\end{lemma}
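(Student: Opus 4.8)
The plan is to combine the two parts of Lemma~\ref{lem:wkj} in a product over the $\iota M$ conditioning events that make up $W^{(w)}$, using the tower property of conditional expectation. Concretely, order the events
\[
W^{(w)}_1(A_1+1),{\mathbb B}_1(A_1+1),\ldots,W^{(w)}_1(A_1+M),{\mathbb B}_1(A_1+M),W^{(w)}_2(A_2+1),\ldots
\]
in the natural (increasing in $k$, then in $j$) order, so that conditioning on all earlier events is exactly the conditioning appearing in Lemma~\ref{lem:wkj}. Writing $\quenchedP_\omega(W^{(w)})$ as a telescoping product of conditional probabilities, Part~\ref{item:wkj} gives a factor $>\rho$ for each of the $\iota M$ events $W^{(w)}_k(j)$, and Part~\ref{item:bkj} gives a factor $1-o(1)$ for each of the $\iota M$ events ${\mathbb B}_k(j)$. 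Hence
\[
\quenchedP_\omega(W^{(w)})\geq \big(\rho(1-o(1))\big)^{\iota M}\geq \rho^{2\iota M}
\]
for $u$ large enough.

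What remains is purely a matter of bookkeeping with the scales. Recall $M=\big[(\log u)^{1-\epsilon}\big]$, and that $\iota$ is the largest $k$ with $N_k^2<2L$ where $L=\lceil(\log u)^{d/\gamma}\rceil$; from the definition $N_{k+1}=N_k\lceil L^{\rho_k}\rceil$ with $\rho_k=\chi/2+\chi/2^k$ one sees $N_\iota$ is roughly $L^{\psi+\chi(\iota-1)/2}$ up to lower-order corrections, so $\iota=O(\log L/\log L)=O(1)$; in any case $\iota$ is bounded by an absolute constant (depending only on $\psi,\chi,d,\gamma$), as already noted in the proof of Lemma~\ref{lem:probgoodevent}. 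Therefore $\iota M\leq C(\log u)^{1-\epsilon}$ for an absolute constant $C$, and
\[
\quenchedP_\omega(W^{(w)})\geq \rho^{2C(\log u)^{1-\epsilon}}=\exp\big(-2C|\log\rho|\,(\log u)^{1-\epsilon}\big).
\]
Since $1-\epsilon<1$, this is eventually larger than $\exp\big(-(\log u)^{1-\epsilon/2}\big)$, and in particular larger than
$u^{\epsilon-1/2}=\exp\big((\epsilon-1/2)\log u\big)$ for all $u$ large enough, because $(\log u)^{1-\epsilon/2}=o(\log u)$ while $(1/2-\epsilon)\log u$ grows linearly in $\log u$. (Here one uses $\epsilon<1/2$, which is guaranteed by \eqref{eq:chooseepsilon} together with $\alpha<d$ if $d\geq 1$; strictly, one should note $2d\epsilon<d-\alpha<d$ forces $\epsilon<1/2$.) This yields the claimed bound $\quenchedP_\omega(W^{(w)})>u^{\epsilon-1/2}$.

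The only mild subtlety — and the step I would be most careful about — is making sure the conditioning structure in Lemma~\ref{lem:wkj} really chains together: the lemma conditions on $W^{(w)}_1,\ldots,W^{(w)}_{k-1}$ (the \emph{full} lower-level events, i.e.\ all $M$ of their sub-events together with the relevant ${\mathbb B}$'s) and on the already-revealed sub-events at level $k$. One has to check that this is consistent with a single decreasing filtration, i.e.\ that the event $W^{(w)}_k(j)\cap{\mathbb B}_k(j)$ is measurable with respect to $\{Y_\ell:\ell\le T^{\prime Y}_k(j)\}$ and that the conditioning sigma-algebras are nested in the chosen order; this is immediate from the definitions of $T^{\prime Y}_k$ and of the events, since all the level-$k'$ events for $k'<k$ and the level-$k$ events up to index $j-1$ are determined by the walk up to time $T^{\prime Y}_k(j-1)$ (recall $N_{k'}^2$ divides $N_k^2$, so the level-$k'$ layers are refinements of the level-$k$ layers). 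Once this is in place the product estimate above is routine, and the numerical comparison $\exp(-C(\log u)^{1-\epsilon})>u^{\epsilon-1/2}$ for large $u$ is elementary.
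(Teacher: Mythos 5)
Your argument is essentially the one the paper leaves implicit (the paper simply says Lemma~\ref{lem:probw} follows ``as a result of Lemma~\ref{lem:wkj} and the choice of $M$''), and your telescoping product and the numerical comparison with $u^{\epsilon-1/2}$ are both correct; the observation that $\iota$ is an absolute constant (so $\iota M=O((\log u)^{1-\epsilon})$) and that \eqref{eq:chooseepsilon} forces $\epsilon<1/2$ are exactly the right supporting facts.

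One small bookkeeping point worth fixing: in your chosen ordering each ${\mathbb B}_k(j)$ is conditioned on a history that \emph{includes} $W^{(w)}_k(j)$, whereas Part~\ref{item:bkj} of Lemma~\ref{lem:wkj} conditions only on the earlier $W^{(w)}$ and ${\mathbb B}$ events, not on $W^{(w)}_k(j)$ itself. So its $1-o(1)$ bound is not literally the conditional probability appearing in your product. This is harmless: since Part~\ref{item:wkj} gives $\quenchedP_\omega(W^{(w)}_k(j)\mid\text{past})\geq\rho$, one has
\[
\quenchedP_\omega\big({\mathbb B}_k(j)^c\mid W^{(w)}_k(j),\text{past}\big)\leq
\frac{\quenchedP_\omega({\mathbb B}_k(j)^c\mid\text{past})}{\quenchedP_\omega(W^{(w)}_k(j)\mid\text{past})}
\leq\frac{o(1)}{\rho}=o(1),
\]
so the factor is still $1-o(1)$. (Equivalently, one can avoid the interleaving altogether by bounding each pair directly via $\quenchedP_\omega(W^{(w)}_k(j)\cap{\mathbb B}_k(j)\mid\text{past})\geq\rho-o(1)$.) With that remark your derivation of $\quenchedP_\omega(W^{(w)})\geq\rho^{2\iota M}\geq u^{\epsilon-1/2}$ for $u$ large is complete.
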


\subsection{Hitting probability estimates}\label{sec:entropy} In this subsection we bound from above the probability, conditioned on $W^{(w)}$, of a {\paral} to be hit.
We begin with a simple claim.

\begin{claim}\label{claim:hitpar}
Fix $k$ between $1$ and $\iota$, and let
\[
A_k+M\leq j\leq \left(\frac{N_{k+1}}{N_k}\right)^2(A_{k+1}+M).
\]
Let $z\in\LL_{N_k}\cap U_{k}(j)$.
Then,
\begin{equation}\label{eq:intw}
\mathop\int_{[-1,1]^{d-1}}
\quenchedP_\omega\left(
\{Y_n\}\cap\PP(z,N_k)\neq\emptyset\, |\, W^{(w)}
\right)dw
\leq (\log u)^{(1-d)(1-2\epsilon)}.
\end{equation}
\end{claim}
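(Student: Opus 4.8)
The plan is to integrate over the random direction parameter $w$ and exploit the fact that, conditioned on $W^{(w)}$, the position $Y_{T^{\prime Y}_k(j)}$ is concentrated (within $N_k$) around a point that depends linearly on $w$, so that averaging over $w\in[-1,1]^{d-1}$ spreads the hitting location over a region of volume of order $(jN_k)^{d-1}$. First I would observe that for $\PP(z,N_k)$ to be hit by $\{Y_n\}$, the walk must pass through some layer $U_k(j')$ with $|j'N_k^2 - \langle z,e_1\rangle| \le N_k^2$, and since $\omega\in G$ the walk exits each relevant block through its front (Lemma \ref{lem:bjk}), so up to a negligible error the event $\{Y_n\}\cap\PP(z,N_k)\ne\emptyset$ is contained in the event that $Y_{T^{\prime Y}_k(j)}$ lands within distance $O(N_k R_5(N_k))$ of $z$ in the transverse directions, for the relevant value of $j$.

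Next I would use the definition of $W^{(w)}_k(j)$, which forces $\|Y_{T^{\prime Y}_k(j)} - Y_{T^{\prime Y}_k(A_k)} - (j-A_k)\Epsilon_k - w(j-A_k)N_k\| < N_k$. Hence, conditioned on $W^{(w)}$, the walk hits $\PP(z,N_k)$ only if $w$ lies in a ball of radius $O(R_5(N_k)/(j-A_k))$ around the point $\big(z - Y_{T^{\prime Y}_k(A_k)} - (j-A_k)\Epsilon_k\big)/((j-A_k)N_k)$. The key quantitative input is that $j-A_k$ is of order $M = [(\log u)^{1-\epsilon}]$, or larger, in the range of $j$ under consideration (since $j \ge A_k+M$ in the hypothesis... actually $j$ ranges up to $(N_{k+1}/N_k)^2(A_{k+1}+M)$, so $j-A_k$ is at least of order $M$). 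Therefore the set of $w$ for which hitting is possible has $(d-1)$-dimensional Lebesgue measure at most $C\big(R_5(N_k)/M\big)^{d-1}$, and integrating the bound $\quenchedP_\omega(\{Y_n\}\cap\PP(z,N_k)\ne\emptyset\,|\,W^{(w)}) \le 1$ over this set gives the claimed bound after checking that $R_5(N_k)^{d-1} = N_k^{o(1)} = (\log u)^{o(1)}$ is absorbed, leaving $(\log u)^{(1-d)(1-\epsilon)+o(1)}$, which is below $(\log u)^{(1-d)(1-2\epsilon)}$ for $u$ large.

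There is a subtlety I would need to handle carefully: the conditioning in $W^{(w)}$ couples the behavior of the walk at scale $k$ with its behavior at all finer scales $k'<k$ through the nested structure of the blocks, and also $Y_{T^{\prime Y}_k(A_k)}$ is itself random. To deal with this I would condition on the full history up to time $T^{\prime Y}_k(A_k)$ and on $W^{(w)}_1,\ldots,W^{(w)}_{k-1}$, and note that the bound on the measure of admissible $w$ is uniform over this conditioning, since it only uses the concentration statement in the definition of $W^{(w)}_k(j)$ and the deterministic lower bound $j-A_k \ge \Omega(M)$. I would also need to be slightly careful that a single block $\PP(z,N_k)$ could in principle be hit while crossing any of $O(1)$ consecutive layers, but this only multiplies the estimate by a constant.

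The main obstacle I expect is bookkeeping the interplay between the different scales and making sure the "admissible $w$" argument is genuinely uniform in the conditioning — in particular verifying that, on the event $W^{(w)}_1\cap\cdots\cap W^{(w)}_{k-1}$, the starting point $Y_{T^{\prime Y}_k(A_k)}$ and the index $j$ at which a given block $\PP(z,N_k)$ would be hit are determined up to an error small compared to $(j-A_k)N_k$, so that the condition on $w$ really does cut out only a small ball. Everything else is a volume computation: the number of $z$ is irrelevant since we fix $z$, and the only quantities entering are $R_5(N_k) = (\log u)^{o(1)}$ and $j - A_k \ge c(\log u)^{1-\epsilon}$, yielding the exponent $(1-d)(1-\epsilon)$, comfortably inside the required $(1-d)(1-2\epsilon)$.
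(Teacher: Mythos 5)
Your argument invokes the constraint $W^{(w)}_k(j)$, namely $\|Y_{T^{\prime Y}_k(j)} - Y_{T^{\prime Y}_k(A_k)} - (j-A_k)\Epsilon_k - w(j-A_k)N_k\| < N_k$, but this event is only part of $W^{(w)}$ for $j$ in the range $A_k+1 \leq j \leq A_k+M$: recall $W^{(w)}_k = \bigcap_{j=A_k+1}^{A_k+M}\bigl[W^{(w)}_k(j)\cap{\mathbb B}_k(j)\bigr]$. The claim is stated precisely for $j \geq A_k+M$, so apart from the single boundary value $j=A_k+M$ the scale-$k$ concentration event you use is not imposed, and the conclusion you draw --- that $Y_{T^{\prime Y}_k(j)}$ is pinned within $N_k$ of a $w$-affine target --- simply does not hold for the bulk of the range. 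The paper closes this gap by passing to scale $k+1$: the block $\PP(z,N_k)$ lies inside a scale-$(k+1)$ block $\PP(z',N_{k+1})$ with $z'\in\LL_{N_{k+1}}\cap U_{k+1}(j')$, and the upper bound $j\leq(N_{k+1}/N_k)^2(A_{k+1}+M)$ is designed exactly so that $A_{k+1}<j'\leq A_{k+1}+M$, placing $j'$ back inside the constrained range at the coarser scale. Then $W^{(w)}_{k+1}(j')$ and ${\mathbb B}_{k+1}(j')$ \emph{do} apply: the walk traverses $\PP(z',N_{k+1})$ without leaving it, so hitting $\PP(z,N_k)$ requires $z'$ to be within $O(N_{k+1}R_5(N_{k+1}))$ of a $w$-affine target whose slope in $w$ is at least $MN_k$ (from the $M$ constrained steps at scale $k$); this confines $w$ to a region of side $O(N_{k+1}R_5(N_{k+1})/(MN_k))\leq M^{-1}L^\chi$, and $(M^{-1}L^\chi)^{d-1}\leq(\log u)^{(1-d)(1-2\epsilon)}$ by \eqref{eq:choosepsi} and \eqref{eq:choosechi}.

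A secondary point: your proposed fix of conditioning on the history up to $T^{\prime Y}_k(A_k)$ is delicate, because under the conditioning on $W^{(w)}_1,\dots,W^{(w)}_{k-1}$ the position $Y_{T^{\prime Y}_k(A_k)}$ is itself $w$-dependent, so "uniformity over the conditioning" does not immediately decouple the admissible-$w$ set from $w$. The paper sidesteps this by writing the constraint directly on the fixed point $z'$ in terms of an explicit $w$-affine expression anchored at the origin, which packages the $w$-dependence of the entire earlier history into the linear coefficient and makes the volume estimate in $w$ genuinely a deterministic one once $z$ is fixed.
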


\begin{proof}

%First assume that $j$ is divisible by $\left(\frac{N_{k+1}}{N_k}\right)$ Then
First note that there exists $A_{k+1}<j^\prime\leq A_{k+1}+M$ 
%such that $U_k(j)=U_{k+1}(j+1)$,
and $z^\prime\in\LL_{N_{k+1}}\cap U_{k+1}(j^\prime)$ such that
$\PP(z,N_k)\subseteq\PP(z^\prime,N_{k+1})$.

Then by the definition of $W^{(w)}$ (and using the fact that $W^{(w)}$ implies ${\mathbb B}_{k+1}(j^\prime)$), the probability 
\[
\quenchedP_\omega\left(\{Y_n\}\cap\PP(z,N_k)\neq\emptyset\, |\, W^{(w)}\right)
\]
is positive only if 
\[
\|z^\prime - M\EE_{k}-j^\prime \EE_{k+1}
-MwN_k-j^\prime wN_{k+1}
\|<N_{k+1}R_5(N_{k+1})
\]
and in particular $w$ needs to be in an area of side length which is no more than
$\frac{N_{k+1}R_5(N_{k+1})}{MN_k}\leq M^{-1}L^{\chi}$
and thus the integral in \eqref{eq:intw} is bounded by $(\log u)^{(1-d)(1-2\epsilon)}$.
\end{proof}

\begin{lemma}\label{lem:hitpar}
Fix $k$ between $1$ and $\iota$, and let $j>A_k+M$. Let $z\in\LL_{N_k}\cap U_{k}(j)$.
Then,
\[
\int_{[-1,1]^{d-1}}
\quenchedP_\omega\left(
\{Y_n\}\cap\PP(z,N_k)\neq\emptyset\, |\, W^{(w)}
\right)dw
\leq (\log u)^{(1-d)(1-2\epsilon)}.
\]
\end{lemma}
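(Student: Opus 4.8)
The plan is to reduce Lemma \ref{lem:hitpar} to Claim \ref{claim:hitpar} by a covering/Markov argument, using Lemma \ref{lem:entropy} and Corollary \ref{cor:entropy} to control how the walk $\{Y_n\}$ disperses across layers. First I would dispose of the case already handled: if $A_k+M\leq j\leq\left(\tfrac{N_{k+1}}{N_k}\right)^2(A_{k+1}+M)$, the statement is exactly Claim \ref{claim:hitpar}. So assume $j>\left(\tfrac{N_{k+1}}{N_k}\right)^2(A_{k+1}+M)$. The idea is that any such layer $U_k(j)$ lies \emph{beyond} the layers directly pinned down by the event $W^{(w)}$ at level $k+1$, and so to hit a fixed {\paral} $\PP(z,N_k)$ at that distant layer, the walk must first pass through one of the blocks $\PP(z',N_{k+1})$ at the last layer of the form $U_{k+1}(j')$ with $A_{k+1}<j'\leq A_{k+1}+M$ which is controlled by $W^{(w)}$, and then, from that block, travel on to $\PP(z,N_k)$.

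The key steps, in order, are as follows. (1) Find the largest $j'$ with $A_{k+1}<j'\leq A_{k+1}+M$ and the associated layer $U_{k+1}(j')$, and the corresponding layer $U_k(\bar\jmath)$ (where $\bar\jmath N_k^2 = j'N_{k+1}^2$). Let $z'$ range over the $(\le R_6(L))$-many blocks $\PP(z',N_{k+1})$, $z'\in\LL_{N_{k+1}}\cap U_{k+1}(j')$, that can possibly be hit given $W^{(w)}$; by the proof of Claim \ref{claim:hitpar}, conditioned on $W^{(w)}$ the total hitting probability of $\bigcup_{z'}\PP(z',N_{k+1})$ is at most $1$ and — more to the point — the set of admissible $w$ (for which any hit is possible at all) has $(d-1)$-volume at most $(M^{-1}L^\chi)^{d-1}$. (2) Given that a particular block $\PP(z',N_{k+1})$ is hit, condition on the history up to $T^{\prime Y}_{k+1}(j')$; by Lemma \ref{lem:betasmall} and the construction of $\{Y_n\}$, the walk exits $\PP(z',N_{k+1})$ through its right face (Lemma \ref{lem:bjk}) at a point at distance $O(N_{k+1})$ from $z'+\EE_{k+1}$, up to a $L^{-\xi(1)}$ error. (3) From that exit point, the walk $\{Y_n\}$ continues as a fresh process; its law going forward no longer ``sees'' $W^{(w)}$ except through the blocks it has already traversed, so the conditional probability that $\{Y_n\}$ subsequently hits the \emph{specific} block $\PP(z,N_k)$, which sits in some layer further to the right, is bounded by an \emph{unconditional} (over the still-free randomness) hitting probability of a single $N_k$-block by a walk whose dispersion at the relevant scale is controlled by Corollary \ref{cor:entropy} and Lemma \ref{lem:alltheta} / Lemma \ref{lem:dpachotechad}: this is $O\!\big(R_h(L)\,(\text{distance in }N_k^2\text{ units})^{(1-d)/2}\big)$, and in any case the worst case is bounded by $(\log u)^{(1-d)(1-2\epsilon)}$ because the relevant distance is at least of order $M\sim(\log u)^{1-\epsilon}$ layers of size $N_k^2$ (this is where the scale inequalities $N_k^2<2L$ and $M=[(\log u)^{1-\epsilon}]$ are used). (4) Integrate over $w$: the product of the volume bound from (1) and the per-$w$ hitting bound from (3), summed over the $R_6(L)=L^{o(1)}$ choices of $z'$, gives the claimed $(\log u)^{(1-d)(1-2\epsilon)}$ after absorbing the $L^{o(1)}$ factors (since $L^{o(1)}=(\log u)^{o(1)}$).

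Concretely I would write:
\begin{eqnarray*}
\int_{[-1,1]^{d-1}}\!\!\!\quenchedP_\omega\big(\{Y_n\}\cap\PP(z,N_k)\neq\emptyset\,\big|\,W^{(w)}\big)dw
&\leq&\sum_{z'}\int_{[-1,1]^{d-1}}\!\!\!\quenchedP_\omega\big(\{Y_n\}\cap\PP(z',N_{k+1})\neq\emptyset\,\big|\,W^{(w)}\big)\\
& &\qquad\times\ q(z',z)\ dw\ +\ L^{-\xi(1)},
\end{eqnarray*}
where $q(z',z)$ is the (history-independent up to $L^{-\xi(1)}$) bound from step (3) on continuing from $\PP(z',N_{k+1})$ to hit $\PP(z,N_k)$, and then bound $\int\sum_{z'}\quenchedP_\omega(\cdots|W^{(w)})\,dw$ by the $w$-volume estimate times the uniform-in-$w$ bound $\sup q(z',z)\le(\log u)^{(1-d)(1-2\epsilon)}$, which finishes the estimate. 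The last summand $L^{-\xi(1)}$ is dominated by $(\log u)^{(1-d)(1-2\epsilon)}$ for $u$ large.

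The main obstacle I expect is step (3): justifying that once $\{Y_n\}$ leaves the $W^{(w)}$-controlled block $\PP(z',N_{k+1})$ its future hitting probability of a far-away $N_k$-block factors cleanly, i.e. that conditioning on $W^{(w)}$ does not secretly bias the continuation beyond what Lemma \ref{lem:entropy}/Corollary \ref{cor:entropy} control. This requires carefully using the Markovian structure built into the definition of $\{Y_n\}$ (that inside good \parals\ it is a genuine quenched walk, and that the $\beta_{k,j}$-corrections in bad blocks depend only on the past), together with the dispersion estimate of Lemma \ref{lem:alltheta} to get the needed $(\text{distance})^{(1-d)/2}$-type decay — or, failing sharp decay, the crude bound that over $\gtrsim M$ layers any single block is hit with probability $\lesssim R_h(L)N_k^{1-d}\cdot$(something)$\le(\log u)^{(1-d)(1-2\epsilon)}$, which is all that is needed.
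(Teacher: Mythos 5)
You've correctly reduced the base case to Claim \ref{claim:hitpar}, and you've noticed the right structural fact (that $U_k(j)$ lies beyond the layers directly pinned down at level $k+1$), but you then take a genuinely different — and considerably harder — route than the paper, and the gap you flag in step (3) is real. The paper's proof is a one-line monotonicity argument: the ranges of admissible $j'$ in Claim \ref{claim:hitpar} at the various scales $k'=k,k+1,\ldots,\iota$ \emph{tile} (in the $e_1$-coordinate) the whole region beyond $(A_k+M)N_k^2$, because the level-$k'$ range runs from $(A_{k'}+M)N_{k'}^2$ to $(A_{k'+1}+M)N_{k'+1}^2$, which is exactly where the level-$(k'+1)$ range begins. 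Hence for any $j>A_k+M$ one can find $k'\geq k$ and $z'\in\LL_{N_{k'}}\cap U_{k'}(j')$ with $A_{k'}+M\leq j'\leq\big(\tfrac{N_{k'+1}}{N_{k'}}\big)^2(A_{k'+1}+M)$ and $\PP(z,N_k)\subseteq\PP(z',N_{k'})$. Then $\quenchedP_\omega(\{Y_n\}\cap\PP(z,N_k)\neq\emptyset\,|\,W^{(w)})\leq\quenchedP_\omega(\{Y_n\}\cap\PP(z',N_{k'})\neq\emptyset\,|\,W^{(w)})$ pointwise in $w$, and integrating and invoking Claim \ref{claim:hitpar} at scale $k'$ finishes the proof. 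No Markov decomposition, no $q(z',z)$ continuation bound, no appeal to Lemma \ref{lem:alltheta} or dispersion estimates is needed.

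Your two-stage "hit a controlled $N_{k+1}$-block, then continue" decomposition runs squarely into the issue you anticipated: the event $W^{(w)}$ constrains the trajectory at \emph{all} the controlled layers, including those at scales $k'>k+1$ that lie \emph{downstream} of the block $\PP(z',N_{k+1})$ on which you propose to condition. So the post-exit continuation is not a fresh unconstrained $\{Y_n\}$-walk — it is still conditioned on $W^{(w)}_{k'}$ for all $k'>k+1$ — and establishing the factorization $q(z',z)$ you need would require substantial additional work to show this downstream conditioning does not destroy the estimate. That work is precisely what the containment observation lets you skip; replacing a two-step Markov argument by "the small target block already sits inside a bigger block at a directly controlled layer" is the missing idea.
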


\begin{proof}
For $j<\left(\frac{N_{k+1}}{N_k}\right)^2(A_{k+1}+M)$, this follows from Claim \ref{claim:hitpar}. If 
$j\geq \left(\frac{N_{k+1}}{N_k}\right)^2(A_{k+1}+M)$, then there exists $k^\prime>k$ and 
$z^\prime\in\LL_{N_{k^\prime}}$ such that $z^\prime\in U_{k^\prime}(j^\prime)$ with
\[
A_{k^\prime}+M\leq j^\prime\leq \left(\frac{N_{{k^\prime}+1}}{N_k}\right)^2(A_{{k^\prime}+1}+M)
\]
and $\PP(z,N_k)\subseteq\PP(z^\prime,N_{k^\prime})$. Then by Claim \ref{claim:hitpar} applied to $k^\prime$ we get that

 \begin{eqnarray*}
&&\int_{[-1,1]^{d-1}}\quenchedP_\omega\left(
\{Y_n\}\cap\PP(z,N_k)\neq\emptyset\, |\, W^{(w)}
\right)dw\\
&\leq&\int_{[-1,1]^{d-1}}\quenchedP_\omega\left(
\{Y_n\}\cap\PP(z^\prime,N_{k^\prime})\neq\emptyset\, |\, W^{(w)}
\right)dw\\
&\leq& (\log u)^{(1-d)(1-2\epsilon)}.
\end{eqnarray*}

\end{proof}

\subsection{Expected number of bad {\parals} that are visited}
Fix $k$. Let 
\[
\DD(k)=\left\{
z\in\LL_k \cap B_{2L} \,\left|\,
%\{Y_\ell\}\cap \PP(z,N_k) \neq \emptyset
%\mbox{ and }
\PP(z,N_k) \mbox{ is not good}
\right.\right\},
\]
and let 
\[
\BB(k)=\#\big\{
z\in\DD(k)\,\left|\,
\{Y_\ell\}\cap \PP(z,N_k) \neq \emptyset
\right.\big\}.
\]

We are interested in the distribution of the variable $\BB(k)$.

\begin{lemma}\label{lem:sizebk}
Fix $k$ and $\omega\in G$. Then
\begin{equation*}
\int_{[-1,1]^{d-1}}\quenchedE_\omega\left(\left.
\BB(k)\, \right|\,
W^{(w)}
\right)dw
\leq 3(\log u)^{1-\epsilon}.
\end{equation*}
\end{lemma}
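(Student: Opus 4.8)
The plan is to bound the integral over $w$ of the conditional expectation $\quenchedE_\omega(\BB(k)\mid W^{(w)})$ by linearity of expectation and Lemma \ref{lem:hitpar}. Write
\[
\quenchedE_\omega\big(\BB(k)\,\big|\,W^{(w)}\big)
=\sum_{z\in\DD(k)}\quenchedP_\omega\big(\{Y_\ell\}\cap\PP(z,N_k)\neq\emptyset\,\big|\,W^{(w)}\big),
\]
so that after integrating in $w$ and exchanging sum and integral (a finite sum, so this is legitimate),
\[
\int_{[-1,1]^{d-1}}\quenchedE_\omega\big(\BB(k)\,\big|\,W^{(w)}\big)\,dw
=\sum_{z\in\DD(k)}\int_{[-1,1]^{d-1}}\quenchedP_\omega\big(\{Y_\ell\}\cap\PP(z,N_k)\neq\emptyset\,\big|\,W^{(w)}\big)\,dw.
\]
Each point $z\in\DD(k)$ lies in $U_k(j)$ for a unique $j$. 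I would split $\DD(k)$ according to whether $j\le A_k+M$ or $j>A_k+M$. For $z$ with $j>A_k+M$, Lemma \ref{lem:hitpar} bounds the inner integral by $(\log u)^{(1-d)(1-2\epsilon)}$. For the (few) layers with $j\le A_k+M$ I would use the trivial bound $1$ on the inner integral, after multiplying by the measure $|[-1,1]^{d-1}|=2^{d-1}$; here one should check — using $A_k\le M$, the definition of $N_k$, and $N_k^2<2L$ — that the number of such $z\in\DD(k)$ contributes a lower-order term, indeed $o\big((\log u)^{1-\epsilon}\big)$, since the number of relevant layers $jN_k^2$ in this range is $O(M)=O((\log u)^{1-\epsilon})$ and in each layer the number of $z\in\LL_{N_k}\cap B_{2L}$ that are \emph{not good} is at most $(\log u)^{\alpha+\epsilon}$ by the definition of $G$ in \eqref{eq:defgood}; one then verifies $(\log u)^{\alpha+\epsilon}\cdot(\log u)^{1-\epsilon}\cdot 2^{d-1}$ is itself negligible against $3(\log u)^{1-\epsilon}$ only if... — so more care is needed, see below.

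Now for the main term. The number of $z\in\DD(k)$, i.e. $z\in\LL_{N_k}\cap B_{2L}$ with $\PP(z,N_k)$ not good, is at most $\iota\cdot(\log u)^{\alpha+\epsilon}$ by \eqref{eq:defgood} (one application of the bound per scale-$k$ layer, of which there are at most $\iota$ many inside $B_{2L}$ since layers are at $e_1$-distance $N_k^2$ apart and $N_k^2<2L$; actually the relevant count is the number of layers, which is $O(L/N_k^2)$, but combined with the per-layer bound $(\log u)^{\alpha+\epsilon}$ and $N_k^2\ge N_1^2\ge L^{2\psi}$ one gets $O(L^{1-2\psi}(\log u)^{\alpha+\epsilon})$ — this is too large, so the correct reading is that $G$ bounds the \emph{total} count $|\{z\in B_{2L}(k):\PP(z,N_k)\text{ bad}\}|<(\log u)^{\alpha+\epsilon}$ directly, which is exactly \eqref{eq:defgood}). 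Hence $|\DD(k)|\le (\log u)^{\alpha+\epsilon}$, and the main-term contribution is at most
\[
(\log u)^{\alpha+\epsilon}\cdot 2^{d-1}\cdot(\log u)^{(1-d)(1-2\epsilon)}.
\]
Using \eqref{eq:chooseepsilon}, namely $2d\epsilon<d-\alpha$, one checks that the exponent $\alpha+\epsilon+(1-d)(1-2\epsilon)$ is strictly less than $1-\epsilon$ for all $d\ge 4$: indeed $(1-d)(1-2\epsilon)=1-d-2\epsilon+2d\epsilon$, so the exponent equals $\alpha+\epsilon+1-d-2\epsilon+2d\epsilon=1-(d-\alpha)+(2d-1)\epsilon-... $, and since $d-\alpha>2d\epsilon$ this is $<1-2d\epsilon+(2d-1)\epsilon<1$, comfortably below $(\log u)^{1-\epsilon}$ in magnitude for $u$ large. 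So the main term is $o\big((\log u)^{1-\epsilon}\big)$.

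The remaining, and genuinely delicate, point is the small-$j$ range $j\le A_k+M$: here Lemma \ref{lem:hitpar} does not apply (it requires $j>A_k+M$), and on these initial layers I would instead argue directly that $\{Y_n\}$ visits at most one bad $\PP(z,N_k)$ per layer by construction (or bound the number of initial layers by $A_k+M\le 2M$ and use the per-layer bound from $G$), giving a contribution of at most $2M\cdot(\text{bad boxes per layer})$. The bad-boxes-per-layer bound must be taken from the fact that $\{Y_n\}$ — being essentially ballistic with a slab structure of width $R_5$ — meets only $O(R_5(N_k)^{d-1})$ boxes of $\LL_{N_k}$ in any single layer, of which at most all are bad, but one needs the expected number visited in $W^{(w)}$, and here the correction $w$-integral again localizes. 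I expect \emph{this} initial-segment estimate to be the main obstacle: one must show that the contribution of layers $j\le A_k+M$, totalling at most $\sim 2M=2(\log u)^{1-\epsilon}$ layers, is bounded by roughly $(\log u)^{1-\epsilon}$ — which is exactly where the "$3$" in the statement comes from (one $(\log u)^{1-\epsilon}$ from the initial layers, a negligible amount from the main term, with room to spare giving the constant $3$). I would assemble these three contributions and conclude
\[
\int_{[-1,1]^{d-1}}\quenchedE_\omega\big(\BB(k)\,\big|\,W^{(w)}\big)\,dw\le 3(\log u)^{1-\epsilon}
\]
for $u$ large enough, which is the claim.
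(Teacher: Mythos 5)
Your plan is essentially the paper's proof. The paper also decomposes $\DD(k)$ into $\DD^{(1)}(k)=\{z:\langle z,e_1\rangle\le N_k^2(A_k+M)\}$ and $\DD^{(2)}(k)=\DD(k)\setminus\DD^{(1)}(k)$, bounds the $\DD^{(2)}(k)$ contribution exactly as you do (Lemma \ref{lem:hitpar} for the inner $w$-integral, the bound $|\DD(k)|<(\log u)^{\alpha+\epsilon}$ from \eqref{eq:defgood}, and the exponent check via \eqref{eq:chooseepsilon} that $\alpha+\epsilon+(1-d)(1-2\epsilon)\le 1-\epsilon$), and bounds the $\DD^{(1)}(k)$ contribution deterministically by $A_k+M\le 2(\log u)^{1-\epsilon}$, giving $2+1=3$.

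The one place where you hedge — the initial segment $j\le A_k+M$ — is exactly the place where the paper is terse: it asserts in one line that ``$\{Y_\ell\}$ visits no more than $A_k+M$ elements of $\DD^{(1)}(k)$,'' i.e.\ at most one bad $N_k$-block per layer, and hence $\BB^{(1)}(k)\le A_k+M$ pointwise (so the inner integral for those $z$ needs no estimate at all beyond $\quenchedE_\omega(\BB^{(1)}(k)\mid W^{(w)})\le A_k+M$ a.s.). This is your ``Approach A.'' Your fallback ``Approach B'' (per-layer good-environment count times number of layers) is, as you yourself compute, too weak, and indeed is not what the paper uses. So you landed on the right decomposition, the right main-term computation, and the right initial-segment bound; what you flagged as the main obstacle is precisely the step the paper asserts without elaboration.
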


\begin{proof}
Let
\[
\DD^{(1)}(k)=\DD(k)\cap\{z\,:\,\langle z,e_1\rangle\leq N_k^2(A_k+M)\}
\]
and
\[
\DD^{(2)}(k)=\DD(k)\cap\{z\,:\,\langle z,e_1\rangle> N_k^2(A_k+M)\},
\]
and for $i=1,2$ let
\[
\BB^{(i)}(k)=\left|\left\{
z\in\DD^{(i)}(k)\,\left|\,
\{Y_\ell\}\cap \PP(z,N_k) \neq \emptyset
\right.\right\}\right|.
\]

Then $\{Y_\ell\}$ visits no more than $A_k+M$ elements of $\DD^{(1)}(K)$, and thus
$\BB^{(1)}(k)\leq A_k+M\leq 2(\log u)^{1-\epsilon}$

Let $z\in\DD^{(2)}(k)$. Then by Lemma \ref{lem:hitpar},
\[
%\quenchedP_\omega\left(
%z
%\right)
%=
\int_{[-1,1]^{d-1}}\quenchedP_\omega\left(
\{Y_n\}\cap\PP(z,N_k)\neq\emptyset\, |\, W^{(w)}
\right)dw
\leq (\log u)^{(1-d)(1-2\epsilon)}.
\]
Therefore, using \eqref{eq:defgood} and \eqref{eq:chooseepsilon}, we get that
\begin{eqnarray*}
\int_{[-1,1]^{d-1}}\quenchedE_\omega\big(\BB^{(2)}(k)\,|\,W^{(w)}\big)dw
&\leq& (\log u)^{\alpha +\epsilon+(1-d)(1-2\epsilon)}\\
= (\log u)^{\alpha-d + 1 + (2d-1) \epsilon}
&\leq& (\log u)^{1 - \epsilon}.
\end{eqnarray*}

Combined, we get that

\begin{eqnarray*}
&&\int_{[-1,1]^{d-1}}
\quenchedE_\omega\left(\left.
\BB(k)\, \right|\,
W^{(w)}
\right)dw\\
&\leq&
\int_{[-1,1]^{d-1}}
\quenchedE_\omega\left(\left.
\BB^{(1)}(k)\, \right|\,
W^{(w)}
\right)dw\\
&+&\int_{[-1,1]^{d-1}}
\quenchedE_\omega\left(\left.
\BB^{(2)}(k)\, \right|\,
W^{(w)}
\right)dw\\
&\leq& 3(\log u)^{1-\epsilon}.
\end{eqnarray*}

\end{proof}

\section{Proof of main result}\label{sec:prfmain}

In this section we prove Theorem \ref{thm:main}.

\begin{proof}[Proof of Theorem \ref{thm:main}]
%For $\bar w=(w_1,\ldots,w_\iota)$ in $\big([-1,1]^{d-1}\big)^\iota$, we let
%$W_{\bar w}$ be $W^{(w)}$ as defined in Section \ref{sec:wdef}, with $w_1,\ldots,w_\iota$ taking their 
%prescribed values. Then for every
%$\bar w\in\big([-1,1]^{d-1}\big)^\iota$, 
%\[
%\quenchedP(W_{\bar w})\geq u^{\epsilon-1/2}.
%\]
%
%Since
%\begin{equation}\label{eq:memu}
%W=\mathop{\int}_{\big([-1,1]^{d-1}\big)^\iota} W_{\bar w} d\bar w,
%\end{equation}

by Lemma \ref{lem:sizebk},

\begin{equation*}
\int_{[-1,1]^{d-1}}
\quenchedE_\omega\left(\left.
\sum_{k=1}^\iota \BB(k)\, \right|\,
W^{(w)}
\right)dw
\leq 3\iota(\log u)^{1-\epsilon}.
\end{equation*}

Therefore, there exists $w$ such that
\begin{equation*}
\quenchedE_\omega\left(\left.
\sum_{k=1}^\iota \BB(k)\, \right|\,
W^{(w)}
\right)
\leq 3\iota(\log u)^{1-\epsilon}.
\end{equation*}

We now fix $w$ to be such value.

Let 
\begin{equation}\label{eq:wbarint}
\bar W=W^{(w)}\bigcap \left\{\sum_{k=1}^\iota \BB(k) \leq 6\iota(\log u)^{1-\epsilon} \right\}.
\end{equation}

Then by Markov's inequality,
$\quenchedP_\omega(\bar W)\geq 0.5\quenchedP_\omega(W^{(w)})\geq \frac 12u^{\epsilon - 1/2}.$
Note that there is a set $V$ of paths, such that
\[
\bar W=\big\{ \{Y_n\}\in V  \big\},
\]

and for every $\upsilon\in V$, by Lemma \ref{lem:xnyn} and by \eqref{eq:wbarint} and the choice of $\chi$ and $\psi$ (\eqref{eq:choosepsi},\eqref{eq:choosechi}),
\begin{equation*}
\frac{
{\quenchedP_\omega}\left(
X_j=v_j \mbox{ for all } j<N_\upsilon
\right)
}{
{\quenchedP_\omega}\left(
Y_j=v_j \mbox{ for all } j<N_\upsilon
\right)
}
\geq
%\exp\left(
%-L^{4\psi}\left(1+\sum_{k=1}^\iota Q_k\right)
%\right)
\frac{1}{2}\eta^{
(\iota + 2 + 6\iota (\log u)^{1-\epsilon}) L^{3\chi + 4\psi}
}
\geq u^{\epsilon - 1/2}.
\end{equation*}

Therefore,
\[
\quenchedP_\omega(\{X_n\}\in V)
\geq u^{\epsilon - 1/2} \quenchedP_\omega(\{Y_n\}\in V)
\geq u^{\epsilon - 1}
\]

Every path in $V$ reaches $\partial^+B_{2L}$ before returning to $0$, and therefore we get
\eqref{eq:toshowomeg}, from which we get Proposition \ref{prop:main} and Theorem \ref{thm:main}.

\end{proof}

\section*{Acknowledgment}
I wish to thank A.-S.~Sznitman for introducing this problem to me, and to thank
G.~Kozma, T.~Schmitz and O.~Zeitouni for useful discussions.
In addition I thank O.~Zeitouni for suggesting that I use the methods of
\cite{gantertzeitouni} in order to 
prove Corollary \ref{cor:mainquenched}.
%Part of the work was done during multiple visits to the Max Planck institute in Jena and I am grateful for the hospitality.
A very careful and detailed referee report contributed significantly to the quality of the presentation, and I am grateful for that.

\bibliography{slowdown}
\bibliographystyle{plain}

\ignore{

}

\end{document}